\title{\MakeUppercase{Fixed point Floer cohomology and closed-string mirror symmetry for nodal curves}}
\date{\today}
\author{Maxim Jeffs, Yuan Yao, and Ziwen Zhao}
\begin{document}

\maketitle

\begin{abstract}
We show that for singular hypersurfaces, a version of their genus-zero Gromov-Witten theory may be described in terms of a direct limit of fixed point Floer cohomology groups, a construction which is more amenable to computation and easier to define than the technical foundations of the enumerative geometry of more general singular symplectic spaces. As an illustration, we give a direct proof of closed-string mirror symmetry for nodal curves of genus greater than or equal to $2$, using calculations of (co)product structures on fixed point Floer homology of Dehn twists due to Yao-Zhao \cite{ziwenyao}. 
%We also observe how algebraic properties of elliptic curves arise naturally on the $A$-side in terms of natural structures on fixed point Floer cohomology.
\end{abstract}

 \tableofcontents

\section{Introduction}\label{sec:intro}

Singular varieties arise frequently in mirror symmetry, even in the simplest cases, such as  mirrors of smooth algebraic curves. While studying the enumerative geometry of such singular varieties intrinsically is certainly possible, outside of the orbifold case it is often technically demanding and not straightforward to compute (see for instance \cite{parker1,parker2,ACGS,chen,jun_li,TZ,TMZ,log_GW} though this list is certainly not exhaustive). An alternative proposal due to Auroux and developed in \cite{jeffs}, uses a more algebraic and categorical approach to defining symplectic invariants of singular hypersurfaces and complete intersections. Passing to the closed-string setting by taking Hochschild cohomology suggests defining new enumerative invariants in terms of direct limits of fixed point Floer cohomology groups of nearby fibers. In the case of smooth algebraic curves of genus greater than or equal to $2$, calculations of the (co)product structures on fixed point Floer homology for Dehn twists have been carried out by \cite{ziwenyao}. Their work allows us to carry out this direct-limit construction explicitly and produce precisely the algebra structures predicted by mirror symmetry. 

Homological mirror symmetry for (smooth) curves has been studied extensively: beginning with \cite{seidel_genus_two} and \cite{efimov_curves}, as well as \cite{heather_thesis}, \cite{pascaleff1,pascaleff2}, and \cite{lekili1,lekili2}. However, it seems that enumerative mirror symmetry for curves has yet to be studied outside of the genus-$1$ case (see \cite{sili,dijkgraaf}).

In \S \ref{sec:hms} we explain the background in mirror symmetry and the closed-string predictions for nodal curves; 
in \S \ref{sec:bmodel} we carry out the calculation of the $B$-model invariants on the mirror side. After setting up our conventions for Lefschetz fibrations in \S \ref{sec:lefschetz}, in \S \ref{sec:seidel}, we define the Seidel class and a twisted closed-open map, and prove the twisted closed-open map takes the Seidel class to the Seidel natural transformation. Finally, in \S \ref{sec:computation}, we calculate the Seidel class and in \S \ref{sec:A_model_calculations} use the results of \cite{ziwenyao} to compute the direct limit along multiplication by this class.

In future work we will study the algebraic structure of fixed point Floer cohomology for symplectomorphisms of algebraic surfaces using similar methods. This opens up avenues for studying fixed points of symplectomorphisms using mirror symmetry, which we will explore further.

\subsection{Statement of results}\label{sec:hms}

If $\Sigma_g$ is a smooth Riemann surface of genus $g \geq 2$, it is known that a mirror can be described as a Landau-Ginzburg model $(X_g,W_g)$ where $X_g$ is a 3-dimensional algebraic variety and $W_g:X_g \to \CC$ is a holomorphic function whose critical locus is a trivalent configuration $Z_g$ of $\PP^1$s and $\AA^1$s \cite{efimov_curves,AAK}. Homological mirror symmetry then predicts that a Fukaya category of $\Sigma_g$ is equivalent to the matrix factorization category $\mathrm{MF}(X_g,W_g)$. After passing to Hochschild cohomology, this should yield the closed-string mirror symmetry equivalence \cite{GKR} with symplectic cohomlogy (with $\CC$ coefficients):
\begin{equation*}
    \mathrm{SH}^k(\Sigma_g) \cong \bigoplus_{i \equiv k \;\mathrm{mod}\;2}\mathrm{HH}^i(\scr{F}(\Sigma_g)) \cong \bigoplus_{i \equiv k \;\mathrm{mod}\;2} \mathrm{HH}^i(\mathrm{MF}(X_g,W_g))
\end{equation*}
that induces an isomorphism of $\ZZ/2$-graded $\CC$-algebras between $\mathrm{SH}^{\ast}(\Sigma_g)$ and $\mathrm{HH}^{\ast}(\mathrm{MF}(X_g,W_g))$.

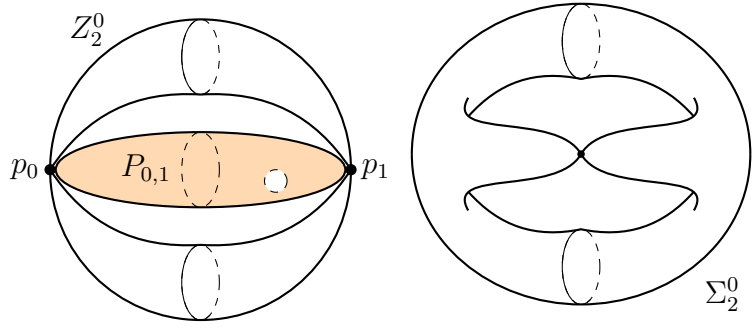
\begin{figure}
\begin{center}
    \begin{tikzpicture}[scale=0.5]
    %\draw[help lines] (-8,-8) grid (8,8);
    \begin{scope}
        \draw[thick] (4,2) to[bend left] (0,0) to[bend left] (-4,2) to[out=-90,in=-180] (0,-2) to[out=0, in=-90] (4,2);
        \draw[dashed] (0,-1) ellipse (0.5 and 1);
        \draw (0,0) arc[start angle=90,end angle=300,x radius=0.5,y radius =1];
        \draw (0,3) arc[start angle=90,end angle=300,x radius=0.5,y radius =1];
        \draw (0,6) arc[start angle=90,end angle=300,x radius=0.5,y radius =1];
    \end{scope}
    \begin{scope}[rotate=180,shift={(0,-4)}]
        \draw[thick] (4,2) to[bend left] (0,0) to[bend left] (-4,2) to[out=-90,in=-180] (0,-2) to[out=0, in=-90] (4,2);
        \draw[dashed] (0,-1) ellipse (0.5 and 1);
    \end{scope}
        \filldraw[thick,fill=orange!30] (0,2) ellipse (3.85 and 1);
        \draw[dashed] (0,2) ellipse (0.5 and 1);
        \fill[black] (-4,2) circle (0.15);
        \draw (-4,2) node[anchor=east] {$p_0$};
        \fill[black] (4,2) circle (0.15);
        \draw (4,2) node[anchor=west] {$p_1$};
        \filldraw[dashed,fill=white] (2,1.7) circle (0.3);
        \draw (-0.5,2) node[anchor=east] {$P_{0,1}$};
        \draw (-3,5) node[anchor=south] {$Z_2^0$};
    \end{tikzpicture}
        \begin{tikzpicture}[scale=0.5]
    %\draw[help lines] (-8,-8) grid (8,8);
    \begin{scope}
        \draw[thick] (3,1) to[bend left] (0,0) to[bend left] (-3,1);
        \draw[thick] (-4.5,2) to[out=-90,in=-180] (0,-2) to[out=0, in=-90] (4.5,2);
        \draw[dashed] (0,-1) ellipse (0.5 and 1);
        \draw[thick] (3,0.5) to[out=60,in=-60] (0,2);
        \draw[thick] (-3,0.5) to[out=120,in=-120] (0,2);
        \fill[black] (0,2) circle (0.1);
        \draw (0,0) arc[start angle=90,end angle=300,x radius=0.5,y radius =1];
        \draw (0,6) arc[start angle=90,end angle=300,x radius=0.5,y radius =1];
    \end{scope}
    \begin{scope}[rotate=180,shift={(0,-4)}]
        \draw[thick] (3,1) to[bend left] (0,0) to[bend left] (-3,1);
        \draw[thick] (-4.5,2) to[out=-90,in=-180] (0,-2) to[out=0, in=-90] (4.5,2);
        \draw[thick] (3,0.5) to[out=60,in=-60] (0,2);
        \draw[thick] (-3,0.5) to[out=120,in=-120] (0,2);
        \draw[dashed] (0,-1) ellipse (0.5 and 1);
        \draw (-3,5) node[anchor=north west] {$\Sigma_{2}^0$};
    \end{scope}
    \end{tikzpicture}
\end{center}
\caption{The mirror $Z_2$ of a nodal genus-$2$ curve $\Sigma_{2}^0$.}\label{fig:once-punctured-mirror}
\end{figure}

One expects that a nodal degeneration of the curve $\Sigma_g$, which we denote by $\Sigma_{g}^0$, is mirror to removing one smooth point from the critical locus of $W_g$; in the sense that we should have an equivalence of categories between $\scr{F}(\Sigma_{g}^0)$, the Fukaya category of the nodal curve $\Sigma_{g}^0$ defined as in \cite{jeffs}, with the matrix factorization category $\mathrm{MF}(X^{0}_g, W^{0}_g)$, where the critical locus $Z_g^0$ of $W^0_g$ should be the complement of a single smooth point in the critical locus $Z_g$ of $W_g$. The $A$-model invariant of $\Sigma_g^0$ we consider is:

\begin{definition}
    The \textbf{symplectic cohomology} of the nodal curve $\Sigma_{g}^0$ is given by the direct limit
    \begin{equation*}
        \mathrm{SH}^{\ast}(\Sigma_{g}^0) = \dlim_d \mathrm{HF}^{\ast}(\Sigma_g,\phi^d)
    \end{equation*}
    where $\mathrm{HF}^{\ast}(\Sigma_g, \phi^d)$ is the fixed point Floer cohomology (in the sense of \cite{ziwenyao}) of the composition of the Dehn twists around the vanishing cycles, and the direct limit is taken along multiplication by the Seidel class $S \in \mathrm{HF}^0(\Sigma_g,\phi)$ (defined in \S \ref{sec:seidel} below).
\end{definition}

The ring structure on symplectic cohomology $\mathrm{SH}^{\ast}(\Sigma_{g}^0)$ is induced by the product structure
    \begin{equation*}
        \mathrm{HF}^{\ast}(\Sigma_g,\phi^i) \otimes \mathrm{HF}^{\ast}(\Sigma_g,\phi^j) \to \mathrm{HF}^{\ast}(\Sigma_g,\phi^{i+j})
    \end{equation*}
on fixed point Floer (co)homology as in \cite{ziwenyao}.

The justification for this definition will be given in \S \ref{sec:seidel}, where we prove Theorem \ref{thm:HF_equals_QH}, showing that our symplectic cohomology algebra does indeed compute the Hochschild cohomology of the Fukaya category of the nodal curve:

\begin{theorem*} Suppose $M$ is a non-degenerate Liouville manifold in the sense of \cite[Definition 1.1]{ganatra_thesis},  then the twisted closed-open map $\scr{CO}_{\phi}$ is an isomorphism and there is an equivalence of graded algebras:
    \begin{equation*}
    \mathrm{HH}^{\ast}(\scr{F}(M^0)) \cong \dlim_d \mathrm{HF}^{\ast}(\phi^d)
\end{equation*}
where the connecting maps in the direct limit are given by multiplication by the Seidel class $S$ in $\mathrm{HF}^0(\phi)$.
\end{theorem*}

\begin{remark}
    As the computations in Theorem \ref{thm:ring_structure} and \ref{thm:HF^1 for nodal curves} show, we can think of the symplectic cohomology $\mathrm{SH}^*(\Sigma_g^0)$ as the quantum deformation of the singular cohomology of $\Sigma_g^0$ with the node removed. As this result falls out of a direct computation, it is not clear in what level of generality this result is expected to hold. Our definition of symplectic cohomology applies to singular hypersurfaces of any dimension, though it is not clear whether we can still view our definition of symplectic cohomology as a deformation of singular cohomology (of the complement of the singular locus). For this reason, we have described our $A$-model invariant as `symplectic cohomology' rather than the quantum cohomology of $\Sigma_g^0$, even though the curve $\Sigma_g$ may be closed.
\end{remark}

We are certainly not claiming that this is the same as other notions of Gromov-Witten theory that may be defined for nodal curves; and this is certainly not the only way the symplectic cohomology of such a curve could be defined. However, attempting to compute the quantum cohomology of a nodal curve by na\"ive counts of actual holomorphic spheres inside the nodal curve will only yield trivial results.

The invariants on the $B$-side that we shall consider are given by taking the cohomology of

\begin{definition}
    The sheaf $\tilde{T}_{Z}$ of \textbf{balanced vector fields} on a trivalent configuration $Z$ of $\AA^1$s and $\PP^1$s is the sheaf whose sections are vector fields on $Z$ (vanishing at the nodes) whose rotation numbers around every node sum to zero.
\end{definition}

The justification for this definition will be given in \S \ref{sec:bmodel} where we prove Theorem \ref{thm:b_model}, showing that the cohomology of the sheaves of balanced vector fields computes the Hochschild cohomology of the matrix factorization category: 

\begin{theorem*}
Let ${Z_{g}^0}$ be the critical locus of the LG model $(X^0_g, W^0_g)$ mirror to the nodal curve $\Sigma_{g}^0$; then
\begin{align*}
\mathrm{HH}^{\mathrm{even}}(\mathrm{MF}(X^0_g,W^0_g)) & \cong  H^0(\scr{O}_{Z_{g}^0}) \osum H^1(\tilde{T}_{Z_{g}^0}) \\
    \mathrm{HH}^{\mathrm{odd}}(\mathrm{MF}(X^0_g,W^0_g)) & \cong H^1(\scr{O}_{Z_{g}^0}) \osum H^0(\tilde{T}_{Z_{g}^0})
\end{align*}
as $\CC$-algebras and modules respectively.
\end{theorem*}

\begin{remark}
    One could say that $\tilde{T}_{Z_g^0}$ represents the vector fields corrected by the sheaf of vanishing cycles on $\mathrm{crit}(W_g)$ as in \cite{GKR}.
\end{remark}

With these definitions our main theorem is:

\begin{theorem}(\textbf{Closed-string mirror symmetry for nodal curves})  \label{thm:main_theorem}
There is an equivalence of graded algebras
\begin{equation*}
     \mathrm{SH}^{i}(\Sigma_{g}^0) \cong \bigoplus_{j+k \cong i \; \mathrm{mod} \; 2} H^j(Z_{g}^0, \wedge^k \tilde{T}_{Z_{g}^0})
\end{equation*}
Moreover, the same result holds if $\Sigma_{g}^0$ has several nodes whose vanishing cycles are disjoint homologically linearly independent closed curves.
\end{theorem}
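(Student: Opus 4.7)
The plan is to prove the isomorphism by direct computation on both sides, rather than going through any separate statement of homological mirror symmetry. Theorem \ref{thm:b_model} already unpacks the right-hand side in terms of the sheaf cohomology groups $H^j(Z_g^0, \wedge^k \tilde{T}_{Z_g^0})$ for $j,k \in \{0,1\}$ (higher wedges vanish since $Z_g^0$ is one-dimensional), with concrete generators read off from a standard affine cover. The task therefore reduces to giving an equally explicit description of $\mathrm{SH}^{\ast}(\Sigma_g^0)$ and matching generators and products.

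For the $A$-side, I would first use the explicit identification of the Seidel class $S \in \mathrm{HF}^0(\Sigma_g, \phi)$ produced in Section \ref{sec:computation}, together with the Yao--Zhao description of $\mathrm{HF}^{\ast}(\Sigma_g, \phi^d)$ as a graded algebra recalled in Theorems \ref{thm:ring_structure} and \ref{thm:HF^1 for nodal curves}. Computing the effect of multiplication by $S$ on each generator and passing to the direct limit, the classes split into those stabilized by $S$, which survive to the limit, and those killed at some finite stage. Tabulating the survivors degree-by-degree should produce four blocks matching, with correct parity, the four groups $H^0(\scr{O}_{Z_g^0})$, $H^1(\scr{O}_{Z_g^0})$, $H^0(\tilde{T}_{Z_g^0})$, and $H^1(\tilde{T}_{Z_g^0})$ from Theorem \ref{thm:b_model}. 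Matching the algebra structure then amounts to checking that the pair-of-pants products on stable generators assemble, in the limit, into the wedge and multiplication operations on polyvector fields on $Z_g^0$.

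For the generalization to several nodes whose vanishing cycles are disjoint and homologically linearly independent, the Yao--Zhao input extends to the composition of the commuting Dehn twists, and the Seidel class factors as a product of local contributions at each node; the direct limit then distributes across the nodes, with the balanced condition at each node imposed independently. The main obstacle, I expect, is precisely to see the balanced condition on vector fields emerge as a Floer-theoretic constraint at each node: this is essentially a local computation at a single node, but keeping track of how the two branches contribute oppositely to yield a \emph{balanced} rather than arbitrary vector field is the most delicate point, and it is exactly here that the structure of the Seidel class and its action by multiplication is felt.
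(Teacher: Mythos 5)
Your plan is essentially the paper's proof: compute both sides directly and independently, and observe that the resulting $\ZZ/2$-graded algebras coincide. The $B$-side is Theorem~\ref{thm:b_model}, giving $A \cong \CC[Y,Z]/(YZ-Y^3-Z^2)$ in even degree and $A\oplus\CC^{2g-2}$ in odd degree; the $A$-side is the direct-limit calculation of Theorems~\ref{thm:ring_structure} and~\ref{thm:HF^1 for nodal curves}, inverting the Seidel class $X=[f^1]$ in the graded ring $\bigoplus_d \mathrm{HF}^{\ast}(\phi^d)$ to recover exactly the same algebra and module. Your description of the direct-limit mechanics and the identification of the four blocks is in line with how the paper proceeds.

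One place where your expectations depart from what actually happens: you predict the main obstacle will be ``seeing the balanced condition emerge as a Floer-theoretic constraint at each node.'' This step is not needed and does not occur. The balancing condition is built into the $B$-side definition of $\tilde{T}$ and is justified separately by the twisted HKR theorem for matrix factorizations; the $A$-side computation never encounters it. The two sides are computed in complete isolation, and the agreement of the output algebras is established only afterwards --- there is no step matching Floer generators to balanced vector fields at a node, and the paper itself remarks that it is unclear conceptually why the answer takes the form it does. Also, for the multiple-node case, the Seidel class is the \emph{sum}, not the product, of contributions from the different twist regions, and the resulting direct limit is an iterated fiber product $A \times_{\CC} A \times_{\CC} \cdots \times_{\CC} A$ over the common evaluation map $A \to \CC$ (Theorem~\ref{b side: multiple twists} and its $A$-side counterpart), not a na\"ive product; keeping that structure straight is the nontrivial point if you actually carry out the multi-node computation.
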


\begin{remark}[Multiple Dehn twists on $\Sigma_g$] \label{remark: topological assumption on Dehn twists}
Let $\Sigma_g$ be the closed Riemann surface with genus $g\ge 2$ (respectively $\Sigma_{g,k}$ the $k$-th punctured Riemann surface). Whenever we talk about performing multiple Dehn twists on $\Sigma_g$ (resp. $\Sigma_{g,k}$) along different circles, we always assume the following. Let $C_1,\cdots,C_\ell$ denote the embedded closed curves along which we are performing the Dehn twists: we assume that they are disjoint and homologically linearly independent. Equivalently, this means that $C_1\cup C_2\cup\cdots \cup C_\ell$ is non-separating.
\end{remark}

The proof of this theorem involves directly computing both the symplectic cohomology via a direct limit of fixed point Floer cohomology groups (Theorem \ref{thm:ring_structure}), and the cohomology of sheaves of balanced vector fields on the mirror (Theorem \ref{thm:b_model}) to be isomorphic to the same graded algebra.

A very similar calculation can be performed for nodal curves with punctures. Let $\Sigma_{g,k}$ denote $\Sigma_g$ with finitely many punctures $\{p_1, p_2, \cdots,p_k\}$ and disjoint homologically linearly independent vanishing cycles $C_1, \cdots, C_{\ell}$, and let $\Sigma_{g,k}^0$ denote the corresponding punctured nodal curve. The punctured curve $\Sigma_{g,k}^0$ is then mirror to a Landau-Ginzburg model $(X_{g}\dash, W_{g}\dash)$ whose critical locus $Z_{g}\dash$ is a degeneration of ${Z_{g}^0}$ that has one additional trivalent node for each puncture (see Figure \ref{fig:singular}). Then we have

\begin{figure}
\begin{center}
    \begin{tikzpicture}[scale=0.5]
    %\draw[help lines] (-8,-8) grid (8,8);
    \begin{scope}
        \draw[thick] (4,2) to[out=220,in=30] (0,-1) to[out=160,in=-40] (-4,2) to[out=-120,in=-120] (0,-1) to[out=-60, in=-70] (4,2);
        \draw[dashed] (-2,-0.5) ellipse (0.3 and 0.65);
        \draw[dashed] (2,-0.5) ellipse (0.3 and 0.73);
        \draw (0,3) arc[start angle=90,end angle=300,x radius=0.5,y radius =1];
        \draw (0,6) arc[start angle=90,end angle=300,x radius=0.5,y radius =1];
        \fill[black] (0,-1) circle (0.15);
        \draw (0,-3) node {$P_{\infty}$};
    \end{scope}
    \begin{scope}[rotate=180,shift={(0,-4)}]
        \draw[thick] (4,2) to[bend left] (0,0) to[bend left] (-4,2) to[out=-90,in=-180] (0,-2) to[out=0, in=-90] (4,2);
        \draw[dashed] (0,-1) ellipse (0.5 and 1);
    \end{scope}
        \filldraw[thick,fill=orange!30] (0,2) ellipse (3.85 and 1);
        \draw[dashed] (0,2) ellipse (0.5 and 1);
        \fill[black] (-4,2) circle (0.15);
        \draw (-4,2) node[anchor=east] {$p_0$};
        \fill[black] (4,2) circle (0.15);
        \draw (4,2) node[anchor=west] {$p_1$};
        \filldraw[dashed,fill=white] (2,1.7) circle (0.3);
        \draw (-0.5,2) node[anchor=east] {$P_{0,1}$};
        \draw (-3,5) node[anchor=south] {$Z_2^{\prime}$};
        \draw[thick] (1.2,-4) -- (0,-1) -- (-1.2,-4);
        \draw[dashed] (0,-4) ellipse (1.2 and 0.5);
    \end{tikzpicture}
        \begin{tikzpicture}[scale=0.6]
    %\draw[help lines] (-8,-8) grid (8,8);
    \begin{scope}
        \draw[thick] (3,1) to[bend left] (0,0) to[bend left] (-3,1);
        \draw[thick] (-4.5,2) to[out=-90,in=-180] (0,-2) to[out=0, in=-90] (4.5,2);
        \draw[dashed] (0,-1) ellipse (0.5 and 1);
        \draw[thick] (3,0.5) to[out=60,in=-60] (0,2);
        \draw[thick] (-3,0.5) to[out=120,in=-120] (0,2);
        \fill[black] (0,2) circle (0.1);
        \draw (0,0) arc[start angle=90,end angle=300,x radius=0.5,y radius =1];
        \draw (0,6) arc[start angle=90,end angle=300,x radius=0.5,y radius =1];
        \filldraw[dashed,fill=white] (1.7,-1.1) circle (0.3);
    \end{scope}
    \begin{scope}[rotate=180,shift={(0,-4)}]
        \draw[thick] (3,1) to[bend left] (0,0) to[bend left] (-3,1);
        \draw[thick] (-4.5,2) to[out=-90,in=-180] (0,-2) to[out=0, in=-90] (4.5,2);
        \draw[thick] (3,0.5) to[out=60,in=-60] (0,2);
        \draw[thick] (-3,0.5) to[out=120,in=-120] (0,2);
        \draw[dashed] (0,-1) ellipse (0.5 and 1);
        \draw (-3,5) node[anchor=north west] {$\Sigma_{2}^0$};
    \end{scope}
    \end{tikzpicture}
\end{center}
\caption{The mirror $Z_2^{\prime}$ of a punctured nodal genus-$2$ curve $\Sigma_{2,1}^0$.}\label{fig:singular}
\end{figure}
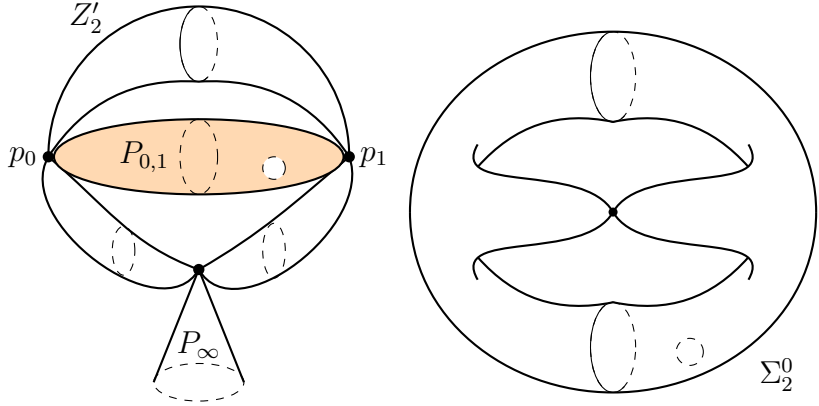

\begin{theorem} (\textbf{Closed string mirror symmetry for punctured nodal curves}) \label{thm:mirror symmetry statement for punctured nodal curves}
There is an isomorphism of $\ZZ/2$-graded algebras: 
\[
\mathrm{SH}^i(\Sigma_{g,k}^0) \cong \bigoplus_{j+\ell \cong i \; \mathrm{mod}\; 2} H^j({Z_{g}^{\prime}}, \wedge^\ell \tilde{T}_{Z_{g}^{\prime}})
\]
\end{theorem}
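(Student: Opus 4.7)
The plan is to run the two-step argument used for Theorem \ref{thm:main_theorem} in the punctured setting: compute the A-side invariant $\mathrm{SH}^*(\Sigma_{g,k}^0)$ as a direct limit of fixed point Floer cohomology groups, compute the B-side cohomology $\bigoplus H^j(Z_{g}^{\prime}, \wedge^\ell \tilde{T}_{Z_{g}^{\prime}})$ directly from the degenerated critical locus $Z_{g}^{\prime}$, and exhibit both sides as isomorphic $\ZZ/2$-graded algebras.

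On the A-side, the surface $\Sigma_{g,k}$ with $k \geq 1$ is manifestly a non-degenerate Liouville manifold, so Theorem \ref{thm:HF_equals_QH} identifies $\mathrm{SH}^*(\Sigma_{g,k}^0)$ with the direct limit $\dlim_d \mathrm{HF}^*(\phi^d)$ under multiplication by the Seidel class. The topological hypotheses of Remark \ref{remark: topological assumption on Dehn twists} ensure that the calculations of \cite{ziwenyao} of fixed point Floer cohomology and its product structure for compositions of Dehn twists remain valid. I would first compute the groups $\mathrm{HF}^*(\Sigma_{g,k}, \phi^d)$, then the product structure, then the direct limit, paralleling the proof of Theorem \ref{thm:ring_structure}. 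The new feature is that the $k$ cylindrical ends contribute additional fixed point classes, which assemble in the direct limit into a polynomial-ring-type summand per puncture.

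On the B-side, $Z_{g}^{\prime}$ is obtained from $Z_{g}^0$ by attaching an additional $\AA^1$ at a new trivalent node for each puncture (see Figure \ref{fig:singular}). I would analyze the balanced vector field sheaf $\tilde{T}_{Z_{g}^{\prime}}$ by a Mayer-Vietoris decomposition along the extra nodes, mirroring the argument of Theorem \ref{thm:b_model}, with each new node imposing an additional rotation-number-sum linear constraint in the gluing. This yields explicit descriptions of $H^j(Z_{g}^{\prime}, \wedge^\ell \tilde{T}_{Z_{g}^{\prime}})$ for $j,\ell \in \{0,1\}$ in terms of the combinatorics of the trivalent graph underlying $Z_{g}^{\prime}$; concretely, each new $\AA^1$ contributes a polynomial module to $H^0(\scr{O}_{Z_{g}^{\prime}})$ and a module of vector fields vanishing at the node to $H^0(\tilde{T}_{Z_{g}^{\prime}})$, while the higher cohomology groups are modified in a controlled way by the new constraints.

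The hardest part is matching the algebra structures, not merely the underlying vector spaces. The polynomial and vector-field contributions from each new $\AA^1$ branch on the B-side must be matched to the Floer generators supported near the cylindrical ends on the A-side, together with the effect of multiplying by the Seidel class in the direct limit; the ring structure linking the new summands to the old ones on each side has to be verified directly from the cup/wedge product on one side and the Yao-Zhao product on the other. Once the frameworks of Theorems \ref{thm:ring_structure} and \ref{thm:b_model} are adapted to the punctured setting, verifying this correspondence reduces to a combinatorial bookkeeping exercise on the trivalent graph data, but this is where the bulk of the work lies.
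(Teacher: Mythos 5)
Your proposal correctly identifies the paper's two-step strategy: compute $\mathrm{SH}^*(\Sigma_{g,k}^0)$ as a direct limit of fixed point Floer cohomology groups using the extensions to the punctured setting from \S\ref{sec:fixed_point_floer_punctured} and Theorems \ref{thm: one puncture}, \ref{thm:A side multiple puncture}; compute $H^*(Z_g^\prime, \wedge^*\tilde{T}_{Z_g^\prime})$ via \v{C}ech cohomology with new $\AA^1$-branches per puncture as in Theorem \ref{thm:b_model_singular} and its $k$-puncture generalization; then match the resulting fiber-product algebras. This is exactly how the paper argues (the key new algebraic feature you flag — additional polynomial summands per puncture glued over evaluation maps, together with the new $H^1$ generators coming from $[\varphi^d]=[h_0^d-v_0^d]$ — is precisely what the paper verifies), so the proof proposal is essentially the same as the paper's.
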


The reader may find the following table helpful for keeping track of notation (under the assumption that $\Sigma_{g,k}$ has disjoint homologically linearly independent vanishing cycles).
\begin{figure}[H]
\begin{center}
\begin{tabular}{ c | c | c }
\toprule
 $A$-\textbf{side} & $B$-\textbf{side} & \textbf{critical locus} \\ \midrule
 smooth compact curve $\Sigma_g$ & LG model $(X_g, W_g)$ & $Z_g$ trivalent configuration of $(3g-3)$-many $\PP^1$s \\ 
 Dehn twist $\phi$ on $\Sigma_g$ along $C$ & -- & line bundle $\scr{L}$ on $Z_g$, degree-$1$ on one $\PP^1$ \\
 $\ell$-nodal compact curve $\Sigma_{g}^0$ & LG model $(X_g^0, W_g^0)$  & $\ell$-punctured trivalent configuration $Z_{g}^0$ (Fig. \ref{fig:once-punctured-mirror}) \\
nodal $k$-punctured curve $\Sigma_{g,k}^0$ & LG model $(X_g\dash, W_g\dash)$ & punctured curve $Z_g\dash$ with $k$-many $\AA^1$s (Fig. \ref{fig:singular})\\
\bottomrule
\end{tabular}
\end{center}
\end{figure}

\begin{remark}
All of our results above apply only in the case $g \geq 2$. In the case where $g=1$, Theorem \ref{thm:main_theorem} above follows more or less directly from Theorem \ref{thm:HF_equals_QH} combined with Theorem 2 from \cite{jeffs}.
\end{remark}

\subsection{Homogeneous coordinate rings}

If $\phi$ is the monodromy around a large complex structure limit point, one expects that it corresponds under an appropriate homological mirror symmetry equivalence to tensoring by an ample line bundle $\scr{L}$ on the mirror \cite{jeffs}. In our case, a Dehn twist on $\Sigma_g$ is mirror to tensoring by a degree-$1$ line bundle $\scr{L}$ on one $\PP^1$-component of ${{Z_{g}}}=\mathrm{crit}(W_g)$. This motivates the following theorem:

\begin{theorem}(\textbf{Mirror symmetry for homogeneous coordinate rings})\label{thm:homog}
There is an equivalence
\begin{equation*}
    \bigoplus_{d=0}^{\infty} \mathrm{HF}^k(\Sigma_g, \phi^d)
\cong 
\bigoplus_{d=0}^{\infty}\bigosum_{i+j \cong k \; \text{mod} \; 2} H^{i}(\wedge^{j} T_{{{Z_{g}}}} \otimes \scr{L}^{\otimes d})
\end{equation*}
of graded algebras (for $k=0$) and graded modules (when $k=1$) where the grading is given by the order $d$. Similarly, by starting with $\phi^2$,
\begin{equation*}
    \bigoplus_{d=0}^{\infty} \mathrm{HF}^k(\Sigma_g, \phi^{2d})
\cong 
\bigoplus_{d=0}^{\infty}\bigosum_{i+j \cong k \; \text{mod} \; 2} H^{i}(\wedge^{j} T_{{{Z_{g}}}} \otimes \scr{L}^{\otimes {2d}})
\end{equation*}
as graded algebras/modules.
\end{theorem}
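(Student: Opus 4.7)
The plan is to prove Theorem \ref{thm:homog} by comparing the two sides bidegree by bidegree --- with one grading by $d$ and the other by $k \in \ZZ/2$ --- reusing much of the machinery that underlies Theorem \ref{thm:main_theorem} but retaining the $d$-grading rather than passing to the direct limit under multiplication by the Seidel class. On the $A$-side, I would invoke the explicit description of $\mathrm{HF}^k(\Sigma_g,\phi^d)$ and the associative pairings
\[
\mathrm{HF}^*(\Sigma_g,\phi^i) \otimes \mathrm{HF}^*(\Sigma_g,\phi^j) \to \mathrm{HF}^*(\Sigma_g,\phi^{i+j})
\]
coming from \cite{ziwenyao} (and the calculations in Theorem \ref{thm:ring_structure} and \S\ref{sec:A_model_calculations}) to realize $\bigoplus_{d \geq 0}\mathrm{HF}^0(\Sigma_g,\phi^d)$ as a bigraded $\CC$-algebra, with $\bigoplus_{d \geq 0}\mathrm{HF}^1(\Sigma_g,\phi^d)$ a bigraded module over it.

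On the $B$-side, I would compute $H^i(\wedge^j T_{Z_g} \otimes \scr{L}^{\otimes d})$ via the normalization sequence for the trivalent configuration $Z_g$. Writing the normalization as a disjoint union of $\PP^1$s, and recalling that $\scr{L}$ has degree one on a single distinguished $\PP^1$-component and degree zero on every other component, the pullback of $\scr{L}^{\otimes d}$ equals $\scr{O}_{\PP^1}(d)$ on the distinguished component and is trivial elsewhere. Applying Riemann--Roch on each $\PP^1$, together with the skyscraper contributions at the nodes coming from the normalization sequence, then determines the dimension in each bidegree, and the graded-commutative algebra structure comes from the tensor product of sections of $\scr{L}^{\otimes \bullet}$ and the wedge product of vector fields. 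To identify the two bigraded algebras, I would use Theorem \ref{thm:HF_equals_QH} and the identification behind Theorem \ref{thm:main_theorem} to match the $d=1$ piece --- sending the Seidel class $S \in \mathrm{HF}^0(\Sigma_g,\phi)$ to the distinguished section of $\scr{L}$ --- after which multiplicativity and the explicit form of the $A$-side product determine the isomorphism in all higher bidegrees, with dimension counts bidegree by bidegree confirming a bijection.

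The main obstacle, and the reason for separately stating the theorem in terms of $\phi^{2d}$, is the careful treatment of signs and parity on both sides. For odd $d$ the $\ZZ/2$-grading on $\mathrm{HF}^*(\Sigma_g,\phi^d)$ can interact with the sign conventions of the Floer product in a way that obstructs a clean $\ZZ$-graded compatibility; restricting to $\phi^{2d}$ squares these contributions away and produces an honest $\ZZ$-graded equivalence. I expect the bulk of the technical work to lie in the Koszul sign bookkeeping for the wedge-and-tensor product on the $B$-side, together with the parity analysis of the Floer product on the $A$-side and the identification of the Seidel-class multiplication with the distinguished section of $\scr{L}$.
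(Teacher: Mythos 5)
Your overall shape---compute both sides explicitly in the $d$-grading, using the product relations from \cite{ziwenyao} on the $A$-side and sheaf cohomology of $\wedge^{\bullet}\tilde{T}_{Z_g}\otimes\scr{L}^{\otimes d}$ on the $B$-side, then compare---is essentially what the paper does (\S\ref{sec:bmodel} produces $R\oplus\CC$ on the $B$-side; Theorem \ref{thmsum} in \S\ref{sec:A_model_calculations} produces the same ring on the $A$-side). But two points in your proposal are off.

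First, and most importantly, your explanation of the $\phi^{2d}$ clause is wrong. You suggest that the theorem is restated in terms of $\phi^{2d}$ to avoid a sign/parity obstruction that blocks a ``clean $\ZZ$-graded compatibility'' for odd $d$. There is no such obstruction: the $\phi^d$ case is stated and proved as an honest graded isomorphism of algebras, identifying $\bigoplus_{d\geq 0}\mathrm{HF}^0(\Sigma_g,\phi^d)$ with $\CC[X,Y,Z]/(XYZ=Y^3+Z^2)$ (with $|X|=1,|Y|=2,|Z|=3$) plus a square-zero $\CC$. The $\phi^{2d}$ statement is an additional, independent computation: it produces a genuinely \emph{different} graded ring $R_2 = \CC[X,Y,Z]/(XYZ=Y^2+Z^4)$ (see Proposition \ref{prop:B side calculation for phi^2} and \S\ref{sec:phi_squared_homogenous}), corresponding to the embedding of the nodal elliptic curve into the weighted projective plane $\PP(1,1,2)$ rather than into $\PP^2$. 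As the paper says after the theorem statement, ``the rings that arise in this case are also intrinsically interesting, as they give rise to different embeddings of the nodal elliptic curve into weighted projective spaces.'' If signs had genuinely obstructed the odd-$d$ case, the first displayed isomorphism in the theorem would not be stated at all.

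Second, your proposed identification mechanism---invoking Theorem \ref{thm:HF_equals_QH} and a twisted closed-open map to send the Seidel class to a distinguished section of $\scr{L}$, then propagate by multiplicativity---is not what the paper does and would face technical obstructions. Theorem \ref{thm:HF_equals_QH} is proved for nondegenerate \emph{Liouville} $M$; the paper explicitly remarks that it does not directly apply to the closed surfaces $\Sigma_g$ appearing in \S\ref{sec:A_model_calculations}. Moreover, that theorem identifies a \emph{direct limit} with Hochschild cohomology of a localized category, not the individual $d$-graded pieces with $\mathrm{HH}^{\ast}(\mathrm{MF},\scr{L}^{\otimes d})$; the $d$-graded comparison is only heuristically justified in \S 2.4 via an expected HKR theorem with coefficients, and is not part of the proof. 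The actual proof simply computes both sides to abstractly isomorphic graded algebras, with the match between generators (e.g.\ sending $[f^1],[e_1^2],[e_1^3]$ to $X,Y,Z$) made by inspection after both computations are complete. Even if one wanted to carry out your more structured matching, knowing where a degree-$1$ generator goes does not by itself determine the images of the degree-$2$ and degree-$3$ generators $Y,Z$; one would still need a separate argument there.

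One more small gap: you should account for the extra summands at $d=0$, namely $H^2(\Sigma_g)\subset\mathrm{HF}^0(\phi^0)$ on the $A$-side and $H^1(\tilde{T}_{Z_g})$ at $k=0$ on the $B$-side, which the paper handles carefully as square-zero $\CC$-extensions.
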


There are analogous results also for $\phi^k$ for $k \geq 3$. The rings that arise in this case are also intrinsically interesting, as they give rise to different embeddings of the nodal elliptic curve into weighted projective spaces. 

In \S \ref{sec:bmodel} we justify these $B$-side invariants as the result of an expected twisted HKR theorem for matrix factorization categories. We then calculate these homogeneous coordinate rings on the $B$-side; in \S \ref{sec:product} and \S \ref{sec:phi_squared_homogenous} we carry out the calculation on the $A$-side and verify that we obtain the same algebra.

\subsubsection*{Total spaces of line bundles}

There is an alternative way to formulate these results: let $f: E \to \CC^{\ast}$ be the $\phi$-twisted $\Sigma_{g,k}$-bundle over $\CC^{\ast}$. When $\Sigma_{g,k}$ is an open surface (i.e. $k \geq 1$), we can make $E$ into a Liouville domain and define $\mathrm{SH}^{\ast}(E,f)$, the symplectic cohomology (in the sense of \cite{GPS1}) of the Liouville sector obtained from $(E,f)$ by placing a stop at $f\inv(-\infty)$. If one assumes that a twisted version of the K\"unneth theorem of \cite{GPS2} to hold, we have
\begin{equation*}
    \mathrm{SH}^0(E,f) \cong \bigoplus_{d=0}^{\infty} \mathrm{HF}^0(\Sigma_{g,k},\phi^{d})
\end{equation*}
Therefore, from Theorem \ref{thm:homog} we may deduce:
\begin{theorem}\label{thm:total_spaces}
If a twisted K\"unneth theorem for symplectic cohomology is assumed to hold, then there is an equivalence of algebras
\begin{equation*}
    \mathrm{SH}^0(E,f)  \cong H^0(L, \scr{O}_L)
\end{equation*}
    where $L$ is the total space of the line bundle $\scr{L}$ over ${{Z_{g}}}$.
\end{theorem}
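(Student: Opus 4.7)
The plan is to chain three identifications: the hypothesised twisted Künneth formula, Theorem~\ref{thm:homog}, and the standard description of a line bundle as a relative spectrum. By assumption we begin with
\[
\mathrm{SH}^0(E,f) \cong \bigoplus_{d=0}^\infty \mathrm{HF}^0(\Sigma_{g,k}, \phi^d)
\]
as $\ZZ_{\geq 0}$-graded algebras, with the product on the right induced by the pair-of-pants product of \cite{ziwenyao}. The first step is to invoke Theorem~\ref{thm:homog} (adapted to the punctured curve $\Sigma_{g,k}$) to rewrite the right-hand side as a direct sum of sheaf cohomology groups on the mirror $Z_g$.

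On the geometric side, I would use the standard fact that the projection $\pi : L \to Z_g$ from the total space of a line bundle is relatively affine, with $\pi_{\ast}\scr{O}_L \cong \bigoplus_{d \geq 0} \scr{L}^{\otimes d}$ (viewing $L$ as $\mathrm{Spec}_{Z_g}(\mathrm{Sym}^{\bullet} \scr{L}^{\vee})$ and expanding functions in powers of the fibre coordinate). Taking global sections then gives
\[
H^0(L, \scr{O}_L) \cong \bigoplus_{d=0}^{\infty} H^0(Z_g, \scr{L}^{\otimes d})
\]
as a graded algebra under multiplication of sections. Chaining the three isomorphisms together yields the desired equivalence, provided the intermediate algebra descriptions can be matched termwise.

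The main obstacle will be to reconcile the target $\bigoplus_{d} H^0(Z_g, \scr{L}^{\otimes d})$ with the \emph{a priori} larger expression produced by Theorem~\ref{thm:homog}, whose $k=0$ part also carries summands $H^1(Z_g, T_{Z_g} \otimes \scr{L}^{\otimes d})$. These must be shown to vanish (or else not to survive to $\mathrm{SH}^0(E,f)$) in the punctured setting: heuristically, puncturing $\Sigma_g$ removes the fixed points of $\phi^d$ detected by these classes and, mirror-symmetrically, the $\AA^1$-type components appearing in Figure~\ref{fig:singular} force the relevant higher cohomology to be trivial. Once this vanishing is verified, the algebra-structure match becomes immediate from the product compatibility already contained in Theorem~\ref{thm:homog}.
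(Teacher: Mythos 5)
Your skeleton (twisted Künneth, then Theorem~\ref{thm:homog}, then the relative spectrum identity $\pi_\ast\scr{O}_L \cong \bigoplus_d\scr{L}^{\otimes d}$) is the same chain of identifications the paper gestures at, so there is no disagreement about the overall strategy. However, there is a genuine gap in the way you propose to fill in the middle step. Theorem~\ref{thm:homog} is proved for the \emph{closed} curve $\Sigma_g$, where each $\mathrm{HF}^0(\Sigma_g,\phi^d)$ is finite-dimensional and the mirror is the compact trivalent configuration $Z_g$. The twisted K\"unneth hypothesis instead gives $\mathrm{SH}^0(E,f)\cong \bigoplus_d\mathrm{HF}^0(\Sigma_{g,k},\phi^d)$ for the \emph{punctured} surface $\Sigma_{g,k}$ with $k\geq 1$, and in this setting (see \S\ref{sec:fixed_point_floer_punctured}) each $\mathrm{HF}^0(\Sigma_{g,k},\phi^d)$ acquires infinitely many new generators $u_i^d$, $i\geq 0$, coming from wrapping around each puncture. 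No amount of vanishing of $H^1$-type correction terms over the compact $Z_g$ can make the finite-dimensional graded pieces of $\bigoplus_d H^0(Z_g,\scr{L}^{\otimes d})$ match an infinite-dimensional $\mathrm{HF}^0(\Sigma_{g,k},\phi^d)$. What is actually required is to replace $Z_g$ by the mirror of the punctured curve, a configuration with one $\AA^1$-component per puncture of $\Sigma_{g,k}$; the sections of $\scr{L}^{\otimes d}$ along those $\AA^1$'s supply the $\CC[T_j]$-type factors that match the $u_i^d$, exactly as in the fibre-product descriptions of Theorems~\ref{thm:b_model_singular} and~\ref{thm:A side multiple puncture}. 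Calling this an ``adaptation of Theorem~\ref{thm:homog}'' undersells what needs to be done: the homogeneous coordinate ring of a non-compact mirror has to be recomputed, and without this the third step of your chain attaches to the wrong object.

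Separately, the ``main obstacle'' as you frame it --- showing the summands $H^1(\tilde{T}_{Z_g}\otimes\scr{L}^{\otimes d})$ vanish --- is mostly already resolved by the paper's \v{C}ech computation: these groups vanish for all $d\geq 1$, and the only survivor is $H^1(\tilde{T}_{Z_g})\cong\CC$ in degree $0$. On the $A$-side this degree-$0$ extra $\CC$ is mirror to the fundamental class $K\in H^2(\Sigma_g)$ sitting inside $\mathrm{HF}^0(\Sigma_g,\phi^0)\cong H^{\mathrm{even}}(\Sigma_g)$. Puncturing kills $H^2$ and kills the $\CC$-summand simultaneously, so your heuristic that ``puncturing removes the fixed points detected by these classes'' is correct in spirit but is being applied to the wrong part of the answer; you should identify the discrepancy as exactly this single degree-$0$ class rather than as a putative family of $H^1$'s over all $d$. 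Fixing both of these points --- using the open mirror for the middle term, and recognizing that the $H^1$-correction only lives in degree $0$ --- would turn your sketch into a correct argument.
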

This we may regard also as a form of mirror symmetry. If $\Sigma_{g,k}$ can be realized as a very affine hypersurface $H \subset (\CC^{\ast})^2$ and $\Sigma^{0}_{g,k}$ is its large complex structure limit induced by a choice of subdivision of the Newton polytope (as in \cite{AAK}), then it is proved in \cite[Proposition 3.2.2]{gammage_jeffs} that the twisted $H$-bundle $E$ is homologically mirror to the total space of the canonical line bundle $K_{X}$. The proof of Theorem \ref{thm:total_spaces} is analogous to the proof given there, assuming Theorem \ref{thm:homog}.

\subsection*{Notation and Conventions}

The \textit{Fukaya category} $\scr{F}(M)$ refers to the split-closure of the $A_{\infty}$-category of $\ZZ/2$-graded twisted complexes over the Fukaya category, (partially) wrapped if appropriate (as in \cite{GPS1}). Grading and sign conventions are as in \cite{ganatra_thesis}. The \textit{matrix factorization category} $\mathrm{MF}$ is the $\ZZ/2$-graded dg-derived category of coherent matrix factorizations. We write $\mathrm{hom}$ for morphism complexes in a dg/$A_{\infty}$-category, and $\mathrm{Hom}$ for their cohomology. We write $\scr{C}-\mathrm{bimod}$ for the category of $\scr{C}-\scr{C}$ bimodules, and $\scr{Y}^{\ell}, \scr{Y}^{r}$ for left and right Yoneda modules respectively. All coefficient rings are $\CC$ unless otherwise stated; all algebras are $\ZZ/2$-graded.

\subsection*{Acknowledgements}

 We would like to thank Kai Xu for asking us questions about fixed point Floer homology that led to many fruitful ideas; we would also like to thank Shaoyun Bai for telling us about his forthcoming work with Paul Seidel \cite{shaoyun}. MJ would also like to thank Sheel Ganatra and Xujia Chen for helpful conversations, as well as his advisor Denis Auroux for his invaluable support and guidance. MJ was partially supported by the Rutherford Foundation of the Royal Society of New Zealand, NSF grants DMS-1937869 and DMS-2202984, and by Simons Foundation grant \#385573.

\clearpage

\section{$B$-model calculation}\label{sec:bmodel}

The critical locus $\mathrm{crit}(W_g)$ is a trivalent configuration of $\PP^1$s, with $(2g-2)$ nodes, call them $p_i$, and $(3g-3)$ irreducible $\PP^1$-components. When $\Sigma_{g}^0$ has a single node, the mirror $(X^0_g, W^0_g)$ to $\Sigma_{g}^0$ has critical locus ${Z_{g}^0} = \mathrm{crit}(W^0_g)$ which is the critical locus of $W_g$ punctured at a single point: say this puncture occurs on component $P_{0,1}$ (containing $p_0,p_1$) without loss of generality. 

\begin{figure}[H]
\begin{center}
\begin{tikzpicture}[scale=0.4]
    %\draw[help lines] (-8,-8) grid (8,8);
    \begin{scope}
        \draw[thick] (4,2) to[bend left] (0,0) to[bend left] (-4,2) to[out=-90,in=-180] (0,-2) to[out=0, in=-90] (4,2);
        \draw[dashed] (0,-1) ellipse (0.5 and 1);
        \draw (0,0) arc[start angle=90,end angle=300,x radius=0.5,y radius =1];
        \draw (0,6) arc[start angle=90,end angle=300,x radius=0.5,y radius =1];
    \end{scope}
    \begin{scope}[rotate=180,shift={(0,-4)}]
        \draw[thick] (4,2) to[bend left] (0,0) to[bend left] (-4,2) to[out=-90,in=-180] (0,-2) to[out=0, in=-90] (4,2);
        \draw[dashed] (0,-1) ellipse (0.5 and 1);
    \end{scope}
    \begin{scope}[scale=1.5,rotate=90,shift={(-2.7,-4.7)}]
        \draw[thick] (4,2) to[bend left] (0,0) to[bend left] (-4,2) to[out=-90,in=-180] (0,-2) to[out=0, in=-90] (4,2);
        \draw[dashed] (0,-1) ellipse (0.5 and 1);
    \end{scope}
    \begin{scope}[rotate=180]
            \begin{scope}[scale=1.5,rotate=90,shift={(2.7,-4.7)}]
        \draw[thick] (4,2) to[bend left] (0,0) to[bend left] (-4,2) to[out=-90,in=-180] (0,-2) to[out=0, in=-90] (4,2);
        \draw[dashed] (0,-1) ellipse (0.5 and 1);
    \end{scope}
    \end{scope}
    \begin{scope}[shift={(0,-12)}]
    \begin{scope}[rotate=180,shift={(0,-4)}]
        \filldraw[thick,fill=orange!30] (4,2) to[bend left] (0,0) to[bend left] (-4,2) to[out=-90,in=-180] (0,-2) to[out=0, in=-90] (4,2);
        \draw[dashed] (0,-1) ellipse (0.5 and 1);
    \end{scope}
        \begin{scope}
        \draw[thick] (4,2) to[bend left] (0,0) to[bend left] (-4,2) to[out=-90,in=-180] (0,-2) to[out=0, in=-90] (4,2);
        \draw[dashed] (0,-1) ellipse (0.5 and 1);
        \draw (0,0) arc[start angle=90,end angle=300,x radius=0.5,y radius =1];
        \draw (0,6) arc[start angle=90,end angle=300,x radius=0.5,y radius =1];
        \filldraw[dashed,fill=white] (2,4.5) circle (0.4);
         \draw (-2.5,4.5) node[anchor=west] {$P_{0,1}$};
    \end{scope}
            \fill[black] (-4,2) circle (0.15);
                \draw (-4,2) node[anchor=west] {$p_0$};
        \fill[black] (4,2) circle (0.15);
        \draw (4,2) node[anchor=east] {$p_1$};
    \end{scope}
        \fill[black] (-4,2) circle (0.15);
        \draw (-4,2) node[anchor=west] {$p_2$};
        \fill[black] (4,2) circle (0.15);
        \draw (4,2) node[anchor=east] {$p_3$};
        \draw (-4,-4) node[anchor=east] {$Z_3^0$};
    \end{tikzpicture}
    \hspace{20pt}
            \begin{tikzpicture}[scale=0.5]
    %\draw[help lines] (-8,-8) grid (8,8);
    \begin{scope}
        \draw[thick] (3,1) to[bend left] (0,0) to[bend left] (-3,1);
        \draw[thick] (-4.5,2) to[out=-90,in=-180] (0,-2) to[out=0, in=-90] (4.5,2);
        \draw[dashed] (0,-1) ellipse (0.5 and 1);
        \draw[thick] (3,0.5) to[out=60,in=-60] (0,2);
        \draw[thick] (-3,0.5) to[out=120,in=-120] (0,2);
        \fill[black] (0,2) circle (0.1);
        \draw (0,0) arc[start angle=90,end angle=300,x radius=0.5,y radius =1];
        \draw (0,6) arc[start angle=90,end angle=300,x radius=0.5,y radius =1];
    \end{scope}
    \begin{scope}[rotate=180,shift={(0,-4)}]
        \draw[thick] (3,0.5) to[out=60,in=-60] (0,2);
        \draw[thick] (-3,0.5) to[out=120,in=-120] (0,2);
    \end{scope}
    \begin{scope}[rotate=180,shift={(0,-8)}]
        \draw[thick] (2.7,1.6) to[out=-90,in=0] (0,0) to[out=180,in=-90] (-2.7,1.6);
        \draw[thick] (-4.5,2) to[out=-90,in=-180] (0,-2) to[out=0, in=-90] (4.5,2);
        \draw[thick] (2.6,1) to[out=120,in=0] (0,2);
        \draw[thick] (-2.6,1) to[out=60,in=-180] (0,2);
        \draw[dashed] (0,-1) ellipse (0.5 and 1);
    \end{scope}
        \draw[thick] (4.5,2) -- (4.5,6);
        \draw[thick] (-4.5,2) -- (-4.5,6);
        \draw[thick] (-3,3) to[out=60,in=0] (0,4);
        \draw[thick] (3,3) to[out=120,in=180] (0,4);
        \draw[dashed] (0,5) ellipse (0.5 and 1);
        \draw (0,10) arc[start angle=90,end angle=300,x radius=0.5,y radius =1];
        \draw (-3,-3) node[anchor=east] {$\Sigma_{3}^0$};
    \end{tikzpicture}
    \end{center}
    \caption{The mirror $Z_3$ of a nodal genus-$3$ curve $\Sigma_{3}^0$}
\end{figure}
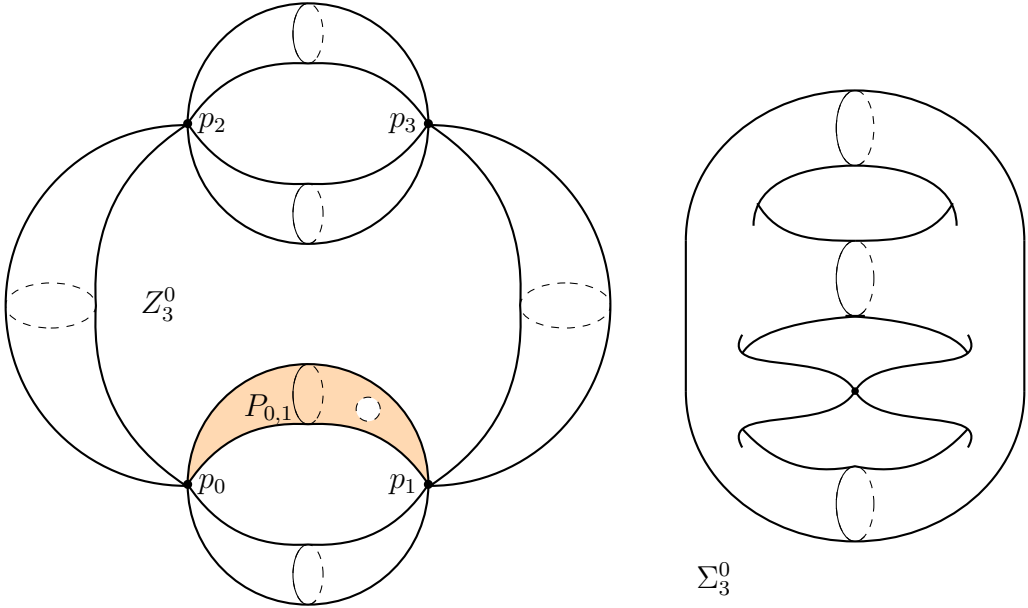

\subsection{Hochschild-Kostant-Rosenberg theorem and balanced vector fields}

In this subsection, we justify our use of balanced vector fields as our $B$-model invariants with the following theorem:

\begin{theorem}
    Let ${Z_{g}^0}$ be the critical locus of the LG model $(X^0_g, W^0_g)$ mirror to the nodal curve $\Sigma_{g}^0$; then
\begin{align*}
\mathrm{HH}^{\mathrm{even}}(\mathrm{MF}(X^0_g,W^0_g)) & \cong  H^0(\scr{O}_{Z_{g}^0}) \osum H^1(\tilde{T}_{Z_{g}^0})  \\
    \mathrm{HH}^{\mathrm{odd}}(\mathrm{MF}(X^0_g,W^0_g)) & \cong H^1(\scr{O}_{Z_{g}^0}) \osum H^0(\tilde{T}_{Z_{g}^0})
\end{align*}
as $\CC$-algebras and as modules respectively.
\end{theorem}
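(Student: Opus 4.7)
The plan is to approach this as a singular/twisted Hochschild-Kostant-Rosenberg theorem for the matrix factorization category, localizing on the critical locus $Z_g^0$ and then handling the nodal points separately from the smooth locus. The smooth-case HKR for Landau-Ginzburg models (Dyckerhoff, Efimov, Polishchuk-Vaintrob, Lin-Pomerleano) gives, when the critical locus is smooth, an identification
\begin{equation*}
\mathrm{HH}^{\ast}(\mathrm{MF}(X,W)) \cong \bigoplus_k H^{\ast}(\mathrm{crit}(W), \wedge^k T_{\mathrm{crit}(W)}),
\end{equation*}
and because the matrix factorization category localizes on $\mathrm{crit}(W_g^0)$, only a neighborhood of $Z_g^0$ inside $X_g^0$ contributes. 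On the smooth locus of $Z_g^0$ this reproduces $H^{\ast}(\mathcal{O}_{Z_g^0}) \oplus H^{\ast-1}(T_{Z_g^0})$ directly; the content of the theorem is therefore to show that the corrections at the $(2g-2)$ nodes assemble exactly into the sheaf $\tilde{T}_{Z_g^0}$ of balanced vector fields.

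First I would set up a local model near each node $p_i$: an étale neighborhood in $X_g^0$ looks like $\mathbb{A}^3$ with coordinates $(x,y,z)$ and superpotential $W = xyz$ (or the appropriate three-fold ordinary double point model dictated by the mirror construction of \cite{AAK}), whose critical locus is the union of the three coordinate axes meeting at the origin. On such a local piece I would compute $\mathrm{HH}^{\ast}(\mathrm{MF}(\mathbb{A}^3, xyz))$ explicitly (this is a classical computation: one can use a Koszul-type resolution of the diagonal, or reduce to a computation of $\mathrm{Ext}$ between the three matrix factorizations supported on the coordinate axes), and match the result against local sections of $\mathcal{O}_{Z_g^0}$ and of the balanced tangent sheaf $\tilde{T}_{Z_g^0}$ at the node. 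The point is that the Hessian of $W$ provides a relation which kills precisely the vector fields violating the balanced condition — this is the local version of the GKR correction by vanishing cycles mentioned in the remark.

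Next I would glue the local and smooth computations. Restricting sheaves of matrix factorizations to open subsets gives a Mayer-Vietoris / Čech-type spectral sequence converging to $\mathrm{HH}^{\ast}(\mathrm{MF}(X_g^0, W_g^0))$. Covering $X_g^0$ by a neighborhood of the smooth locus of $Z_g^0$ and small neighborhoods of each node, the $E_2$ page receives contributions only from the two sheaves $\mathcal{O}_{Z_g^0}$ and $\tilde{T}_{Z_g^0}$ (the latter being defined precisely so that its local sections at a node are the balanced vector fields singled out by the local computation). Since $Z_g^0$ is a curve, $H^i$ vanishes for $i \geq 2$, giving the four summands stated. The algebra structure on the even part comes from the natural ring structure on $H^0(\mathcal{O}_{Z_g^0})$ combined with the cup product $H^1(\tilde{T}_{Z_g^0}) \otimes H^1(\tilde{T}_{Z_g^0}) = 0$ for dimension reasons, while the bimodule structure on the odd part follows from the wedge pairing $H^0(\tilde{T}_{Z_g^0}) \otimes H^1(\mathcal{O}_{Z_g^0}) \to H^1(\tilde{T}_{Z_g^0})$.

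The main obstacle I anticipate is making the local-to-global gluing rigorous: specifically, proving that the local HKR classes glue to global sections of the prescribed sheaf $\tilde{T}_{Z_g^0}$, and not some other modification of the tangent sheaf. The balanced condition itself — that the rotation numbers around a node sum to zero — must be shown to be exactly the condition cut out by the local Jacobian/Hessian relations appearing in $\mathrm{HH}^{\ast}(\mathrm{MF}(\mathbb{A}^3, xyz))$. A secondary issue is verifying that higher differentials in the Mayer-Vietoris spectral sequence vanish, which I would do by a degree/parity argument exploiting that $Z_g^0$ is one-dimensional so the spectral sequence is concentrated in a narrow range.
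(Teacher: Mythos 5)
Your proposal follows essentially the same skeleton as the paper's proof — global HKR for matrix factorizations plus a local computation at the model $(\mathbb{C}^3, xyz)$ near each node, glued by a Čech argument over a cover adapted to $Z_g^0$ — but the packaging differs in one substantive way, and it is the harder way. The paper invokes Lin--Pomerleano's HKR theorem \emph{directly} to reduce $\mathrm{HH}^{\ast}(\mathrm{MF}(X_g^0, W_g^0))$ to the hypercohomology of the twisted Koszul complex $(\bigwedge^{\ast} T_{X_g^0}, \iota_{dW})$; this theorem does not require the critical locus to be smooth, so no separate treatment of nodes versus smooth locus is needed. It then just computes the cohomology sheaves of that complex locally on the cover $U_p \cong (\mathbb{C}^3, xyz)$, finds $\mathcal{O}_{Z_g^0}$ in degree $0$ and $\tilde{T}_{Z_g^0}$ in degree $1$ and nothing above, and concludes by the hypercohomology spectral sequence (which degenerates trivially since $Z_g^0$ is a curve — your dimension argument for the vanishing of higher differentials is correct and is what the paper implicitly uses). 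Your plan instead applies the smooth-case HKR away from the nodes and patches in a node-by-node computation via a Mayer--Vietoris spectral sequence for the sheaf of matrix-factorization categories. This works, but it effectively reproves the relevant instance of Lin--Pomerleano, and the concern you flag — that the local HKR classes actually glue to sections of the specific sheaf $\tilde{T}_{Z_g^0}$, which requires compatibility of the HKR isomorphisms with restriction — is exactly the technical overhead the paper sidesteps by working with the Koszul complex from the start, where the balancing condition drops out for free. One small correction: you attribute the balancing relation to the Hessian of $W$, but in the Koszul-complex picture it comes from the Jacobian, namely the contraction $\iota_{dW}$. For $v = a\, x\partial_x + b\, y\partial_y + c\, z\partial_z$ one has $\iota_{d(xyz)}(v) = (a+b+c)\,xyz$, so $v$ survives to $H^1$ precisely when $a + b + c = 0$ — that is the balanced condition, and no second-order data enters.
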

\begin{proof}
By the Hochschild-Kostant-Rosenberg theorem for global matrix factorization categories \cite[Theorem 3.1]{lin_pomerleano}, we can compute Hochschild cohomology of the matrix factorization category $\mathrm{MF}(X_g,W_g)$ as the hypercohomology of the complex $(\bigwedge^{\ast} T_{X_g}, \iota_{\dd W})$ of sheaves of polyvector fields with differential twisted by $\dd W$. This we can compute by taking a Zariski open cover of a neighbourhood of the critical locus given by open sets $U_p$ whose intersection with $\mathrm{crit}(W_g)$ is the complement of all components of ${Z_{g}}$ not adjacent to node $p$. In each of these open sets $U_p$, the LG model takes the form $(\CC^3, x y z)$ \cite{heather_thesis}. A simple calculation shows that the cohomology of the complex $(\bigwedge^{\ast} T_{\CC^3}, \iota_{\dd (xyz)})$ is given by:
\begin{equation*}
    H^i\left(\wedge^{\ast} T_{\CC^3}, \iota_{\dd (xyz)}\right) \cong \left\{\begin{array}{lll}\CC[x,y,z]/(xy,yz,xz) & \mathrm{for} & i=0 \\ \CC[x,y,z]/(xy,yz,xz)\langle x \partial_x -  y \partial_y, y \partial_y - z \partial_z \rangle & \mathrm{for} & i=1 \\
    0 & \mathrm{for} & i\geq 2
    \end{array}\right.
\end{equation*}
Thus the local regular functions are unchanged, while the vector fields are required to be `balanced' in the sense that the rotation numbers around the nodes sum to zero. Therefore the hypercohomology of this sheaf $(\bigwedge^{\ast} T_{X_g}, \iota_{\dd W})$ is the same as the cohomology of the sheaf of balanced vector fields $\tilde{T}_{Z_g}$ on $Z_g$. An entirely analogous argument applies when $Z_{g}^0$ has punctures or additional $\AA^1$-components. 
\end{proof}

\subsection{Sheaf cohomology of balanced vector fields}

In this section we shall calculate the algebraic structure of the $B$-model invariants directly:

\begin{theorem}\label{thm:b_model}
Let ${Z_{g}^0}$ be the critical locus of the LG model $(X^0_g, W^0_g)$ mirror to the nodal curve $\Sigma_{g}^0$; then
\begin{align*}
H^0(\scr{O}_{Z_{g}^0}) \osum H^1(\tilde{T}_{Z_{g}^0})  & \cong A \\
 H^1(\scr{O}_{Z_{g}^0}) \osum H^0(\tilde{T}_{Z_{g}^0}) & \cong A \osum \CC^{2g-2}
\end{align*}
as $\CC$-algebras and as $A$-modules respectively, where $A$ is the $\CC$-algebra $A = \CC[Y,Z]/(YZ = Y^3+Z^2)$.
\end{theorem}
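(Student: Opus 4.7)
The plan is to compute the four cohomology groups via the normalization $\nu : \tilde{Z}_g^0 \to Z_g^0$, where $\tilde{Z}_g^0$ is the disjoint union of one $\AA^1$ (the normalization of the punctured component $P_{0,1}$) and $3g-4$ copies of $\PP^1$. First I would write down the two normalization short exact sequences
\begin{align*}
0 &\to \scr{O}_{Z_g^0} \to \nu_* \scr{O}_{\tilde{Z}_g^0} \to \textstyle\bigoplus_{p} \CC_p^2 \to 0, \\
0 &\to \tilde{T}_{Z_g^0} \to \nu_* T_{\tilde{Z}_g^0}(-\textstyle\sum_j q_j) \to \textstyle\bigoplus_p \CC_p \to 0,
\end{align*}
the second of which expresses that the balanced condition cuts out a codimension-one subspace of the three-dimensional space of rotation numbers at each trivalent node (summing over the $6g-6$ preimages $q_j$ of the $2g-2$ nodes). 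Since $T_{\PP^1}(-2\,\mathrm{pts}) \cong \scr{O}_{\PP^1}$ has vanishing $H^1$ and the $\AA^1$-component is affine, the middle terms have no $H^1$, so each long exact sequence reduces to a four-term sequence computing the kernel and cokernel of an explicit linear map determined by the dual graph $\Gamma$ of $Z_g$.

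A global function on $Z_g^0$ pairs a polynomial on $C_1^0 = \AA^1$ with constants on the remaining components, matching at the nodes. Connectedness of $Z_g \setminus C_1$ (which holds for a generic choice of puncture) forces all non-$C_1$ constants to equal a common value $c$, and then $f(0) = f(1) = c$, yielding $H^0(\scr{O}_{Z_g^0}) \cong \{f \in \CC[t] : f(0) = f(1)\}$; this is identified with $A$ under $Y \mapsto t(1-t)$, $Z \mapsto t^2(1-t)$, which one verifies satisfy $YZ = Y^3 + Z^2$ and generate all polynomials with $f(0)=f(1)$ by an easy induction. For the balanced-vector-field map to $\CC^{2g-2}$, freely varying the polynomial on $C_1^0$ gives arbitrary rotation-sum values at $p_0, p_1$, while the signed boundary map of $\Gamma - C_1$ hits the remaining $2g-4$ coordinates---any functional killing the image would be locally constant on $\Gamma - C_1$ and vanish at $p_0, p_1$, hence zero. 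This gives $H^1(\tilde{T}_{Z_g^0}) = 0$ and the algebra isomorphism $H^0(\scr{O}_{Z_g^0}) \osum H^1(\tilde{T}_{Z_g^0}) \cong A$.

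The main obstacle is upgrading the dimension count for the odd part to an $A$-module isomorphism. A rank count yields $H^1(\scr{O}_{Z_g^0}) \cong \CC^{g-1}$, and since $A$-multiplication on a \v{C}ech class factors through the value of the function at the node, this is a torsion $A$-module via the augmentation $A \to \CC$. For $H^0(\tilde{T}_{Z_g^0})$ I would parametrize sections by pairs $(g, a)$ with $g \in A$ and $a$ in the $g$-dimensional subspace $K \subset \CC^{3g-4}$ cut out by the boundary condition at the interior nodes, subject to the single coupling $g(0) = \alpha(a)$, where $\alpha : K \to \CC$ is the rotation-sum functional at $p_0$ (the $p_1$-constraint is redundant since $\partial a$ has zero total sum on $K$). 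Choosing a linear section $s : \CC \to K$ of $\alpha$, which exists by connectedness of $Z_g \setminus C_1$ (lift a unit flow from $p_0$ to $p_1$), the map $(g, a) \mapsto (g, a - s(\alpha(a)))$ is an $A$-linear isomorphism $H^0(\tilde{T}_{Z_g^0}) \xrightarrow{\sim} A \osum \ker\alpha$, since $A$ acts on each $a_i$ through the scalar $f(0)$ and $s$ is linear. Adding $H^1(\scr{O}_{Z_g^0})$ to $\ker\alpha \cong \CC^{g-1}$ assembles the full torsion summand $\CC^{2g-2}$, completing the $A$-module isomorphism.
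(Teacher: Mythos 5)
Your proof is correct and reaches the same conclusion, but by a genuinely different route. The paper computes all four cohomology groups directly from a Čech complex attached to the open cover $\{U_p\}$ indexed by the nodes, with explicit formulas for the restriction maps in local coordinates on $P_{0,1}$ (producing the generators $Y=\frac{x}{(1+x)^2}$, $Z=\frac{x^2}{(1+x)^3}$). You instead pass through the normalization $\nu : \tilde{Z}_g^0 \to Z_g^0$ and the two short exact sequences
$0 \to \scr{O}_{Z_g^0} \to \nu_*\scr{O}_{\tilde{Z}_g^0} \to \bigoplus_p \CC_p^2 \to 0$ and $0 \to \tilde{T}_{Z_g^0} \to \nu_* T_{\tilde{Z}_g^0}(-\sum q_j) \to \bigoplus_p \CC_p \to 0$, using $T_{\PP^1}(-2\,\mathrm{pts}) \cong \scr{O}_{\PP^1}$ and affineness of $\AA^1$ to kill the $H^1$ of the middle terms. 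This collapses the whole computation to the cokernel and kernel of two finite-dimensional linear maps governed by the signed incidence matrix of the dual graph $\Gamma - C_1$; the connectedness of $\Gamma - C_1$ then does all the work, and your argument makes visible that this connectedness (equivalently: the vanishing cycle is non-separating, as in Remark~\ref{remark: topological assumption on Dehn twists}) is the hypothesis being used — the paper's Čech argument relies on the same fact (needed for the extra cocycle $(1,0,0)$ to be independent of the image of $\dd$ on constants) but only implicitly. Your explicit construction of the $A$-module splitting $H^0(\tilde{T}_{Z_g^0}) \xrightarrow{\sim} A \oplus \ker\alpha$ via a linear section $s : \CC \to K$ of the rotation-sum functional, together with the verification that $A$ acts on $\ker\alpha$ through the augmentation, is also more careful than the paper's brief statement. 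One small wording issue: calling the connectedness of $Z_g \setminus C_1$ a ``generic'' condition is misleading — it is not a genericity statement but precisely the condition that the punctured component is not a bridge in the dual graph, which is forced by the requirement that the vanishing cycle be non-separating. What the paper's approach buys is uniformity with the rest of \S2 (the same Čech cover is reused for the homogeneous-coordinate-ring computations with $\scr{L}^{\otimes k}$, where twisted versions of the normalization sequence would be more awkward), and concrete local-coordinate generators that are used later to match the $A$-side; your approach buys transparency about why the answer depends only on the dual graph and a cleaner isolation of the topological hypothesis.
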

\begin{proof}
To compute this sheaf cohomology, we can take a Zariski open cover of ${Z_{g}}$ given by $U_p$, the complement of all components of ${Z_{g}}$ not adjacent to node $p$; for $i \neq 0,1$ this is the affine scheme $\set{xy=yz=xz=0} \subset \CC^3$. Except for $i,j=0,1$, every two-fold intersection $U_{p_i} \cap U_{p_j}$ is a disjoint union of three $\CC^{\ast}$s. For $i,j=0,1$ we have
\begin{align*}
  &\scr{O}_{Z_{g}}(U_{p_0}) \cong \CC[x,y,z,(1+x)\inv]/(xy,yz,xz) \\ & \scr{O}_{Z_{g}}(U_{p_1}) \cong \CC[x\dash,y\dash,z\dash,(1+x\dash)\inv]/(x\dash y\dash,y\dash z\dash ,x\dash z\dash) \\
  &\scr{O}_{Z_{g}}(U_{p_0} \cap U_{p_1}) \cong \CC[x^\pm,(1+x)\inv]\osum \CC[y^\pm]\osum \CC[z^\pm]
\end{align*}
and the restriction map takes $x,y,z \mapsto x,y,z$ and $x\dash,y\dash,z\dash \mapsto 1/x,1/y,1/z$ respectively. Here we identify $P_{0,1} \cong \PP^1\setminus\set{-1}$ with local coordinates $x$ near $p_0 = 0$ and $x\dash = 1/x$ near $p_1 = \infty$. All three-fold intersections $U_i \cap U_j \cap U_k$ are empty for $i<j<k$ so there are no $H^2$ contributions.

Regardless of the genus, $H^0(\scr{O}_{Z_{g}^0})$ is always equivalent to the algebra $A = \CC[Y,Z]/(YZ - Z^2- Y^3)$ of functions on a nodal affine cubic curve. This is simply because regular functions on ${Z_{g}^0}$ must be constant over the compact $\PP^1$ components, so the functions on the punctured component $P_{0,1}$ must hence take on the same value at $0$ and $\infty$. This is therefore equivalent to the algebra of regular functions on a once-punctured $\PP^1$ with two points identified, which is an affine nodal elliptic curve. Since the once-punctured nodal elliptic curve has no moduli, this ring of functions is isomorphic to $A$. Explicitly, regular functions on $Z_{g}^0$ correspond to rational functions $f \in \CC[x,(1+x)^{-1}]$ with $f(0) = f(\infty)$. These are generated as an algebra by the functions 
\begin{align*}
    1 & = \frac{1+x}{1+x}, \\
    Y &= \frac{x}{(1+x)^2},\\
    Z & = \frac{x^2}{(1+x)^3}
\end{align*}
which indeed satisfy the relation $YZ = Y^3+Z^2$.

To calculate the cohomology of $\scr{O}_{Z_{g}^0}$ we use the \v{C}ech complex:
 \begin{equation*}
     0 \to \scr{O}_{Z_{g}^0}({Z_{g}^0}) \to \bigoplus_{i} \scr{O}_{{Z_{g}^0}}(U_{p_i}) \overset{\dd}{\longrightarrow} \bigoplus_{i < j} \scr{O}_{{Z_{g}^0}}(U_{p_i} \cap U_{p_j})  \to  0
 \end{equation*}
The map $\dd$ will be surjective on Laurent polynomials with no constant terms, but $\dd$ will take a constant $c \in \scr{O}_{Z_{g}^0}(U_{p_i})$ to $(c,c,c) \in \bigosum_{j}\scr{O}_{{Z_{g}^0}}(U_{p_i} \cap U_{p_j})$ where $p_j$ are the three vertices adjacent to $p_i$. Over all $i$, this means the image of $\dd$ inside the $(3g-3)$-dimensional space of constant functions in $\bigoplus_{i > j} \scr{O}_{{Z_{g}^0}}(U_{p_i} \cap U_{p_j})$ is the span of $(2g-2)$ vectors, though only $(2g-3)$ are linearly independent. The only other constant functions in the image are those in the image of the restriction map $\scr{O}_{Z_{g}^0}(U_{p_0}) \osum \scr{O}_{Z_{g}^0}(U_{p_1}) \to \scr{O}_{{Z_{g}^0}}(U_{p_0} \cap U_{p_1})$, which takes
\begin{equation*}
    \br{\frac{1}{1+x}, \frac{1}{1+x\dash}} \mapsto \frac{1}{1+x} + \frac{x}{1+x} = 1
\end{equation*}
so that $(1,0,0) \in \bigosum_{j}\scr{O}_{{Z_{g}^0}}(U_{p_0} \cap U_{p_j})$ is also in the image of $\dd$. Therefore this gives us a cokernel of dimension $(3g-3) - (2g-3)  - 1 = g-1$ and so this \v{C}ech cohomology calculation tells us that $H^1(\scr{O}_{Z_{g}^0}) \cong \CC^{g-1}$, with all classes represented by constant functions.

The global sections of the sheaf $\tilde{T}_{Z_{g}^0}$ of balanced vector fields is given by $A \langle x \partial_x \rangle \oplus \CC^{g-1}$ (as an $A$-module), consisting of vector fields that satisfy the balancing condition at every vertex. On each compact $\PP^1$ component, every vector field is a constant multiple of $x\partial_x$; around every vertex, these multiples must sum to zero. On the punctured component $P_{0,1}$, vector fields are of the form  $f(x) x \partial_x $ for $f \in \scr{O}(P_{0,1})$. The balancing conditions at every node force the rotation numbers of the vector field at $0$ and $\infty$ of $P_{0,1}$ to agree, so that $f(0) = f(\infty)$. Again, this is the algebra $A$ of functions on a nodal affine cubic curve, and so balanced vector fields are of the form $f(x) x \partial_x$ for $f \in A$ over $P_{0,1}$. The $2g-2$ balancing conditions, along with the $3g-3$ components, mean that there are $g-1$ constant rotations that need to be specified also: $A$ acts on this summand by evaluation at $0$.

To calculate the cohomology of $\tilde{T}_{Z_{g}^0}$, we consider the \v{C}ech complex
 \begin{equation*}
    0 \to  \tilde{T}_{{Z_{g}^0}}({Z_{g}^0}) \to \bigoplus_{i} \tilde{T}_{{Z_{g}^0}}(U_{p_i}) \overset{\dd}{\longrightarrow} \bigoplus_{i < j} \tilde{T}_{{Z_{g}^0}}(U_{p_i} \cap U_{p_j})  \to 0
 \end{equation*}
 Again, for $i \neq 0,1$, the intersections $U_{p_i} \cap U_{p_j}$ are a disjoint union of three $\CC^{\ast}$s and so
 \begin{align*}
    \tilde{T}_{Z_{g}^0}(U_{p_i}) & \cong \CC[x,y,z]/(xy,yz,xz)\langle x \partial_x -  y \partial_y, y \partial_y - z \partial_z \rangle\\
      \bigoplus_{j} \tilde{T}_{Z_{g}^0}(U_{p_i} \cap U_{p_j}) & \cong \CC[x^{\pm}]\langle \partial_x \rangle \osum \CC[y^\pm]\langle \partial_y \rangle \osum \CC[z^{\pm}]\langle \partial_z \rangle
 \end{align*}
Because of the balancing condition, the image of the restriction maps $\tilde{T}_{Z_{g}^0}(U_{p_i}) \to \bigoplus_{j} \tilde{T}_{Z_{g}^0}(U_{p_i} \cap U_{p_j})$ is the subspace of vector fields of the form $f(x)\partial_x + g(y)\partial_y + h(z) \partial_z$ with the condition that $f(0)+g(0)+h(0) = 0$. 

However, for the punctured component $P_{0,1}$ the restriction map $\tilde{T}_{Z_{g}^0}(U_{p_0}) \osum \tilde{T}_{Z_{g}^0}(U_{p_1}) \to \tilde{T}_{Z_{g}^0}(U_{p_0} \cap U_{p_1})$ takes
\begin{equation*}
    \br{-\frac{(x\dash)^2 \partial_{x\dash}}{1+x\dash}, \frac{x^2 \partial_x}{1+x}} \mapsto \frac{x \partial_{x}}{1+x} + \frac{x^2 \partial_x}{1+x}  = x \partial_{x}
\end{equation*}
where we use the same coordinates on $P_{0,1}$ as above. Here, since
\begin{equation*}
    \frac{x^2 \partial_x}{1+x} = \frac{x}{1+x}(x\partial_x - y \partial_y)
\end{equation*}
both are balanced vector fields themselves (i.e. sections of $\tilde{T}_{Z_{g}^0}(U_{p_i})$ for $i=0,1$) and so the unbalanced vector field $x\partial_{x}$ is in the image of $\dd$. Since all balanced vector fields are also in the image of $\dd$, this means that $\dd$ is surjective and therefore the \v{C}ech cohomology calculation yields $H^1(\tilde{T}_{Z_{g}^0}) = 0$.
\end{proof}

In the case of Dehn twists around $k$ disjoint homologically linearly independent closed curves on $\Sigma_g$ (see Remark \ref{remark: topological assumption on Dehn twists}), the mirror will be $(X\dash_g,W\dash_g)$, where the critical locus $\mathrm{crit}(W_g\dash) = {Z_{g}^{\prime}}$ has $k$ punctures on different components. Then a straightforward generalization of the above calculation implies:

\begin{theorem}\label{b side: multiple twists}
Let $Z_{g}^{\prime}$ be the critical locus of the LG model $(X\dash_g, W\dash_g)$ mirror to the $k$-nodal curve $\Sigma_{g}^0$; then
\begin{align*}
\mathrm{HH}^{\mathrm{even}}(\mathrm{MF}(X\dash_g,W\dash_g)) & \cong  H^0(\scr{O}_{{Z_{g}^{\prime}}}) \osum H^1(\tilde{T}_{{Z_{g}^{\prime}}})  \cong A \times _{\mathbb{C}} A \times_{\mathbb{C}} A \times... \times_{\mathbb{C}} A \\
    \mathrm{HH}^{\mathrm{odd}}(\mathrm{MF}(X\dash_g,W\dash_g)) & \cong H^1(\scr{O}_{{Z_{g}^{\prime}}}) \osum H^0(\tilde{T}_{{Z_{g}^{\prime}}}) \cong A \times _{\mathbb{C}} A \times_{\mathbb{C}} A \times... \times_{\mathbb{C}} A \osum \CC^{2g-2}
\end{align*}
as $\CC$-algebras and as $A$-modules respectively, where $A$ is the $\CC$-algebra $A = \CC[Y,Z]/(YZ = Y^3+Z^2)$ and $\times_{\mathbb{C}}$ denotes the fiber product of rings over their common map $A\rightarrow \mathbb{C}$ (iterated $k$ times in each line). 
\end{theorem}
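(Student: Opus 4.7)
The plan is to extend the \v{C}ech cohomology computations from the proof of Theorem \ref{thm:b_model} to $k$ punctures on distinct components of $Z_g$, using the same open cover $\{U_p\}$ indexed by the $2g-2$ nodes. For the even case, $H^0(\scr{O}_{{Z_{g}^{\prime}}}) \cong A \times_\CC \cdots \times_\CC A$ (which I abbreviate $A^{(k)}$) because regular functions must be constant on each compact $\PP^1$, these constants all agree by connectivity of the tree of compact components, and on each of the $k$ punctured components a regular function is an element of $A$ taking the same value $c$ at both adjacent nodes. The vanishing $H^1(\tilde{T}_{{Z_{g}^{\prime}}}) = 0$ follows by the same lifting trick as in the one-puncture case, applied independently at each puncture: the vector fields $\tfrac{x_i^2 \partial_{x_i}}{1+x_i}$ available near puncture $i$ realize all unbalanced vector fields on the $P_i$-overlap as images of $\dd$.

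For the odd case, I would compute the direct sum $H^1(\scr{O}_{{Z_{g}^{\prime}}}) \osum H^0(\tilde{T}_{{Z_{g}^{\prime}}})$ via the same \v{C}ech complex. On the constant summand of each overlap ring, the image of $\dd$ is spanned by the signed incidence matrix of the dual graph together with the extras coming from lifts of $\tfrac{1}{1+x_i}$-functions at each puncture, while surjectivity on the non-constant Laurent and $(1+x_i)^{-n}$ parts is exactly as in the single-puncture case. For balanced vector fields: on each punctured component $f_i(x_i) x_i \partial_{x_i}$ with $f_i \in \scr{O}(P_i \setminus \{-1\})$ automatically vanishes at the two adjacent nodes, so the balancing conditions at the $2g-2$ nodes cut out the allowed constant parameters $(f_i(0), f_i(\infty), c_e)$, while the higher-order parts of each $f_i$ contribute freely without affecting balancing. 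Summing and reshuffling the pieces yields the isomorphism $H^1(\scr{O}_{{Z_{g}^{\prime}}}) \osum H^0(\tilde{T}_{{Z_{g}^{\prime}}}) \cong A^{(k)} \osum \CC^{2g-2}$ of $\CC$-vector spaces.

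The main obstacle is installing the $\CC$-algebra and $A^{(k)}$-module structures on each side and verifying they match. The multiplicative structure on $H^0(\scr{O}_{{Z_{g}^{\prime}}})$ comes componentwise from $A$ on each punctured component, as does the action of $A^{(k)}$ on the $H^0(\tilde{T}_{{Z_{g}^{\prime}}})$-part of the odd side; the $\CC^{2g-2}$ summand consists of balanced constants that mix contributions across components, with $A^{(k)}$ acting through its augmentation to $\CC$. Matching these structures through the reshuffling that produces the fiber product is the essential conceptual step beyond the dimension count.
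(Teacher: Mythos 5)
Your outline matches the paper's intended route---the paper states this result as following from ``a straightforward generalization of the above calculation'' (Theorem~\ref{thm:b_model}) and gives no further details---so the plan of extending the \v{C}ech computation over the same cover $\{U_p\}$ is the right one. Two points need attention before the sketch becomes a proof.

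First, the compact $\PP^1$ components of $Z_g\dash$ do not form a tree: they are the $3g-3-k$ edges of a graph on $2g-2$ vertices, which is connected (this is exactly what the homological linear independence hypothesis on the vanishing cycles buys you) but has first Betti number $g-k$, not zero, for $k<g$. Connectivity is all you actually use for $H^0(\scr{O}_{Z_g\dash})\cong A\times_{\CC}\cdots\times_{\CC}A$, so that step is fine. But the word ``tree'' would falsify the rest of the count: the cokernel of $\dd$ on the constant piece of the \v{C}ech $C^1$ has dimension $(3g-3)-(2g-3)-k = g-k$, and a balanced vector field vanishing on all $k$ punctured components is precisely a cycle of the compact subgraph, again a $\CC^{g-k}$. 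These $\CC^{g-k}$'s are exactly the pieces that recombine to $\CC^{2g-2}$, so the nontrivial cycle space must be made explicit rather than suppressed.

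Second, you are right that installing the module structure is the essential step, but the ``reshuffling'' is not automatic. Restriction of a balanced vector field to the punctured components gives a surjection
\begin{equation*}
H^0(\tilde{T}_{Z_g\dash}) \longrightarrow V := \Bigl\{(f_1,\dots,f_k) : \textstyle\sum_j \bigl(f_j(0)-f_j(\infty)\bigr)=0\Bigr\}
\end{equation*}
with kernel $\CC^{g-k}$, and this extension does split $H^0(\scr{O})$-linearly since the compact-component constants can be chosen linearly in the $f_j(0),f_j(\infty)$. However $V$ sits in a two-step filtration $A\times_{\CC}\cdots\times_{\CC}A \subset A^k \subset V$ whose graded pieces are each $\CC^{k-1}$ with the fiber product acting through the augmentation, and neither step admits a module splitting: a nonzero element of $\CC[x,(1+x)^{-1}]$ annihilated by the augmentation ideal of $A$ would be torsion over the domain $A$, which is impossible. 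So arriving at the asserted direct sum $A\times_{\CC}\cdots\times_{\CC}A \osum \CC^{2g-2}$ as a module, rather than merely as a filtered object with that associated graded, requires a more careful argument than a dimension count plus reshuffling, and this point deserves to be addressed head-on rather than deferred.
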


\subsection{Mirrors to open surfaces}

Now consider $Z_{g}\dash$, a trivalent configuration of one $\AA^1$ component and $(3g+4)$-many $\PP^1$ components (one of them punctured), as in Figure \ref{fig:singular}. Again denote the punctured component $P_{0,1}$ and we will denote the $\AA^1$ component by $P_{\infty}$, with local coordinate $t=0$ corresponding to the nodal point. We explain how to modify the above calculations for this case. 

\begin{theorem}\label{thm:b_model_singular}
Let ${Z_{g}\dash}$ be the critical locus of the LG model $(X\dash_g, W\dash_g)$ mirror to the open nodal curve $\Sigma_{g,1}^0$; then
\begin{align*}
H^0(\scr{O}_{Z_{g}\dash}) \osum H^1(\tilde{T}_{Z_{g}\dash})  & \cong A  \times_{\CC} \CC[T]\\
H^0(\tilde{T}_{Z_{g}\dash}) \osum H^1(\scr{O}_{Z_g\dash}) & \cong  \; (\CC[W] \times_{\CC} \CC[T]) \osum \CC^{2g-2}
\end{align*}
as $\CC$-algebras and as $A\times_{\CC} \CC[T]$-modules respectively, where $A$ denotes the $\CC$-algebra $A = \CC[Y,Z]/(YZ - Z^2- Y^3)$. The fiber product in the first is taken along the evaluation at zero maps; while in the second it is taken along the evaluation maps $F(W) \mapsto F(0)-F(1)$ and $g(T) \mapsto g(0)$. The algebra $A$ acts on $\CC[W]$ by $Y \mapsto W-W^2, Z \mapsto W^2-W^3$.
\end{theorem}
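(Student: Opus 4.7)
The strategy is to extend the \v{C}ech-cover argument of Theorem \ref{thm:b_model} (and its multi-node generalization in Theorem \ref{b side: multiple twists}) to $Z_g'$ by adjoining a single new open set $U_{p_\infty}$ around the trivalent node $p_\infty$ at which the $\AA^1$-component $P_\infty$ attaches. Since $P_\infty\cong\AA^1$ is non-compact, sections on $U_{p_\infty}$ pick up an extra factor $\CC[T]$ for $\scr{O}_{Z_g'}$, and $\CC[T]\cdot\partial_T$ for $\tilde T_{Z_g'}$, subject to the usual balancing at $p_\infty$; higher \v{C}ech overlaps $U_{p_\infty}\cap U_{p_j}$ remain disjoint unions of $\CC^{\ast}$s, one per compact $\PP^1$-component through $p_\infty$, so the structure of the complex parallels that of Theorem \ref{thm:b_model}.

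For the structure sheaf, a global regular function must restrict to $A$ on the once-punctured component $P_{0,1}$, to $\CC[T]$ on $P_\infty$, and to constants on every other compact $\PP^1$-component. Compatibility at $p_\infty$ identifies the $T=0$-evaluation of the $\CC[T]$-function with the constant on the neighboring compact component (which in turn equals the $A$-value there), giving $H^0(\scr{O}_{Z_g'})\cong A\times_\CC\CC[T]$ via the evaluation-at-zero fiber product. The vanishing $H^1(\tilde T_{Z_g'})=0$ then follows from the same surjectivity argument as in Theorem \ref{thm:b_model}: the extra degrees of freedom in $\CC[T]\cdot\partial_T$ on $P_\infty$ surject onto any new \v{C}ech piece coming from $U_{p_\infty}$.

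For $\tilde T_{Z_g'}$, the key new phenomenon is that the vector field $g(t)\partial_t$ on $P_\infty$ has completely free rotation $g(0)$ at $p_\infty$. This relaxes the single linear balancing relation that in Theorem \ref{thm:b_model} forced the coefficient $f$ of the $P_{0,1}$-vector field $f(x)x\partial_x$ to lie in the subring $A\subset\scr{O}(P_{0,1})\cong\CC[W]$, with $W=x/(1+x)$ realizing $P_{0,1}\cong\AA^1$. Now $f$ ranges freely over $\CC[W]$, with the only surviving relation being a single linear equation, which a sign-tracking computation shows is precisely $F(0)-F(1)=g(0)$, yielding the fiber product $\CC[W]\times_\CC\CC[T]$ of the theorem. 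Combined with the $g-1$ free constant rotations on the remaining compact $\PP^1$-components (unchanged from the closed case) and a \v{C}ech computation for $H^1(\scr{O}_{Z_g'})$ paralleling the closed case that contributes another $\CC^{g-1}$, we assemble the $\CC^{2g-2}$ summand on the right-hand side of the second isomorphism.

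The algebra and module structures are then a formal check: the map $A\to\CC[W]$ sending $Y\mapsto W-W^2$ and $Z\mapsto W^2-W^3$ is the normalization of the nodal cubic (the relation $YZ=Y^3+Z^2$ is immediate upon substitution), making $\CC[W]$ an $A$-algebra compatible with the stated fiber products. The main obstacle, I anticipate, will be the precise bookkeeping for the residual balancing equation $F(0)-F(1)=g(0)$: one must carefully track the signs of the rotation numbers at the two ends of $P_{0,1}$ and propagate the balancing conditions through the rest of the configuration to confirm that exactly this linear combination (and not some other linear form) is the one that survives the attachment of $P_\infty$. Once these sign and propagation conventions are pinned down, all remaining \v{C}ech calculations run parallel to those in Theorem \ref{thm:b_model}.
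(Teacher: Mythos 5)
Your proposal follows essentially the same approach as the paper: identify global sections of $\scr{O}_{Z_g'}$ and $\tilde{T}_{Z_g'}$ directly (the fiber product structure coming from compatibility at the attaching node and the balancing relation $F(0)-F(1)=g(0)$ along $P_{0,1}$), use a \v{C}ech cover with one additional open set around $p_\infty$ to compute $H^1(\scr{O}_{Z_g'})\cong\CC^{g-1}$ and $H^1(\tilde{T}_{Z_g'})=0$, and obtain the module structure via the normalization map $A\hookrightarrow\CC[W]$. Minor quibble: write the vector field on $P_\infty$ as $g(t)t\partial_t$ rather than $g(t)\partial_t$ so that it lies in $\tilde{T}_{Z_g'}$ (vanishing at the node) with rotation number $g(0)$; otherwise this matches the paper's proof.
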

\begin{proof}
In the case of $H^0(\scr{O}_{Z\dash_g})$, the global functions on $Z_{g}\dash$ simply consist of a pair $(f,g)$ where $f \in A$ is a global function on $P_{0,1}$ (with $f(0) = f(\infty)$) and $g \in \CC[t]$ is a function on $\AA^1$ such that $g(0) = f(0)$. This means exactly that $H^0(\scr{O}_{Z\dash_g})$ is the fiber product $A \times_{\CC} \CC[T]$ of algebras over their evaluation at zero maps to $\CC$. For the same reasons as in the proof of Theorem \ref{thm:b_model}, $H^1(\tilde{T}_{Z\dash_g}) = 0$.

To calculate the cohomology group $H^1(\scr{O}_{Z\dash_g})$ we may use the same \v{C}ech complex as above. Now, the intersections $U_i \cap U_j$ will consist of $(3g-3)$ copies of $\CC^{\ast}$s and one pair of pants. Again, $\dd$ will be surjective onto Laurent polynomials with no constant terms, and there are $2g-2+1$ constraints on the possible constant terms (corresponding to the number of nodes), minus $1$ for the additional way of creating constant terms on the component $P_{0,1}$ as described in the proof of Theorem \ref{thm:b_model}. Therefore, the dimension of the cokernel of $\dd$ is $(3g-3) - (2g-2+1-1) = g-1$, and each of these classes can be represented by constant functions. Hence $H^1(\scr{O}_{Z\dash_g}) \cong \CC^{g-1}$ and $H^0(\scr{O}_{Z\dash_g})$ acts by constants. 

Finally, we calculate $H^0(\tilde{T}_{Z\dash_g})$, the global balanced vector fields. If we write $f(x) x \partial_x$ for a section of $\tilde{T}_{Z\dash_g}$ over $P_{0,1}$ (with $f \in \CC[x,(1+x)\inv]$ and $f(\infty) < \infty$), and $g(t)t\partial_t$ for a section over $P_{\infty}$ (with $g \in \CC[t]$), then because of the non-compact $\AA^1$ component $P_{\infty}$, the balancing conditions imply that we must have that $f(0) - f(\infty) = g(0)$. A basis for $H^0(\scr{O}_{Z\dash_g})$ consists of two kinds of vector fields:
\begin{enumerate}
    \item Those given by non-zero pairs $(f(x) x \partial_x, g(t) t \partial_t)$ with $f(0) - f(\infty) = g(0)$ (extended appropriately to the compact components);
    \item Those $g-1$ linearly independent vector fields given by constant rotations of the compact components satisfying the balancing conditions, vanishing on $P_{0,1}$ and $P_{\infty}$.
\end{enumerate}
The algebra $H^0(\scr{O}_{Z\dash_g})$ acts by evaluation at zero on vector fields of type (2): this gives a submodule isomorphic to $\CC^{g-1}$. Vector fields of type (1) correspond to pairs
\begin{equation*}
    \left(F\left(\frac{x}{1+x}\right) x \partial_x, g(t) t \partial_t \right)
\end{equation*} 
where $F \in \CC[W]$ is a polynomial in one variable $W$ and $g \in \CC[T]$ is a polynomial in $T$, satisfying a balancing condition: since the difference in rotation numbers $f(0) - f(\infty)$ is given by $F(0) - F(1)$, the balancing condition becomes $F(0) - F(1) =g(0)$. In this notation, the generator $Y$ of $A$ acts by $W-W^2$ and the generator $Z$ by $W^2-W^3$, while $T$ simply acts by multiplication by $T$. This gives a complete description of the $H^0(\scr{O}_{Z\dash_g})$-module structure on the fiber product $\CC[W] \times_{\CC} \CC[T]$ of $\CC[W]$ and $\CC[T]$ over the evaluation maps $F(W) \mapsto F(0)-F(1)$ and $g(T) \mapsto g(0)$.
\end{proof}

When $\Sigma_{g,k}$ has disjoint homologically linearly independent vanishing cycles and $k \geq 1$, the mirror $Z_g\dash$ to the nodal curve $\Sigma_{g,k}^0$ with more than one puncture will be another trivalent configuration of of $k$-many $\AA^1$ components and $(3g+3+k)$-many $\PP^1$ components (one of them punctured). We have a straightforward generalization to this case:
 
\begin{theorem}
Let ${Z_{g}\dash}$ be the critical locus of the LG model $(X\dash_g, W\dash_g)$ mirror to the open nodal curve $\Sigma_{g,k}^0$; then
\begin{align*}
H^0(\scr{O}_{Z_{g}\dash}) \osum H^1(\tilde{T}_{Z_{g}\dash})  \cong A \times_{\CC}\CC[T_1] \times_{\CC} \CC[T_2] \times_{\CC} \cdots \times_{\CC}\CC[T_k]
\end{align*}
as $\CC$-algebras, where the fiber product is taken over the evaluation maps at zero; and
\begin{equation*}
 H^0(\tilde{T}_{Z_{g}\dash}) \osum H^1(\scr{O}_{Z_g\dash}) \cong  \; (\CC[W]\times_{\CC}(\CC[T_1] \times \CC[T_2] \times \cdots \CC[T_k])) \oplus \CC^{2g-2}
\end{equation*}
as $A  \times_{\CC}\CC[T_1] \times_{\CC} \CC[T_2] \times_{\CC} \cdots \times_{\CC}\CC[T_k]$-modules, where the fiber product is taken over the evaluation maps $F(0) - F(1) = g_1(0) + \cdots + g_k(0)$, and $A$ acts on $\CC[W]$ as above and on $\CC^{2g-2}$ by constants.
\end{theorem}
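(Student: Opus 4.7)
The plan is to imitate the proof of Theorem \ref{thm:b_model_singular}, replacing the single $\AA^1$-component by $k$ of them; the structural steps (compute global sections of $\scr{O}$ and $\tilde{T}$, show $H^1(\tilde{T})$ vanishes via the puncture on $P_{0,1}$, and identify $H^1(\scr{O})$ with constant functions on compact components) remain in place, but the compatibility conditions at the nodes now couple together $k+1$ ``ends'' rather than $2$.

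First, I would compute $H^0(\scr{O}_{Z_g\dash})$ directly. A global regular function is a compatible collection: an element $f \in A$ on $P_{0,1}$, polynomials $g_j(t_j) \in \CC[t_j]$ on each $\AA^1$-component $P_\infty^{(j)}$, and locally constant values on the chains of compact $\PP^1$s. Since regular functions on compact $\PP^1$s are constant, following each chain from $P_{0,1}$ to $P_\infty^{(j)}$ forces the common boundary value $f(0)=f(\infty)$ to equal $g_j(0)$. This realizes $H^0(\scr{O}_{Z_g\dash})$ as the iterated fiber product $A \times_\CC \CC[T_1] \times_\CC \cdots \times_\CC \CC[T_k]$ over the evaluation-at-zero maps, exactly as in the single-puncture case.

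Next, I would compute $H^1(\scr{O}_{Z_g\dash})$ and $H^1(\tilde{T}_{Z_g\dash})$ using the same \v Cech cover $\{U_p\}$ as in Theorems \ref{thm:b_model} and \ref{thm:b_model_singular}. The differential $\dd$ is surjective onto Laurent polynomials with no constant term, so only the constant-term subspace is at issue; one then counts dimensions as before, noting that each $\AA^1$-component contributes one additional node (hence one extra balancing constraint) while also killing one pair of pants in $U_{p_i}\cap U_{p_j}$, and that the puncture on $P_{0,1}$ still produces the extra constant $(1,0,0)$ in the image of $\dd$. The net effect recovers $H^1(\scr{O}_{Z_g\dash}) \cong \CC^{g-1}$, represented by constant functions on the $g-1$ independent cycles of compact $\PP^1$s. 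For $H^1(\tilde{T}_{Z_g\dash})$, the identical argument using the unbalanced vector field $x\partial_x$ lying in the image of $\dd$ via the puncture on $P_{0,1}$ shows that $\dd$ is surjective, so $H^1(\tilde{T}_{Z_g\dash})=0$.

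Finally, I would compute $H^0(\tilde{T}_{Z_g\dash})$ as the space of global balanced vector fields. On $P_{0,1}$ such a vector field takes the form $F(x/(1+x))\, x\partial_x$ with $F\in\CC[W]$; on each $P_\infty^{(j)}$ it has the form $g_j(t_j)\,t_j\partial_{t_j}$ with $g_j\in\CC[T_j]$; and on each compact $\PP^1$ it is a constant rotation. Propagating the balancing condition along the chains of compact $\PP^1$s forces the net rotation exiting $P_{0,1}$ (namely $F(0)-F(1)$) to equal the sum of incoming rotations $g_1(0)+\cdots+g_k(0)$ from the $\AA^1$-components, which is precisely the fiber-product condition claimed. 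Together with the $g-1$ independent constant-rotation configurations on the compact components that lie transverse to any such chain (and on which $A$ acts by constants via evaluation at $0$), this yields the module structure $(\CC[W]\times_\CC (\CC[T_1]\times\cdots\times\CC[T_k]))\oplus\CC^{g-1}$; summing with the $\CC^{g-1}$ from $H^1(\scr{O}_{Z_g\dash})$ gives the stated $\CC^{2g-2}$.

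The main obstacle is the bookkeeping in the \v Cech dimension count: when $k$ additional $\AA^1$-components are grafted onto the configuration, one must check carefully that each added component introduces exactly one additional node, one additional non-compact piece in the double intersections, and one additional balancing constraint, so that the net contribution to $H^1(\scr{O})$ remains $g-1$ and the single global constraint $F(0)-F(1)=\sum_j g_j(0)$ is the only constraint produced. Once this is verified, the rest of the proof is a direct translation of Theorem \ref{thm:b_model_singular}.
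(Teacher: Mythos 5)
Your proposal is correct and follows essentially the same route as the paper, which simply notes that the single-puncture computation of Theorem \ref{thm:b_model_singular} extends verbatim by introducing one variable $t_i$ per $\AA^1$-component; the compatibility condition $f(0)=f(\infty)=g_j(0)$ gives the iterated fiber product, and the balancing condition turns into $F(0)-F(1)=g_1(0)+\cdots+g_k(0)$. You spell out the \v Cech dimension count that the paper leaves implicit, but the decomposition of the argument and all the key observations (surjectivity of $\dd$ onto non-constant Laurent terms, the unbalanced $x\partial_x$ killing $H^1(\tilde T)$, the $g-1$ constant-rotation classes, and the net-flux constraint linking $P_{0,1}$ to the $\AA^1$'s) coincide with the paper's proof.
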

\begin{proof}
    Observe that in the above, we will introduce one additional variable $t_i$ for each $\AA^1$ component, and regular functions on $Z_{g}\dash$ will consist of tuples of polynomials $(f, g_1, \dots, g_k)$ with $f \in \scr{O}_{P_{0,1}}$ and $g_i \in \CC[t_i]$ satisfying $f(0)=f(\infty)=g_1(0) =\cdots = g_k(0)$. For the balanced vector fields, the difference between the rotation numbers of $f(x) x \partial_x$ at $0$ and $\infty$ will again be given by the total rotation number around infinity of the vector fields $g(t_1)t_1 \partial_{t_1}, \dots, g(t_k) t_k \partial_{t_k}$ on the $\AA^1$-components, equal to $g_1(0) + \cdots + g_k(0)$.
\end{proof}

% Let $W = \frac{x}{(1+x)}$ and let $W^n$ denote $\frac{x^n}{(1+x)^n}$. We consider the submodule generated by the vector fields as in $(1)$ and $(3)$, and show as a module over $A \times_\mathbb{C} \mathbb{C}[t]$, this submodule is given by $\mathbb{C}[W] \times _\mathbb{C} \mathbb{C}[t]$. Here the fiber product is given by 
% \[
% ev: \mathbb{C}[t] \rightarrow \mathbb{R}
% \]
% \[
% f(t) \rightarrow f(0)
% \]
% and 
% \[
% ev: \mathbb{C}[W] \rightarrow \mathbb{C}
% \]
% \[
% f(W) \rightarrow f(0)-f(1)
% \]
% We next describe the
% module multiplication over $A\times_\mathbb{C} \mathbb{C}[t]$. A generator of the form $(Y,h(t))$ with $h(0)=0$. It acts by multiplication via $(W-W^2, h(t))$. Another generating element $(Z,h(t))$ with $h(0)=0$ acts by multiplication via $(W^2-W^3, h(t))$. Finally $(1,1)$ acts by multiplication by $(1,1)$. We extend this multiplicatively.

% To prove this, let us use the abbreviation $Y$ for $(Y,0)$ etc.

% We first observe we have the relations 
% \[
% YW=Z, \quad ZW= Z-Y^2
% \]
% Using the first relation we can eliminate $Z$ in lieu of $YW$. We can eliminate $Y$ by using the the relation
% \[
% Y=W-W^2.
% \]

% Similarly we can realize $Y^2W^2=Z^2$. In this way we can eliminate all powers and products $Y,Z$ from this module, so this module is generated over $A \times_\mathbb{C} \mathbb{C}[t]$ by $\{1\times_\mathbb{C} \mathbb{C}[t],W\times_\mathbb{C} \mathbb{C}[t],W^2\times_\mathbb{C} \mathbb{C}[t],...\}$.

% Finally we realize the only relationship between $Y$ and $Z$, i.e. $YZ=Y^3+Z^2$, when expressed as their actions over $C[W]$, gives a tautology. 

\subsection{Homogeneous coordinate rings}

In this section, we will describe the $B$-side calculations used to prove Theorem \ref{thm:homog}, after first justifying why this is the appropriate quantity to calculate. 

One expects a version of the Hochschild-Kostant-Rosenberg theorem with coefficients (see \cite[Theorem 3.11]{kraehmer} for the affine case) to apply for matrix factorization categories, so that the Hochschild cohomology with coefficients in a power of a line bundle $\scr{L}$ on $Z_g$ (extended to $X_g$) is isomorphic to
\begin{equation*}
    \mathrm{HH}^{k}(\mathrm{MF}(X_g,W_g), \scr{L}^{\otimes d}) \cong \bigosum_{i+j \cong k \; \text{mod} \; 2} \mathbb{H}^{i}(\wedge^{j} T_{{Z_{g}}} \otimes \scr{L}^{\otimes d})
\end{equation*}
the hypercohomology of the sheaf of (balanced) polyvector fields on the critical locus $Z_g$ with coefficients in a power of $\scr{L}$. 

The right hand side of the above can be calculated explicitly:
\begin{theorem}
If $\scr{L}$ is a line bundle over $Z_g$ that has degree $1$ over one single $\PP^1$ component and is trivial over all the others, then there is an equivalence of algebras:
    \begin{equation*}
        \bigosum_{k=0}^{\infty} H^0(\scr{L}^{\otimes k}) \osum H^1(\tilde{T}_{Z_g} \otimes \scr{L}^{\otimes k}) \cong R \osum \CC
    \end{equation*}
    where $R$ is the graded $\CC$-algebra $\CC[X,Y,Z]/(XYZ = Y^3 +Z^2)$ with $|X|=1, |Y|=2,|Z|=3$ and the $\CC$ summand is a square-zero extension by the module $R/(X,Y,Z)$. Moreover, there is an equivalence of $R$-modules,
    \begin{equation*}
                \bigosum_{k=0}^{\infty} H^1(\scr{L}^{\otimes k}) \osum H^0(\tilde{T}_{Z_g} \otimes \scr{L}^{\otimes k}) \cong R \osum (\CC[X])^{2g-2} \osum \CC
    \end{equation*}
    where $\CC$ is the $R$-module $R/(X,Y,Z)$.
\end{theorem}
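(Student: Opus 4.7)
The plan is to use the normalization $\nu\colon \tilde{Z}_g = \bigsqcup_i P_i \to Z_g$ to reduce the sheaf-cohomology computations to the disjoint union of $\PP^1$-components, just as in the proof of Theorem~\ref{thm:b_model}. The pullbacks $\nu^\ast\scr{L}^k$ and $\nu^\ast(\tilde{T}_{Z_g}\otimes\scr{L}^k)$ both consist of $\scr{O}(k)$ on $P_{0,1}$ and $\scr{O}$ on the other $3g-4$ components (using $\tilde{T}_{Z_g}|_{P_i} \cong T_{\PP^1}(-p-p') \cong \scr{O}_{\PP^1}$, since the vector fields on $P_i$ must vanish at its two nodes), each with $H^0$ of dimension $k+3g-3$ and vanishing $H^1$. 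The normalization exact sequences
\begin{equation*}
0 \to \scr{L}^k \to \nu_\ast \nu^\ast \scr{L}^k \to \bigosum_p \CC^2 \to 0, \quad 0 \to \tilde{T}_{Z_g}\otimes \scr{L}^k \to \nu_\ast\nu^\ast(\tilde{T}_{Z_g}\otimes \scr{L}^k) \to \bigosum_p \CC \to 0,
\end{equation*}
with $2$-dimensional (respectively $1$-dimensional) obstructions at each node, reduce the problem to analyzing the associated connecting maps, controlled combinatorially by the dual graph $G$ of $Z_g$ ($3$-regular with $2g-2$ vertices, $3g-3$ edges, first Betti number $g$). The decisive observation is that for $k \geq 1$ the two free parameters $a_0, a_k$ in $H^0(P_{0,1}, \scr{O}(k))$ at the nodes $p_0, p_1$ already make the $\tilde{T}$-connecting map surjective, giving $H^1(\tilde{T}_{Z_g}\otimes\scr{L}^k) = 0$ for $k \geq 1$; whereas the $\scr{L}^k$-connecting map has cokernel of dimension $g-1$ for $k \geq 1$. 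Combined with the identification $H^0(\tilde{T}_{Z_g}) \cong H_1(G;\CC)$, a dimension count yields the expected values in each degree.

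To identify the ring $\bigosum_k H^0(\scr{L}^k) \cong R$, I would exhibit explicit generators: $X = (1+x, 1, \ldots, 1) \in H^0(\scr{L})$ using the coordinate $x$ on $P_{0,1}$ with $p_0=0, p_1=\infty$, together with $Y = (x, 0, \ldots, 0) \in H^0(\scr{L}^2)$ and $Z = (x^2, 0, \ldots, 0) \in H^0(\scr{L}^3)$ supported on $P_{0,1}$. Direct multiplication verifies $XYZ = (x^3 + x^4, 0, \ldots, 0) = Y^3 + Z^2$, and the Hilbert series of $\CC[X,Y,Z]/(XYZ - Y^3 - Z^2)$ matches the dimensions $1 + \sum_{k\geq 1} k\cdot t^k$ computed above, so this is the only relation. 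For the even formula, the extra $\CC$ summand in degree zero is exactly $H^1(\tilde{T}_{Z_g})$; since $H^1(\tilde{T}_{Z_g}\otimes\scr{L}^k) = 0$ for $k\geq 1$, multiplication by any of $X,Y,Z$ kills this class, exhibiting precisely the square-zero extension by $R/(X,Y,Z)$.

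For the odd formula I would exploit that $Y$ and $Z$ vanish identically on every component other than $P_{0,1}$, while $X$ equals the constant $1$ there. Multiplying by a fixed nonzero balanced vector field $\sigma \in H^0(\tilde{T}_{Z_g})$ whose $P_{0,1}$-component is nonzero produces an injection $R \hookrightarrow \bigosum_k H^0(\tilde{T}_{Z_g}\otimes\scr{L}^k)$ realizing the $R$-summand. The $(\CC[X])^{2g-2}$ summand then splits as a $(g-1)$-dimensional contribution from $\bigosum_{k\geq 1} H^1(\scr{L}^k)$ and another $(g-1)$-dimensional contribution from the complement of the $R$-summand inside each $H^0(\tilde{T}_{Z_g}\otimes\scr{L}^k)$, corresponding to cycles in $G$ disjoint from the $P_{0,1}$-edge. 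On both, $Y$ and $Z$ act as zero (since they vanish on every component away from $P_{0,1}$), while $X$-multiplication gives the required isomorphisms between consecutive degrees. Finally, the residual $\CC$ in degree zero is the one-dimensional kernel of the surjection $X\colon H^1(\scr{O}_{Z_g}) \to H^1(\scr{L})$, annihilated by all of $X,Y,Z$.

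The main obstacle is justifying the decomposition of the odd part as an $R$-module: although the dimension counts work out cleanly, establishing the splitting rigorously requires representing the various $H^1$ classes via \v{C}ech cochains on the cover by neighborhoods of the nodes, and tracking the $X,Y,Z$ actions through the isomorphisms between consecutive degrees. Signs and orientations on the dual graph also need to be handled carefully in setting up the balancing condition for $\tilde{T}_{Z_g}$, and one must check that the chosen splitting is compatible across the $H^0$-$H^1$ interface in degree zero.
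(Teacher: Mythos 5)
Your proposal takes a genuinely different route from the paper. The paper works directly with the \v{C}ech complex on the open cover by neighborhoods $U_p$ of the trivalent nodes (the same cover as in the proof of Theorem~\ref{thm:b_model}), identifies $\bigoplus_k H^0(\scr{L}^{\otimes k})$ with the homogeneous coordinate ring of a nodal plane cubic by an elementary Riemann--Roch argument (citing Hartshorne and Hosgood), and computes the module structure by writing down an explicit basis for each $H^0(\tilde T_{Z_g}\otimes\scr{L}^{\otimes k})$ and tracking the action of $X,Y,Z$ component by component. You instead use the normalization exact sequence and the combinatorics of the $3$-regular dual graph $G$, which packages the same dimension counts more cleanly: in particular the surjectivity of the connecting map to $\bigoplus_p\CC\cong\CC^{2g-2}$ for $k\geq 1$ is located in the fact that the two endpoints of the $P_{0,1}$-edge become independently adjustable, breaking the one relation (sum of all vertex conditions) that holds at $k=0$; and the identification $H^0(\tilde T_{Z_g})\cong H_1(G;\CC)$ makes the $\CC^{g-1}$ summands transparent. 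Your $R$-module splitting $H^0(\tilde T_{Z_g}\otimes\scr{L}^{\otimes k})\cong\sigma R_k\oplus(\text{sections vanishing on }P_{0,1})$ is an honest direct sum of $R$-submodules: $\sigma R_k$ surjects onto the image of restriction to $P_{0,1}$, the complementary submodule is visibly killed by $Y,Z$ and carried isomorphically by $X$, and the Hilbert-series comparison settles the ring isomorphism.

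One technical slip should be corrected. The pullback $\nu^\ast\tilde T_{Z_g}$ is \emph{not} locally free on the normalization: near a trivalent node with local ring $\scr O=\CC[x,y,z]/(xy,yz,zx)$, the module $\tilde T_{Z_g}$ is generated by $a=x\partial_x-y\partial_y$ and $b=y\partial_y-z\partial_z$, and computing $\tilde T_{Z_g}\otimes_{\scr O}\CC[x]$ (i.e.\ setting $y=z=0$) gives $\CC[x]\oplus\CC[x]/(x)$ on the $x$-branch---rank one generically, but with a torsion summand at the node. Your parenthetical ``$\tilde T_{Z_g}|_{P_i}\cong T_{\PP^1}(-p-p')\cong\scr O_{\PP^1}$'' is correct as a statement about the \emph{image} of the restriction-of-sections map, not about $\nu^\ast$. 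The object you actually want is the locally free sheaf $T_{\tilde Z_g}(-D)\cong\bigsqcup_i\scr O_{P_i}$ on the normalization, where $D$ is the preimage of the nodes, together with the short exact sequence
$0\to\tilde T_{Z_g}\to\nu_\ast T_{\tilde Z_g}(-D)\to\bigoplus_p\CC\to 0$
coming from the balancing condition (one constraint per trivalent node). With that substitution the long exact sequences you invoke are the right ones, and the remainder of your argument goes through as written.
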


\begin{proof}
The line bundle $\scr{L}$ is trivial over all but one component of $Z_g$, where it has degree $1$. Let $P_{0,1}$ denote this distinguished $\PP^1$-component of $Z_g$ over which $\scr{L}$ is non-trivial, so that $\scr{L}^{\otimes k}|_{P_{0,1}} \cong \scr{O}_{\PP^1}(k)$. Written in local coordinates, sections of $\scr{L}^{\otimes k}$ over $P_{0,1}$ correspond to pairs $(f,f\dash)$, where $f \in \CC[z], f\dash \in \CC[z\dash]$ are a pair of polynomials satisfying $f(z) = z^k f\dash(\frac{1}{z})$. Sections of $\scr{L}^{\otimes k}$ must be constant over all other components of $Z_g$, so that we must also have $f(0) = f\dash(0)$. Therefore, sections of $\scr{L}^{\otimes k}$ correspond to sections of $\scr{O}_{\PP^1}(k)$ that have the same value at $0$ and $\infty$. Hence we see that the graded algebra structure on $\bigosum_{k \geq 0} H^0(\scr{L}^{\otimes k})$ is just isomorphic to the homogeneous coordinate ring of a degree-$1$ line bundle on a nodal elliptic curve. One may show using an elementary argument that there is an equivalence of graded algebras
    \begin{equation*}
        \bigoplus_{k=0}^{\infty} H^0(\scr{L}^{\otimes k}) \cong \CC[X,Y,Z]/(XYZ = Y^3 +Z^2)
\end{equation*}
where $|X|=1, |Y|=2,|Z|=3$ (see \cite[Proposition IV.4.6]{hartshorne} or \cite[\S 6.1]{hosgood}).

Explicit representatives for these generators in local coordinates on $P_{0,1}$ are given by:
\begin{itemize}
    \item $X$ is represented by $(1+z, 1+z\dash)$, and is $1$ on all other components;
    \item $Y$ is represented by $(z, z\dash)$, and is $0$ on all other components;
    \item $Z$ is represented by $(z^2, z\dash)$, and is $0$ on all other components.
\end{itemize}
which satisfy the relation $XYZ = Y^3 +Z^2$.

Next, we calculate the module structure on $\bigosum_{k \geq 0} H^0(\tilde{T}_{Z_g} \otimes \scr{L}^{\otimes k})$, the balanced vector fields with coefficients in $\scr{L}^{\otimes k}$. Over the component $P_{0,1}$, a section of $\tilde{T}_{Z_g} \otimes \scr{L}^{\otimes k}$ corresponds to a pair $(f, f\dash)$ with $f \in \CC[z], f\dash \in \CC[z\dash]$ satisfying $f(z) = z^{k} f\dash(\frac{1}{z})$. Over all of $Z_g$ a section $s$ of $\tilde{T}_{Z_g} \otimes \scr{L}^{\otimes k}$ corresponds to a tuple $(f, f\dash, a_2, \cdots, a_{3g-3})$ with $a_i \in \CC$, subject to $2g-4$ balancing conditions among the $a_i$s around each node except $p_0,p_1$, plus two further balancing conditions at $p_0$ and $p_1$, given by $f(0)+a_2+a_3=0, f\dash(0)+a_4+a_5=0$. Here the corresponding section $s$ is given by the vector field $a_i z\partial_z$ on component $i \neq 1$ of $Z_g$. We can find a basis of sections of $\tilde{T}_{Z_g} \otimes \scr{L}^{\otimes k}$ consisting of:
\begin{enumerate}
    \item sections with $f(0) = f\dash(0)$,
    \item sections that are zero on $P_{0,1}$.
\end{enumerate}
There are $k$ basis vectors of the first kind, corresponding to a basis of $H^0(\scr{L}^{\otimes k})$; there are $g-1$ basis vectors of the second kind, there being $3g-4$ constants to choose, subject to $2g-3$ balancing conditions. Hence, 
\begin{equation*}
    H^0(\tilde{T}_{Z_g} \otimes \scr{L}^{\otimes k}) \cong H^0(\scr{L}^{\otimes k}) \osum \CC^{g-1}
\end{equation*}
and the action of the algebra $\bigosum_{k \geq 0} H^0(\scr{L}^{\otimes k})$ preserves this direct sum decomposition. As a module over $\bigosum_{k \geq 0} H^0(\scr{L}^{\otimes k})$, the first summand corresponds to $\bigosum_{k \geq 0} H^0(\scr{L}^{\otimes k})$ as a module over itself. Of the generators of the ring $\CC[X,Y,Z]/(XYZ = Y^3 + Z^2)$, the sections $Y,Z$ act on the $\CC^{g-1}$ summand by $0$, while $X$ is $1$ outside of $P_{0,1}$, and so restricts to an isomorphism of $\CC^{g-1} \subset H^0(\tilde{T}_{Z_g} \otimes \scr{L}^{\otimes k})$ to $\CC^{g-1} \subset H^0(\tilde{T}_{Z_g} \otimes \scr{L}^{\otimes (k+1)})$. Therefore, as a module,
\begin{equation*}
    \bigosum_{k=0}^{\infty}H^0(\tilde{T}_{Z_g} \otimes \scr{L}^{\otimes k}) \cong \CC[X,Y,Z]/(XYZ = Y^3 + Z^2) \osum (\CC[X])^{g-1}
\end{equation*}

Next, we calculate the module structure on the cohomology groups $\bigosum_{k \geq 0} H^1(\scr{L}^{\otimes k})$. To calculate the cohomology of $\scr{L}^{\otimes k}$ we use the \v{C}ech complex:
 \begin{equation*}
     0 \to \scr{L}^{\otimes k}({Z_{g}}) \to \bigoplus_{i} \scr{L}^{\otimes k}(U_{p_i}) \overset{\dd}{\longrightarrow} \bigoplus_{i < j} \scr{L}^{\otimes k}(U_{p_i} \cap U_{p_j})  \to  0
 \end{equation*}
 where $U_i$ is the same open cover used above. We have
\begin{align*}
  &\scr{L}^{\otimes k}(U_{p_i}) \cong \CC[x,y,z]/(xy,yz,xz) \\ & \scr{L}^{\otimes k}(U_{p_j}) \cong \CC[x\dash,y\dash,z\dash]/(x\dash y\dash,y\dash z\dash ,x\dash z\dash) \\
  &\scr{L}^{\otimes k}(U_{p_i} \cap U_{p_j}) \cong \CC[x^\pm]\osum \CC[y^\pm]\osum \CC[z^\pm]
\end{align*}
and the restriction map takes $x,y,z \mapsto x,y,z$ and $x\dash,y\dash,z\dash \mapsto 1/x,1/y,1/z$ respectively, except on the component $P_{0,1}$, where $f\dash(z\dash) \mapsto z^k f\dash(\frac{1}{z})$. For $k \geq 0$, this means that the image of $\dd$ contains all polynomials with no constant terms, and so the cokernel of $\dd$ is spanned by $g-1$ classes represented by constant functions (which can be chosen to be zero on $P_{0,1}$) just as in the proof of Theorem \ref{thm:b_model} above. Therefore, $H^1(\scr{L}^{\otimes k}) \cong \CC^{g-1}$ and $Y,Z \in H^0(\scr{L}^{\otimes k})$ act by $0$, while again since $X$ is $1$ on all components other than $P_{0,1}$, multiplying by $X$ gives an isomorphism $H^1(\scr{L}^{\otimes k}) \to H^1(\scr{L}^{\otimes (k+1)})$. Hence, as a $\bigosum_{k \geq 0} H^0(\scr{L}^{\otimes k})$-module, 
\begin{equation*}
    \bigosum_{k=0}^{\infty} H^1(\scr{L}^{\otimes k})  \cong (\CC[X])^{g-1} \osum \CC
\end{equation*}
where the extra $\CC$ summand comes from the case $k=0$ and so all positive-degree generators of $R$ act by zero on this summand.

Finally, we need to calculate $H^1(\tilde{T}_{Z_g} \otimes \scr{L}^{\otimes k})$, again using the \v{C}ech complex:
 \begin{equation*}
     0 \to \tilde{T}_{Z_g} \otimes\scr{L}^{\otimes k}({Z_{g}}) \to \bigoplus_{i} \tilde{T}_{Z_g} \otimes\scr{L}^{\otimes k}(U_{p_i}) \overset{\dd}{\longrightarrow} \bigoplus_{i < j} \tilde{T}_{Z_g} \otimes\scr{L}^{\otimes k}(U_{p_i} \cap U_{p_j})  \to  0
 \end{equation*}
 where
  \begin{align*}
    \tilde{T}_{Z_{g}}\otimes\scr{L}^{\otimes k}(U_{p_i}) & \cong \CC[x,y,z]/(xy,yz,xz)\langle x \partial_x -  y \partial_y, y \partial_y - z \partial_z \rangle\\
        \tilde{T}_{Z_{g}}\otimes\scr{L}^{\otimes k}(U_{p_i}) & \cong \CC[x\dash,y\dash,z\dash]/(x\dash y\dash,y\dash z\dash,x\dash z\dash)\langle x\dash \partial_{x\dash} -  y\dash \partial_{y\dash}, y\dash \partial_{y\dash} - z\dash \partial_{z\dash} \rangle\\
      \bigoplus_{j} \tilde{T}_{Z_{g}}\otimes\scr{L}^{\otimes k}(U_{p_i} \cap U_{p_j}) & \cong \CC[x^{\pm}]\langle \partial_x \rangle \osum \CC[y^\pm]\langle \partial_y \rangle \osum \CC[z^{\pm}]\langle \partial_z \rangle
 \end{align*}
 and the restriction map takes $x,y,z \mapsto x,y,z$ and $x\dash,y\dash,z\dash \mapsto 1/x,1/y,1/z$ respectively, except on the component $P_{0,1}$, where $f\dash(z\dash) z\dash \partial_{z\dash} \mapsto - z^k f\dash(\frac{1}{z}) z \partial_z$. For $k \geq 1$, this restriction map on $U_0 \cap U_1$ means the rotation numbers of sections of $\tilde{T}_{Z_g} \otimes \scr{L}^{\otimes k}$ over $P_{0,1}$ need not be the same at $p_0$ and $p_1$, so, as in the proof of Theorem \ref{thm:b_model}, all unbalanced vector fields are in the image of $\dd$, and $\dd$ is surjective, meaning $H^1(\tilde{T}_{Z_{g}}\otimes\scr{L}^{\otimes k}) = 0$. For $k=0$, the image of $\dd$ contains only balanced vector fields and so $H^1(\tilde{T}_{Z_g}) \cong \CC$, and the product of this class with itself is therefore zero.
 \end{proof}

The degree-$0$ part of the above algebra is a kind of homogeneous coordinate ring and can be calculated using an elementary Riemann-Roch argument: see \cite[Proposition IV.4.6]{hartshorne}. We can use this to see that if we start with $\scr{L}^{\otimes 2}$ instead, we will get:
\begin{proposition} \label{prop:B side calculation for phi^2}
There is an equivalence of graded algebras
\begin{equation*}
        \bigoplus_{k=0}^{\infty} H^0(\scr{L}^{\otimes 2k}) \cong \CC[X,Y,Z]/(XYZ = Y^2 +Z^4)
\end{equation*}
where $|X|=1,|Y|=2,|Z|=1$. Denote this graded algebra by $R_2$.
\end{proposition}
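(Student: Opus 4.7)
The plan is to identify $\bigoplus_{k \geq 0} H^0(\scr{L}^{\otimes 2k})$ with the even-degree Veronese subring of the algebra $R = \CC[X, Y, Z]/(XYZ - Y^3 - Z^2)$ computed in the preceding theorem, and to exhibit an explicit isomorphism with the claimed presentation via a judicious choice of generators.

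First I would confirm that the target algebra $\CC[X,Y,Z]/(XYZ - Y^2 - Z^4)$ with weights $|X|=|Z|=1$, $|Y|=2$ has Hilbert function $1, 2, 4, 6, 8, 10, \ldots$, matching $\dim H^0(\scr{L}^{\otimes 2k})$; this is a direct count of weighted monomials in degree $n$ minus multiples of the degree-$4$ relation. The dimensions on the geometric side follow either from the preceding theorem (restricted to even degrees) or from an elementary Riemann-Roch argument on the nodal elliptic curve obtained by contracting all components of $Z_g$ other than $P_{0,1}$.

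Next I would write down explicit generators. In the local coordinate $z$ on $P_{0,1}$, a section of $\scr{L}^{\otimes 2k}$ is a polynomial $f(z)$ of degree at most $2k$ satisfying the balancing condition $f(0) = [z^{2k}]f$. I would set $X = 1 + z^2$ and $Z = z$ in $H^0(\scr{L}^{\otimes 2})$, and $Y = z$ in $H^0(\scr{L}^{\otimes 4})$; a direct computation gives $XYZ = z^2(1+z^2) = z^2 + z^4 = Y^2 + Z^4$ as elements of $H^0(\scr{L}^{\otimes 8})$, establishing the relation. The resulting graded homomorphism $\phi\colon \CC[X,Y,Z]/(XYZ - Y^2 - Z^4) \to \bigoplus_k H^0(\scr{L}^{\otimes 2k})$ is injective: the source is an integral domain, since $XYZ - Y^2 - Z^4$ viewed as a monic quadratic in $Y$ has discriminant $Z^2(X^2 - 4Z^2)$, which is not a square in $\CC[X,Z]$, and $\phi$ sends each generator to a nonzero section. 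Equality of Hilbert functions then upgrades injectivity to bijectivity in each graded piece.

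The main obstacle is the nonobvious choice of generators. If one works naively with the Veronese subring of $R$ and picks $X^2, Y, XZ$, then the computation $X^2 \cdot (XYZ - Y^3 - Z^2) = 0$ yields the relation $XYZ = Y^2 + XZ^3$ (after relabeling), which only matches the cleaner form claimed in the proposition after a nonlinear change of variables on $\PP(1,1,2)$ (essentially a completion of the square in $Y$ together with a reparameterization of the degree-one sections). Choosing the three explicit sections above bypasses this inconvenience and makes the relation transparent.
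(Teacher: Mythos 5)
The paper does not spell out a proof of this proposition; it invokes the classical theory of projective embeddings of (nodal) elliptic curves (Hartshorne, Prop.~IV.4.6, and Hosgood, \S6.1), via the map to a quartic in $\PP(1,1,2)$. Your explicit route is a reasonable alternative, and your choice of generators --- $X = 1+z^2$ and $Z = z$ in $H^0(\scr{L}^{\otimes 2})$, $Y = z$ in $H^0(\scr{L}^{\otimes 4})$ --- is well made: the relation $XYZ = z^2(1+z^2) = Y^2 + Z^4$ comes out immediately, without the completion-of-the-square change of coordinates that the na\"ive Veronese generators $X'^2, Y', X'Z'$ would require. Your Hilbert function computation for $\CC[X,Y,Z]/(XYZ-Y^2-Z^4)$, and your verification that the three sections satisfy the balancing condition, are both correct.

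There is, however, a gap in the injectivity step. Your discriminant computation correctly shows that the source $R_2$ is a domain, but the fact that $R_2$ is a domain and that each generator maps to a nonzero section does not imply $\phi$ is injective: the map $\CC[X,Y] \to \CC[t]$, $X,Y \mapsto t$, has a domain as source and nonzero image on each generator, yet has kernel $(X-Y)$. Equality of Hilbert functions lets you deduce injectivity from surjectivity (or vice versa), but cannot supply either on its own. A clean fix is to establish surjectivity first: the even-degree subring of $\bigoplus_k H^0(\scr{L}^{\otimes k}) \cong \CC[X',Y',Z']/(X'Y'Z'-Y'^3-Z'^2)$ is generated by $u = X'^2 = (1+z)^2$, $v = Y' = z$ (degree $2$), $w = X'Z' = z^2 + z^3$ (degree $4$), and these lie in the image of $\phi$ via $v = Z$, $u = X+2Z$, $w = Z^2 + XZ - Y$; surjectivity plus your Hilbert function count then gives injectivity. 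Alternatively, one can note that the map $\CC[X,Y,Z] \to \CC[z,t]$ recording degree by the auxiliary variable $t$, namely $X \mapsto (1+z^2)t$, $Y \mapsto zt^2$, $Z \mapsto zt$, has two-dimensional image, so its kernel is a height-one prime in a UFD and hence equals the principal ideal generated by the irreducible element $XYZ - Y^2 - Z^4$. Either repair completes the argument.
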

This comes from the map ${{Z_{g}}} \to \PP(1,1,2)$ whose image is a quartic hypersurface in the weighted projective plane $\PP(1,1,2)$: see \cite[\S 6.1]{hosgood}.

Starting from this, it is not difficult to prove that
\begin{theorem}
If $\scr{L}$ is a line bundle over $Z_g$ that has degree $1$ over one single $\PP^1$ component and is trivial over all the others, then there is an equivalence of graded algebras:
    \begin{equation*}
        \bigosum_{k=0}^{\infty} H^0(\scr{L}^{\otimes 2k}) \osum H^1(\tilde{T}_{Z_g} \otimes \scr{L}^{\otimes 2k}) \cong R_2 \oplus \CC
    \end{equation*}
    where $\CC$ summand is a square-zero extension by $R_2/(X,Y,Z)$. Moreover, there is an equivalence of $R_2$-modules,
    \begin{equation*}
                \bigosum_{k=0}^{\infty} H^1(\scr{L}^{\otimes 2k}) \osum H^0(\tilde{T}_{Z_g} \otimes \scr{L}^{\otimes 2k}) \cong R_2 \oplus (\CC[X])^{2g-2} \oplus \CC
    \end{equation*}
    where $\CC$ is a square-zero extension by $R_2/(X,Y,Z)$.
\end{theorem}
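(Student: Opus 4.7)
The plan is to run an essentially line-by-line adaptation of the proof of the previous theorem (the one computing $\bigoplus_k H^0(\scr{L}^{\otimes k}) \oplus H^1(\tilde{T}_{Z_g} \otimes \scr{L}^{\otimes k})$), with $\scr{L}^{\otimes k}$ replaced everywhere by $\scr{L}^{\otimes 2k}$. The degree-zero part of the algebra on the left-hand side has already been identified in Proposition \ref{prop:B side calculation for phi^2} as $R_2 = \CC[X,Y,Z]/(XYZ - Y^2 - Z^4)$, so the only remaining task is to track how the extra $\CC^{g-1}$ summands (balanced constant rotations on the compact components) and the $H^1$ contributions behave as graded $R_2$-modules.

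First I would rerun the \v{C}ech calculation on the cover $\{U_{p_i}\}$ used above, now with the bundle $\scr{L}^{\otimes 2k}$. For $H^0(\tilde{T}_{Z_g} \otimes \scr{L}^{\otimes 2k})$, the same basis-splitting argument -- sections with matching values at $0$ and $\infty$ on $P_{0,1}$ versus sections vanishing on $P_{0,1}$ given by constant rotations on the remaining $(3g-4)$ compact components subject to $(2g-3)$ balancing conditions -- produces $H^0(\scr{L}^{\otimes 2k}) \oplus \CC^{g-1}$, with $X$ acting as the identity on the $\CC^{g-1}$ summand while $Y,Z$ act by zero (since $Y$ and $Z$ vanish off $P_{0,1}$ whereas $X$ equals $1$ there). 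For $H^1(\scr{L}^{\otimes 2k})$ with $2k \geq 1$, the image of $\dd$ contains all Laurent polynomials without constant terms on the intersections, and the cokernel is $(g-1)$-dimensional, represented by constants that can be chosen to vanish on $P_{0,1}$; the case $k=0$ picks up the extra $\CC$ from $H^1(\scr{O}_{Z_g})$, on which all positive-degree generators act by zero. Analogously, for $H^1(\tilde{T}_{Z_g} \otimes \scr{L}^{\otimes 2k})$ the twist by the nontrivial degree-$2k$ line bundle on $P_{0,1}$ means the unbalanced vector field $x\partial_x$ lies in the image of $\dd$ once $k \geq 1$, so this cohomology vanishes; for $k=0$ we pick up the single class in $H^1(\tilde{T}_{Z_g})$, giving the claimed square-zero $\CC$ summand.

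Assembling these computations and sorting by parity of $j + \ell$ immediately gives the two stated isomorphisms: the even part collects $R_2$ together with $\bigoplus_k H^1(\tilde{T}_{Z_g} \otimes \scr{L}^{\otimes 2k}) \cong \CC$, and the odd part collects $(\CC[X])^{g-1} \oplus \CC$ from $H^1(\scr{L}^{\otimes 2k})$ together with $R_2 \oplus (\CC[X])^{g-1}$ from $H^0(\tilde{T}_{Z_g} \otimes \scr{L}^{\otimes 2k})$. The only mildly delicate point is bookkeeping the grading, since $R_2$ is generated in degrees $1,1,2$ rather than $1,2,3$ as for $R$; one should confirm via Riemann-Roch (equivalently via the embedding into $\PP(1,1,2)$ from \cite[\S 6.1]{hosgood}) that $\dim H^0(\scr{L}^{\otimes 2k})$ agrees with the dimension of the degree-$2k$ piece of $R_2$. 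This is precisely the content of Proposition \ref{prop:B side calculation for phi^2}, so there is no genuine obstacle: the entire argument is routine once that proposition is granted.
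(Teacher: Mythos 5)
Your proof is correct and follows exactly the route the paper intends. In fact the paper does not supply a proof of this statement at all: immediately after establishing Proposition~\ref{prop:B side calculation for phi^2}, it simply asserts that the result ``is not difficult to prove,'' implicitly deferring to the line-by-line adaptation of the preceding theorem's \v{C}ech computation. Your write-up supplies precisely those omitted details and gets the bookkeeping right (the $g-1$ constant rotations from $3g-4$ components subject to $2g-3$ independent balancing conditions, the vanishing of $H^1(\tilde{T}_{Z_g}\otimes\scr{L}^{\otimes 2k})$ for $k\geq 1$, the extra $\CC$ at $k=0$ from $H^1(\tilde{T}_{Z_g})$ and from $H^1(\scr{O}_{Z_g})$, and the module actions of $X,Y,Z$ via their values on and off $P_{0,1}$). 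The only cosmetic slip is referring to ``the degree-$2k$ piece of $R_2$'' where the grading convention of Proposition~\ref{prop:B side calculation for phi^2} makes $H^0(\scr{L}^{\otimes 2k})$ the degree-$k$ piece; this does not affect the argument.
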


There are analogous results for the graded rings obtained by starting with $\scr{L}^{\otimes k}$ instead for $k \geq 3$, corresponding to further embeddings of nodal elliptic curves into weighted projective spaces: see \cite[\S 6.1]{hosgood} for details.

\begin{remark}
    The results from \cite{hosgood,kraehmer} are stated with the assumption of smoothness, but they continue to apply in our case with the singular curve ${{Z_{g}}}$.
\end{remark}

\section{Standard Lefschetz fibrations}\label{sec:lefschetz}

In this section we review some of the important geometric setups that will be relevant in the A-side calculations, especially those related to the Seidel class. 

Given a compactly-supported symplectomorphism $\phi: M \to M$ of a Liouville domain $(M,\lambda_M)$, define a \textbf{standard Lefschetz fibration} $\pi: E \to \CC$ for $\phi$ to be an exact symplectic Lefschetz fibration over $\CC$ (in the sense of \cite{seidel}) with total space a Liouville domain $(E,\lambda_E)$ with the following properties:
\begin{itemize}
    \item Each smooth fiber $F_t = \pi\inv(t)$ is diffeomorphic to $M$ and the restricted Liouville form $(F_t, \lambda_{E}|_{F_t})$ makes the fiber $F_t$  exact symplectomorphic to $(M, \lambda_M)$ via symplectic parallel transport.
    \end{itemize}
Using clockwise parallel transport around the boundary of the unit disk we get symplectomorphisms $\phi_t: M \to F_{\e{it}}$ and identify $\phi_{2 \pi}: M \to M$ with the symplectomorphism $\phi$. Then we require the additional condition:
    \begin{itemize}
        \item Outside of the disk $\set{|z| \leq {1-\epsilon}}$, there is an isomorphism of $\pi: E \to \CC$ as an exact symplectic fiber bundle, with the symplectization of a mapping torus $\mathbb{R}\times M_\phi$, where $M_\phi$ is the mapping torus of $\phi$ with the fiberwise Liouville $1$-form induced from $\lambda_M$. Here we view the symplectization of the mapping torus $\RR\times M_{\phi}$ as an exact symplectic fiber bundle with base $\RR\times S^1$, and fiber $(M,\lambda_M)$.
    \end{itemize}

One has a more general notion of a \textbf{standard Lefschetz fibration over a Riemann surface $S$ with ends}: strip-like ends, cylindrical ends, and \textit{twisting data} (a choice of integer $b_i$ for every boundary component and interior puncture), compare \cite[p.239]{seidel}. Here the counterclockwise parallel transport of $\pi:E \to S$ along each boundary component or (sufficiently close to each) interior puncture is identified with the corresponding $\phi^{b_i}$, and $\pi:E \to S$ comes with isomorphisms (as exact symplectic fiber bundles) with the (positive or negative) symplectization of an open mapping torus of $\phi^{b_i}$ over each cylindrical end, and with the trivial product bundle over each strip-like end. The above is a special case where $S=\CC$ with a positive cylindrical end outside $\set{|z|\leq 1 -\epsilon}$ and twisting data given by $1$.

If $V \subset M$ is a framed exact Lagrangian sphere in $M$, Seidel in \cite{seidel_LES} builds a canonical standard Lefschetz fibration over $\CC$ for the Dehn twist $\phi_V$, with a single critical point of $\pi$ over $0\in \mathbb{C}$. We may then choose to identify the fiber over $1$ with $M$, and identify $V$ with the vanishing cycle obtained by parallel transport along the ray $\RR_{\geq 0}$. By a lemma of \cite{seidel_LF2} which allows us to identify a Lefschetz fibration at infinity with the mapping cylinder of its global monodromy, the second condition will then follow. By \cite{giroux_pardon}, there is an essentially unique way to associate a standard Lefschetz fibration to a Dehn twist on a Weinstein manifold, up to Weinstein homotopy. In general, of course, there is no canonical standard Lefschetz fibration associated to a symplectomorphism (despite the name). 

For later use, we give a recall of Seidel's construction of a standard Lefschetz fibration associated to a Dehn twist:

\begin{proposition}\label{prop:standard_LF}
Given a circle $S^1 \subset \Sigma$ inside a punctured Riemann surface $(\Sigma,\omega)$, we can choose a Liouville form $\lambda_\Sigma$ so that we can construct the standard Lefschetz fibration for the Dehn twist $\phi$ along this circle.
\end{proposition}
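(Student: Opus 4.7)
The plan is to exhibit an explicit Liouville form $\lambda_\Sigma$ on $\Sigma$ that makes $(\Sigma, \lambda_\Sigma)$ into a Liouville manifold with cylindrical ends at the punctures and simultaneously makes the circle $S^1 \subset \Sigma$ into an exact Lagrangian. Once this is in place, $S^1$ carries a canonical framing (any oriented $1$-sphere is framed as a Lagrangian sphere of dimension one), so Seidel's construction from \cite{seidel_LES} applies directly to produce the desired standard Lefschetz fibration $\pi: E \to \CC$ for the Dehn twist $\phi$.

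First, since $\Sigma$ is a non-compact surface, $H^2(\Sigma; \RR) = 0$, and the symplectic form $\omega$ admits a global primitive $\lambda_0$. Near each puncture there is a distinguished collar of the form $(R_i, \infty) \times S^1$, and a standard cutoff argument modifies $\lambda_0$ by an exact form so that on each such collar it takes the model form $r\, d\theta$; call the resulting primitive $\lambda_1$. Let $V \subset \Sigma$ denote the compact complement of these collars, a compact surface with boundary containing $S^1$ (after a compactly supported Hamiltonian isotopy if necessary).

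Second, I would arrange that $\oint_{S^1} \lambda_\Sigma = 0$ by an adjustment by a closed $1$-form supported inside $V$. If $[S^1] = 0$ in $H_1(V;\RR)$ then $\oint_{S^1}\lambda_1 = 0$ automatically by Stokes' theorem; otherwise the evaluation map $H^1(V;\RR) \to \RR$, $[\beta] \mapsto \oint_{S^1}\beta$, is surjective, so I can pick a closed $1$-form $\beta$ supported in $V$ with $\oint_{S^1}\beta = -\oint_{S^1}\lambda_1$. Setting $\lambda_\Sigma := \lambda_1 + \beta$ then gives a primitive of $\omega$ with vanishing period on $S^1$, while preserving the cylindrical model at every puncture.

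With this $\lambda_\Sigma$, the circle $S^1$ is a framed exact Lagrangian sphere in the Liouville manifold $(\Sigma, \lambda_\Sigma)$, so Seidel's construction applies verbatim; the asymptotic mapping-cylinder identification required in the definition of a standard Lefschetz fibration then follows from the lemma of \cite{seidel_LF2} cited above. The main subtlety is the compatibility of the cylindrical-end normal form at the punctures with the vanishing-period condition on $S^1$, but since the former is determined modulo exact forms and the latter depends only on the class of the primitive in $H^1(\Sigma;\RR)$, and since $\beta$ can be chosen to alter only the period around $S^1$ (being supported away from the collars), there is no obstruction to satisfying both simultaneously.
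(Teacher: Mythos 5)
Your reduction---adjust the primitive so that $S^1$ becomes exact, then invoke Seidel's construction from \cite{seidel_LES}---is the natural high-level approach, and it addresses a point the paper passes over in silence. However, two steps in your existence argument for $\lambda_\Sigma$ are broken.

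The Stokes step is wrong: $\lambda_1$ is a primitive of $\omega$, not a closed form. If $[S^1]=0$ in $H_1(V;\RR)$ then $S^1$ bounds a compact subsurface $D\subset V$, and $\oint_{S^1}\lambda_1=\int_D\omega=\mathrm{Area}(D)>0$, and this value is the same for every primitive of $\omega$. In that case $S^1$ can never be made exact and the proposition itself is false. What rescues the statement is the standing hypothesis of Remark \ref{remark: topological assumption on Dehn twists} that the twist curve is non-separating, which in particular forces $[S^1]\neq 0$; you should invoke that hypothesis up front rather than attempt a case split. Second, if $\beta$ is required to vanish on the cylindrical collars (as it must be, to preserve the end normal form), the relevant class lives in $H^1_c(\mathrm{int}\,V;\RR)$ rather than $H^1(V;\RR)$, and the evaluation $\beta\mapsto\oint_{S^1}\beta$ on $H^1_c$ is surjective onto $\RR$ precisely when $[S^1]\neq 0$ in $H_1(V,\partial V;\RR)$, which is exactly the non-separating condition. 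The weaker condition $[S^1]\neq 0$ in $H_1(V;\RR)$ does not suffice: an end-parallel curve is separating yet has nontrivial class in $H_1(V;\RR)$, and it pairs trivially against every compactly supported closed one-form. Both slips are healed simultaneously by assuming $S^1$ non-separating, but as written the argument has a genuine gap.

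For comparison, the paper does not establish the existence of $\lambda_\Sigma$ abstractly and then cite \cite{seidel_LES} verbatim. Instead it fixes a Weinstein neighborhood $(-\lambda,\lambda)\times S^1$ with $\lambda_\Sigma=x\,dy$ there, writes down the explicit local model from \cite{seidel_LES}, modifies it by a chosen Morse perturbation $H_0$ away from the twist region, and then directly verifies that outside a large disk the resulting fibration is the symplectization of the mapping torus of the relevant time-one map. The perturbation $H_0$ is not cosmetic: it is what determines the perturbed fixed points $e_i^d$, $h_i^d$ that underlie the Floer computations of \S\ref{sec:product} and \S\ref{sec:computation}. A proof at your level of abstraction is fine for pure existence, but you would still need to retrofit $H_0$ into Seidel's model for the downstream calculations.
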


\begin{proof}
Most of this is already done in e.g. \cite{seidel_LES}. In the following, we will closely follow the setup in \cite{seidel_LES} and explain the necessary modifications that need to be made to suit our purposes. 

Fix positive real numbers $\lambda, r>0$. We choose a local coordinate $(x,y)\in (-\lambda,\lambda)\times S^1$ for the region around the Dehn twist, over which the one-from $\lambda_\Sigma=x dy$. Fix a Morse function $H_0$ on the complement of a small neighborhood of the circle $\{x=0\}\subset \Sigma$, with two local minimum corresponding to the fixed points $e_0^1$ and $e_1^1$. We take the local model of the fibration $E$ over the disk $\mathbb{D}_r$ to be the same as the one described in \cite{seidel_LES} equation (1.17), with one modification coming from the Hamiltonian perturbation $H_0$:
\[
\omega=d(xdy+(g(|x|)-1)\tilde{R}_{r}(|x|)d\theta +H_0 d\theta),
\]
where $g:[0,\lambda)\to \mathbb{R}$ is a smooth, non-decreasing function with $g(s)=0$ for $s$ small and $g(s)=1$ for $s$ close to $\lambda$, and $\tilde{R}_r(s)=\frac{1}{2}(s-\sqrt{s^2+r^2/4})$.

Outside of the twist region, the fibration over $\mathbb{C}$ is defined as the trivial product together with the fiberwise two-form
\[
\omega = \omega_\Sigma+ d(H_0 d\theta).
\]

To extend the fibration to $\mathbb{C}\setminus \mathbb{D}_r$, we pick a non-decreasing smooth function $\psi:[0,\infty)\to[0,\infty)$ with the following properties:
\begin{enumerate}
    \item $\psi(s)=s$ for $s\in[0,r]$, and
    \item There exist $R>r$ and $C>0$ such that $\psi(s)=C$ for $s>R$.
\end{enumerate}

Now we define the local structure of the fibration $E$ over $\mathbb{C}$ to be given by the two-form
\[
\omega=d(xdy+(g(|x|)-1)\tilde{R}_{\psi(r)}(|x|)d\theta +H_0(x,y)d\theta).
\]

It is not difficult to see that over $\mathbb{C}\setminus \mathbb{D}_R$, the fibration is isomorphic (as an exact Lefschetz fibration) to the symplectization of the mapping torus $(R,\infty) \times M_\phi$, where $\phi$ is the time-1 map of $(g(|x|)-1)\tilde{R}_C(|x|)+H_0$.
\end{proof}

\begin{remark} \label{remark:seidel class for Riemann surface with boundary}
Note that our $\pi:E \to \CC$ is an exact symplectic Lefschetz fibration in the sense of \cite{seidel}: the total space $(E,\lambda_E)$ is made into a Liouville domain by taking a horizontal convex slice and then rounding the corners. Therefore, our definition of the Seidel class involves only counting those sections contained inside this domain; if wrapping is performed at infinity (in the open setting), then this is performed in the completion of the domain and does not affect the count for the Seidel class.
\end{remark}
\begin{remark}
    If we choose a different Liouville form $\lambda_\Sigma'$ that is only different from $\lambda_\Sigma$ outside of the neighborhood (described in the above proof) of the circle, the resulting Lefschetz fibration, when viewed as a symplectic manifold, will be the same.
\end{remark}
For the non-exact case, let $\Sigma$ denote a closed Riemann surface with symplectic form $\omega_\Sigma$. Using the same techniques as in the exact case (again, we need to fix a local primitive $\lambda_\Sigma=xdy$ of $\omega_\Sigma$ in a neighborhood of the circle with coordinates $(x,y)\in (-\lambda,\lambda)\times S^1$) we have
\begin{proposition}
We can construct a (non-exact) Lefschetz fibration $E\rightarrow \mathbb{C}$ with the following properties.
\begin{itemize}
\item The generic smooth fiber is $\Sigma$, a Riemann surface with punctures with symplectic form $\omega_\Sigma$.
\item The only critical point lies over $0\in \mathbb{C}$, with the monodromy map a Dehn twist about a circle $S^1 \subset \Sigma$.
\item Away from the origin $0\in \mathbb{C}$, the Lefschetz fibration is isomorphic as a symplectic fiber bundle to the symplectization of a mapping torus $M_\phi \times \mathbb{R}$, where $M_\phi$ is the mapping torus and the symplectic structure on $M_\phi \times \mathbb{R}$ is given by $dr\wedge d\theta + \omega$.
\end{itemize}
Hence we have a (nonexact) standard Lefschetz fibration for the Dehn twist of $\Sigma$ around this circle.
\end{proposition}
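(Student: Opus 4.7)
The plan is to follow the proof of Proposition \ref{prop:standard_LF} (the exact case) nearly verbatim, modifying only the global gluing step to accommodate the fact that $\omega_\Sigma$ need not be globally exact. The crucial observation is that the entire Lefschetz deformation is supported inside a tubular neighborhood $U \cong (-\lambda,\lambda) \times S^1$ of the vanishing circle, on which a local primitive $\lambda_\Sigma = x\,dy$ does exist; the local Lefschetz model from \cite{seidel_LES} can therefore be transplanted verbatim into $U \times \CC$, and only the patching to the complement requires any new input.

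Concretely, I would cover the total space by two regions. In the twist region $U \times \CC$ we place the fiberwise two-form
\[
\omega = d\bigl(x\,dy + (g(|x|)-1)\tilde{R}_{\psi(|z|)}(|x|)\,d\theta + H_0\,d\theta\bigr),
\]
with $g$, $\tilde{R}_r$ and $\psi$ chosen exactly as in the proof of Proposition \ref{prop:standard_LF}, and with $H_0$ a Morse function on $\Sigma$ supported away from $\{x=0\}$. In the complementary region $(\Sigma \setminus U') \times \CC$ with $U' \Subset U$, we place the product symplectic form $\omega_\Sigma + d(H_0\,d\theta)$. Because $g \equiv 1$ on $U \setminus U'$, the twist-region formula reduces on the overlap to $d(x\,dy) + d(H_0\,d\theta) = \omega_\Sigma + d(H_0\,d\theta)$, so the two forms agree on the overlap and assemble into a globally defined closed two-form on $E$.

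Having fixed the symplectic structure, I would verify the three listed properties directly, each of which is a local computation already carried out in Proposition \ref{prop:standard_LF}: the fiberwise restriction is $\omega_\Sigma$ (with a Hamiltonian perturbation by $H_0$); there is a single critical point over $0 \in \CC$ whose vanishing cycle under parallel transport is the chosen $S^1 \subset \Sigma$, with monodromy the Dehn twist; and for $|z| > R$ the function $\psi$ is constant equal to $C$, so the total two-form is translation-invariant in the radial $\CC$-direction and realizes the symplectization of the mapping torus of the time-$1$ map of $(g(|x|)-1)\tilde{R}_C(|x|) + H_0$, which is the desired Dehn twist $\phi$ up to Hamiltonian isotopy.

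The main (and relatively mild) obstacle is verifying non-degeneracy of the assembled two-form on the gluing annulus, since the interpolation now takes place against a background form that is closed but not exact; this reduces, as in \cite{seidel_LES}, to bounding the perturbation by $H_0$ and by the cutoff terms in $C^2$-norm and rescaling if necessary. Crucially, exactness of $\omega_\Sigma$ is never used in the local Lefschetz model, so no further modification is required.
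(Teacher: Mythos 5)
Your proposal is correct and follows precisely the approach the paper intends: the paper does not give a separate proof of this proposition, instead stating that the construction from Proposition~\ref{prop:standard_LF} carries over once one fixes a local primitive $\lambda_\Sigma = x\,dy$ of $\omega_\Sigma$ near the vanishing circle, and your write-up spells out the details of why this works (the deformation is supported in the twist region where the local primitive exists, and on the overlap with the complementary region the two-forms literally coincide since $g \equiv 1$ there). Your only slight overstatement is describing the gluing annulus as requiring an interpolation against a non-exact background form---in fact the forms agree identically on the overlap, so the non-degeneracy concern is no worse than in the exact case---but this does not affect the correctness of the argument.
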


For future use we now construct a Lefschetz fibration with the following properties:

\begin{proposition}\label{prop:two_crit_points}
    Let $\Sigma$ be a Riemann surface (closed or with punctures). Let $V_{-},V_{+} \subset \Sigma$ be vanishing circles along which we perform Dehn twists $\phi_{V_i}$. Then we can construct an (exact or non-exact, respectively) Lefschetz fibration $\pi:E\rightarrow \CC$ with the following properties
    \begin{itemize}
        \item The generic smooth fiber is $\Sigma$, with symplectic form $\omega_\Sigma$ (in the exact case with also equip it with $d\lambda_\Sigma =\omega_\Sigma$).
        \item There are two critical points, and they lie over $\pm 1 \in \mathbb{C}$. The monodromy map around $\pm 1$ is a Dehn twist around $V_{\pm} \subset \Sigma$.
        \item Outside of a large disk, the Lefschetz fibration is isomorphic as a symplectic fiber bundle to the symplectization of a mapping torus $M_{\phi} \times \mathbb{R}$ of $\phi = \phi_{V_{-}}\phi_{V_{+}}$, where $M_{\phi}$ is the mapping torus with the symplectic structure on $M_\phi \times \mathbb{R}$ given by $dr\wedge d\theta + \omega$.
    \end{itemize}
    Hence we have a standard Lefschetz fibration for the composition of Dehn twists of $\Sigma$ around this $V_{+}, V_{-}$.
\end{proposition}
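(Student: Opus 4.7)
The plan is to assemble the desired Lefschetz fibration from two copies of the standard local model of Proposition \ref{prop:standard_LF} placed over small disks around $\pm 1$, interpolate between them by a trivial symplectic fiber bundle in an intermediate annular region, and then extend to infinity by invoking the mapping-cylinder-at-infinity identification for Lefschetz fibrations.

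First, choose disjoint disks $\mathbb{D}_{r}(\pm 1) \subset \mathbb{C}$ and an ambient Liouville (resp. symplectic) form $\lambda_{\Sigma}$ (resp. $\omega_{\Sigma}$) on $\Sigma$ whose local primitive $xdy$ near the circles $V_{\pm}$ is of the standard form used in the proof of Proposition \ref{prop:standard_LF}. Applying Proposition \ref{prop:standard_LF} to each vanishing circle $V_{\pm}$ produces Lefschetz fibrations $\pi_{\pm}: E_{\pm} \to \mathbb{D}_{R}(\pm 1)$ with unique critical point over $\pm 1$, monodromy $\phi_{V_{\pm}}$, and identified (outside a slightly smaller subdisk) with the product bundle $\mathbb{D}_{R}(\pm 1) \setminus \mathbb{D}_{r}(\pm 1) \times \Sigma$ with the fiberwise form $\omega_{\Sigma}$ perturbed by a Hamiltonian term supported near $V_{\pm}$.

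Next, over a region $U \subset \mathbb{C}$ obtained by removing the two small disks $\mathbb{D}_{r}(\pm 1)$ from a large disk $\mathbb{D}_{R'}$, I take the trivial product bundle $U \times \Sigma$ equipped with the fiberwise form $\omega_{\Sigma}$, modified by the same Morse Hamiltonian perturbation $H_0$ as in Proposition \ref{prop:standard_LF}. Because the perturbations coming from the two local models agree with this trivial picture on their overlap (in each region the monodromy-creating term from $g(|x|)-1)\tilde{R}_{r}(|x|)$ vanishes), the three pieces glue into an exact symplectic Lefschetz fibration over $\mathbb{D}_{R'}$ with the correct vanishing cycles and monodromies.

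Finally, on the complement $\mathbb{C} \setminus \mathbb{D}_{R'}$ I attach the (positive) symplectization of the mapping torus $M_{\phi}$ of $\phi := \phi_{V_-} \phi_{V_+}$ with its induced fiberwise Liouville form. The key point is that the clockwise boundary monodromy of the already-constructed fibration over $\partial \mathbb{D}_{R'}$ is isotopic to $\phi$ through compactly supported symplectomorphisms, so by the Giroux--Pardon / Seidel \cite{seidel_LF2, giroux_pardon} identification-at-infinity lemma there is an isomorphism of exact symplectic fiber bundles between a collar of $\partial \mathbb{D}_{R'}$ in the constructed piece and a collar in the symplectization of $M_{\phi}$; this lets us glue the two pieces. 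The hard part is ensuring that the symplectic parallel transport along $\partial\mathbb{D}_{R'}$ genuinely produces the composition $\phi_{V_-}\phi_{V_+}$ with the correct ordering and within the compactly-supported isotopy class required by the identification lemma, which is handled exactly as in Proposition \ref{prop:standard_LF} by choosing the cut-off functions $g$ and the interpolation radius $\psi$ appropriately; the non-exact case is identical modulo replacing Liouville data by symplectic data.
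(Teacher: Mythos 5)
The paper's proof is one sentence: start with a symplectic fiber bundle over the thrice-punctured sphere (punctures at $\pm 1$ and $\infty$) with prescribed monodromies $\phi_{V_\pm}$ and $\phi = \phi_{V_-}\phi_{V_+}$, constructed as in \cite{ziwenyao}, and then glue in the one-critical-point local Lefschetz models near $\pm 1$. Your overall strategy — local models near $\pm 1$ glued to a connecting piece, with a symplectization of $M_\phi$ at infinity — is in the same spirit, but there is a genuine error in the way you describe the connecting piece.

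You take the connecting piece over $U = \mathbb{D}_{R'} \setminus (\mathbb{D}_r(1) \cup \mathbb{D}_r(-1))$ to be the \emph{trivial} product bundle $U \times \Sigma$ with fiberwise form $\omega_\Sigma + d(H_0\,d\theta)$, and you assert in passing that on the overlap annuli the monodromy-creating term $(g(|x|)-1)\tilde{R}_{\psi(\rho)}(|x|)$ vanishes so the two pictures match. This is not true: that term vanishes only where $g(|x|)=1$, i.e.\ away from the vanishing circle $V_\pm$ in the \emph{fiber}; near $V_\pm$ in the fiber (small $|x|$) the term is nonzero for any base radius $\rho$, and indeed \emph{must} be, since it is exactly what produces the Dehn twist monodromy around the loop $\partial\mathbb{D}_R(\pm 1)$. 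A trivial product bundle over $U$ has monodromy Hamiltonian-isotopic to the identity around every loop, while the bundle that glues to your local models must have monodromy $\phi_{V_+}$, $\phi_{V_-}$, and $\phi_{V_-}\phi_{V_+}$ around the three boundary circles of the pair of pants $U$; these are not isotopic to the identity, so there is no isomorphism of symplectic fiber bundles on the overlaps and the gluing you describe cannot be performed. Acknowledging that the ordering and isotopy class of the boundary monodromy need care (as you do at the end) does not resolve this: the obstruction is already present at the level of existence of the intermediate bundle.

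The fix is precisely what the paper does: replace the trivial product over $U$ by a symplectic fiber bundle over the pair of pants with the prescribed non-trivial monodromies (the construction from \cite{ziwenyao}, which interpolates the monodromy-creating terms coherently across the pair of pants), and then glue in the local Lefschetz models over small disks at $\pm 1$. With that substitution, the rest of your argument (the match-up along cylindrical collars, and the identification with the symplectization of $M_\phi$ outside a large disk) goes through.
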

\begin{proof}
    Take a pair of pants with punctures $\pm 1 \in \CC$ and $\infty$. Construct, as in \cite{ziwenyao}, a symplectic fiber bundle over this pair of pants with fiber $\Sigma$, monodromy $\phi_{V_{\pm}}$ around $\pm 1$ and monodromy $\phi$ around $\infty$. Next, near $\pm 1$ glue in the standard Lefschetz fibrations with one critical point (and monodromy $\phi_{V_{\pm}}$). This is possible because sufficiently far away from the singular fiber the standard Lefschetz fibrations we constructed in Proposition \ref{prop:standard_LF} are isomorphic to the symplectization of a mapping torus.
\end{proof}

\section{Symplectic cohomology of singular hypersurfaces}\label{sec:seidel}

Suppose $f:X \to \CC$ is a holomorphic function on a Stein manifold $X$, having general fiber $M$ and a single singular fiber $M^0$ over $0$; and suppose $\phi:M \to M$ is the counterclockwise monodromy symplectomorphism of the fiber. The Fukaya category $\scr{F}(M^0)$ of the singular fiber is defined in \cite{jeffs}, and is quasi-equivalent to the localization of the Fukaya category $\scr{F}(M)$ of the smooth fiber at Seidel's natural transformation $s: \mathrm{id} \to \phi$ (see below). The purpose of this section is to prove:

\begin{theorem}\label{thm:HF_equals_QH} Suppose $M$ is a non-degenerate Liouville manifold in the sense of \cite[Definition 1.1]{ganatra_thesis}: then the twisted closed-open map $\scr{CO}_{\phi}$ is an isomorphism and there is an equivalence of graded algebras:
    \begin{equation*}
    \mathrm{HH}^{\ast}(\scr{F}(M^0)) \cong \dlim_d \mathrm{HF}^{\ast}(\phi^d)
\end{equation*}
where the connecting maps in the direct limit are given by multiplication by the Seidel class $S$ in $\mathrm{HF}^0(\phi)$ (see Definition \ref{def:seidel class} below).
\end{theorem}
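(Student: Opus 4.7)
The plan is to split the theorem into two independent components: a twisted version of Ganatra's theorem for non-degenerate Liouville manifolds, together with a purely algebraic computation of Hochschild cohomology of a localization. Since $\scr{F}(M^0)$ is by definition the localization of $\scr{F}(M)$ at Seidel's natural transformation $s: \mathrm{id} \to \phi$, the hom-spaces of $\scr{F}(M^0)$ are computed as
\[
\mathrm{hom}_{\scr{F}(M^0)}(L_1, L_2) \cong \dlim_d \mathrm{hom}_{\scr{F}(M)}(L_1, \phi^d L_2),
\]
with connecting maps given by post-composition with $s$. A direct application of the bar complex computing Hochschild cohomology, combined with the commutation of direct limits with the relevant totalizations, then yields
\[
\mathrm{HH}^*(\scr{F}(M^0)) \cong \dlim_d \mathrm{HH}^*(\scr{F}(M), \phi^{d\ast}),
\]
where $\phi^{d\ast}$ denotes the graph bimodule of $\phi^d$ and the connecting maps are composition with the bimodule map induced by $s$.

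Next, I would construct the twisted closed-open map
\[
\scr{CO}_{\phi^d}: \mathrm{HF}^*(\phi^d) \longrightarrow \mathrm{HH}^*(\scr{F}(M), \phi^{d\ast})
\]
using moduli spaces of disks with one interior puncture and boundary conditions twisted by $\phi^d$, extending the construction for $d=1$ set up in Section \ref{sec:seidel}. The key compatibility, namely that $\scr{CO}_{\phi^{d+1}}$ intertwines multiplication by the Seidel class $S \in \mathrm{HF}^0(\phi)$ with post-composition by the natural transformation $s$, follows from the identification $\scr{CO}_\phi(S) = s$ (established in Section \ref{sec:seidel}), together with a standard TQFT gluing argument comparing the two moduli spaces of disks after stretching the neck around the relevant marked point.

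The main obstacle is showing that $\scr{CO}_{\phi^d}$ is a quasi-isomorphism for every $d$. My strategy would be to adapt Ganatra's proof of the untwisted non-degenerate case: the central input there is a geometric split-generation criterion expressing the diagonal bimodule as a retract of a twisted complex of Yoneda bimodules of a finite generating set. Tensoring with the bimodule map $\mathrm{id} \to \phi^{d\ast}$ reduces checking that $\scr{CO}_{\phi^d}$ is an isomorphism to a Lagrangian-intersection computation on the generators, which can be verified directly using the action of $\mathrm{HF}^*(\phi^d)$ on Floer complexes $\mathrm{CF}^*(L, \phi^d L')$. An alternative route would be to identify $\mathrm{HF}^*(\phi^d)$ with a Hamiltonian Floer cohomology on the mapping torus of $\phi^d$ and apply a Viterbo-transfer argument.

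Combining these ingredients, the direct-limit description identifies the left-hand side with $\dlim_d \mathrm{HH}^*(\scr{F}(M), \phi^{d\ast})$, the twisted closed-open maps identify each term with $\mathrm{HF}^*(\phi^d)$, and the identification $\scr{CO}_\phi(S) = s$ ensures the connecting maps in the two direct systems agree. The algebra structure is preserved because $\scr{CO}_{\phi^d}$ intertwines the Pontryagin-type product on fixed-point Floer cohomology (as defined in \cite{ziwenyao}) with the cup product on Hochschild cohomology, which can again be verified by a TQFT argument with moduli spaces having two interior punctures.
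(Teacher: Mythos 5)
Your overall architecture mirrors the paper's: express $\mathrm{HH}^\ast(\scr{F}(M^0))$ as a direct limit of twisted Hochschild cohomologies, show each is computed by fixed-point Floer cohomology via a twisted closed-open map, and check that $\scr{CO}_\phi(S)=s$ aligns the connecting maps. The identification of the Seidel class with Seidel's natural transformation and the compatibility with products are both established in the paper (Theorems \ref{thm:S_equals_S} and \ref{thm:product_agrees}) along the lines you sketch, and the reduction of the isomorphism statement for $\scr{CO}_{\phi^d}$ to Ganatra's split-generation of the diagonal is the route the paper takes as well.

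However, there is a genuine gap at the step where you claim that ``a direct application of the bar complex computing Hochschild cohomology, combined with the commutation of direct limits with the relevant totalizations'' yields $\mathrm{HH}^\ast(\scr{F}(M^0)) \cong \dlim_d \mathrm{HH}^\ast(\scr{F}(M), \phi^{d\ast})$. This is \emph{not} a formal commutation. The Hochschild cochain complex
\[
\mathrm{CC}^\ast(\scr{C},\scr{B}) \cong \prod_{L_0,\dots,L_k}\mathrm{Hom}\bigl(\mathrm{hom}(L_{k-1},L_k)\otimes\cdots\otimes\mathrm{hom}(L_0,L_1),\ \scr{B}(L_0,L_k)\bigr)
\]
involves a product over tuples of objects and a $\mathrm{Hom}$ out of the hom-complexes, and neither commutes with filtered colimits in general. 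This is exactly where the homological smoothness of $\scr{F}(M)$ (provided by non-degeneracy) must enter: the diagonal bimodule is then perfect, hence a compact object of $\scr{C}$-$\mathrm{bimod}$, which is what makes $\mathrm{hom}_{\scr{C}\text{-}\mathrm{bimod}}(\Delta,-)$ commute with the direct limit $\dlim_d \scr{F}^d$. The paper isolates this step as Proposition \ref{prop:HH_as_limit}, whose proof passes through Lemma \ref{lem:algebra} (quotients of Yoneda bimodules are Yoneda bimodules of the quotient, and quotient is exact), the bimodule K\"unneth theorem, and Seidel's formula identifying morphisms in a localization as a direct limit on \emph{objects}. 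Without explicitly invoking smoothness and perfection of the diagonal, your ``direct application'' would fail for a general $A_\infty$-category, so you would need to add this argument (or cite the paper's Proposition \ref{prop:HH_as_limit}) to make the step rigorous. The alternative Viterbo-transfer route you mention for the isomorphism of $\scr{CO}_{\phi^d}$ is also not obviously viable, since the mapping-torus Floer theory is not of the form to which transfer applies; the paper's route via Ganatra's functor $\mathbf{M}: \scr{F}^2(M)\to \scr{F}(M)\text{-}\mathrm{bimod}$ and split-generation of $\Delta_\phi$ by products $L_i\times\phi(L_j)$ is the one that is known to close.
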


The proof of the first part of this theorem was known to Ganatra, as a generalization of the arguments from \cite{ganatra_thesis}; we give an outline below for completeness. Compare also \cite{abouzaid_ganatra, kotelskiy}, \cite[Conjecture 7.17]{seidel_LF2}, forthcoming work of Shaoyun Bai and Paul Seidel \cite{shaoyun}, as well as forthcoming work of Shuo Zhang. By Seidel's split-generation theorem \cite{seidel}, this non-degeneracy hypothesis will hold whenever $f: X \to \CC$ is an exact symplectic Lefschetz fibration coming from a Lefschetz pencil with a smooth fiber at infinity.

\begin{remark}
In \cite{jeffs}, the monodromy $\phi$ was taken to be the clockwise monodromy, and $s$ to be a natural transformation to the identity from the monodromy functor. This difference is entirely a matter of convention and the categories resulting from localization will be quasi-equivalent. The difference in convention is chosen to be closer to \cite{seidel}. Moreover, the definition of $s$ used in \cite{jeffs}, as the cone of the $\cap - \cup$ adjunction, can be shown to be equivalent to the definition given in \cite{seidel}: this is a result of \cite{abouzaid_seidel}.
\end{remark}

Our setting is substantially simpler than that considered in \cite{abouzaid_seidel}. Since we consider only the case where $f:X \to \CC$ has one Lefschetz critical value at $0$, the wrapped Fukaya category $\scr{W}(X,f)$ is generated by the thimble $T$; any definition of the $\cap$ functor will take $\cap T$ to be the vanishing cycle $V$, then the exact triangle
\begin{center}
\begin{tikzcd}
\cap \cup \ar[swap]{dr}{+1} & {} & \mathrm{id} \ar[swap]{ll}{\eta}\\
{} & \mu \ar{ur}{s} & {}
\end{tikzcd}
\end{center}
simply becomes, for any Lagrangian $L$ in $M$,
\begin{center}
\begin{tikzcd}
\cap \cup \ar[swap]{dr}{+1} & {} & \mathrm{id} \ar[swap]{ll}{\eta}\\
{} & \mu \ar{ur}{s} & {}
\end{tikzcd}
\end{center}
which is exactly Seidel's exact triangle from \cite{seidel1997floer}, and $s$ is exactly his section counting map from \cite{seidel}.

\begin{remark}
\textbf{Signs:} to show that the result of Theorem \ref{thm:HF_equals_QH} holds with $\CC$ coefficients, as we will use in \S \ref{sec:computation} and \S \ref{sec:A_model_calculations}, we will need to ensure that our Floer-theoretic arguments hold with signs. If one uses the standard setup of orientation lines and canonical orientations, as in \cite{seidel,ganatra_thesis}, by choosing consistent orientations of moduli spaces of domains and equipping our Lagrangians with gradings, spin structures and orientations, our moduli spaces are canonically oriented relative to the orientation lines at the ends (see \cite[Lemma B.1]{ganatra_thesis} or \cite[(12.8)]{seidel}). Then the fact that our arguments work with signs is essentially automatic, with two minor subtleties:
\begin{itemize}
    \item Since the holomorphic curves we consider live in the total space of a Lefschetz fibration $E$ rather than in $\Sigma$ itself, we will need to use a (canonical) `stabilization' identification of the orientation lines for $p \in L_i \cap L_{i+1}$ and $\tilde{p} \in \tilde{L}_i \cap \tilde{L}_{i+1}$, given by lifting the brane structures to $E$, in order to have signs for our section counting maps. We have a similiar identification for fixed points of $\phi$.
    \item In our case of a Riemann surface $\Sigma$, we have essentially at most two choices of spin structures on any connected Lagrangian: in the following, we will implicitly choose the non-trivial (bounding) spin structure on compact Lagrangians, so that Seidel's vanishing result continues to hold when taken with signs (see \cite[Example 17.3]{seidel}).
\end{itemize}
For further details on signs and orientations, the reader can refer to the forthcoming \cite{shaoyun}.
\end{remark}

\begin{remark}
Note that the results in this section are stated in the setting where $M$ is an exact symplectic manifold, and so do not directly apply to those calculations in \S \ref{sec:product} where $\Sigma_g$ is a closed Riemann surface. Nevertheless, one expects the same theorems to hold also in the monotone setting, and with the same proofs, provided the appropriate technology were developed.
\end{remark}

\subsection{Compatibility with wrapping}\label{sec: Compatibility with wrapping}

In the following we shall also want to consider operations between \textit{wrapped} Floer complexes, in the cases where $M$ is not compact. Thus we will want to consider domains that carry (implicit) rescaling data. This of course poses no problem if we cut off our rescaling diffeomorphism to lie away from the region in which the compactly-supported symplectomorphism is taking place. Thus the following is largely a straightforward combination of \cite[\S 17]{seidel} and \cite[\S 4]{ganatra_thesis}.

Suppose $(M,\lambda_M)$ is a Liouville domain, with boundary $\partial M$. Denote its completion to a Liouville manifold by $\hat{M}$, given by attaching $M$ to $\partial M \times [0,\infty)_r$ with Liouville form $\lambda_{\hat{M}} = r \lambda_{\partial M}$. We say a Hamiltonian on $\hat{M}$ is \textbf{admissible} if $\partial_r H(x,r)>0,\partial_x H(x,r)=0$ on $\partial M \times (0,\infty)$, if $H(x,r) =\frac{1}{2} r^2$ outside of a compact set, and $\partial_r H(x,0)=0$. We say an almost-complex structure $J$ is of \textbf{$c$-rescaled contact type} if $\lambda(J(\partial_r)) = -1$ for $r$ sufficiently small, and $c r^{-1} \lambda \circ J = \dd r$ for some constant $c$ outside of some compact set. Our Floer data will always be a choice of admissible Hamiltonian $H_t$ and rescaled contact-type almost-complex structure $J_t$ for each generator of a Floer complex (possibly $t$-dependent, to break the $S^1$ symmetry). We say a Lagrangian $L$ inside $\hat{M}$ is \textbf{strictly cylindrical} if $L = \Lambda \times [0,\infty)$ inside $\partial M \times [0,\infty)$, where $\Lambda \subset \partial M$ is a compact Legendrian submanifold of $(M,\lambda|_{\partial M})$. For our purposes in \S \ref{sec:product}, $M$ will be a Stein manifold, so the Fukaya category is generated by strictly-cylindrical Lagrangians (see \cite{GPS2}), and defining our operations for these Lagrangians is sufficient.

It is important to note that in the following sections, when choosing Floer data and perturbation data, we use the same classes of Hamiltonians and almost-complex structures as we use to define the operations on the fixed point Floer homology groups on $M$ as in \S 2 of \cite{ziwenyao} away from the puncture regions (if any), and use admissible Hamiltonians and almost complex structures of rescaled contact type near the punctures. We do this in order for the closed-open map to respect the product structure (Theorem \ref{thm:product_agrees}). The details of the product operation in fixed point Floer (co)homology are described in Section \ref{sec:product}.
%(e.g. we require $\phi_{t}^{\ast} J = J$ etc.).  

Define a \textbf{rescaling diffeomorphism} $\psi^{\rho}:\hat{M} \to \hat{M}$ as follows: $\psi^{\rho} = \mathrm{id}$ on $M$, and on $\partial M \times [0,\infty)$, it is $\psi^{\rho}(x,r) = (x, f_{\delta,\rho}(r))$, where $f_{\delta,\rho}(r)$ is a small convex smoothing of:
\begin{align*}
    f_{\delta,\rho}(r) = \left\{ \begin{matrix} r, & r \geq 2\delta, \\ \delta^{-1}(\rho-1)r^2-(\rho-2)r, & \delta \leq r  \leq 2\delta, \\ \rho r, & 0 \leq r  \leq \delta.\end{matrix} \right.
\end{align*}
Given an admissible Hamiltonian $H$, let $h_{\delta,\rho}(r)$ denote the unique smooth solution to the differential equation:
\begin{equation*}
    H(f_{\delta,\rho}^2(r)) h\dash(r) + f_{\delta,\rho}\dash(r)h(r)\pd{H}{r}-\pd{H}{r}=0
\end{equation*}
with $h_{\delta,\rho}(0)=1$. The diffeomorphism $\psi^\rho$ has the following important properties:
\begin{lemma}\label{lemma:handy}
Given an admissible Hamiltonian $H$, choose $\delta$ sufficiently small so that all of the $1$-periodic orbits of $H$ take place outside $\partial M \times [0, 2\delta]$. Then:
\begin{enumerate}
    \item For all $\rho>0$, $\psi^\rho$ takes strictly cylindrical Lagrangians to strictly cylindrical Lagrangians;
    \item The pullback $\rho^{-2}(\psi^\rho)^{\ast}H$ is also an admissible Hamiltonian;
    \item If $J$ is an almost-complex structure on $(\hat{M},\lambda)$ that is of rescaled contact type, then the pullback $(\psi^\rho)^\ast J$ is an almost-complex structure for the same $(\hat{M},\lambda)$ and is also of rescaled contact type (for a different value of $c$).
    \item If $\phi$ is any compactly-supported symplectomorphism of $M$ extended to $\hat{M}$, then $\psi^\rho$ commutes with $\phi$.
\end{enumerate}
\end{lemma}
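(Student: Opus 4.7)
The plan is to verify each of the four statements by direct computation, exploiting the explicit piecewise definition of $f_{\delta,\rho}$ together with the fact that $\psi^{\rho}$ splits naturally into three regions: the interior of $M$ (where it is the identity by definition), the collar $\partial M \times (0, 2\delta)$ (where $f_{\delta,\rho}$ interpolates), and the exterior $\partial M \times [2\delta, \infty)$ (where $f_{\delta,\rho}(r) = r$ and so $\psi^{\rho}$ is again the identity). Claims (1) and (4) then follow formally. For (1), $f_{\delta,\rho}(0) = 0$ and $f_{\delta,\rho}$ is an orientation-preserving diffeomorphism of $[0, \infty)$, so $\psi^{\rho}$ fixes the boundary and preserves each radial ray $\{x\} \times [0, \infty)$; hence a strictly cylindrical Lagrangian $\Lambda \times [0, \infty)$ is sent to $\Lambda \times f_{\delta,\rho}([0,\infty)) = \Lambda \times [0,\infty)$ setwise. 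For (4), $\phi$ is compactly supported inside $M$, while the non-trivial part of $\psi^{\rho}$ lives on $\partial M \times [0, 2\delta]$, so the two maps have disjoint supports and commute trivially.

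For (2), since $H$ depends only on $r$ on $\partial M \times (0, \infty)$ and $\psi^{\rho}$ preserves the radial foliation, the pullback satisfies $(\psi^{\rho})^{*} H(x, r) = H(x, f_{\delta,\rho}(r))$ and therefore also depends only on $r$. The positivity
$\partial_r[(\psi^{\rho})^{*}H] = f_{\delta,\rho}'(r) \cdot \partial_r H(x, f_{\delta,\rho}(r)) > 0$
is immediate from the monotonicity of both $f_{\delta,\rho}$ and $H$, and the boundary condition $\partial_r H(x, 0) = 0$ is preserved by virtue of $f_{\delta,\rho}(0) = 0$. Outside a compact set $f_{\delta,\rho}(r) = r$, so $(\psi^{\rho})^{*} H$ coincides with $H$ there; the overall factor $\rho^{-2}$ is the prefactor one needs to normalize the quadratic behavior at infinity so that the result is again admissible.

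For (3), a parallel region-by-region analysis works. Outside the collar, $\psi^{\rho} = \mathrm{id}$ and so $(\psi^{\rho})^{*} J = J$, meaning the contact-type condition $c r^{-1} \lambda \circ J = dr$ at infinity passes through unchanged. On $\partial M \times [0, \delta]$, $\psi^{\rho}$ is linear scaling by $\rho$ in the radial direction and the identity on $T\partial M$, so the chain rule $d\psi^{\rho}(\partial_r) = \rho \partial_r$ combines with $\lambda = r \lambda_{\partial M}$ to convert the normalization $\lambda(J\partial_r) = -1$ into the analogous normalization for $(\psi^{\rho})^{*}J$ up to an explicit $\rho$-dependent rescaling. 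The main bookkeeping challenge is in matching these rescalings across the interpolating collar so that a single global constant $c'$ works simultaneously at infinity and in the small-$r$ region; this is where the convexity and monotonicity of $f_{\delta,\rho}$ together with the fact that the rescaled contact-type condition is an open condition do the work. There are no conceptual obstacles beyond this: once the radial chain rule is tabulated in each of the three regions, all four claims follow immediately.
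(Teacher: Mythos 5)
The paper states Lemma~\ref{lemma:handy} without giving any proof, so there is nothing internal to compare against; I evaluate your argument on its own terms.

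Your treatments of (1), (3), and (4) are essentially fine: $\psi^\rho$ fixes each radial ray $\{x\}\times[0,\infty)$ and fixes $r=0$, so strictly cylindrical Lagrangians are preserved setwise; supports of $\phi$ and the nontrivial part of $\psi^\rho$ are disjoint, so (4) is immediate; and (3) reduces to a chain-rule computation near $r=0$ and at infinity (with no real ``matching'' issue in the interpolating region, since the rescaled contact-type condition only constrains $J$ at the two ends).

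Your discussion of (2), however, is internally contradictory. You write that ``outside a compact set $f_{\delta,\rho}(r)=r$, so $(\psi^\rho)^*H$ coincides with $H$ there,'' and then immediately claim that ``the overall factor $\rho^{-2}$ is the prefactor one needs to normalize the quadratic behavior at infinity.'' These two sentences cannot both be true: if $(\psi^\rho)^*H$ already equals $\tfrac12 r^2$ at infinity, then multiplying by $\rho^{-2}\neq 1$ produces $\tfrac{\rho^{-2}}{2}r^2$, which \emph{fails} the admissibility condition $H=\tfrac12 r^2$ outside a compact set. You appear to have taken the paper's definition of $f_{\delta,\rho}$ at face value while simultaneously importing the intuition from the standard Liouville-rescaling picture (where $\psi^\rho$ multiplies $r$ by $\rho$ \emph{at infinity}, so that $(\psi^\rho)^*H=\tfrac12\rho^2 r^2$ there, and the factor $\rho^{-2}$ restores admissibility), without noticing the conflict. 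The paper's piecewise formula for $f_{\delta,\rho}$ is almost certainly typeset with the cases inverted (compare: with the stated definition $\psi^\rho$ would not even glue smoothly to the identity on $M$, since $f'_{\delta,\rho}(0)=\rho\neq 1$; and the parenthetical ``for a different value of $c$'' in claim (3) only makes sense if $\psi^\rho$ is nontrivial at infinity). A careful proof should flag this: either $f_{\delta,\rho}$ equals $\rho r$ for large $r$ and $r$ for small $r$, or the prefactor in (2) should be dropped. As written, your argument for (2) cannot be completed.
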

Since $\rho^{-2}(\psi^\rho)^{\ast}H$ is an admissible Hamiltonian, so is $h_{\rho,\delta}(r)(\psi^\rho)^{\ast}H$. Moreover, under $\psi^\rho$, orbits of $H$ are in bijection with periodic orbits of $h_{\rho,\delta}(r)(\psi^\rho)^{\ast}H$ on $\partial M \times [0,\infty)$:
\begin{proposition}
Given any compactly-supported symplectomorphism $\phi: M \to M$, there is a canonical isomorphism of fixed point Floer cohomology complexes:
\begin{equation*}
        \mathrm{CF}^\ast(\phi,H_t,J_t) \cong \mathrm{CF}^\ast(\phi, h_{\delta,\rho}(\psi^\rho)^\ast(H_t), (\psi^\rho)^\ast J_t)
\end{equation*}
Likewise, given any pair of strictly cylindrical exact Lagrangians $L_0,L_1$, we have a canonical isomorphism of wrapped Lagrangian Floer cochain complexes:
\begin{equation*}
    \mathrm{CF}^\ast(L_0,L_1,H_t,J_t) \cong \mathrm{CF}^\ast(\psi^\rho L_0, \psi^\rho L_1, h_{\delta,\rho}(\psi^\rho)^\ast(H_t), (\psi^\rho)^\ast J_t).
\end{equation*}
\end{proposition}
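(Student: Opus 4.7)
The plan is to show that the diffeomorphism $\psi^\rho$ itself implements the claimed canonical isomorphism by pushing generators and Floer trajectories of one complex to those of the other. The argument combines properties (1)--(4) of Lemma \ref{lemma:handy} with the defining ODE for $h_{\delta,\rho}$.

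First I would establish the bijection of generators. In the fixed point case a generator is a $1$-periodic orbit $\gamma$ of the $\phi$-twisted flow of $H_t$, and I would assign to it $\psi^\rho \circ \gamma$. Property (4) ensures that composition with $\phi$ is preserved, and the ODE defining $h_{\delta,\rho}(r)$ is set up precisely so that the Hamiltonian vector field of $h_{\delta,\rho}(\psi^\rho)^{\ast}H_t$ equals the pushforward $(\psi^\rho)_{\ast}X_{H_t}$; this is a direct calculation in the local symplectization coordinates on $\partial M \times [0,\infty)$. Property (2) guarantees that the new Hamiltonian is still admissible, and the choice of $\delta$ small enough that all orbits lie in $M$ shows the bijection is the identity on generators in the interior. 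In the Lagrangian case, property (1) additionally ensures that $\psi^\rho$ sends strictly cylindrical Lagrangians to strictly cylindrical Lagrangians, and the Hamiltonian chord generators transform analogously.

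Next I would check the bijection of Floer trajectories. By property (3), $(\psi^\rho)^{\ast}J_t$ is again of rescaled contact type, and a solution $u$ to the Floer equation for $(H_t, J_t)$ is sent by $(\psi^\rho)^{-1}\circ u$ to a solution for $(h_{\delta,\rho}(\psi^\rho)^{\ast}H_t, (\psi^\rho)^{\ast}J_t)$, because the intertwining of Hamiltonian vector fields combines with that of almost complex structures to give the correct equation. Since $\psi^\rho$ is a diffeomorphism, transversality and the canonical orientations (constructed via lifted brane structures as noted earlier in the paper) are preserved, so the induced map on chain complexes is a chain isomorphism. The main obstacle is the direct verification of the intertwining $(\psi^\rho)_{\ast}X_{H_t} = X_{h_{\delta,\rho}(\psi^\rho)^{\ast}H_t}$: because $\psi^\rho$ is not a symplectomorphism (one has $(\psi^\rho)^{\ast}\omega \neq \omega$ on the symplectization end, since $f_{\delta,\rho}$ is a nonlinear rescaling), the factor $h_{\delta,\rho}$ is designed to compensate exactly for this failure, and unpacking the ODE amounts to solving for the factor that makes the two Hamiltonian flows conjugate by $\psi^\rho$ with respect to the original $\omega = d(r\lambda_{\partial M})$.
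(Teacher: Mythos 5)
Your proposal carries out the standard Abouzaid--Ganatra rescaling argument, which is what the paper invokes (the paper's own ``proof'' is a one-sentence remark that the isomorphism ``arises from the fact that $\phi$ is the identity outside of a compact set,'' referring to the wrapping setup), so the structure of your argument is the right one: transport generators by $\psi^\rho$, transport trajectories, and observe that the ODE for $h_{\delta,\rho}$ is precisely the compensation for $\psi^\rho$ failing to be symplectic.

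There are, however, two correlated direction slips and one imprecision worth correcting. Since you transport trajectories by $u\mapsto(\psi^\rho)^{-1}\circ u$, the generator map must likewise be $\gamma\mapsto(\psi^\rho)^{-1}\circ\gamma$, not $\psi^\rho\circ\gamma$; otherwise the asymptotics of your transported trajectory land on the wrong orbit. (Both maps reduce to the identity on generators since $\psi^\rho=\mathrm{id}$ wherever the orbits live, but the written formula should be consistent with the trajectory map.) Correspondingly, the identity forced by the ODE is $X_{h_{\delta,\rho}(\psi^\rho)^{\ast}H_t}=(\psi^\rho)^{\ast}X_{H_t}$, the \emph{pullback}, not the pushforward $(\psi^\rho)_{\ast}X_{H_t}$: in the symplectization coordinates where $H=H(r)$ one has $X_{h(\psi^\rho)^{\ast}H}=\bigl(h'(r)H(f(r))+h(r)H'(f(r))f'(r)\bigr)R$ and $(\psi^\rho)^{\ast}X_H=H'(f(r))R$, and equating these is exactly the stated ODE; the pushforward would instead give $H'(f^{-1}(r))R$, which is a different ODE. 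Finally, the condition on $\delta$ places the orbits outside $\partial M\times[0,2\delta]$, which is not the same as ``in $M$'' --- for admissible Hamiltonians the wrapping orbits sit on the cylindrical end at arbitrarily large $r$. What actually matters, and what makes the generator identification trivial, is that $f_{\delta,\rho}(r)=r$ (hence $\psi^\rho=\mathrm{id}$) for $r\ge 2\delta$ as well as on all of $M$, so $\psi^\rho$ is the identity wherever orbits (and strictly cylindrical Lagrangians) live. With these corrections your argument closes: the Floer equations are literally conjugated by $\psi^\rho$, and since $\psi^\rho$ is a diffeomorphism, transversality, compactness, and the canonical orientations transport verbatim.
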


The isomorphism of the two complexes arises from the fact that $\phi$ is the identity outside of a compact set: the left hand side is the fixed point Floer cohomology of $\phi$ perturbed by the Hamiltonian flow of $H_t$ near the punctures, using the almost complex structure $J_t$ near the punctures to define the differential. The right hand side is the fixed point Floer cohomology of $\phi$, perturbed by $h_{\delta,\rho}(\psi^\rho)^*(H_t)$ near the punctures, with the almost complex structure $(\psi^\rho)^*J_t$ used to define the differential. The details of how we perform wrapping on fixed point Floer cohomology are described in Section \ref{sec:fixed_point_floer_punctured}. 

Following \cite{seidel}, the boundary of the total space $E$ as a Liouville manifold can be separated into two parts, the vertical part $\partial^v E \cong \partial D \times M$ and the horizontal part $\partial^h E \cong D \times \partial M$. We may likewise define a diffeomorphism $\tilde{\psi}^{\rho}:\hat{E} \to \hat{E}$ that is the identity in the interior $E$, acts by $\psi^\rho$ fiberwise on $\partial^h E$, and by the standard rescaling function on $\CC$ that is constant in the fibers on $\partial^v E$. By our above observation, $\tilde{\psi}^\rho$ takes fibered Lagrangians in $E$ to fibered Lagrangians for all $\rho>0$. Moreover, if $\tilde{L}_i$ denotes the parallel transport of a cylindrization of a Lagrangian $L_i \subset \Sigma$ along a radial arc, then $\tilde{\psi}^\rho(\tilde{L}_i) = \tilde{\br{\psi^\rho L_i}}$.

Using our rescaling functions $\psi^\rho$ and $\tilde{\psi}^\rho$ we may now use identical definitions (as in \cite[Definitions 4.5,4.7,4.7]{ganatra_thesis}) of when perturbation data are adapted to a choice of Floer data for a Riemann surface $S$ with weighted strip-like ends and cylindrical, carrying a standard Lefschetz fibration $\pi:E \to S$ (itself adapted to the ends of $S$ in the sense of \cite[(17b)]{seidel}). This perturbation data consists of a choice of a $1$-form $K \in \Omega^1(S, C^{\infty}(E))$ and a domain-dependent almost-complex structure $J_S$ on $E$, satisfying a list of compatibility conditions with strip-like ends, weighting data, boundary conditions, Floer data, and fibration structures, which can be found in \cite[\S 4.1]{ganatra_thesis} and \cite[\S 17]{seidel}. As stated above, because our rescaling by $\psi^\rho$ and twisting by $\phi$ take place in disjoint regions of $M$ there is no obstruction to finding Floer data and perturbation data satisfying both sets of conditions. We may then carry through the same analysis as in \cite{ganatra_thesis,seidel} to define operations on (wrapped) Floer complexes: we shall leave this data implicit unless it is significant to the argument at hand.

For instance, when defining the product in fixed point Floer cohomology, we want to consider sections of a certain symplectic fiber bundle $\pi:E \to S$ where $S$ is a three-punctured sphere equipped with two negative and one positive cylindrical ends, of weights $n,m,$ and $m+n$, respectively. Counting isolated points in a moduli space of perturbed pseudoholomorphic sections defines a map
\begin{equation*}
        \mathrm{CF}(\phi^n,H_t,J_t) \otimes \mathrm{CF}(\phi^m,H_t,J_t)  \to \mathrm{CF}(\phi^{n+m}, h_{\rho,\delta} (\psi^\rho)^\ast(H_t), (\psi^\rho)^\ast J_t) \cong \mathrm{CF}(\phi^{m+n},H_t,J_t)
\end{equation*}
which is exactly the product as defined in \cite{ziwenyao}, extended to the wrapped setting.

\subsection{Section-counting maps}

For the purposes of illustration, we will use a dashed curve on a figure representing a Riemann surface to indicate any boundary marked points that are mapped to an intersection between $L_i$ and $\phi^d(L_{i+1})$, or cylindrical ends that are asymptotic to an orbit of $\phi^d$. This is purely illustrative: our surfaces contain no seams or cuts (though the analysis could alternatively be set up this way), and they do not indicate that marked points must lie on the same line. Interior critical values of the Lefschetz fibration are denoted by solid dots; domains biholomorphic to $D$ have solid boundary, while those biholomorphic to $\CC$ have dashed boundary.

\begin{definition}\label{def:seidel class}
Given a standard symplectic Lefschetz fibration $\pi: E \to \CC$ with global monodromy $\phi$, and a choice of compatible perturbation data $(K,J)$, we define a moduli space $\scr{M}(E,J,K,p)$ to be the set of $K$-perturbed $J$-holomorphic sections of the Lefschetz fibration $\pi: E \to \CC$ that are horizontally $C^1$-asymptotic to the orbit of $\phi$ starting at $p$ (in the sense that $u(r \e{i t})$ converges to $\phi_t(p)$ in $C^1(M_{r})$ as $r \to \infty$). 

For generically chosen compatible perturbation data $(K, J)$, this moduli space is a topological manifold (see \cite[p.237]{seidel}) and compact when the dimension is zero. In this case we define the \textbf{Seidel element} to be $S \in \mathrm{CF}(\phi)$ given by the count of dimension zero moduli spaces,
\begin{equation*}
    S = \sum_{p \in \mathrm{Fix}(\phi)} \# \scr{M}(E,J, K,p)\; [p]
\end{equation*}
with canonically determined signs. In the wrapped case we equip $\CC$ with a weight-$1$ positive cylindrical end.
\end{definition}

One may verify using standard methods that this is indeed a cocycle, and we call the result the \textbf{Seidel class}, though this could  be called a special kind of Borman-Sheridan class.   

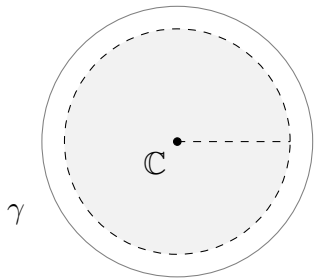
\begin{figure}
\begin{center}
\begin{tikzpicture}[scale=1]
\filldraw[fill=gray!10,dashed] (0,0) circle (1.5);
\draw[dashed] (0,0) to (1.5,0);
\draw[fill=black] (0,0) circle (0.05);
\draw[gray,->] (0,0) circle (1.8);
\node[anchor=north east] at (200:2) {$\gamma$};
\node[anchor=north east] at (0,0) {$\CC$};
\end{tikzpicture}
\caption{Domains used to define Seidel element.}
\label{fig:seidel_element}
\end{center}
\end{figure}

\begin{remark}
    Because of the non-uniqueness of standard Lefschetz fibrations, this class may depend on the choice of standard Lefschetz fibration for $\phi$ when $\phi$ is not a single Dehn twist.
\end{remark}

We have a similar construction in the open sector: given a spin exact cylindrical Lagrangian $L$ and a vanishing cycle $V$ inside a Liouville manifold $M$, Seidel in \cite[(17d)]{seidel} defines a cocycle $s \in \mathrm{CF}(L, \phi_V(L))$ via a count of sections. The arguments in \cite[\S 17]{seidel} essentially show that this extends to a degree-$0$ natural transformation $\mathrm{id} \to \phi_V$ between $A_{\infty}$-functors on the Fukaya category $\scr{F}(M)$, and hence an element of $\mathrm{HH}^{0}(\scr{F}(M),\phi_V)$ which we call \textbf{Seidel's natural transformation}. Seidel's construction applies more generally: a count of suitably perturbed sections of a standard Lefschetz fibration for $\phi$ over a domain (modulo reparametrization) such as in Figure \ref{fig:seidel_transformation} defines a map:
\begin{equation*}
    s_k: \mathrm{CF}(L_{k-1},L_k)\otimes \cdots \otimes \mathrm{CF}(L_0,L_1) \to \mathrm{CF}(L_0,\phi(L_k))
\end{equation*}
which gives a term of a natural transformation $s:\mathrm{id} \to \phi$.

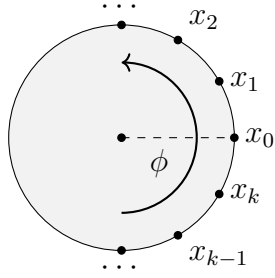
\begin{figure}
\begin{center}
\begin{tikzpicture}[scale=0.5]
\filldraw[fill=gray!10] (0,0) circle (3);
\draw[dashed] (0,0) to (3,0);
\draw[fill=black] (0,0) circle (0.1);
\draw[fill=black] (3,0) circle (0.1);
\node[anchor=west] at (3,0) {$x_0$};
\draw[fill=black] (30:3) circle (0.1);
\node[anchor=west] at (30:3) {$x_1$};
\draw[fill=black] (60:3) circle (0.1);
\node[anchor=south west] at (60:3) {$x_{2}$};
\draw[fill=black] (90:3) circle (0.1);
\node[anchor=south] at (90:3) {$\cdots$};
\draw[fill=black] (270:3) circle (0.1);
\node[anchor=north] at (270:3) {$\cdots$};
\draw[fill=black] (300:3) circle (0.1);
\node[anchor=north west] at (300:3) {$x_{k-1}$};
\draw[fill=black] (330:3) circle (0.1);
\node[anchor=west] at (330:3) {$x_k$};
\draw[thick,->] (-90:2) arc[start angle=-90, end angle=90,radius=2];
\node[anchor=north] at (1,0) {$\phi$};
\end{tikzpicture}
\caption{Domains used to define Seidel's natural transformation.}
\label{fig:seidel_transformation}
\end{center}
\end{figure}

Likewise, counting sections of a standard symplectic fibration over domains as in Figure \ref{fig:bimodule}:
\begin{equation*}
    \mu_{\phi}^{k,\ell}(x_k, x_{k-1},\dots,x_1,y_1,x_{1}\dash, \dots, x_{\ell}\dash) = \sum_{y_0 \in L_{\ell}\dash \cap \phi(L_k)} \#\scr{M}_{k,\ell}(E_{k,\ell}, \phi, J, K; p, x_k, \dots, x_1,y_1, x_{1}\dash,\dots,x_{\ell}\dash,y_0) \; [y_0]
\end{equation*}
with canonically determined signs, defines a term of the bimodule structure on $\Gamma_\phi$:
\begin{equation*}
    \mu_{\phi}^{k,\ell}:\mathrm{CF}(L_{k-1},L_k)\otimes \cdots \otimes \mathrm{CF}(L_0,L_1)\otimes
\mathrm{CF}(L\dash_0,\phi(L_0))\otimes \mathrm{CF}(L\dash_{0},L\dash_1)\otimes \cdots \otimes \mathrm{CF}(L_{\ell}\dash,L_{\ell-1}\dash)
\to \mathrm{CF}(L\dash_{\ell},\phi(L_k))
\end{equation*}

\begin{figure}
\begin{center}
\begin{tikzpicture}[scale=0.5]
\filldraw[fill=gray!10] (0,0) circle (3);
\draw[dashed] (-3,0) to (3,0);
\draw[fill=black] (3,0) circle (0.1);
\draw[fill=black] (-3,0) circle (0.1);
\node[anchor=west] at (3,0) {$y_0$};
\draw[fill=black] (30:3) circle (0.1);
\node[anchor=west] at (30:3) {$x_{\ell}\dash$};
\draw[fill=black] (60:3) circle (0.1);
\node[anchor=south west] at (60:3) {$x_{\ell-1}\dash$};
\draw[fill=black] (-60:3) circle (0.1);
\node[anchor=north west] at (-60:3) {$x_{k-1}$};
\draw[fill=black] (90:3) circle (0.1);
\node[anchor=south] at (90:3) {$\cdots$};
\draw[fill=black] (270:3) circle (0.1);
\node[anchor=north] at (270:3) {$\cdots$};
\draw[fill=black] (300:3) circle (0.1);
\node[anchor=south east] at (160:3) {$x_{1}\dash$};
\node[anchor=east] at (-3,0) {$y_1$};
\node[anchor=north east] at (200:3) {$x_{1}$};
\draw[fill=black] (160:3) circle (0.1);
\draw[fill=black] (200:3) circle (0.1);
\draw[fill=black] (330:3) circle (0.1);
\node[anchor=west] at (330:3) {$x_{k}$};
\draw[thick,->] (-90:2) to (90:2);
\node[anchor=north] at (1,0) {$\phi$};
\end{tikzpicture}
\caption{Domains used to define bimodule structure for $\phi$.}
\label{fig:bimodule}
\end{center}
\end{figure}
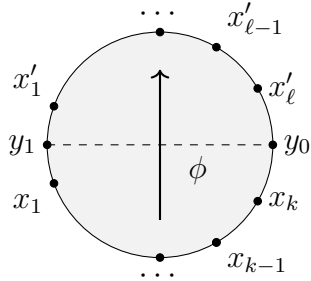

\subsection{Twisted closed-open maps}

To relate the Seidel class to the Seidel natural transformation we want to consider a twisted closed-open map
\begin{equation*}
    \scr{CO}_{\phi}: \mathrm{HF}^{\ast}(\phi) \to \mathrm{HH}^{\ast}(\scr{F}(M), \Gamma_\phi)
\end{equation*}
where $\Gamma_\phi$ is the $A_{\infty}$-bimodule induced by the symplectomorphism $\phi$ of a Liouville manifold $M$ (see \cite{seidel}), and $\mathrm{HH}^{\ast}(\scr{F}(M), \Gamma_\phi)$ denotes the Hochschild cohomology of $\scr{F}(M)$ with coefficients in this bimodule.
 
\begin{definition}\label{def:twisted_co}
Let $\scr{Q}_k$ be the moduli space of closed disks $S_k$ with $k$ negative boundary points $p_1,\dots,p_k$ and one positive boundary marked point $p_0$ fixed at $1$; as well as an interior negative puncture fixed at $0$, equipped with a negative cylindrical end. For each such $S_k$, we equip it with twisting data given by $1$ at the interior puncture, and $0$ over every boundary component except that between $p_k$ and $p_0$. Then we fix a standard symplectic fiber bundle $\pi: E_k \to S_k$ with fiber $M$, compatible with ends and twisting data.

Each of these we equip with choices of Floer data and Lagrangian labels $L_0,\dots,L_k \subset \Sigma$, modified so that there are counterclockwise moving boundary conditions along the boundary segment between $p_k$ and $p_0$ given by the isotopy $\phi$ (cf. \cite[p.244]{seidel}).

Given choices of compatible perturbation data $(K,J)$, let $\scr{M}_k(E_k, \phi, J, K, p,x_k, \dots, x_1,x_0)$ denote the moduli space of $K$-perturbed $J$-holomorphic sections $u: S_k \to E_k$ over some domain $S_k \in \scr{Q}_k$, satisfying Lagrangian boundary conditions along $\tilde{L}_0, \dots \tilde{L}_k$ (the parallel transport of $L_0, \dots, L_k$ along $\partial S_k$), that are horizontally asymptotic to the orbit $p$ of $\phi$ around $0$. 

For generic consistent perturbation data $(K,J)$, these moduli spaces are topological manifolds, compact when the dimension is zero, and we may define the \textbf{twisted closed-open map} $\scr{CO}_{\phi}: \mathrm{CF}^{\ast}(\phi) \to \mathrm{CC}^{\ast}(\scr{F}(M), \Gamma_\phi)$ as follows (cf. \cite[p.66]{ganatra_thesis}).

Given a fixed point $p \in \mathrm{Fix}(\phi)$ and morphisms in $\scr{F}(M)$ given by $x_i \in \mathrm{CF}(L_{i-1}, L_i)$, we define a Hochschild cochain $\scr{CO}_{\phi}(p)$ in 
\begin{equation*}
    \mathrm{CC}^{\ast}(\scr{F}(M), \Gamma_\phi) = \prod_{L_0, \dots, L_k} \mathrm{Hom}(\mathrm{CF}(L_{k-1},L_k) \otimes \cdots \otimes \mathrm{CF}(L_0, L_1), \mathrm{CF}(L_0, \phi(L_k)))
\end{equation*}
via a sum over $k \geq 0$ of dimension-zero moduli spaces:
\begin{equation*}
    \scr{CO}_{\phi}^k(p)(x_k, \dots, x_1) = \sum_{x_0 \in L_k \cap \phi(L_0)} \# \scr{M}_k(E_k, \phi, J, K; p, x_k, \dots, x_1, x_0) \; [x_0]
\end{equation*}
with their canonically determined signs, as illustrated in Figure \ref{fig:open-closed} below.
\end{definition}

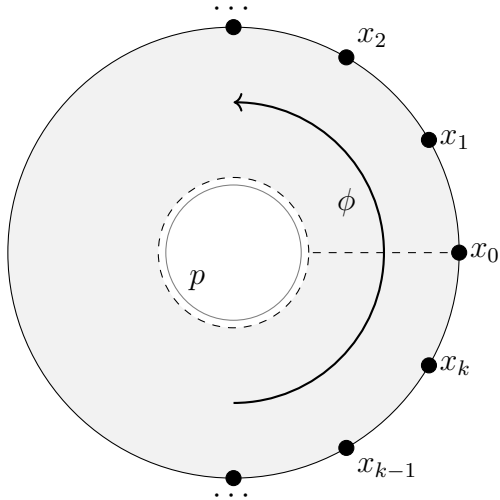
\begin{figure}[H]
\begin{center}
\begin{tikzpicture}[scale=0.5]
\begin{scope}[scale=2]
\filldraw[fill=gray!10] (0,0) circle (3);
\draw[dashed] (0,0) to (3,0);
\draw[fill=white,dashed] (0,0) circle (1);
\draw[fill=black] (3,0) circle (0.1);
\node[anchor=west] at (3,0) {$x_0$};
\draw[fill=black] (30:3) circle (0.1);
\node[anchor=west] at (30:3) {$x_1$};
\draw[fill=black] (60:3) circle (0.1);
\node[anchor=south west] at (60:3) {$x_{2}$};
\draw[fill=black] (90:3) circle (0.1);
\node[anchor=south] at (90:3) {$\cdots$};
\draw[fill=black] (270:3) circle (0.1);
\node[anchor=north] at (270:3) {$\cdots$};
\draw[fill=black] (300:3) circle (0.1);
\node[anchor=north west] at (300:3) {$x_{k-1}$};
\draw[fill=black] (330:3) circle (0.1);
\node[anchor=west] at (330:3) {$x_k$};
\draw[thick,->] (-90:2) arc[start angle=-90, end angle=90,radius=2];
\node[anchor=north] at (1.5,1) {$\phi$}; 
\end{scope}
\draw[gray,->] (0,0) circle (1.8);
\node[anchor=north east] at (200:0.5) {$p$};
\end{tikzpicture}
\caption{Domains used to define twisted closed-open map.}
\label{fig:open-closed}
\end{center}
\end{figure}

Again, one can show using standard methods that this gives a chain map and so descends to a map $\scr{CO}_{\phi}: \mathrm{HF}^{\ast}(\phi) \to \mathrm{HH}^{\ast}(\scr{F}(M), \Gamma_\phi)$.

\begin{theorem}\label{thm:S_equals_S} Given a choice of standard Lefschetz fibration for $\phi$, Seidel's natural transformation $s \in \mathrm{HH}^{\ast}(\scr{F}(M),\phi)$ is the image under the twisted closed-open map $\scr{CO}_{\phi}$ of the Seidel class $S \in \mathrm{HF}(\phi)$.
\end{theorem}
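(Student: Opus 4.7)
The plan is to construct a parametrized moduli space of domains that interpolates between the two section-counting problems defining $\scr{CO}_\phi(S)$ and $s$, and to read off the desired equality from a TQFT-style gluing/degeneration argument. Concretely, for each $k \geq 0$ and each $R \in [0,\infty]$, I will introduce a moduli space $\scr{Q}_k^R$ of closed disks $S_k^R$ with one positive boundary puncture $p_0$ at $1$, $k$ negative boundary punctures $p_1,\dots,p_k$, and an interior point at $0$ around which we excise a disk of \emph{conformal radius} depending on $R$: for $R$ finite and positive, we fix a small disk of radius $e^{-R}$ at $0$ and equip it with no puncture, treating its boundary as an interior circle along which the twisting data is $1$; for $R=\infty$ this disk shrinks to a cylindrical end (recovering the domains of Definition \ref{def:twisted_co}), while for $R=0$ the interior disk is absent and we recover exactly the domains of Figure \ref{fig:seidel_transformation} used to define Seidel's natural transformation. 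Over each such domain $S_k^R$ I fix a standard symplectic Lefschetz fibration $\pi\colon E_k^R \to S_k^R$ compatible with the twisting data, Lagrangian labels $L_0,\dots,L_k$, and consistent Floer/perturbation data; for $R=0$ this is the Lefschetz fibration for $\phi$ pulled back to the whole disk, for $R>0$ it agrees with the one used for the closed-open map outside the excised neighborhood of $0$ and with the Lefschetz fibration defining $S$ inside that neighborhood.

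Next, I form the parametrized moduli space $\scr{M}_k^{[0,\infty]}(x_k,\dots,x_1;x_0)$ of $K$-perturbed $J$-holomorphic sections $u\colon S_k^R \to E_k^R$ with Lagrangian boundary conditions $\tilde L_0,\dots,\tilde L_k$ and prescribed asymptotics at $x_1,\dots,x_k,x_0$, varying $R \in [0,\infty]$. For generic perturbation data this is a manifold-with-boundary of the expected dimension, and I claim that counting its isolated points as $R$ varies yields a chain homotopy
\begin{equation*}
    H^k\colon \mathrm{CF}(L_{k-1},L_k)\otimes\cdots\otimes \mathrm{CF}(L_0,L_1) \longrightarrow \mathrm{CF}(L_0,\phi(L_k))
\end{equation*}
between the $k$-th component of $\scr{CO}_\phi(S)$ (read off at $R=\infty$) and the $k$-th component of Seidel's natural transformation (read off at $R=0$). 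The boundary of the compactified $1$-dimensional parametrized moduli space decomposes into five types of contributions: (i) $R=\infty$ breakings along the interior neck, which by a standard gluing theorem are in bijection with pairs consisting of a section of the Seidel-element Lefschetz fibration asymptotic to some $p \in \mathrm{Fix}(\phi)$ and a section contributing to $\scr{CO}_\phi^k(p)(x_k,\dots,x_1)$; (ii) $R=0$ configurations, which by construction count contributions to $s^k(x_k,\dots,x_1)$; (iii) strip-breakings at the $x_i$ or $x_0$, giving the $A_\infty$ and bimodule structure terms that assemble into $\partial\circ H + H \circ \partial$; (iv) disk bubbles and sphere bubbles, which by dimension/energy for strictly cylindrical Lagrangians (together with the choice of bounding spin structures to handle signs of disk bubbles, as in the conventions established in the remark on signs) do not occur generically in codimension $1$; and (v) boundary Lagrangian-moving breakings at the boundary component carrying the $\phi$-moving condition, which are forbidden by the twisting setup. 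Summing (i) and (ii) and equating them to (iii) yields the desired chain homotopy.

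The key technical ingredient is compatibility of perturbation data across the parametrized family together with the gluing theorem at $R=\infty$, identifying the broken configurations with the composition $\scr{CO}_\phi \circ S$. This is a direct adaptation of the gluing analyses in \cite[\S 17]{seidel} and \cite[\S 4]{ganatra_thesis}, with the rescaling-diffeomorphism considerations of \S\ref{sec: Compatibility with wrapping} used to ensure compatibility with wrapping in the noncompact setting and with the conventions of \cite{ziwenyao} used for the fixed point Floer side. I expect the main obstacle to be verifying that signs match between the two descriptions: the canonical orientations of the moduli spaces of sections at $R=\infty$ split (via the gluing isomorphism on orientation lines) into the tensor product of orientations on the factors in the standard way, and combining this with the stabilization identification mentioned in the sign remark gives the equality on the nose, rather than up to an overall sign. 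Once the chain homotopy is constructed, passing to cohomology yields $\scr{CO}_\phi(S) = s$ in $\mathrm{HH}^{\ast}(\scr{F}(M),\Gamma_\phi)$, as required.
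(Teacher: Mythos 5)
Your proposal is essentially the same as the paper's proof: both construct a one-parameter family of Lefschetz fibrations over the disk (the paper's parameter $r\in(0,1]$ is your $e^{-R}$), read off $\scr{CO}_\phi(S)$ at the fully stretched end and Seidel's natural transformation at the unstretched end, and interpret the remaining codimension-one boundary strata (Deligne–Mumford and strip breaking) as the Hochschild coboundary of the resulting homotopy $T$. The treatment of signs via the conventions of Ganatra and Seidel, and the gluing identification at the stretched end, also match the paper's argument.
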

\begin{proof} This argument is a fairly straightforward combination of the compactness arguments of \cite[\S 4]{ganatra_thesis} applied to the section-counting maps of \cite[\S 17]{seidel}. Let $\scr{Q}_k$ be the moduli space of closed disks $D_k$ with $k$ incoming boundary marked points $p_i$, one outgoing marked point $p_0$ fixed at $1$, and a distinguished point at $0$ which will be the critical value of a Lefschetz fibration. We construct a family of standard Lefschetz fibrations parametrized by $r \in (0,1]$, living over each domain $S_k \in \scr{Q}_k$, with Lefschetz fibration $E_r$ pulled back from the chosen standard Lefschetz fibration $E \to \CC$ for $\phi$ by the map $z \to z/r$ for $r \neq 0$. This has the effect of flattening the Lefschetz fibration over the complement of $D_{r(1-\epsilon)}$. Denote the extended moduli space of sections of this Lefschetz fibration (with appropriate perturbation data $(J,K)$) by $\tilde{\scr{M}}_k(E_r, \phi, J, K, x_0, x_1, \dots, x_k)$, consisting of pairs $(u,r)$ where $u: D_{k} \to E_r$ is a $K$-perturbed $J$-holomorphic section of the Lefschetz fibration $E_r$ over $S_k \in \scr{Q}_k$ satisfying the boundary conditions described in Definition \ref{def:twisted_co} above. After choosing suitably generic perturbation data, when the dimension is zero, counting elements of this moduli space yields a map:
\begin{equation*}
    T^k: \mathrm{CF}(L_{k-1},L_k) \otimes \cdots \otimes \mathrm{CF}(L_0, L_1) \to \mathrm{CF}(L_0, \phi(L_k))
\end{equation*}
of degree $-k-1$, giving an element in $CC^{-1}(\scr{F}(M), \phi)$.

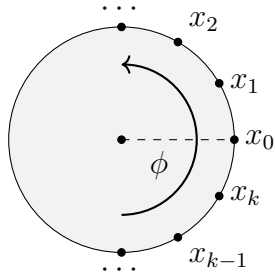
\begin{figure}[H]
\begin{center}
\begin{tikzpicture}[scale=0.5]
\filldraw[fill=gray!10] (0,0) circle (3);
\draw[dashed] (0,0) to (3,0);
\draw[fill=black] (0,0) circle (0.1);
\draw[fill=black] (3,0) circle (0.1);
\node[anchor=west] at (3,0) {$x_0$};
\draw[fill=black] (30:3) circle (0.1);
\node[anchor=west] at (30:3) {$x_1$};
\draw[fill=black] (60:3) circle (0.1);
\node[anchor=south west] at (60:3) {$x_{2}$};
\draw[fill=black] (90:3) circle (0.1);
\node[anchor=south] at (90:3) {$\cdots$};
\draw[fill=black] (270:3) circle (0.1);
\node[anchor=north] at (270:3) {$\cdots$};
\draw[fill=black] (300:3) circle (0.1);
\node[anchor=north west] at (300:3) {$x_{k-1}$};
\draw[fill=black] (330:3) circle (0.1);
\node[anchor=west] at (330:3) {$x_k$};
\draw[thick,->] (-90:2) arc[start angle=-90, end angle=90,radius=2];
\node[anchor=north] at (1,0) {$\phi$};
\end{tikzpicture}
\caption{Domains used to define the Hochschild cochain $T$.}
\end{center}
\end{figure}

Taking the Gromov compactification of $\tilde{\scr{M}}_k(E,\phi,J,K,x_0,x_1,\dots,x_k)$ gives a manifold fibered over $[0,1]$, and the codimension-$1$ boundary is covered by the union of the images of the natural inclusions of the moduli spaces of the form below (see \cite{ganatra_thesis,seidel} for this kind of diagrammatic proof). As well as taking our perturbation data to be universal and consistent under gluing, one must verify that the Lefschetz fibrations constructed are consistent under gluing of components inside the boundary strata of Deligne-Mumford moduli space (see \cite[p.27]{seidel_LES} for the notion of gluing Lefschetz fibrations and its effect on section-counting maps). This poses no difficulty in our case since Lefschetz fibrations with a single critical point are essentially unique in a strong sense. 
\begin{enumerate}
    \item At $r=0$ we have Figure \ref{fig:part_1}, which represents the closed-open map applied to the Seidel class; 
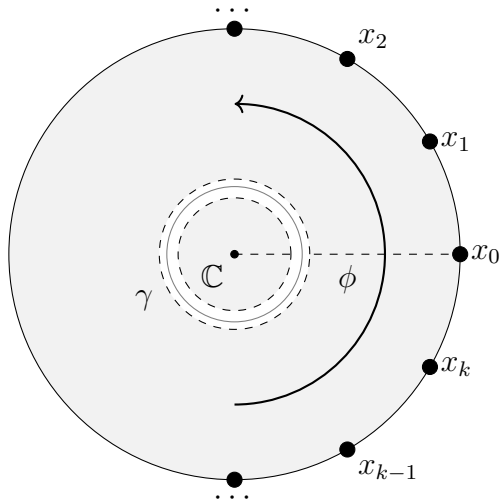
\begin{figure}[H]
\begin{center}
\begin{tikzpicture}[scale=0.5]
\begin{scope}[scale=2]
\filldraw[fill=gray!10] (0,0) circle (3);
\draw[dashed] (0,0) to (3,0);
\draw[fill=white,dashed] (0,0) circle (1);
\draw[fill=black] (3,0) circle (0.1);
\node[anchor=west] at (3,0) {$x_0$};
\draw[fill=black] (30:3) circle (0.1);
\node[anchor=west] at (30:3) {$x_1$};
\draw[fill=black] (60:3) circle (0.1);
\node[anchor=south west] at (60:3) {$x_{2}$};
\draw[fill=black] (90:3) circle (0.1);
\node[anchor=south] at (90:3) {$\cdots$};
\draw[fill=black] (270:3) circle (0.1);
\node[anchor=north] at (270:3) {$\cdots$};
\draw[fill=black] (300:3) circle (0.1);
\node[anchor=north west] at (300:3) {$x_{k-1}$};
\draw[fill=black] (330:3) circle (0.1);
\node[anchor=west] at (330:3) {$x_k$};
\draw[thick,->] (-90:2) arc[start angle=-90, end angle=90,radius=2];
\node[anchor=north] at (1.5,0) {$\phi$}; 
\end{scope}
\filldraw[fill=gray!10,dashed] (0,0) circle (1.5);
\draw[dashed] (0,0) to (1.5,0);
\draw[fill=black] (0,0) circle (0.1);
\draw[gray,<-] (0,0) circle (1.8);
\node[anchor=north east] at (200:2) {$\gamma$};
\node[anchor=north east] at (0,0) {$\CC$};
\end{tikzpicture}
\end{center}
\caption{Domains representing the closed-open map applied to the Seidel element}
\label{fig:part_1}
\end{figure}
    
    \item At $r=1$, we have Figure \ref{fig:part_2}, which represents Seidel's natural transformation;

\begin{figure}[H]
\begin{center}
\begin{tikzpicture}[scale=0.5]
\filldraw[fill=gray!10] (0,0) circle (3);
\draw[dashed] (0,0) to (3,0);
\draw[fill=black] (0,0) circle (0.1);
\draw[fill=black] (3,0) circle (0.1);
\node[anchor=west] at (3,0) {$x_0$};
\draw[fill=black] (30:3) circle (0.1);
\node[anchor=west] at (30:3) {$x_1$};
\draw[fill=black] (60:3) circle (0.1);
\node[anchor=south west] at (60:3) {$x_{2}$};
\draw[fill=black] (90:3) circle (0.1);
\node[anchor=south] at (90:3) {$\cdots$};
\draw[fill=black] (270:3) circle (0.1);
\node[anchor=north] at (270:3) {$\cdots$};
\draw[fill=black] (300:3) circle (0.1);
\node[anchor=north west] at (300:3) {$x_{k-1}$};
\draw[fill=black] (330:3) circle (0.1);
\node[anchor=west] at (330:3) {$x_k$};
\draw[thick,->] (-90:2) arc[start angle=-90, end angle=90,radius=2];
\node[anchor=north] at (1,0) {$\phi$};
\end{tikzpicture}
\end{center}
\caption{Domains used to define the Seidel natural transformation.}
\label{fig:part_2}
\end{figure}
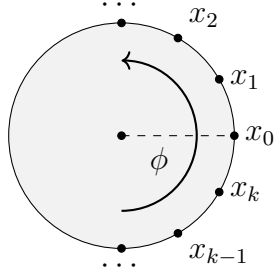

    \item We have the Deligne-Mumford degenerations in Figures \ref{fig:part_3} and \ref{fig:part_4} of $S_k$ which represent (respectively) the first and second terms of the Hochschild coboundary of $T$:

\begin{figure}[H]
    \begin{center}
\begin{tikzpicture}[scale=0.5]
\filldraw[fill=gray!10] (0,0) circle (3);
\begin{scope}[shift={(-5,0)}]
\filldraw[fill=gray!10] (0,0) circle (2);
\draw[fill=black] (0:2) circle (0.1);
\draw[fill=black] (60:2) circle (0.1);
\node[anchor=south west] at (60:2) {$x_i$};
\draw[fill=black] (120:2) circle (0.1);
\node[anchor=south] at (120:2) {$x_{i+1}$};
\draw[fill=black] (-60:2) circle (0.1);
\node[anchor=north] at (-120:2) {$x_{i+\ell-1}$};
\draw[fill=black] (-120:2) circle (0.1);
\node[anchor=north west] at (-60:2) {$x_{i+\ell}$};
\node[anchor=east] at (180:2) {$\vdots$};
\end{scope}
\draw[dashed] (0,0) to (3,0);
\draw[fill=black] (0,0) circle (0.1);
\draw[fill=black] (3,0) circle (0.1);
\node[anchor=west] at (3,0) {$x_0$};
\draw[fill=black] (30:3) circle (0.1);
\node[anchor=west] at (30:3) {$x_1$};
\draw[fill=black] (60:3) circle (0.1);
\node[anchor=south west] at (60:3) {$x_{2}$};
\draw[fill=black] (90:3) circle (0.1);
\node[anchor=south] at (90:3) {$\cdots$};
\draw[fill=black] (270:3) circle (0.1);
\node[anchor=north] at (270:3) {$\cdots$};
\draw[fill=black] (300:3) circle (0.1);
\node[anchor=north west] at (300:3) {$x_{k-1}$};
\draw[fill=black] (330:3) circle (0.1);
\node[anchor=west] at (330:3) {$x_k$};
\draw[thick,->] (-90:2) arc[start angle=-90, end angle=90,radius=2];
\node[anchor=north] at (1,0) {$\phi$};
\end{tikzpicture}
\end{center}
\caption{Domains representing $T$ composed with the $A_{\infty}$ operations.}
\label{fig:part_3}
\end{figure}
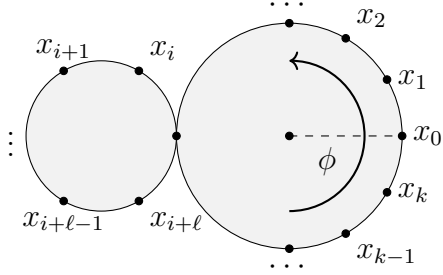

    \begin{figure}[H]
    \begin{center}
\begin{tikzpicture}[scale=0.5]
\filldraw[fill=gray!10] (0,0) circle (3);
\begin{scope}[shift={(5,0)}]
\filldraw[fill=gray!10] (0,0) circle (2);
\draw[fill=black] (0:2) circle (0.1);
\draw[fill=black] (60:2) circle (0.1);
\draw[fill=black] (120:2) circle (0.1);
\draw[fill=black] (-60:2) circle (0.1);
\draw[fill=black] (-120:2) circle (0.1);
\node[anchor=west] at (2,0) {$x_0$};
\node[anchor=west] at (30:2) {$x_1$};
\node[anchor=south west] at (60:2) {$x_{2}$};
\node[anchor=north west] at (300:2) {$x_{k-1}$};
\node[anchor=west] at (330:2) {$x_k$};
\node[anchor=north] at (-120:2) {$x_{i+ \ell + 1}$};
\node[anchor=south] at (120:2) {$x_{i-1}$};
\node[anchor=north] at (-80:2.2) {$\cdots$};
\node[anchor=south] at (80:2.2) {$\cdots$};
\end{scope}
\draw[dashed] (0,0) to (7,0);
\draw[fill=black] (0,0) circle (0.1);
\draw[fill=black] (3,0) circle (0.1);
%\node[anchor=west] at (3,0) {$x_0$};
\draw[fill=black] (30:3) circle (0.1);
%\node[anchor=west] at (30:3) {$x_k$};
\draw[fill=black] (60:3) circle (0.1);
%\node[anchor=south west] at (60:3) {$x_{k-1}$};
\draw[fill=black] (90:3) circle (0.1);
\node[anchor=south] at (90:3) {$\cdots$};
\draw[fill=black] (270:3) circle (0.1);
\node[anchor=north] at (270:3) {$\cdots$};
\draw[fill=black] (300:3) circle (0.1);
%\node[anchor=north west] at (300:3) {$x_2$};
\draw[fill=black] (330:3) circle (0.1);
%\node[anchor=west] at (330:3) {$x_1$};
\draw[thick,->] (-90:2) arc[start angle=-90, end angle=90,radius=2];
\node[anchor=north] at (1,0) {$\phi$};
\end{tikzpicture}
\end{center}
\caption{Domains representing the $A_{\infty}$-bimodule operations applied to $T$.}
\label{fig:part_4}
\end{figure}
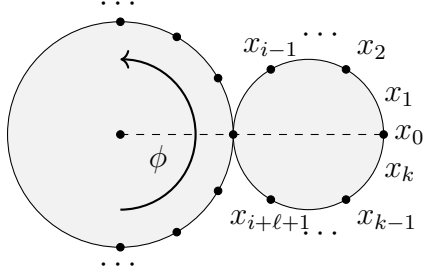

    Recall that the Hochschild differential is given by:
\begin{align*}
    d_{CC^{\ast}}(T)(x_k \otimes \cdots \otimes x_1) = \sum_{i=1}^{k}\sum_{\ell=0}^{k-i} (-1)^{\maltese^{i-1}} T(x_k, \dots, x_{i+\ell+1},\mu^{\ell+1}(x_{i+ \ell}, \dots, x_i), x_{i-1}, \dots, x_1) \\ + \sum_{i=1}^k \sum_{\ell=0}^{k-i}(-1)^{|T|(\maltese^{i-1}+1)+1} \mu_{\phi}^{k-i-\ell,i-1}(x_k, \dots, x_{i+\ell+1}, T(x_{i+\ell}, \dots, x_i), x_{i-1}, \dots, x_1)
\end{align*}
where $\mu_{\phi}^d$ denotes the $A_{\infty}$ bimodule operations for $\phi$ and $\maltese^j = |x_0| + \cdots + |x_j| - j$.

\item Finally, we have the strip-breaking degenerations in Figure \ref{fig:part_5}, which represent the remaining part of the Hochschild coboundary of $T$:
\begin{equation*}
    \sum_{i=1}^k (-1)^{|T|(\maltese^{i-1} +1)+1}T^k(x_k, \dots, \mu^1(x_i), \dots, x_1) + (-1)^{k+1} \mu^{1}_\phi(T^{k}(x_k, \dots, x_1))
\end{equation*}

\begin{figure}[H]
\begin{center}
\begin{tikzpicture}[scale=0.5]
\filldraw[fill=gray!10] (0,0) circle (3);
\begin{scope}[shift={(-4,0)},scale=0.5]
\filldraw[fill=gray!10] (0,0) circle (2);
\draw[fill=black] (0:2) circle (0.2);
\draw[fill=black] (180:2) circle (0.2);
\node[anchor=east] at (180:2) {$x_i$};
\end{scope}
\draw[dashed] (0,0) to (3,0);
\draw[fill=black] (0,0) circle (0.1);
\draw[fill=black] (3,0) circle (0.1);
\node[anchor=west] at (3,0) {$x_0$};
\draw[fill=black] (30:3) circle (0.1);
\node[anchor=west] at (30:3) {$x_1$};
\draw[fill=black] (60:3) circle (0.1);
\node[anchor=south west] at (60:3) {$x_{2}$};
\draw[fill=black] (90:3) circle (0.1);
\node[anchor=south] at (90:3) {$\cdots$};
\draw[fill=black] (270:3) circle (0.1);
\node[anchor=north] at (270:3) {$\cdots$};
\draw[fill=black] (300:3) circle (0.1);
\node[anchor=north west] at (300:3) {$x_{k-1}$};
\draw[fill=black] (330:3) circle (0.1);
\node[anchor=west] at (330:3) {$x_k$};
\draw[thick,->] (-90:2) arc[start angle=-90, end angle=90,radius=2];
\node[anchor=north] at (1,0) {$\phi$};
\end{tikzpicture}
\end{center}
\caption{Strip breaking from $T$.}
\label{fig:part_5}
\end{figure}
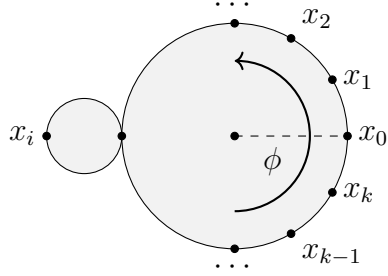
\end{enumerate}
If one takes appropriate sign twisting data and orients the moduli space $\scr{Q}_k$ as in \cite{ganatra_thesis}, one can see that the sum of the number of boundary points of the Gromov compactification of $\tilde{\scr{M}}_k(E,\phi,J,K,x_0,\dots,x_k)$, taken with respect to their canonical orientations, agrees with the signs of the Hochschild differential and $A_{\infty}$ operations. Since this count is zero, this shows that $\scr{CO}_{\phi}(S) = s + d_{CC^{\ast}}(T)$, so they represent the same class in Hochschild cohomology (with $\ZZ$-coefficients).
\end{proof}

\begin{theorem}\label{thm:product_agrees}
The twisted closed-open map $\scr{CO}_{\phi}$ respects the natural product operations on fixed point Floer homology from \cite{ziwenyao}:
\begin{center}
\begin{tikzcd}
    \mathrm{HF}^\ast(\phi^i) \otimes \mathrm{HF}^\ast(\phi^j) \ar{r} \ar{d}{\scr{CO}_{\phi^i} \otimes \scr{CO}_{\phi^j}} & \mathrm{HF}^\ast(\phi^{i+j}) \ar{d}{\scr{CO}_{\phi^{i+j}}} \\
    \mathrm{HH}^{\ast}(\Gamma_{\phi^i}) \otimes \mathrm{HH}^{\ast}(\Gamma_{\phi^j}) \ar{r} & \mathrm{HH}^{\ast}(\Gamma_{\phi^{i+j}})
\end{tikzcd}
\end{center}
\end{theorem}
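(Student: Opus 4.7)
The proof strategy parallels the proof of Theorem \ref{thm:S_equals_S} and Ganatra's proof in \cite{ganatra_thesis} that the ordinary closed-open map is a ring homomorphism: build a parameterized moduli space of $J$-holomorphic sections of standard Lefschetz fibrations whose codimension-one boundary recovers both compositions in the square (up to Hochschild coboundary). First, I would describe each composition explicitly. The bottom-right path $\scr{CO}_{\phi^{i+j}}(x *_{\mathrm{HF}} y)$ corresponds to counting sections over a nodal domain obtained by gluing the pair-of-pants $P$ (with cylindrical ends of weights $i$, $j$, and $i+j$) used to define the Floer-theoretic product in \cite{ziwenyao}, along its weight-$(i+j)$ end, to the interior puncture of the disk $S_k$ of Definition \ref{def:twisted_co}. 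The top-right path---the Yoneda product of bimodule Hochschild cochains $\scr{CO}_{\phi^i}(x) \cup \scr{CO}_{\phi^j}(y)$, using the canonical identification $\Gamma_{\phi^i} \otimes_{\scr{F}(M)} \Gamma_{\phi^j} \simeq \Gamma_{\phi^{i+j}}$---corresponds to counting sections over a domain built by splitting the boundary inputs between two twisting disks and gluing them along their counterclockwise-moving boundary arcs.

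Both endpoint configurations arise as degenerations of a single family of domains: disks with $k+\ell$ boundary marked points, one positive boundary marked point fixed at $1$, and two interior cylindrical ends weighted by $i$ and $j$, with total twisting $i+j$ on the boundary arc between $x_{k+\ell}$ and $x_0$. On each domain we place a standard Lefschetz fibration compatible with the twisting data, and vary the positions of the two interior punctures. When the two punctures collide in the interior, a pair-of-pants bubbles off carrying the Floer-theoretic product, yielding the bottom-right composition; when they migrate toward the boundary and separate along the twisted arc, a neck between two twisting disks stretches and degenerates, yielding the top-right composition. Compatibility of the Lefschetz fibrations across both endpoints is made possible by the essential uniqueness invoked in Theorem \ref{thm:S_equals_S} and Proposition \ref{prop:two_crit_points}. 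The codimension-one boundary of the Gromov compactification then consists of these two endpoint degenerations, together with Deligne-Mumford strata corresponding to $A_{\infty}$-structure and $A_{\infty}$-bimodule compositions applied in various positions---these are exactly the Hochschild coboundary terms for a cochain $H(x,y) \in \mathrm{CC}^{*-1}(\scr{F}(M), \Gamma_{\phi^{i+j}})$ defined by the intermediate moduli spaces---plus Floer breaking at the boundary marked points and interior cylindrical ends (producing $\scr{CO}_{\phi^i}(\partial x)$, $\scr{CO}_{\phi^j}(\partial y)$, and strip breaking along Lagrangian boundary arcs).

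With the wrapping conventions of \S \ref{sec: Compatibility with wrapping} and the canonical orientations described in the signs remark after Theorem \ref{thm:HF_equals_QH}, the signed count of boundary points yields a chain-level identity of the form $\scr{CO}_{\phi^{i+j}}(x *_{\mathrm{HF}} y) - \scr{CO}_{\phi^i}(x) \cup \scr{CO}_{\phi^j}(y) = d_{\mathrm{CC}^*} H(x,y)$ modulo terms involving the Floer differentials of $x$ and $y$; passing to cohomology gives the desired commutativity. The main obstacle will be the careful construction of the interpolating family of Lefschetz fibrations with consistent perturbation data: the twisting along the boundary must interpolate correctly between the two endpoint configurations, the pair-of-pants fibration from \cite{ziwenyao} must glue smoothly to the $S_k$ fibrations at one endpoint, and the rescaling near horizontal infinity from Lemma \ref{lemma:handy} must be compatible throughout the family. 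The sign bookkeeping, while standard after \cite{seidel,ganatra_thesis}, must be matched to the product sign conventions used in \cite{ziwenyao}.
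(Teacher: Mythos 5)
Your proposal matches the paper's proof essentially exactly: both construct a one-parameter family of twisting disks with two interior cylindrical ends weighted $i$ and $j$, with the interior punctures colliding at one end of the parameter interval (bubbling off the pair-of-pants computing the fixed-point Floer product, fed into a single twisted closed-open map) and separating/degenerating at the other end (yielding the Hochschild/Yoneda product of two twisted closed-open maps), with the remaining codimension-one strata contributing the coboundary of a Hochschild cochain homotopy $h$. The paper realizes this by placing the two punctures on the imaginary axis at distance $r \in (0,1)$ and taking $r \to 0$ and $r \to 1$, which is a concrete coordinate version of your "collision vs.\ migration toward the boundary arc" picture, and it handles signs and consistency of the Lefschetz fibration data exactly as you flag.
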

\begin{proof}
Again, the proof is analogous to \cite[\S 4]{ganatra_thesis}, where we consider a moduli space of parametrized domains. Let $D_k$ be a closed disk $k$ incoming boundary marked points $p_1,\dots,p_k$ and one outgoing marked point $p_0$ fixed at $1$, and with two negative interior punctures, lying on the imaginary axis at a distance of $r \in (0,1)$, see Figure \ref{fig:parametrized_domains}. As well as strip-like and cylindrical ends, $D_k$ also comes with twisting data: equal to $i$ and $j$ for the interior punctures, equal to $i+j$ on the boundary component between $p_k$ and $p_0$, and zero otherwise. For each $r \in (0,1)$, for each $D_k$ inside the moduli space $\scr{Q}_{k}^r$ of such disks, take $\pi:E_r \to D_k$ to be a standard symplectic fiber bundle (with respect to these ends and twisting data). Consider the extended moduli space of $K$-perturbed $J$-holomorphic sections $\tilde{\scr{M}}_{i,j,k}(E_k, J, K, x_0, x_1, \dots, x_k,p_1,p_2)$, consisting of pairs $(u,r)$ where $u: D_{k} \to E_r$ is a $K$-perturbed $J$-holomorphic section of $E_r$ with $D_k \in \scr{Q}_{k}^r$, satisfying the boundary and asymptotic conditions as described in Definition \ref{def:twisted_co}.

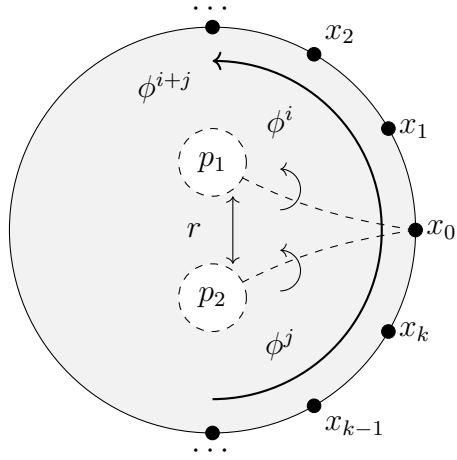
\begin{figure}[H]
\begin{center}
\begin{tikzpicture}[scale=0.9]
\filldraw[fill=gray!10] (0,0) circle (3);
\draw[dashed,bend left=10] (3,0) to (0,1);
\draw[dashed,bend right=10] (3,0) to (0,-1);
\draw[<->] (0.3,0.5) -- (0.3,-0.5);
\draw (0,0) node[anchor=east] {$r$};
\draw[fill=white,dashed] (0,1) circle (0.5);
\draw[fill=white,dashed] (0,-1) circle (0.5);
\draw[fill=black] (3,0) circle (0.1);
\node[anchor=west] at (3,0) {$x_0$};
\draw[fill=black] (30:3) circle (0.1);
\node[anchor=west] at (30:3) {$x_1$};
\draw[fill=black] (60:3) circle (0.1);
\node[anchor=south west] at (60:3) {$x_{2}$};
\draw[fill=black] (90:3) circle (0.1);
\node[anchor=south] at (90:3) {$\cdots$};
\draw[fill=black] (270:3) circle (0.1);
\node[anchor=north] at (270:3) {$\cdots$};
\draw[fill=black] (300:3) circle (0.1);
\node[anchor=north west] at (300:3) {$x_{k-1}$};
\draw[fill=black] (330:3) circle (0.1);
\node[anchor=west] at (330:3) {$x_k$};
\draw[thick,->] (-90:2.5) arc[start angle=-90, end angle=90,radius=2.5];
\node[anchor=north] at (-0.7,2.5) {$\phi^{i+j}$}; 
\node at (0,1) {$p_1$};
\node at (0,-1) {$p_2$};
\draw[->] (1,0.3) arc[start angle=-90, end angle=90,radius=0.3];
\draw (1,-1.3) node[anchor=north] {$\phi^j$};
\draw[->] (1,-0.9) arc[start angle=-90, end angle=90,radius=0.3];
\draw (1,1.2) node[anchor=south] {$\phi^i$};
\end{tikzpicture}
\end{center}
\caption{Domains used to construct the parametrized moduli space $\tilde{\scr{M}}_{i,j,k}$.}
\label{fig:parametrized_domains}
\end{figure}

As in the proof of Theorem \ref{thm:S_equals_S} above, we consider the Gromov compactification of $\tilde{\scr{M}}_{i,j,k}(E_k,J,K)$ as a manifold fibered over $[0,1]$. The components of the Gromov boundary over $r=0, r=1$ are illustrated in Figure \ref{fig:degeneration_1} and Figure \ref{fig:degeneration_2} respectively.
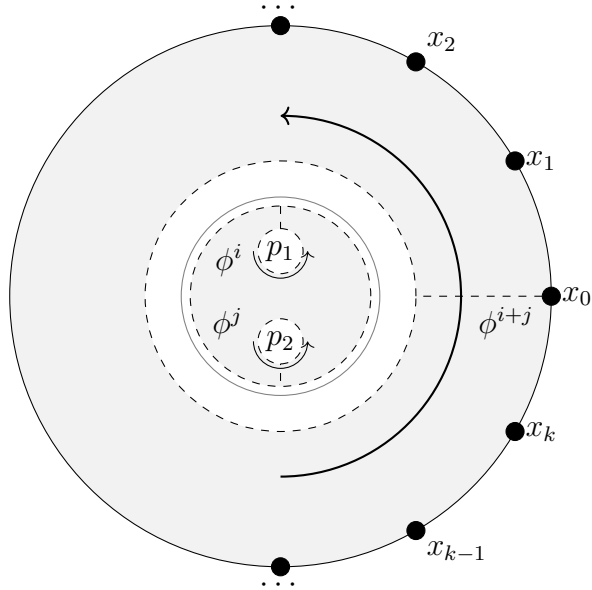
\begin{figure}[H]
\begin{center}
    \begin{tikzpicture}[scale=0.6]
\begin{scope}[scale=2]
\filldraw[fill=gray!10] (0,0) circle (3);
\draw[dashed] (0,0) to (3,0);
\draw[fill=white,dashed] (0,0) circle (1.5);
\draw[fill=black] (3,0) circle (0.1);
\node[anchor=west] at (3,0) {$x_0$};
\draw[fill=black] (30:3) circle (0.1);
\node[anchor=west] at (30:3) {$x_1$};
\draw[fill=black] (60:3) circle (0.1);
\node[anchor=south west] at (60:3) {$x_{2}$};
\draw[fill=black] (90:3) circle (0.1);
\node[anchor=south] at (90:3) {$\cdots$};
\draw[fill=black] (270:3) circle (0.1);
\node[anchor=north] at (270:3) {$\cdots$};
\draw[fill=black] (300:3) circle (0.1);
\node[anchor=north west] at (300:3) {$x_{k-1}$};
\draw[fill=black] (330:3) circle (0.1);
\node[anchor=west] at (330:3) {$x_k$};
\draw[thick,->] (-90:2) arc[start angle=-90, end angle=90,radius=2];
\node[anchor=north] at (2.5,0) {$\phi^{i+j}$}; 
\end{scope}
\filldraw[fill=gray!10,dashed] (0,0) circle (2);
% \draw[dashed] (0,0) to (1.5,0);
% \draw[fill=black] (0,0) circle (0.1);
\draw[gray,<-] (0,0) circle (2.2);
% \node[anchor=north east] at (200:2) {$\gamma$};
% \node[anchor=north east] at (0,0) {$\CC$};
\begin{scope}[rotate=-90]
 \draw[dashed] (-2,0) to (-1,0);
  \draw[dashed] (1,0) to (2,0);
\draw[fill=white,dashed] (1,0) circle (0.5);
\draw[fill=white,dashed] (-1,0) circle (0.5);
\draw[->] (1,-0.6) arc[start angle=-90, end angle=90,radius=0.6];
\draw (1.2,-0.6) node[anchor=south east] {$\phi^j$};
\draw[->] (-1,-0.6) arc[start angle=-90, end angle=90,radius=0.6];
\draw (-0.8,-0.6) node[anchor=east] {$\phi^i$};
\node at (-1,0) {$p_1$};
\node at (1,0) {$p_2$};   
\end{scope}
\end{tikzpicture}
\caption{A degeneration of domains in Figure \ref{fig:parametrized_domains} as $r \to 0$, corresponding to the composition of the twisted closed-open map with the product on fixed point Floer cohomology. }
\end{center}
\label{fig:degeneration_1}
\end{figure}

\begin{figure}[H]
\begin{center}
    \begin{tikzpicture}[scale=0.75]
    \begin{scope}[rotate=-90]
\filldraw[fill=gray!10] (0,0) circle (3);
\draw[dashed] (0,0) to (7,0);
\begin{scope}[shift={(5,0)}]
\filldraw[fill=gray!10] (0,0) circle (2);
\draw[dashed, bend right] (-2,0) to (0,2);
\draw[dashed, bend left] (2,0) to (0,2);
\draw[fill=black] (0:2) circle (0.1);
\draw[fill=black] (60:2) circle (0.1);
\draw[fill=black] (120:2) circle (0.1);
\draw[fill=black] (-60:2) circle (0.1);
\draw[fill=black] (-120:2) circle (0.1);
\node[anchor=west] at (0,2) {$x_0$};
\draw[fill=black] (0,2) circle (0.1);
%\node[anchor=west] at (30:2) {$x_1$};
\node[anchor=west] at (60:2) {$x_{k}$};
\node[anchor=north east] at (300:2) {$x_{k-m}$};
%\node[anchor=west] at (330:2) {$x_k$};
\node[anchor=south east] at (-120:2) {$x_{\ell + 1}$};
\node[anchor=west] at (120:2) {$x_{2}$};
\node[anchor=east] at (-90:2.2) {$\vdots$};
\draw[thick,->] (1.5,-0.2) arc[start angle=0, end angle=180,radius=1.5];
\draw (0,0) node[anchor=south] {$\phi^{i+j}$};
\end{scope}
\draw[fill=black] (0,0) circle (0.1);
\draw[fill=black] (3,0) circle (0.1);
%\node[anchor=west] at (3,0) {$x_0$};
\draw[fill=black] (30:3) circle (0.1);
%\node[anchor=west] at (30:3) {$x_k$};
\draw[fill=black] (60:3) circle (0.1);
%\node[anchor=south west] at (60:3) {$x_{k-1}$};
\draw[fill=black] (90:3) circle (0.1);
\node[anchor=west] at (90:3) {$\vdots$};
\draw[fill=black] (270:3) circle (0.1);
\node[anchor=east] at (270:3) {$\vdots$};
\draw[fill=black] (300:3) circle (0.1);
%\node[anchor=north west] at (300:3) {$x_2$};
\draw[fill=black] (330:3) circle (0.1);
%\node[anchor=west] at (330:3) {$x_1$};
\draw[thick,->] (-90:2) arc[start angle=-90, end angle=90,radius=2];
\node[anchor=east] at (1.5,0) {$\phi^i$};
\draw[dashed] (0,0) to (1.5,0);
\draw[fill=black] (0,0) circle (0.1);
\draw[gray,<-] (0,0) circle (1);
\draw[fill=white,dashed] (0,0) circle (1);
\node[anchor=north east] at (0,0) {$p_1$};
\begin{scope}[shift={(10,0)}]
\filldraw[fill=gray!10] (0,0) circle (3);
\draw[dashed] (0,0) to (-3,0);
\draw[fill=black] (-180:3) circle (0.1);
\draw[fill=black] (0,0) circle (0.1);
\draw[fill=black] (3,0) circle (0.1);
%\node[anchor=west] at (3,0) {$x_0$};
\draw[fill=black] (30:3) circle (0.1);
%\node[anchor=west] at (30:3) {$x_k$};
\draw[fill=black] (60:3) circle (0.1);
%\node[anchor=south west] at (60:3) {$x_{k-1}$};
\draw[fill=black] (90:3) circle (0.1);
\node[anchor=west] at (90:3) {$\vdots$};
\draw[fill=black] (270:3) circle (0.1);
\node[anchor=east] at (270:3) {$\vdots$};
\draw[fill=black] (300:3) circle (0.1);
%\node[anchor=north west] at (300:3) {$x_2$};
\draw[fill=black] (330:3) circle (0.1);
%\node[anchor=west] at (330:3) {$x_1$};
\draw[thick,<-] (90:2) arc[start angle=90, end angle=-90,radius=2];
\node at (1.5,0) {$\phi^j$};
\draw[fill=black] (0,0) circle (0.1);
\draw[gray,<-] (0,0) circle (1);
\draw[fill=white,dashed] (0,0) circle (1);
\node[anchor=north east] at (0,0) {$p_2$};
\end{scope}
\end{scope}
\end{tikzpicture}
\caption{A degeneration of domains in Figure \ref{fig:parametrized_domains} as $r \to 1$, corresponding to the product in twisted Hochschild cohomology applied to the twisted closed-open map.}
\end{center}
\label{fig:degeneration_2}
\end{figure}
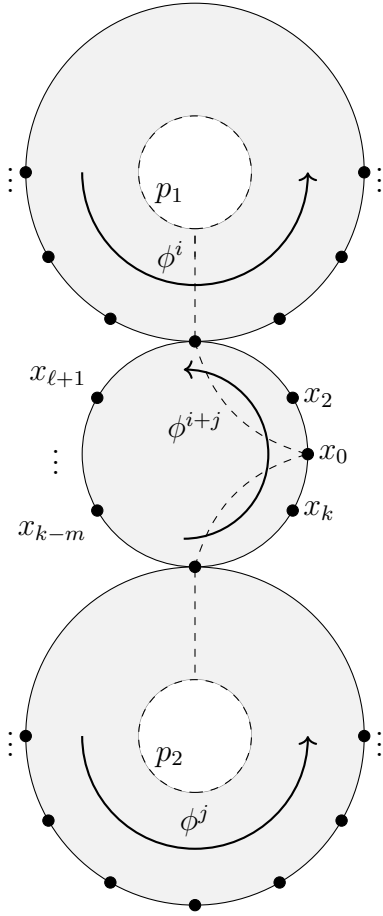
There are also additional components of the Gromov boundary coming from cylinder breaking, corresponding to the the differential of $\mathrm{CF}^{\ast}(\phi^i)$, and disk bubbling and strip breaking, corresponding to the differential of $\mathrm{CC}^{\ast}(\Gamma_{\phi^i})$. When $(K,J)$ are chosen generically and the virtual dimension is zero, the count of points in this moduli space 
provides a chain map:
\begin{equation*}
    h: \mathrm{CF}^{\ast}(\phi^i) \otimes \mathrm{CF}^{\ast}(\phi^j) \to \mathrm{CC}^{\ast}(\Gamma_{\phi^{i+j}})
\end{equation*}
via
\begin{equation*}
    h(p_0,p_1,x_1, \dots, x_k)(x_0) = \#\tilde{\scr{M}}_{i,j,k}(E_k, J, K, x_0, x_1, \dots, x_k,p_1,p_2)
\end{equation*}
that gives a homotopy between the two multiplication operations. If appropriate sign twisting data are used, and $\scr{Q}_k$ is oriented consistently with \cite{ganatra_thesis}, then if this count is taken with its canonically determined signs, the orientations of the boundary strata of the Gromov compactification agree with the orientations for the Hochschild differential and $A_{\infty}$-operations, and this chain homotopy also holds with $\CC$ coefficients.
\end{proof}

\subsubsection*{Lefschetz fibrations}
We note our product formula for fixed point Floer cohomology can be used to compute the Seidel class in more complicated Lefschetz fibrations, which may be of independent interest, \textit{though it is not essential for the rest of the paper}. We give an illustration below. 
\begin{theorem}
Given a single Dehn twist $\phi$, the Seidel element $S_2 \in \mathrm{HF}^0(\phi^2)$ associated to the standard Lefschetz fibration with two critical points constructed in Proposition \ref{prop:two_crit_points} with and $V_{-} = V_{+}$, is equal to $S^2 \in \mathrm{HF}^0(\phi^2)$ where $S \in \mathrm{HF}^0(\phi)$ is the Seidel element for $\phi$. 
\end{theorem}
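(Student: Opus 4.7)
The plan is to compare the two section counts via a parametrized moduli space obtained by stretching the neck between the two critical values. Let $\pi: E \to \CC$ be the standard Lefschetz fibration of Proposition \ref{prop:two_crit_points} with $V_{-} = V_{+}$ and critical values at $\pm 1$. For $R \geq 1$, consider the family of Lefschetz fibrations $\pi_R : E_R \to \CC$ obtained by the analogous construction of Proposition \ref{prop:two_crit_points} in which the two critical values are placed at $\pm R$ and the pair-of-pants region connecting them to the cylindrical end at infinity is elongated. This is possible because, sufficiently far from each critical value, the standard Lefschetz fibration for $\phi$ is isomorphic as an exact symplectic fiber bundle to the symplectization of the mapping torus of $\phi$, and so it can be glued in along arbitrarily long cylindrical pieces to the pair-of-pants bundle of \cite{ziwenyao}.

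Now fix a fixed point $q \in \mathrm{Fix}(\phi^2)$ and consider the parametrized moduli space $\tilde{\scr{M}}(q) = \bigcup_{R \geq 1} \scr{M}(E_R, J_R, K_R, q)$ of pairs $(u,R)$ where $u$ is a $K_R$-perturbed $J_R$-holomorphic section of $\pi_R$ asymptotic to the orbit through $q$. With generic consistent perturbation data this is a $1$-manifold with boundary. The boundary at $R=1$ contributes precisely the count $\#\scr{M}(E, J, K, q)$ defining the coefficient of $[q]$ in $S_2$. I would then appeal to a standard SFT-style neck-stretching/compactness argument as $R \to \infty$: the finite-energy sections decompose into
\begin{itemize}
    \item a section of the standard Lefschetz fibration for $\phi$ near $+R$ asymptotic to some orbit $p_{+} \in \mathrm{Fix}(\phi)$ at its cylindrical end, contributing the coefficient of $[p_{+}]$ in $S$;
    \item a section of the standard Lefschetz fibration for $\phi$ near $-R$ asymptotic to some orbit $p_{-} \in \mathrm{Fix}(\phi)$, contributing the coefficient of $[p_{-}]$ in $S$;
    \item a section of the pair-of-pants symplectic fiber bundle with two negative ends asymptotic to $p_{\pm}$ (with monodromy $\phi$) and positive end asymptotic to $q$ (with monodromy $\phi^2$), which is exactly the moduli space defining the product $\mathrm{HF}^0(\phi) \otimes \mathrm{HF}^0(\phi) \to \mathrm{HF}^0(\phi^2)$ of \cite{ziwenyao}.
\end{itemize}
Summing over $p_\pm$ and $q$, the total count at this end of the parametrized moduli space is exactly the coefficient of $[q]$ in $S \cdot S$. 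The remaining codimension-$1$ strata correspond to cylinder breaking at the positive end of $\pi_R$ and give the Floer differential applied to a degree $-1$ chain; this chain is the parametrized count, providing a cochain homotopy between $S_2$ and $S \cdot S$. Hence $[S_2] = [S]^2$ in $\mathrm{HF}^0(\phi^2)$.

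The main obstacle is the neck-stretching compactness: one needs to ensure that no energy is lost into the neck as $R \to \infty$, that the almost-complex structures and Hamiltonian perturbation data can be chosen cylindrically invariant on the elongated neck region (compatibly with the product perturbation data of \cite{ziwenyao}), and that the gluing theorem identifies the two ends of the parametrized moduli space with the claimed counts. These are standard in this setting, essentially because the two regions being glued already agree with symplectizations of mapping tori outside compact sets, so no new analytic input beyond what is used in Section \ref{sec: Compatibility with wrapping} and in \cite{ziwenyao} is required. Signs work out via the canonical orientations discussed in the remark following Theorem \ref{thm:HF_equals_QH}, since the orientation of the parametrized moduli space relative to its boundary agrees with the sign conventions of the product and of $S_2$.
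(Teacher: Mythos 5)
Your proposal is correct and takes essentially the same route as the paper: the paper's proof constructs a one-parameter family of Lefschetz fibrations by shrinking the disks around the two critical values via $z \mapsto z/r$ as $r \to 0$ (creating a long pair-of-pants neck), which is conformally equivalent to your parametrization by moving the critical values to $\pm R$ and stretching as $R \to \infty$; in both cases the two ends of the parametrized moduli space give $S_2$ and $S \cdot S$ respectively, with cylinder breaking contributing the Floer differential and hence a chain homotopy. Your phrasing of the differential contribution as ``cylinder breaking'' is in fact slightly more precise than the paper's ``strip-breaking,'' since the differential on $\mathrm{CF}^\ast(\phi^2)$ counts cylinders.
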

\begin{proof}
The proof of this result is analogous to Theorem \ref{thm:S_equals_S}, where now we instead stretch the complex structure along two circles, around the two critical values in the base. We construct a family of Lefschetz fibrations $E_r \to \CC$ with strip-like ends living over $\CC$ parametrized by $r \in (0,1)$, and look at sections asymptotic to a given orbit $p$ of $\phi^2$ at infinity: this is given by taking the construction from Proposition \ref{prop:two_crit_points} of the standard Lefschetz fibration with two critical points and instead gluing in at the punctures at $\pm 1$ the Lefschetz fibration $E_r$ pulled back from the standard Lefschetz fibration $E \to \CC$ associated to the vanishing cycle $V$ by the map $z \to z/r$ for $r \neq 0$. This has the effect of stretching the gluing region for the Lefschetz fibration over the complement of $D_{r(1-\epsilon)}$. Denote the extended moduli space of sections of this Lefschetz fibration (with generic perturbation data) by $\tilde{\scr{M}}(E_r,J, K, p)$, consisting of pairs $(u,r)$ where $u: \CC \to E_r$ is a $K$-perturbed $J$-holomorphic section of the Lefschetz fibration $E_r$, horizontally asymptotic to the orbit $p$ at infinity.

Taking the Gromov compactification of $\tilde{\scr{M}}(E_r,J, K, p)$, gives a manifold fibered over $[0,1]$, and the codimension-$1$ boundary is covered by the union of the images of the natural inclusions of the moduli spaces of $J$-holomorphic sections over the domains in Figures \ref{fig:two_crit_pts} and \ref{fig:two_crit_pts_2}.

\begin{figure}[H]
\begin{center}
\begin{tikzpicture}[scale=0.8]
\filldraw[fill=gray!10,dashed] (0,0) circle (3);
\draw[dashed] (3,0) -- (1,0);
\draw[dashed] (-3,0) -- (-1,0);
\draw[fill=white,dashed] (1,0) circle (0.75);
\draw[fill=white,dashed] (-1,0) circle (0.75);
\draw[thick,->] (-90:2.5) arc[start angle=-90, end angle=90,radius=2.5];
\node[anchor=west] at (3,0) {$\phi^{2}$}; 
\draw[->] (1,-0.65) arc[start angle=-90, end angle=90,radius=0.65];
\draw (1.2,-0.6) node[anchor=north] {$\phi$};
\draw[<-] (-1,-0.65) arc[start angle=-90, end angle=-270,radius=0.65];
\draw (-0.8,-0.6) node[anchor=north] {$\phi$};
\begin{scope}[shift={(1,0)},scale=0.3]
\filldraw[fill=gray!10,dashed] (0,0) circle (1.5);
\draw[dashed] (0,0) to (1.5,0);
\draw[fill=black] (0,0) circle (0.1);
\draw[gray,->] (0,0) circle (1.8);
%\node[anchor=north east] at (0,0) {$S$};
\end{scope}
\begin{scope}[shift={(-1,0)},scale=0.3,rotate=180]
\filldraw[fill=gray!10,dashed] (0,0) circle (1.5);
\draw[dashed] (0,0) to (1.5,0);
\draw[fill=black] (0,0) circle (0.1);
\draw[gray,->] (0,0) circle (1.8);
%\node[anchor=north east] at (0,0) {$S$};
\end{scope}
\end{tikzpicture}

\caption{Domains representing the product structure on $\mathrm{CF}(\phi)$ applied to the Seidel class.}
\end{center}
\label{fig:two_crit_pts}
\end{figure}
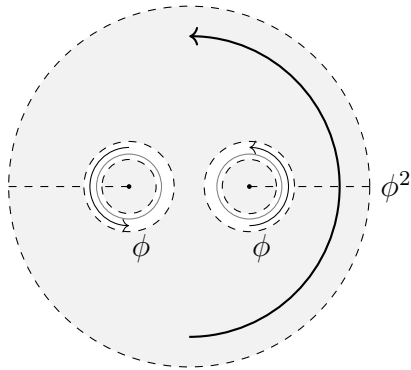

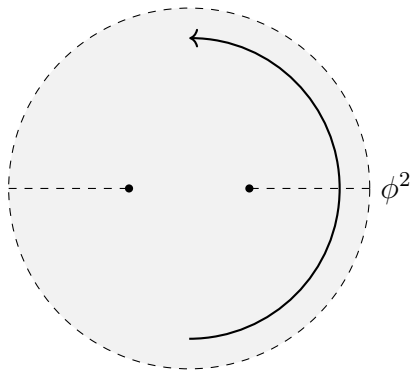
\begin{figure}[H]
\begin{center}
\begin{tikzpicture}[scale=0.8]
\filldraw[fill=gray!10,dashed] (0,0) circle (3);
\draw[dashed] (3,0) -- (1,0);
\draw[dashed] (-3,0) -- (-1,0);
\draw[thick,->] (-90:2.5) arc[start angle=-90, end angle=90,radius=2.5];
\node[anchor=west] at (3,0) {$\phi^{2}$}; 
\draw[thick,fill=black] (-1,0) circle (0.05);
\draw[thick,fill=black] (1,0) circle (0.05);
\end{tikzpicture}

\caption{Domains representing the original count of sections defining the Seidel class for $\phi^2$.}
\end{center}
\label{fig:two_crit_pts_2}
\end{figure}

Essentially by the construction in Proposition \ref{prop:two_crit_points}, the result of stretching the gluing region gives the standard symplectic fiber bundle over the pair of pants used to define the product map in fixed point Floer homology. We may also have an additional term from possible strip-breaking for the Floer differential for the fixed point Floer homology of $\phi^2$. Taking appropriate sign conventions as in \cite{seidel,ganatra_thesis}, we see that the sum of the number of boundary points of the compactification of $\tilde{\scr{M}}(p)$, taken with the correct orientation, is zero. This shows that the difference between $S^2$ and $S_2$ in $\mathrm{CF}(\phi^2)$ is a Floer coboundary.
\end{proof}

\begin{remark}
    We include this theorem as a point of independent interest. This can be used, for instance, when combined with our computation of homogeneous coordinate rings for $\phi^2$ in Theorem \ref{thm:homog}, to compute the direct limit $\dlim_d \mathrm{HF}^*(\Sigma_{g},\phi^{2d})$ when we localize around the monodromy around the Lefschetz fibration constructed in Proposition \ref{prop:two_crit_points}.
\end{remark}

\subsection{Twisted Hochschild cohomology}

In \cite{ganatra_thesis}, Ganatra constructs a split-wrapped Fukaya category $\scr{F}^2(M)$ of $M \times \bar{M}$ whose objects consist of product Lagrangians $L_i \times L_j$ and the diagonal Lagrangian $\Delta$. He moreover constructs an $A_{\infty}$-functor
\begin{equation*}
    \textbf{M}: \scr{F}^2(M) \to \scr{F}(M)-
    \text{bimod}
\end{equation*}
using a version of quilt techniques, which is full on the subcategory split-generated by product Lagrangians. Under a non-degeneracy assumption on $M$ (see \cite[Definition 1.1]{ganatra_thesis}) satisfied always for curves $\Sigma_g$, Ganatra uses methods from \cite{abouzaid_generation} to show that the diagonal Lagrangian $\Delta \subset M \times \bar{M}$ is split-generated by product Lagrangians $L_i \times L_j$ in the split-wrapped category $\scr{F}^2(M)$. This implies that  $\scr{F}(M)$ must be homologically smooth under the non-degeneracy assumption.

One can define an appropriate version of $\scr{F}^2(M)$ in which the Lagrangian correspondence $\Delta_{\phi} \subset M \times \bar{M}$ associated to $\phi$ is also an object, such that
\begin{equation*}
    \mathrm{Hom}^{\ast}_{\scr{F}^2(M)}(\Delta, \Delta_\phi) \cong \mathrm{HF}^{\ast}(\phi)
\end{equation*}
where $\mathrm{HF}(\phi)$ is an (appropriately wrapped) version of fixed point Floer cohomology. Then it is straightforward to see that $\Delta_{\phi}$ is split-generated by the same complex of product Lagrangians $L_i \times \phi(L_j)$. Since $\bf{M}$ is full on the subcategory of $\scr{F}^2(M)$ of product Lagrangians, and faithful because of its algebraic properties \cite{ganatra_thesis}, it follows that there is an isomorphism:
\begin{equation*}
    [\textbf{M}^1]: \mathrm{Hom}^{\ast}_{\scr{F}^2(M)}(\Delta, \Delta_\phi) \to \mathrm{Hom}^{\ast}_{\scr{F}(M)-\mathrm{bimod}}(\mathrm{Id},\Gamma_{\phi})
\end{equation*}

Composing this sequence of equivalences gives a twisted closed-open map $\scr{CO}_{\phi}$:
\begin{equation*}
    \scr{CO}_{\phi}: \mathrm{HF}^{\ast}(\phi) \cong \mathrm{Hom}^{\ast}_{\scr{F}^2(M)}(\Delta, \Delta_\phi) \overset{[\bf{M}^1]}{\longrightarrow} \mathrm{Hom}^{\ast}_{\scr{F}(M)-\mathrm{bimod}}(\mathrm{Id},\Gamma_{\phi}) \cong \mathrm{HH}^{\ast}(\scr{F}(M), \Gamma_\phi)
\end{equation*}
It is not difficult to check that this is chain-homotopic to the closed-open map as defined in Definition \ref{def:twisted_co} (analogous to the `unfolding' argument in \cite[Proposition 9.7]{ganatra_thesis}). 

\begin{proposition}\label{prop:HH_as_limit}
    Suppose $\scr{C}$ is a homologically smooth $A_{\infty}$-category and $s: \mathrm{id} \to F$ is an ambidextrous natural transformation. Then there is a quasi-isomorphism 
    \begin{equation*}
        \mathrm{CC}^{\ast}(\scr{C}[s\inv]) \simeq \dlim_d \mathrm{hom}_{\scr{C}-\mathrm{bimod}}(\mathrm{id}, \scr{F}^d)
    \end{equation*}
    where $\scr{F}$ is the $A_{\infty}$-bimodule associated to $F$ and the direct limit is taken along composition with $s$.
\end{proposition}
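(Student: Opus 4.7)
The plan is to compute both sides as spaces of bimodule maps over $\scr{C}$ and match them up via the universal property of localization, then use homological smoothness to commute a direct limit past a $\mathrm{hom}$. First, I would invoke the standard identification $\mathrm{CC}^{\ast}(\scr{D}) \simeq \mathrm{hom}_{\scr{D}-\mathrm{bimod}}(\mathrm{id}_\scr{D}, \mathrm{id}_\scr{D})$ applied to $\scr{D} = \scr{C}[s\inv]$. The localization functor $\pi : \scr{C} \to \scr{C}[s\inv]$ induces a pullback $\pi^{\ast}$ on bimodules which is fully faithful onto the full subcategory of bimodules on which left (equivalently, by ambidexterity, right) multiplication by $s$ is a quasi-isomorphism. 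Applying this to the diagonal bimodule of $\scr{C}[s\inv]$ gives
\[
\mathrm{CC}^{\ast}(\scr{C}[s\inv]) \simeq \mathrm{hom}_{\scr{C}-\mathrm{bimod}}(\pi^\ast \mathrm{id}_{\scr{C}[s\inv]}, \pi^\ast \mathrm{id}_{\scr{C}[s\inv]}).
\]

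Next I would identify $\pi^\ast \mathrm{id}_{\scr{C}[s\inv]}$ with the homotopy colimit $\dlim_d \scr{F}^d$, where the transition maps are induced by left composition with $s$. This follows from the standard $A_\infty$ model of the localization, in which $\mathrm{hom}_{\scr{C}[s\inv]}(X,Y) \simeq \dlim_d \mathrm{hom}_{\scr{C}}(X, F^dY)$; the ambidexterity of $s$ ensures that the left and right bimodule structures on this direct limit coincide up to coherent homotopy. There is a canonical unit map $\mathrm{id} \to \dlim_d \scr{F}^d$ coming from the $d=0$ inclusion, and since $\dlim_d \scr{F}^d$ is $s$-invertible as a bimodule, it corepresents on the subcategory of $s$-invertible bimodules the same functor as $\mathrm{id}$ does. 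Consequently, $\mathrm{hom}(\dlim_d \scr{F}^d, \dlim_d \scr{F}^d) \simeq \mathrm{hom}(\mathrm{id}, \dlim_d \scr{F}^d)$. Finally, since $\scr{C}$ is homologically smooth, the diagonal $\mathrm{id}$ is a perfect object of $\scr{C}-\mathrm{bimod}$, so $\mathrm{hom}(\mathrm{id}, -)$ commutes with filtered colimits, yielding the claimed quasi-isomorphism to $\dlim_d \mathrm{hom}_{\scr{C}-\mathrm{bimod}}(\mathrm{id}, \scr{F}^d)$.

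The main obstacle is the identification in the second paragraph of the pulled-back diagonal with the direct limit, together with the accompanying claim that $\pi^{\ast}$ is fully faithful onto the $s$-invertible bimodules. These require a concrete model for the $A_\infty$-localization $\scr{C}[s\inv]$ and a verification of its universal property at the level of bimodules; a natural approach is to build $\scr{C}[s\inv]$ directly as the sequential homotopy colimit of multiplication by $s$, using ambidexterity to reconcile the left and right $A_\infty$-module structures, so that the diagonal bimodule is visibly $\dlim_d \scr{F}^d$. Full faithfulness of $\pi^{\ast}$ then follows from the fact that both $\pi$ and the unit $\mathrm{id}\hookrightarrow \dlim_d \scr{F}^d$ solve the same universal problem of inverting $s$, which pins down $\scr{C}[s\inv]$ up to quasi-equivalence and identifies its bimodule category with the claimed subcategory.
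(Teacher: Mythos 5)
Your proof takes a genuinely different route from the paper's, and it is worth spelling out the contrast. The paper's proof proceeds by reduction to generators: since $\scr{C}$ is homologically smooth, the diagonal bimodule $\Delta_\scr{C}$ and each $\scr{F}^d$ are perfect, hence split-generated by tensor products of Yoneda bimodules $\scr{Y}^\ell(X)\otimes\scr{Y}^r(Z)$. The identity $\scr{A}\backslash\Delta_\scr{C}/\scr{A}\simeq\Delta_{\scr{C}/\scr{A}}$ (their Lemma on quotient bimodules) and exactness of the quotient functor reduce the claim to the case of such tensor products, for which the K\"unneth theorem for bimodules turns the $\mathrm{hom}$ into a tensor product of Hom-complexes in $\scr{C}/\scr{A}$; Seidel's subalgebra result ($\mathrm{Hom}_{\scr{C}/\scr{A}}(Y,Y')\cong\dlim_d\mathrm{Hom}_\scr{C}(Y,F^dY')$), cofinality of the diagonal in $\NN\times\NN$, and the tensor-hom adjunction then produce the direct-limit form. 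By contrast, you work at the level of the whole bimodule category, invoking the universal property of localization: you identify $\mathrm{CC}^*(\scr{C}[s^{-1}])$ with self-ext of the pulled-back diagonal $\pi^*\Delta_{\scr{C}[s^{-1}]}\simeq\dlim_d\scr{F}^d$, use the localization property to replace the source by $\Delta_\scr{C}$, and use smoothness in a completely different place---to commute $\mathrm{hom}_{\scr{C}-\mathrm{bimod}}(\Delta_\scr{C},-)$ with the filtered colimit. Your route is conceptually tighter (it avoids K\"unneth and the diagonal-cofinality device entirely), while the paper's is built from more modular, already-cited pieces.

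The load-bearing assertion in your argument---that $\pi^*$ is fully faithful from $\scr{C}[s^{-1}]$-bimodules onto the full subcategory of $s$-invertible $\scr{C}$-bimodules---is the one the paper deliberately sidesteps, and you correctly flag it as the main thing requiring work. This is a Verdier/Bousfield-type statement; it is expected to hold, and it would follow from a sufficiently robust model of $A_\infty$-localization and its effect on bimodule categories, but it is not off-the-shelf in the sources this paper cites, and proving it in the $A_\infty$ setting with two-sided module actions is not free. Relatedly, your step identifying $\pi^*\Delta_{\scr{C}[s^{-1}]}$ with $\dlim_d\scr{F}^d$ needs the transition maps to be coherently compatible with both $\scr{C}$-module structures; ambidexterity of $s$ is exactly the hypothesis you want here, and your reading of it is right, but in a careful write-up you would need to exhibit the bimodule morphism and check it at the $A_\infty$ level rather than just on underlying complexes. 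None of this is a wrong idea---it is a legitimate alternative proof---but as written it trades the paper's K\"unneth/cofinality bookkeeping for a foundational claim about localizations of bimodule categories that would itself need a proof of comparable length.
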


Here $\scr{C}[s\inv]$ denotes the localization of $\scr{C}$ at the natural transformation $s$, which is defined to be the $A_{\infty}$-quotient category of $\scr{C}$ by the full subcategory $\scr{A}$ of cones of $s$, in the sense of \cite{quotients,quotients2}. The quotient category $\scr{C}/\scr{A}$ has the same objects as $\scr{C}$ but morphisms are given by the bar complex:
\begin{equation*}
    \mathrm{hom}_{\scr{C}/\scr{A}}(X,Y) = \bigosum_{k \geq 0} \bigosum_{A_1, \dots, A_k \in \scr{A}} \mathrm{hom}_{\scr{C}}(A_k,Y) \otimes \cdots \otimes \mathrm{hom}_{\scr{C}}(A_1,A_2)[1] \otimes \mathrm{hom}_{\scr{C}}(X,A_1)[1]
\end{equation*}
where the $k=0$ term is $\mathrm{hom}_{\scr{C}}(X,Y)$ itself and the $A_{\infty}$ operations given by summation over all ways of collapsing the complex. One moreover has the notion of a quotient module: if $\scr{M}$ is a right $\scr{C}$-module, one may define a right $\scr{C}/\scr{A}$-module \cite{GPS1} denoted $\scr{M}/\scr{A}$ via:
\begin{equation*}
    (\scr{M}/\scr{A})(X) = \bigosum_{k\geq 0}\bigosum_{A_1, \dots, A_k \in \scr{A}} \scr{M}(A_k) \otimes \cdots \otimes \mathrm{hom}_{\scr{C}}(A_1,A_2)[1] \otimes \mathrm{hom}_{\scr{C}}(X,A_1)[1]
\end{equation*}
where again the $k=0$ term is $\scr{M}(X)$ and the $A_{\infty}$-module operations come from summation over all ways of applying collapsing the complex. Similarly, we use $\scr{A}\backslash\scr{N}$ to denote the quotient of a left $\scr{C}$-module $\scr{N}$ to give a left $\scr{C}/\scr{A}$-module; and $\scr{A}\backslash\scr{B}/\scr{A}$ to denote the quotient of a $\scr{C}$-bimodule $\scr{B}$ to give a $\scr{C}/\scr{A}$-bimodule, defined in an analogous fashion.

We recall the following properties of $A_{\infty}$-quotients, which follow directly from the definitions:

\begin{lemma}\label{lem:algebra}
Suppose $\scr{A}$ is a full subcategory of $\scr{C}$, and $X,Z$ are objects of $\scr{C}$: then
\begin{enumerate}
    \item The Yoneda module $\scr{Y}^{r}_{[X]}$ of $[X]$ in $\scr{C}/\scr{A}$ is quasi-isomorphic to the quotient module $\scr{Y}^r_{X}/\scr{A}$;
    \item The Yoneda module $\scr{Y}^{\ell}_{[Z]}$ of $[Z]$ in $\scr{C}/\scr{A}$ is quasi-isomorphic to the quotient module 
    \item The quotient bimodule $\scr{A}\backslash \Delta_{\scr{C}}/\scr{A}$ is quasi-isomorphic to the diagonal bimodule $\Delta_{\scr{C}/\scr{A}}$;
$\scr{A}\backslash\scr{Y}^{\ell}_{Z}$;
    \item The quotient map $\scr{M} \to \scr{M}/\scr{A}$ from $\scr{C}$-modules to $\scr{C}/\scr{A}$-modules is an exact functor.
\end{enumerate}
Hence if $\scr{C}$ is homologically smooth, then so is $\scr{C}/\scr{A}$.
\end{lemma}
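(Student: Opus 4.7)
The plan is to verify claims (1) through (4) by direct comparison of the bar complexes defining the quotients, and then to deduce homological smoothness of $\scr{C}/\scr{A}$ as a formal consequence of (3) and (4). Since the authors remark that these properties follow directly from the definitions, I do not expect a conceptual obstacle here; the main task is careful bookkeeping, and the only subtle step is verifying the preservation of perfect bimodules in the final deduction.

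For (1), I would unwind the definitions side by side. By definition of the quotient category, $\scr{Y}^{r}_{[X]}(Y) = \mathrm{hom}_{\scr{C}/\scr{A}}(Y,X)$ equals
\begin{equation*}
\bigoplus_{k \geq 0}\bigoplus_{A_1,\dots,A_k \in \scr{A}} \mathrm{hom}_{\scr{C}}(A_k,X) \otimes \mathrm{hom}_{\scr{C}}(A_{k-1},A_k)[1] \otimes \cdots \otimes \mathrm{hom}_{\scr{C}}(Y,A_1)[1],
\end{equation*}
while substituting $\scr{M} = \scr{Y}^{r}_X$ into the quotient module definition gives exactly the same termwise expression for $(\scr{Y}^{r}_X/\scr{A})(Y)$. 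Matching differentials and right $\scr{C}/\scr{A}$-module structure maps amounts to observing that both are given by sums over collapses of the bar in identical positions. Claim (2) follows by the symmetric argument for left modules, and (3) follows from applying the same argument to the two-sided bar complex used to define the bimodule quotient $\scr{A}\backslash(-)/\scr{A}$.

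Claim (4) follows from the observation that $\scr{M} \mapsto \scr{M}/\scr{A}$ is, termwise, a direct sum of functors each given by tensoring $\scr{M}$ with a fixed chain complex built from morphism spaces of $\scr{C}$ over the ground field. Since tensoring with a fixed complex over a field preserves quasi-isomorphisms, any short exact triangle of $\scr{C}$-modules induces a short exact triangle of $\scr{C}/\scr{A}$-modules; compatibility with the $\scr{C}/\scr{A}$-module structure is then immediate from the formulas.

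Finally, to deduce that $\scr{C}/\scr{A}$ is homologically smooth, I would argue as follows. By (3), $\Delta_{\scr{C}/\scr{A}} \simeq \scr{A}\backslash \Delta_{\scr{C}}/\scr{A}$. Since $\scr{C}$ is smooth, $\Delta_{\scr{C}}$ is split-generated as a $\scr{C}$-bimodule by exterior tensor products $\scr{Y}^{\ell}_X \otimes \scr{Y}^{r}_Z$ of Yoneda modules. The exactness of the bimodule quotient functor from (4) implies that split generation is preserved under the quotient, and using (1) and (2) the image of such an exterior product becomes the corresponding Yoneda bimodule $\scr{Y}^{\ell}_{[X]} \otimes \scr{Y}^{r}_{[Z]}$ in $\scr{C}/\scr{A}$. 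Hence $\Delta_{\scr{C}/\scr{A}}$ is split-generated by tensor products of Yoneda modules, which is precisely the definition of homological smoothness. The one point to double-check carefully, and the closest thing to an obstacle, is verifying that the bimodule quotient commutes with exterior tensor products of one-sided Yoneda modules; this again reduces to direct inspection of the defining bar complexes.
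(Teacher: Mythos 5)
Your proposal is correct and takes exactly the approach the paper implicitly invokes: the paper offers no proof, stating only that these properties ``follow directly from the definitions,'' and your term-by-term comparison of bar complexes is precisely the omitted verification. Items (1)--(3) are indeed literal identities of complexes once you substitute the Yoneda modules into the quotient bar resolutions, and (4) follows because each bar summand is $\scr{M}$ tensored over the ground field $\CC$ with a fixed complex of morphism spaces.

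One small point worth flagging in your final deduction: part (4) as stated concerns one-sided $\scr{C}$-modules, while your argument for smoothness of $\scr{C}/\scr{A}$ invokes exactness of the \emph{bimodule} quotient $\scr{A}\backslash(-)/\scr{A}$. This is not a gap in substance---the two-sided bar construction is exact for the same reason---but you should explicitly note that the same termwise tensor-product argument applies to the bimodule quotient. You correctly identified the crux as the compatibility of $\scr{A}\backslash(-)/\scr{A}$ with exterior products $\scr{Y}^{\ell}_X\otimes\scr{Y}^r_Z$; this factorization does hold because the two-sided bar at such a bimodule splits as a tensor product of the left bar applied to $\scr{Y}^{\ell}_X$ and the right bar applied to $\scr{Y}^r_Z$, matching $(\scr{A}\backslash\scr{Y}^{\ell}_X)\otimes(\scr{Y}^r_Z/\scr{A})\simeq\scr{Y}^{\ell}_{[X]}\otimes\scr{Y}^r_{[Z]}$ by parts (1) and (2).
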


\begin{proof} (of Proposition \ref{prop:HH_as_limit}) For a homologically smooth $A_{\infty}$-category $\scr{C}$, every $A_{\infty}$-functor $F: \scr{C} \to \scr{C}$ induces a perfect $\scr{C}$-bimodule $\scr{F}$, so by Lemma \ref{lem:algebra} it suffices to show that if $\scr{M},\scr{N}$ are in the subcategory of $\scr{C}-\mathrm{bimod}$ that is split-generated by tensor products of Yoneda bimodules, one can compute morphisms by
\begin{equation*}
    \mathrm{hom}_{\scr{C}/\scr{A}-\mathrm{bimod}}(\scr{A}\backslash \scr{M}/\scr{A},\scr{A}\backslash \scr{N}/\scr{A}) \simeq \dlim_d \mathrm{hom}_{\scr{C}-\mathrm{bimod}}(\scr{M}, \scr{N}\otimes \scr{F}^d)
\end{equation*} The K\"unneth theorem for bimodules \cite[Proposition 2.13]{ganatra_thesis} says
\begin{equation*}
        \mathrm{hom}_{\scr{C}/\scr{A}-\mathrm{bimod}}(\scr{Y}^{\ell}(X) \otimes \scr{Y}^{r}(Z),\scr{Y}^{\ell}(X\dash) \otimes \scr{Y}^{r}(Z\dash)) \simeq \mathrm{hom}_{\scr{C}/\scr{A}}(X,X\dash) \otimes \mathrm{hom}_{\scr{C}/\scr{A}}(Z,Z\dash)
\end{equation*}
while for $\scr{C}/\scr{A}$ we know from \cite[\S 1]{seidel_subalgebras} that
\begin{equation*}
    \mathrm{Hom}^{\ast}_{\scr{C}/\scr{A}}(Y,Y\dash) \cong \dlim_d \mathrm{Hom}^{\ast}_{\scr{C}}(Y, F^d(Y\dash))
\end{equation*}
Since direct limits commute with tensor products and taking cohomology we have
\begin{equation*}
    \mathrm{hom}_{\scr{C}/\scr{A}-\mathrm{bimod}}(\scr{Y}^{\ell}(X) \otimes \scr{Y}^{r}(Z),\scr{Y}^{\ell}(X\dash)\otimes \scr{Y}^{r}(Z\dash)) \simeq \dlim_{d_1, d_2} \mathrm{hom}_{\scr{C}}(X,F^{d_1}(X\dash)) \otimes \mathrm{hom}_{\scr{C}}(Z,F^{d_2}(Z\dash))
\end{equation*}
and using the fact the diagonal sequence is cofinal, along with the tensor-hom adjunction we get
\begin{equation*}
    \mathrm{hom}_{\scr{C}/\scr{A}-\mathrm{bimod}}(\scr{Y}^{\ell}(X) \otimes \scr{Y}^{r}(Z),\scr{Y}^{\ell}(X\dash)\otimes \scr{Y}^{r}(Z\dash)) \simeq \dlim_{d} \mathrm{hom}_{\scr{C}-\mathrm{bimod}}(\scr{Y}^{\ell}(X) \otimes \scr{Y}^{r}(Z),\scr{Y}^{\ell}(X\dash)\otimes \scr{Y}^{r}(Z\dash) \otimes \scr{F}^{d})
\end{equation*}
which completes the proof.
\end{proof}

\begin{proof}(of Theorem \ref{thm:HF_equals_QH})
By \cite[Lemma 3.16]{GPS1}, the morphisms in the cohomology category of $\scr{F}(M^0)$ are given by
\begin{equation*}
    \mathrm{Hom}_{\scr{F}(M^0)}(X_0,X_1) = \dlim_d \mathrm{Hom}_{\scr{F}(M)}(X_0, \phi^d(X_1))
\end{equation*}
where the connecting maps are given by Seidel's natural transformation $s \in \mathrm{Hom}(X_1, \phi(X_1))$. By Proposition \ref{prop:HH_as_limit}, we know that
\begin{equation*}
    \mathrm{HH}^{\ast}(\scr{F}(M^0)) \cong \dlim_d \mathrm{HH}^{\ast}(\scr{F}(M), \Gamma_{\phi^d}) 
\end{equation*}
and by Theorem \ref{thm:S_equals_S}, we know that the twisted closed-open map takes the Seidel natural transformation to the Seidel element $S$; since $\scr{CO}_{\phi}$ moreover respects multiplicative structures, we know
\begin{equation*}
    \mathrm{HH}^{\ast}(\scr{F}(M^0)) \cong \dlim_d \mathrm{HF}^{\ast}(\phi^d) 
\end{equation*}
as claimed.
\end{proof}

\section{Background on the product on fixed point Floer homology}\label{sec:product}
We begin by describing the product on fixed point Floer cohomology. We note the results we list here come from taking the \emph{cohomology} of the computations in our earlier paper \cite{ziwenyao} instead of homology. In other words, our co-product in \cite{ziwenyao}
\[
\Delta: \mathrm{HF}_*(\phi\circ\psi)\to \mathrm{HF}_*(\phi)\otimes \mathrm{HF}_*(\psi)
\]
is the dual of the product structure
\[
\cdot: \mathrm{HF}^*(\phi)\otimes \mathrm{HF}^*(\psi)\to \mathrm{HF}^*(\phi\circ\psi)
\]
in this paper. We explain this dualization process as a remark \ref{remark:homology vs cohomology}, but we first describe 
the results. 
Recall $\Sigma_g$ denotes a closed Riemann surface of genus g. Let $\phi$ denote a Dehn twist around a circle satisfying conditions of our previous paper \cite{ziwenyao} (in the context of this paper see Remark \ref{remark: topological assumption on Dehn twists}). Then Let $N = [0,1] \times S^1$ denote a Weinstein neighborhood of the essential circle. In this region (which we call the twist region) the symplectic for $\omega = dx\wedge dy$, and the Dehn twist $\phi^d$ can be written as 
\[
(x,y) \rightarrow (x,y-dx)
\]
Outside this Dehn twist region $N$, in $\Sigma_g':=\Sigma_g \setminus N$ the map $\phi^d$ is the Hamiltonian flow of a $C^2$ small Morse function $dH_0$. We further assume that near each boundary component of $\Sigma_g$, there are tubular coordinates $x_i\in(-\epsilon_i,0]$, $y_i\in S^1$ and a small real number $\theta_i$ such that $H_0(x_i,y_i)=\theta_i x_i$.

We now describe the critical points of $\phi^d$, which are the generators of the fixed point Floer cohomology $\mathrm{HF}^*(\phi^d)$. Away from the Dehn twist region the critical points are the same as the critical points of a Morse function. In the Dehn twist region $[0,1]\times S^1$ at $x=i/d, i=1,...,d$ there is a circle's worth of fixed points. This is a Morse-Bott circle. We perturb away the Morse-Bott degeneracy. By abuse of notation we also use $\phi^d$ to denote the perturbed symplectomorphism. Each Morse-Bott circle splits into two fixed points, one elliptic the other hyperbolic. For our notations, the circle of fixed points at $x=i/d$ splits into fixed points $e^d_i$ and $h^d_i$ (e and h for elliptic and hyperbolic respectively). We use the same grading conventions as in \cite{ziwenyao}, under which $|e_i^d|=-1$ and $|h_i^d|=0$. Generators $x$ of $\mathrm{CF}^*$ are graded by $|x|+1$, and we denote by $\mathrm{HF}^k$ the part of $\mathrm{HF}^*$ with grading $k$.

For each $d>0$, the fixed point Floer cohomology $\mathrm{HF}^*(\phi^d)$ is isomorphic to $H^*(\Sigma_g')\bigoplus (\oplus_{i=1}^{d-1}H^*(S^1))$. 

With the above conventions, $[e_i^d]$ and $[h_i^d]$ generate the $i$-th summand of $\oplus_{i=1}^{d-1}H^*(S^1)$.

Note the fixed points near the intersection between the Dehn twist region $N$ and the Morse region $\Sigma_g'$, which correspond to fixed points $e^d_0, h^d_0, e^d_d, h^d_d$ can be considered as either fixed points in the Dehn twist region $N$ or the Morse region $\Sigma_g'$. If we compute the cohomology, we find that $e_0^d + e_d^d$ represents a cohomology class in $\mathrm{HF}^*(\phi^d)$, which we write as $[f^d] \in  H^*(\Sigma_g')$. Similarly, $h_0^d+h_d^d$ represents a cohomology class in $H^*(\Sigma_g') \subset \mathrm{HF}^*(\phi^d)$, which we write as $[g^d]$.

Then the (commutative) product relations from $\mathrm{HF}^*(\phi^n)\otimes \mathrm{HF}^*(\phi^m) \rightarrow \mathrm{HF}^*(\phi^{n+m})$ in the Dehn twist region can be summarized as
\begin{equation}
[e_i^m]\cdot [e_j^n] = [e^{n+m}_{i+j}], \quad 0<i<n,  0<j<m
\end{equation}
\begin{equation}
[h_i^m]\cdot [e_j^n] = [h^{n+m}_{i+j}], \quad 0<i<m,  0<j<n
\end{equation}
\begin{equation}
[f^m] \cdot [e_j^n] = [e^{m+n}_j] + [e^{m+n}_{j+m}], \quad 0<j<n
\end{equation}
\begin{equation}
[g^m] \cdot [e_j^n] = [h^{m+n}_j] + [h^{m+n}_{j+m}], \quad 0<j<n
\end{equation}
\begin{equation}
[f^m] \cdot [f^n] = [e^{m+n}_m] + [e^{m+n}_{n}] + [f^{m+n}]
\end{equation}
\begin{equation}
[g^m] \cdot [f^n] = [h^{m+n}_m] + [h^{m+n}_{n}] + [g^{m+n}]
\end{equation}
\begin{equation}
[h^m_i] \cdot [h^n_j] =0, \quad i=0,...,m, \quad j =0,...,n
\end{equation}

Outside the Dehn twist region $N$ the product is trivial (for the critical points of the Morse functions aside from $e^d_0, e^d_d, h^d_0, h^d_d$).
\begin{remark}[Homology v cohomology] \label{remark:homology vs cohomology}
We briefly explain how to arrive at the previous product relations by dualizing the coproduct structure in \cite{ziwenyao}.

The chain complex for homology is given by $(\mathrm{CF}_*(\phi^d),\partial)$ where the differential counts $J$-holomorphic cylinders between critical points. The cochain complex $(\mathrm{CF}^*(\phi^d),d)$ is defined to be
\[
(\mathrm{CF}^*(\phi^d),d) := \mathrm{Hom}(\mathrm{CF}_*(\phi^d),\mathbb{C})
\]
The differential $d$ is defined by the following. For $\alpha \in \mathrm{CF}^*(\phi^d), a \in \mathrm{CF}_*(\phi^d)$ we define
\[
\langle d \alpha, a \rangle : = \langle \alpha, \partial a \rangle.
\]
Geometrically the differential $d$ on cohomology counts holomorphic cylinders flowing in the opposite direction. By the universal coefficient theorem we have 
$\mathrm{HF}^*(\phi^d)\cong\mathrm{Hom}(\mathrm{HF}_*(\phi^d),\CC) $.  Let $\Delta: \mathrm{CF}_*(\phi^{m+n}) \rightarrow \mathrm{CF}_*(\phi^n)\otimes \mathrm{CF}_*(\phi^m)$, then the product on cohomology is defined by
\[
\langle \alpha \cdot \beta, a\rangle = \langle \Delta a, \alpha \otimes \beta\rangle.
\]
Geometrically, the product structure of $\mathrm{HF}^*$ counts pair of pants with direction opposite to the pair of pants counted by the co-product $\Delta$. Applying this definition to the computations in \cite{ziwenyao} recovers the relations listed above.
\end{remark}

\begin{remark}[Signs] \label{rmk:signs}
Even though the computations in \cite{ziwenyao} were done with $\mathbb{Z}_2$ coefficients, we can choose a coherent orientation so that the above relations hold with $\mathbb{Z}$ (or in our case of interest, $\mathbb{C}$) coefficients. We give a brief outline of this below.

We follow Wendl's \cite{wendl2016lectures} expositions for coherent orientations. Recall to specify a coherent orientation for $\mathrm{HF}^*(\phi^m)$ and the underlying product structure, it suffices to consider the asymptotic operator $A$ associated to each Reeb orbit in $\mathrm{HF}^*(\phi^m)$. Consider the trivial bundle $\mathbb{C}^2\rightarrow \mathbb{C}$, equipped with a Cauchy Riemann type operator $D_A$ that is asymptotic to the operator $A$ as $r\rightarrow \infty$ in the base $\mathbb{C}$ (which we think of as a negative puncture). We choose any orientation in the determinant line bundle of $D_A$, which we write as $\mathfrak{o}_A \in \det(D_A)$, then the collection of such choices specifies a coherent orientation.

Now we look at the Reeb orbits involved in the computation of $\mathrm{HF}^*(\phi^m)$, we realize there are two kinds of Reeb orbits. There are $\gamma_x$ which correspond to Morse critical points $x \in \Sigma_g \setminus N$ with associated asymptotic operators $A_{\gamma_x}$, and Reeb orbits in the Dehn twist region $N$ of the form $e^m_j$ and $h^m_j$. Now we can choose our trivializations so that all $e^m_j$'s (resp. all $h^m_j$'s) have the same asymptotic operator, which we denote by $A_e$ (resp. $A_h$). 

If we restrict our attention to purely the Dehn twist region $N$, then we realize that we are computing the product on the symplectic cohomology of $T^*S^1$. This was already observed in \cite{ziwenyao}: we can view the Dehn twist region as a subset of $T^*S^1$, and the $J$-holomorphic curve equation for our count of sections in the bundle of the form $B_0\times T^*S^1 \rightarrow  B_0$ (here $B_0$ is the thrice punctured sphere) is exactly the Hamiltonian Floer equation computing the pair of pants product on the symplectic cohomology of $T^*S^1$. From its isomorphism with the homology of the loop space  $S^1$, we can choose $\mathfrak{o}_h \in \det (D_{A_h})$ and $\mathfrak{o}_e \in \det (D_{A_e})$ so that all the pairs of pants contained in the Dehn twist region $N$ are counted with sign $+1$. Now for the orbits in the Morse region, choose $\mathfrak{o}_\gamma$ so that the cohomology class $f^m$ multiplies other Reeb orbits in the Morse region as the identity. Then we need to verify that the two coherent orientations agree when we take $\gamma = e^m_0, e^m_m, h^m_0, h^m_m$. We observe from the requirements we imposed on $\mathfrak{o}_e$ on Dehn twist region and Morse region that they agree for $\gamma=e_0^m, e_m^m$. If they do not agree for $\gamma =h^m_0, h^m_m$, we simply reverse $\mathfrak{o}_h$ and observe this has no effect on the sign of the curves we count in the Dehn twist region.

\end{remark}

\subsection{Fixed point Floer homology for Dehn twists on punctured Riemann surfaces} \label{sec:fixed_point_floer_punctured}

In this subsection we outline the difference between Dehn twists on closed Riemann surface $\Sigma_g$ and punctured Riemann surface $\Sigma_{g,k}$. The main difference is that near each puncture $p_i$ we need to consider wrapping in a way that is similar to symplectic cohomology.

For simplicity we assume there is only one puncture $p$. Choose cylindrical neighborhoods $(x,y) \in [0,\infty) \times S^1$ around $p$. Then $\phi$ looks like the time-1 map of the Hamiltonian flow of $H =\frac{1}{2}x^2$ in the neighborhood (then perturbed slightly to $H_0$, in order to break the Morse-Bott degeneracy). The fixed points are located over the circles $x = 0, 1,2,\cdots$. For each non-negative integer $i$ there are two fixed points, $u_i$ an elliptic fixed point of Conley-Zehnder index -1 and $v_i$, an hyperbolic fixed point of Conley-Zehnder index 0. 

We need to further describe the Floer data near the puncture $p$. We make our conventions consistent with the \emph{universal and consistent choice} of Floer data described in \cite{ganatra_thesis}, because we use the open- closed map as defined in \cite{ganatra_thesis} and want to import the technology developed therein. In the following, we briefly recall the Floer data described in \cite{ganatra_thesis} and point out the differences with the setup in \cite{ziwenyao}, where no punctures are dealt with. To define the product $\mathrm{HF}^*(\phi^m)\otimes \mathrm{HF}^*(\phi^n)\rightarrow \mathrm{HF}^*(\phi^{m+n})$, let $B$ be the thrice-punctured sphere with chosen cylindrical coordinates $(s_0, t_0\in [0, \infty)\times S^1$ for the positive puncture, and $(s_i, t_i)\in (-\infty, 0]\times S^1$ for the $i$-th negative puncture ($i=1,2$). To define the product structure, choose a closed one-form $\beta$ on $B$ that is equal to $mdt_1$ and $ndt_2$ for the two negative punctures and $(m+n)dt_0$ for the positive puncture (likewise, if we want to define the coproduct structure, we let $\beta$ to be $(m+n)dt_0$ for the negative puncture and $mdt_1$ and $ndt_2$ for the two positive punctures). We then make the product bundle $B\times [0,\infty)_x\times S^1_y$ into a symplectic fiber bundle, with the fiberwise symplectic form $\omega_\Sigma = dx\wedge dy+d(H_0\beta)$. Remark that near the three punctures of $B$, the parallel transports along the $t_*-$circles are precisely the time-1 maps of the Hamiltonians $mH_0$, $nH_0$ and $(m+n)H_0$ respectively. We also fix a function $a:B\to [\min(m,n), m+n]$ that is equal to $m$ on the first negative puncture, $n$ on the second negative puncture, and $m+n$ on the positive puncture (likewise, if we want to define the coproduct structure, $a$ is assumed to be $m$ and $n$ on the two positive punctures, and $m+n$ on the negative puncture). View the neighborhood of $p$ in $\Sigma_{g,1}$ as the Liouville manifold $[0,\infty)_x\times S^1_y$ with the Liouville form $xdy$, so the Liouville flow $\psi^\rho$ after time $\log \rho$ is given by $(x,y)\mapsto (\rho x, y)$ near the puncture $p$. 

The almost complex structure $J$ on the bundle $X = B\times [0,\infty)\times S^1$ should satisfy the following. Fix a small positive number $\delta$. In a fixed small neighborhood $B\times [0,\delta)\times S^1$ of the slice $\{x=0\}$, $J$ is a fibration-compatible almost complex structure induced by the almost complex structure $J_0$ on $[0,\infty)_x\times S^1_y$ which sends $\partial_x$ to $\partial_y$. In the region $B\times [2\delta,\infty)\times S^1$, $J$ is the fibration-compatible almost complex structure that is induced by the almost complex on the vertical distribution, which is equal to $(\psi^{a(z)})^*J_0$ on the fiber over $z\in B$\footnote{We remark that if we orient the fiber in the same way as \cite{ganatra_thesis}, i.e. the orientation induced by the product $S^1_y\times [0,\infty)_x$, then the almost complex structure described in this paragraph corresponds to the almost complex structure that sends $\partial_x$ to $-\partial_y$ for $x\ge 2\delta$, which is of the $c$-rescaled contact type introduced in Section \ref{sec: Compatibility with wrapping}.}. We note that in order to apply the \emph{local energy inequality} described in \cite{ziwenyao} to the slice $\{x=0\}$, we only need the almost complex structure to be the fibration-compatible one that is induced from $J_0$ \emph{in a neighborhood of} the slice. Consequently, if we make the above requirements for the almost complex structure, the same ``no crossing'' result as described in \cite{ziwenyao} holds. In particular, the no-crossing result tells us the $J$-holomorphic curves with inputs and outputs both below $x=0$ remain entirely within that region, and the analogous statement holds true for $J$-holomorphic curves with input and output above $x=0$. Furthermore, via homology considerations (together with ``no crossing''), we cannot have $J$-holomorphic curves asymptotic to critical points both above $x=0$ and below $x=0$. Hence as far as the product is concerned, the product splits into the product of two distinct regions (with tiny overlap around $x=0$): we get the Morse theoretic product for the region where $x\leq 0$ and away from the Dehn twists; and we get the product for the non-negative sector of symplectic cohomology of $T^*S^1$ for $x\geq 0$.

Here we use the superscript such as  $u_i^n$ and $v_i^n$ to denote the fact we are thinking of the fixed points as living in $\mathrm{HF}(\phi^n)$. Similar to the unpunctured case, the fixed point Floer cohomology $\mathrm{HF}^*(\Sigma_{g,k},\phi^d)$ is isomorphic to $H^*(\Sigma_{g,k}') \oplus (\oplus_{j=1}^{d-1}H^*(S^1)) \oplus (\oplus_{i=1}^\infty \mathbb{C} \langle u_i^d,v_i^d\rangle)$. Here we think of $u_0^d$ and $v_0^d$ as being (the dual of) critical points in $\Sigma_{g,k}'$. In particular, the cohomology class $[f^d]\in H^0(\Sigma_{g,k}')$ is given by $[e_0^d + e^d_d + u_0^d]$, and the cohomology class $[g^d]\in H^1(\Sigma_{g,k}')$ is given by $[h_0^d+h_d^d]$. There is another special cohomology class $[h_0^d-v_0^d]\in H^1(\Sigma_{g,k}')$, which we denote by $[\varphi^d]$. This new cohomology class comes from the fact adding a new puncture changes the first homology group of the surface. The new product relations are given by
\[
[u_i^m]\cdot[u_j^n]=[u_{i+j}^{m+n}],\quad [u_i^m]\cdot[v_j^n]=[v_{i+j}^{m+n}],\quad [v_i^m]\cdot[v_j^n]=0,
\]
\[
[f^m]\cdot [u_i^n]=[u_i^{m+n}],\quad [f^m]\cdot [v_i^n]=[v_i^{m+n}],
\]
\[
[g^m]\cdot [u_i^n]=[v_i^{m+n}],\quad [g^m]\cdot [v_i^n]=0,
\]
\[
[e_i^m]\cdot [u_j^n]=[e_i^m]\cdot[v_j^n]=[h_i^m]\cdot [u_j^n]=[h_i^m]\cdot [v_j^n]=0,
\]
\[
[f^m]\cdot [\varphi^n] = [\varphi^{m+n}]+[h_m^{m+n}],\quad [e_i^m]\cdot [\varphi^n] = [h_i^{m+n}],\quad [h_i^m]\cdot [\varphi^n] = 0,
\]
\[
[u_i^m]\cdot[\varphi^n] = -[v_i^{m+n}],\quad [v_i^m]\cdot [\varphi^n] =0,
\]
and the product relations between $[f^d], [g^d]$ and $[e^m_i], [h_i^m]$ are the same as the unpunctured case. 
These product relations come from the same calculations that were performed in \cite{ziwenyao}. The same no-crossing lemma tells us that when we think of $\Sigma_{g,k}$ as union of the Morse region, Dehn twist region, and the puncture region, holomorphic curves with inputs/outputs in the puncture region cannot enter the Morse region and vice versa (with the exception of $u^d_0$ and $v^d_0$ which we can think of as belonging in either the puncture or the Morse region). Carrying out the same computation as in \cite{ziwenyao} gives us the above results.
\begin{remark}
    In the case where $\Sigma_{g,k}$ has punctures, we need $\phi$ to wrap around the punctures, and in this case there are infinitely many generators in $\mathrm{HF}_*(\Sigma_{g,k},\phi)$. To avoid subtleties with duals of infinite dimensional vector spaces, we simply define the cochain complex $\mathrm{CF}^*(\phi^d)$ to be generated by the fixed points of $\phi^d$, with the differential $d$ and the product structure $\cdot$ defined in the same way as in Remark \ref{remark:homology vs cohomology}.
\end{remark}

\begin{remark}
It is also possible to compute the fixed point Floer cohomology using a direct enumeration of $J$-holomorphic sections by using the standard almost complex structure near each of the punctures -- this is more similar to the approach in \cite{ziwenyao} and similar techniques will give the same co-homology level product relations. However we shift our conventions to agree with that of Ganatra's \cite{ganatra_thesis} to use the machinery developed in that paper.
\end{remark}

\section{Computation of the Seidel class}\label{sec:computation}
In this section we give an explicit computation of the Seidel class in the closed-string setting and in the punctured setting. Throughout this section, we fix a fibration compatible\footnote{A fibration compatible almost complex structure makes the projection map holomorphic, and preserves the horizontal sections.} almost complex structure $J$ on the Lefschetz fibration, that is induced by the complex structure $j_0$ on the Riemann surface which, inside the neighborhood around the circle, sends $\partial_x$ to $\partial_y$. Note by Remark \ref{remark:seidel class for Riemann surface with boundary}, to compute the Seidel class for $\Sigma_{g,k}$, near a puncture $p_i$ where there are cylindrical neighborhoods $S^1 \times [0,\infty) $, we pass to the open Riemann surface with boundary by restricting to $S^1 \times [0,\epsilon)$ for each of these punctures. We call the resulting Riemann surface $\tilde{\Sigma}_{g,k}$. We then form the exact Lefschetz fibration with boundary, with fiber $\tilde{\Sigma}_{g,k}$, and the monodromy near each of these neighborhoods is given by the time-1 map of the Hamiltonian $H(s,t) = s^2/2$.

\begin{theorem} \label{thm:s_calculation}
For either $\mathrm{HF}^*(\Sigma_{g},\phi)$ or $\mathrm{HF}^*(\Sigma_{g,k},\phi)$, the Seidel class $S\in \mathrm{HF}^*(\phi)$ is given by $\pm[f^1]$.
\end{theorem}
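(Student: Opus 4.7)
The plan is to compute the Seidel cocycle $S \in \mathrm{CF}^0(\phi)$ directly as a sum over fixed points, by explicitly enumerating $J$-holomorphic sections of $\pi\colon E \to \mathbb{C}$ using the fibration-compatible almost complex structure fixed at the start of the section. With this choice, the projection $\pi$ is holomorphic, so every $J$-holomorphic section projects to $\mathrm{id}_{\mathbb{C}}$ and the moduli problem reduces to a fiberwise analysis.

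A grading argument first restricts the contributing fixed points: since $S$ lives in degree $0$ and cochain generators $x$ carry degree $|x|+1$, only index $-1$ fixed points appear, namely the elliptic fixed points $e_0^1,e_1^1$ adjacent to the twist region and the index-zero Morse critical points of $H_0$ in $\Sigma_g'$. Next, I would invoke the no-crossing lemma of \cite{ziwenyao}, which applies to precisely the class of fibration-compatible almost complex structures under consideration: $J$-holomorphic sections cannot cross the slices $\{x=c\}$ inside the twist region, so sections asymptotic to a fixed point in the twist region stay there, and sections asymptotic to a critical point in $\Sigma_g'$ stay in $\Sigma_g'$ up to the small boundary overlap. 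In the punctured case, the same lemma applied at each cylindrical end rules out any sections asymptotic to the wrapped orbits $u_i^1,v_i^1$ with $i\geq 1$, so the only potential new contributions are $u_0^1$, which we may treat as one of the Morse critical points of $\Sigma_{g,k}'$.

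With these reductions the remaining count is local. For an interior Morse minimum $p \in \Sigma_g'$ away from the twist region, the fibration restricted to a neighborhood of $\mathbb{C}\times\{p\}$ is essentially a trivial symplectic product with only a small Morse perturbation, so rigid sections asymptotic to $p$ either do not exist or contribute as the Floer differential of Morse chains and vanish in cohomology. For the pair $e_0^1, e_1^1$ straddling the Dehn twist, the local model of Proposition \ref{prop:standard_LF} together with Seidel's vanishing-thimble construction \cite{seidel_LES} produces a distinguished $J$-holomorphic section passing through the Lefschetz critical point: before perturbing by $H_0$ this section is horizontally asymptotic to the Morse--Bott circles of fixed points at $x=0$ and $x=1$; after turning on $H_0$ the degenerate limit splits into two rigid sections, one asymptotic to $e_0^1$ and one to $e_1^1$, each picking out the elliptic side of the perturbed circle and each counted with the same sign under the coherent orientations chosen in Remark \ref{rmk:signs}. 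Summing gives $S = \pm(e_0^1+e_1^1)=\pm f^1$ at the cochain level, and hence the stated identity in cohomology.

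The main obstacle is the local calculation at the Lefschetz critical point: one must verify that the thimble section is regular, that after Morse perturbation it breaks into two sections landing on the \emph{elliptic} (not hyperbolic) side of each Morse--Bott circle, and that each is counted with the \emph{same} sign. I would handle this either by a direct ODE analysis in Seidel's local model (combining the Picard--Lefschetz section near $0\in\mathbb{C}$ with the parallel transport generated by $H_0$ near infinity), or more conceptually by noting that the attracting asymptotic direction of parallel transport along clockwise circles of large radius lies on the elliptic side of each perturbed Morse--Bott circle, and then using the sign conventions of Remark \ref{rmk:signs} --- under which the products within the Dehn twist region agree with those on $T^*S^1$ --- to conclude that the two counts share a common sign.
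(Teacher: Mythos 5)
Your high-level strategy — restrict the asymptotic targets of rigid sections, then do a local count — matches the paper, but two of the paper's key technical ingredients are replaced by heuristics that would not close the argument, and the geometric picture you give for the contributing sections is incorrect.

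The paper's Proposition \ref{index} is not merely a grading computation: it is a Fredholm index formula $\mathrm{ind}(u)=1+CZ^\tau(\gamma_x)+(4-4g)\eta([u])$ in terms of a \emph{wrapping number} $\eta([u])\ge 0$ (an intersection number with a constant section). Because $g\ge 2$ this forces $\eta([u])=0$ and $CZ^\tau(\gamma_x)=-1$ simultaneously, i.e.\ not only which fixed points can appear but also that all contributing sections are disjoint from a chosen constant section $c$. Your grading observation gives only the $CZ^\tau=-1$ conclusion and not $\eta=0$, which is precisely what the paper later uses to reduce the non-exact case (closed $\Sigma_g$) to the exact case by deleting $c$ from the fibration; your proposal does not address the non-exact case at all.

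The larger gap is Proposition \ref{horizontal lemma}: a vertical energy estimate shows that \emph{every} section asymptotic to $e_0^1,e_1^1,u_0^1$ has nonpositive vertical energy and is therefore horizontal, followed by an explicit description of those horizontal sections and an appeal to Wendl's automatic transversality to conclude the count. You instead propose to exhibit a single ``distinguished $J$-holomorphic section passing through the Lefschetz critical point'' via Seidel's thimble construction. This is a different object and a different (incorrect) picture: the paper's contributing sections are the horizontal sections $\mathbb{C}\times\{e_i^1\}$ at the Morse minima of $H_0$, which sit near the boundary of the twist region and stay \emph{away} from the vanishing cycle and from the critical point of $\pi$. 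Nothing in your sketch rules out additional, non-horizontal sections asymptotic to $e_i^1$; existence of one family does not give uniqueness. The energy argument is exactly what eliminates the others, and you would also need the automatic transversality input (Fredholm index $0$, $c_N=-1$, $Z(du)=0$) to guarantee the count is well-defined without further perturbation of $J$. Finally, while the no-crossing lemma from \cite{ziwenyao} is indeed compatible with the fibration-compatible $J$ used here and correctly localizes the problem near the punctures, it does not by itself establish horizontality or transversality, and your ``Morse chains cancel in cohomology'' handling of hypothetical interior minima is not a valid cochain-level argument for computing the Seidel cocycle.
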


Here is the sketch of the proof of Theorem \ref{thm:s_calculation}. We first make this computation in the exact setting, i.e. where the fiber is $\tilde{\Sigma}_{g,k}$. We show by energy estimates that all sections counted by the Seidel class are horizontal. We then construct the horizontal sections by hand and show they are transversely cut out. Finally, using an index argument, we show that the sections counted in the computation of the Seidel class in the non-exact case can be reduced to the exact case.

\subsection{The exact case}
We first compute the index of the curves counted by the Seidel class in the exact case. Let $s:\mathbb{C} \rightarrow E$ denote a $J$-holomorphic section counted by the Seidel element. Let $pt\in \Sigma$ be a generic point away from the Dehn twist region on $\Sigma$, and consider the ``constant'' section $c:\mathbb{C} \rightarrow E$ given by $c(z)=pt$. For any section $u$ of $E$ that is asymptotic to an orbit $\gamma_x$, we have the following definition of the wrapping number (see Definition 4.2 of \cite{hutchings2005periodic}):

\begin{definition}
We define $\eta : = s \cap c \in \mathbb{Z}$ to be the wrapping number of the section $s$.
\end{definition}

The wrapping number $\eta(s)$ of a section $s$ is always non-negative. To see this, notice that in the definition of $\eta(s)$, the intersection number does not depend on the choice of $pt$. In particular, we can choose $pt$ to be a critical point of $H_0$. Under such a choice, the section $c$ is $J$-holomorphic, and hence $\eta(u)\ge0$ by positivity of intersections. Moreover, any $J$-holomorphic section $s$ that's not $c$ with $\eta(s)=0$ is disjoint from $c$. 

\begin{proposition}\label{index}
Let $x \in Fix(\phi)$ denote the fixed point of $\phi$ and let $\gamma_x$ be the corresponding orbit in the mapping torus. Suppose a section $u$ is asymptotic to $\gamma_x$ at $r=\infty$, then the Fredholm index of the section $u$ is given by
\[
\mathrm{ind}(u)=1+CZ^\tau (\gamma_x)+(4-4g)\eta([u]).
\]
 Hence the only fixed points of $\phi$ that can contribute to the Seidel class are $e_0^1$, $e_1^1$ or $u_0^1$.
\end{proposition}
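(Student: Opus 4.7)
The proof is a Riemann--Roch index calculation for the Cauchy--Riemann operator associated to sections of the Lefschetz fibration, together with a homological computation of the relative first Chern number in terms of the wrapping number.

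\textbf{Step 1: Riemann--Roch.} Viewing $\mathbb{C}$ as a closed disk with a single positive cylindrical puncture at infinity gives $\chi(\dot{S})=1$; the fiber has complex dimension $n=1$. For any trivialization $\tau$ of $u^*TE_{vert}$ along $\gamma_x$, the standard Riemann--Roch formula for Cauchy--Riemann operators on punctured Riemann surfaces (as in \cite{wendl2016lectures}) gives
\[
\mathrm{ind}(u) \;=\; n\,\chi(\dot{S}) + 2\,c_1^\tau(u^*TE_{vert}) + CZ^\tau(\gamma_x) \;=\; 1 + 2\,c_1^\tau(u^*TE_{vert}) + CZ^\tau(\gamma_x).
\]

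\textbf{Step 2: Computing $c_1^\tau$ via the wrapping number.} Because $\mathbb{C}$ is contractible, the fibration $\pi\colon E\to\mathbb{C}$ yields $H_2(E)$ generated (up to classes killed by pairing with $c_1(TE_{vert})$) by the fiber class $[F]$, and any two sections $u,u'$ asymptotic to the same orbit $\gamma_x$ differ by
\[
[u]-[u'] \;=\; \bigl(\eta([u])-\eta([u'])\bigr)\,[F],
\]
as one sees by intersecting with $[c]$ (using $[c]\cdot[F]=1$, since the constant section meets a generic fiber in one point). Taking $u'$ to be a reference section at $x$ with $\eta([u'])=0$, and choosing $\tau$ so that $c_1^\tau((u')^*TE_{vert})=0$, we pair with $c_1(TE_{vert})$ and use $\langle c_1(T\Sigma_g),[\Sigma_g]\rangle=2-2g$ to obtain
\[
c_1^\tau(u^*TE_{vert}) \;=\; \eta([u])\,(2-2g).
\]
Substituting into Step 1 produces the claimed index formula.

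\textbf{Step 3: Identifying the contributing fixed points.} The Seidel class sits in cohomological degree $0$, so only moduli spaces of virtual dimension $0$ contribute; the condition $\mathrm{ind}(u)=0$ rearranges to $CZ^\tau(\gamma_x)=-1+(4g-4)\eta([u])$. Choosing $pt$ to be a Morse critical point of $H_0$ makes $c$ a $J$-holomorphic section, so positivity of intersections forces $\eta([u])\geq 0$; combined with $g\geq 2$ and $(4g-4)\geq 4>0$, this forces $\eta([u])=0$ and $CZ^\tau(\gamma_x)=-1$. The orbits with $CZ^\tau=-1$ in the trivialization $\tau$ used above are the elliptic fixed points, and by our specific choice of $H_0$ (whose local minima on $\Sigma_g'$ are exactly the boundary fixed points of the twist region, and $u_0^1$ in the punctured case) the list reduces to $e_0^1,e_1^1$, plus $u_0^1$ in the punctured case. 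The higher puncture orbits $u_i^1$ with $i\geq 1$ are ruled out by comparing the trivialization used for Riemann--Roch with the fiberwise trivialization implicit in the grading conventions of \cite{ziwenyao}: in the former their Conley--Zehnder indices are shifted by $2i$, so only $u_0^1$ satisfies $CZ^\tau=-1$.

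\textbf{Main obstacle.} The most delicate points are Step~2, where the homological identity $[u]-[u']=(\eta([u])-\eta([u']))[F]$ must be carefully justified in the presence of the Lefschetz critical fiber (the thimble class potentially contributes to $H_2(E)$, but pairs trivially with $c_1(TE_{vert})$), and the portion of Step~3 that excludes the higher puncture orbits $u_i^1$ for $i\geq 1$, which requires a careful trivialization comparison rather than following directly from the index formula.
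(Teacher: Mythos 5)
Your Steps 1 and 2 match the paper's proof closely: the paper applies the same Riemann--Roch index formula and invokes \cite[Lemma 5.6]{ziwenyao} for the identity $\langle c_1^\tau(\mathrm{Ver}),[u]\rangle = (2-2g)\eta([u])$, which you instead sketch via the homological identity $[u]-[u']=(\eta([u])-\eta([u']))[F]$. That is a reasonable unpacking of the cited lemma.

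The problem is in Step 3, in how you rule out the higher puncture orbits $u_i^1$ with $i\geq 1$. You claim their Conley--Zehnder indices in the trivialization $\tau$ are shifted by $2i$, so only $u_0^1$ has $CZ^\tau=-1$. This contradicts the paper's own conventions in \S 5.1, which state that \emph{every} $u_i$ is an elliptic fixed point of Conley--Zehnder index $-1$ (in the trivialization relevant to its grading). There is no index shift to appeal to, and inventing one introduces an inconsistency. The actual reason, which the paper relies on, is the truncation performed at the start of \S 6: to define and compute the Seidel class in the punctured case, the fiber is replaced by $\tilde{\Sigma}_{g,k}$, obtained by restricting each cylindrical end $S^1\times[0,\infty)$ to $S^1\times[0,\epsilon)$, and the monodromy near the boundary is the time-$1$ flow of $H=s^2/2$, which has no fixed points in $(0,\epsilon)$. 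In other words, $u_i^1$ and $v_i^1$ with $i\geq 1$ are simply not fixed points of the monodromy in the exact setting where Proposition~\ref{index} is proved (this is also the content of Remark~\ref{remark:seidel class for Riemann surface with boundary}: the Seidel class counts only sections contained in the compact Liouville domain). So the phrase ``by our setting'' in the paper's proof is doing real work; there is no trivialization comparison to make, and your argument for the exclusion is wrong even though the conclusion is right.

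One further small remark on Step 2: your claim that the thimble class ``pairs trivially with $c_1(TE_{vert})$'' is not obvious (the thimble is Lagrangian in $E$, not in the fiber), and the paper does not attempt to reprove this; it simply cites \cite[Lemma 5.6]{ziwenyao}. If you want to supply this detail yourself you should justify that pairing directly rather than assert it.
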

\begin{proof}
    Away from the critical locus, the bundle is a product bundle, and we use the trivialization $\mathrm{Ver}=T((-1,1)_x\times S^1_y)\cong \mathbb{R}^2$ to trivialize the vertical distribution. The Fredholm index formula reads:
    \[
    \begin{split}
    \mathrm{ind}(u)&=-\chi(u)+2\langle c_1^\tau(TE), [u] \rangle+ CZ^\tau (\gamma_x)\\
    &=-1+2(1+\langle c_1^\tau(\mathrm{Ver}), [u]\rangle)+CZ^\tau (\gamma_x)\\
    &=1+2\langle c_1^\tau(\mathrm{Ver}), [u] \rangle+ CZ^\tau (\gamma_x).
    \end{split}
    \]
    The term $\langle c_1^\tau(\mathrm{Ver}), [u] \rangle$ is equal to the wrapping number $\eta([u])$ multiplied by $2-2g$, whose proof (in a slightly different setting) can be found in \cite[Lemma 5.6]{ziwenyao}. So we have
    \[
    \mathrm{ind}(u)=1+CZ^\tau (\gamma_x)+(4-4g)\eta([u]).
    \]
    Since $g\ge 2$, $\eta([u])\ge 0$ and $CZ^\tau(\gamma_x)\in \{-1,0,1\}$, we see that the only possibility for $\mathrm{ind}(u)$ to be zero is $CZ^\tau(\gamma_x)=-1$ and $\eta([u])=0$. By our setting, the only fixed points that have Conley-Zehnder index -1 are $e_0^1$, $e_1^1$ and $u^1_0$, hence the proof.
\end{proof}

\begin{proposition}\label{horizontal lemma}
The only sections of $E$ that are asymptotic to $e_0^1$, $e_1^1$ or $u_0^1$ at $r=\infty$ are horizontal. The horizontal sections exist and are automatically transversely cut out.
\end{proposition}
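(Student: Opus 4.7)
The plan is to prove the three claims—horizontality, existence, and transversality—in turn, exploiting that the candidate fixed points $e_0^1, e_1^1, u_0^1$ all lie outside the Dehn twist region and away from the vanishing cycle of the unique Lefschetz critical point.

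For horizontality, I would apply an energy estimate to any $J$-holomorphic section $u$ of $E\to\mathbb{C}$ asymptotic to one of these fixed points. In the exact setting, $\omega_E$ admits a primitive, so Stokes' theorem expresses the total geometric energy as the action of the asymptotic orbit. Decomposing $du$ into horizontal and vertical components with respect to the symplectic connection one obtains
\[
\int_{\mathbb{C}} u^* \omega_E \;=\; \int_{\mathbb{C}} |d^v u|_J^2 \;+\; \Phi([u]),
\]
where $\Phi([u])$ is a topological contribution depending only on the relative homology class, and hence only on the wrapping number $\eta(u)$. Combining these two expressions and using $\eta(u)=0$ (Proposition \ref{index}) forces $\int |d^v u|_J^2 = 0$, so that $d^v u \equiv 0$ and $u$ is horizontal. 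Care must be taken with the Hamiltonian perturbation contributions in the wrapped setting (relevant for $u_0^1$); here I would use the no-crossing arguments of \cite{ziwenyao} to confine $u$ to a region where the exactness computation makes sense.

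For existence, I would construct the horizontal section through each $x \in \{e_0^1, e_1^1, u_0^1\}$ by parallel transport along radial rays from infinity. Since these fixed points lie away from the vanishing cycle, the parallel transport extends globally to all of $\mathbb{C}$ without meeting the critical point of $\pi$; by fibration-compatibility of $J$, the resulting section is automatically $J$-holomorphic and by construction asymptotic to the prescribed orbit at infinity. For transversality, the linearization $D_u$ along such a horizontal section restricts to a Cauchy-Riemann operator on the complex line bundle $u^*\mathrm{Ver}$ over $\mathbb{C}$, whose asymptotic operator is governed by the linearization of $\phi$ at an elliptic fixed point of Conley-Zehnder index $-1$. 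Proposition \ref{index} gives $\mathrm{ind}(D_u)=0$, and an application of automatic transversality for Cauchy-Riemann operators on the plane (in the vein of Hofer-Lizan-Sikorav/Wendl) yields surjectivity; together with index zero this makes $D_u$ an isomorphism.

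The hard part will be the clean execution of the energy bookkeeping in the horizontality step, especially near the puncture for $u_0^1$: one has to track how the Hamiltonian perturbation terms $d(H\beta)$ enter into $\omega_E$, verify that the no-crossing result pins $u$ to the region in which the exactness argument applies, and confirm that equality in the energy inequality is saturated precisely by horizontal sections. Once this is pinned down, existence reduces to a straightforward parallel-transport construction, and transversality follows by standard automatic-transversality arguments on the sphere.
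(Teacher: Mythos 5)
Your horizontality argument has a real gap. You write $\int_{\mathbb{C}} u^*\omega_E = \int |d^v u|^2 + \Phi([u])$ and assert that $\Phi([u])$ is ``a topological contribution depending only on the relative homology class, and hence only on the wrapping number $\eta(u)$,'' then conclude it vanishes when $\eta(u)=0$. This is not justified. In the paper's setup the quantity that plays the role of $\Phi([u])$ is $\int u^*\bigl(\tfrac{\partial G}{\partial r}\,dr\wedge d\theta\bigr)$, where $G$ is a function that depends on the fiber coordinate; this integral depends on the actual map $u$, not merely on its relative homology class, so it is \emph{not} a topological invariant and cannot be evaluated by knowing $\eta(u)$. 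What the paper actually does is observe (i) that $\int u^*\omega = 0$ by Stokes' theorem because the action of the orbits $e_0^1,e_1^1,u_0^1$ vanishes, and (ii) that the curvature integrand $\tfrac{\partial G}{\partial r}$ is \emph{pointwise} non-negative away from the critical locus. Hence the vertical energy $E(u) = \int u^*\omega - \int \tfrac{\partial G}{\partial r}\,dr\wedge d\theta \le 0$, and since $E(u)\ge 0$ this forces $E(u)=0$ and $u$ horizontal. The pointwise curvature sign is the crucial input, and it has no a priori topological substitute. Notably, the wrapping number plays no role at all in this part of the paper's proof in the exact case; it only enters later (Section 6.2) when passing from the closed surface to the punctured one.

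Your existence construction (parallel transport of a critical point of $H_0$, equivalently the product section $\mathbb{C}\times\{p\}$) and your transversality argument (Wendl's automatic transversality given index zero) match the paper's approach in substance, with the caveat that the paper explicitly verifies $c_N=-1$ and $Z(du)=0$ for Wendl's criterion rather than leaving this implicit. But the horizontality step needs to be redone: replace the claim that the curvature term is topological with the pointwise monotonicity $\tfrac{\partial G}{\partial r}\ge 0$ of the twisting function along radial rays, and combine this with the vanishing of the orbit action.
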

\begin{proof}
    We use the same notations from Proposition \ref{prop:standard_LF}, and let $G(r, x, y)=(g(\mu)-1)\tilde{R}_{\phi(r))}(\mu)+H_0(x,y)$, and let the $\lambda$ be a primitive of $\omega_\Sigma$ that is equal to $xdy$ in the twist region. We know that there is a one-form $\alpha$ on $E$ such that $\omega=d(\lambda+\alpha)$, and $\alpha=Gd\theta$ away from the critical locus. For every section $u$, we consider the vertical energy
    \[
    E(u)=\frac{1}{2}\int |\partial_s u-\partial_s^\#|^2_{g_J}+|\partial_t u-\partial_t^\#|^2_{g_J} ds\wedge dt,
    \]
    where $(s,t)$ is any local conformal coordinate of the base, $g_J$ is the metric induced by $\omega$ and the almost complex structure $J$, and $\partial_s^\#$, $\partial_t^\#$ are the horizontal lifts of the vector fields. Using the polar coordinate $(r,\theta)$, the above expression can be re-written as
    \[
    E(u)=\int u^*\omega - \int \frac{\partial G}{\partial r} dr\wedge d\theta.
    \]
    Now if $u$ is any section that is asymptotic to the fixed points $e_0^1$, $e_1^1$  or $u_0^1$ at $r=\infty$, then the term $\int u^*\omega$ is equal to $u^*\lambda$ integrated at the boundary of $u$, which is zero. Away from the critical locus, the integrand $\frac{\partial G}{\partial r}$ is non-negative. So we conclude that the second term $\int \frac{\partial G}{\partial r} dr\wedge d\theta \ge 0$. Thus, for any $J$-holomorphic section $u$ that is asymptotic to $e_0^1$, $e_1^1$  or $u_0^1$ at $r=\infty$, we have the vertical energy
    \[
    E(u)\le 0,
    \]
    and this can only happen when $E(u)=0$ and $u$ is a horizontal section.

    There exist horizontal sections asymptotic to $e_0^1$ and $e_1^1$ corresponding to the critical points of $H_0$ near the circles $\{x=\pm \lambda\}$. To see this, locally we can write the fiberwise symplectic two-form $\omega$ as $\omega_\Sigma + d H_0 \wedge d\theta$, so the section given by $\CC\times \{p\}$ where $p$ is the critical point of $H_0$ gives the existence of horizontal sections. A simple energy argument shows that for $e_0^1$, $e_1^1$ and $u_0^1$, such horizontal section with the desired asymptotes is unique.

    That the horizontal sections are transversely cut out follows from the automatic transversality criterion given in \cite[Theorem 1]{wendlautomatic}. To be precise we observe the Fredholm index is zero, $c_N =-1$, and $Z(du) =0$. Hence the automatic transversality criterion is satisfied.
    \end{proof}
    \begin{remark}
        Since all of the sections involved in computing the Seidel class are transverse, there is no need to further perturb the almost complex structure $J$, for example as in definition \ref{def:seidel class}, and hence the above proposition finishes the proof of Theorem \ref{thm:s_calculation} in the exact case.
    \end{remark}

\begin{remark}
Whether the end result is $+[f^1]$ or $-[f^1]$ will necessitate working through the coherent choice of orientations. For us we assume the Seidel class is $[f^1]$ for the rest of the article, and make the observation that this choice does not in fact affect the resulting ring in Theorem \ref{thm:ring_structure}. The signs and orientations for the Seidel class are the subject of future work by Shaoyun Bai and Paul Seidel \cite{shaoyun}.
\end{remark}

\subsection{The non-exact case}

We now deduce the computation of Seidel class for closed Riemann surfaces from the case of punctured Riemann surfaces. 

We consider the standard Lefschetz fibration constructed in Proposition \ref{prop:standard_LF} and count sections of this fibration satisfying conditions described in Definition \ref{def:seidel class}. Recall from Proposition \ref{index}, for a generic almost complex structure $J$, we have 
\[
\mathrm{ind}(s)=1+CZ^\tau (\gamma_x)+(4-4g)\eta([s]),
\]
where $CZ^\tau (\gamma_x)\in\{-1,0,1\}$ and $\eta(s)\ge0$. 

The next proposition follows from the above observation:

\begin{proposition}

Any section $s$ counted by the Seidel element must satisfy $\eta(s)=0$ and $CZ^\tau(\gamma_x)=-1$.
    
\end{proposition}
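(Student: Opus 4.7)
The plan is to apply the Fredholm index formula from Proposition \ref{index} to any section $s$ contributing to the Seidel class, and then extract arithmetic constraints from the signs of the terms. Since contributions to the Seidel element come from isolated (zero-dimensional) moduli spaces, for a generic choice of fibration-compatible almost complex structure any such section $s$ must satisfy $\mathrm{ind}(s)=0$. Substituting into Proposition \ref{index} yields
\begin{equation*}
0 = 1 + CZ^\tau(\gamma_x) + (4-4g)\eta(s).
\end{equation*}

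Next I would use the two inequalities already established. First, the wrapping number is non-negative, $\eta(s)\ge 0$: this is the positivity-of-intersections argument recalled just before the proposition (choosing the reference section $c$ to pass through a critical point of $H_0$ makes $c$ itself $J$-holomorphic, so $s\cap c\ge 0$). Second, the Conley--Zehnder index of any fixed point of $\phi$ in our setup lies in $\{-1,0,1\}$, so $1+CZ^\tau(\gamma_x)\in\{0,1,2\}$.

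The final step is the arithmetic: since $g\ge 2$, the coefficient $4-4g\le -4$, so $(4-4g)\eta(s)\le 0$, and moreover $(4-4g)\eta(s)\le -4$ as soon as $\eta(s)\ge 1$. Combined with $1+CZ^\tau(\gamma_x)\le 2$, this would force the right-hand side of the index equation to be at most $-2<0$, contradicting $\mathrm{ind}(s)=0$. Hence $\eta(s)=0$, and then the equation reduces to $0=1+CZ^\tau(\gamma_x)$, giving $CZ^\tau(\gamma_x)=-1$, as claimed.

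I do not expect a serious obstacle here: this is a direct numerical consequence of Proposition \ref{index} together with positivity of intersections. The only subtlety worth flagging is that the argument relies on choosing $J$ generically so that the index computation genuinely controls the dimension of the moduli space, and on the hypothesis $g\ge 2$, which is what makes the coefficient $4-4g$ strictly negative. In the genus $1$ case the coefficient vanishes and this constraint on $\eta(s)$ disappears, consistent with the remark elsewhere in the paper that the $g\ge 2$ assumption is essential for the present calculation.
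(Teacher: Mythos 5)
Your proof is correct and is essentially identical to the paper's argument: both substitute $\mathrm{ind}(s)=0$ into the index formula of Proposition \ref{index}, invoke $\eta(s)\ge 0$ and $CZ^\tau(\gamma_x)\in\{-1,0,1\}$, and use $g\ge 2$ to force $\eta(s)=0$ and hence $CZ^\tau(\gamma_x)=-1$. Your remark about the genus-$1$ case is a nice observation consistent with the paper's stated restriction to $g\ge 2$.
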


%\begin{proposition}
%For $J$-holomorphic section $s$ counted by the Seidel class, $\eta \geq 0$, and the Fredholm index of $s$ is given by
%\[
%\mathrm{ind}(u)=1+CZ^\tau (\gamma_x)+(4-4g)\eta([u])
%\]
%\end{proposition}

%\begin{proposition}
%$s$ avoids the critical point of $\pi: E\rightarrow \mathbb{C}$ and is asymptotic to $e_0^1$ or $e_1^1$.
%\end{proposition}
Hence we see that for generic $J$, all sections counted by the Seidel element must satisfy $\eta=0$. If we pick the constant section $c:\mathbb{C}\to E$ to be $c(z)=pt$ with $pt$ a critical point of $H_0$, then $\eta(u)=0$ implies that $u$ is disjoint from $c$, as we observed before. So we can consider the new bundle $E'=E \setminus c$ with the new generic fiber $\Sigma_{g,1}=\Sigma_g\setminus pt$. On $\Sigma_{g,1}$, we can find a one-form $\lambda_\Sigma$ with $d\lambda_\Sigma=\omega_\Sigma$ and $\lambda_\Sigma=xdy$ in the twist region. Now $E'$ becomes an exact Lefschetz fibration with fiberwise symplectic form
\[
\omega=d(\lambda_\Sigma+\alpha),
\]
where $\alpha$ is the one-form described in Proposition \ref{horizontal lemma}. So the discussions from the previous section imply Proposition \ref{horizontal lemma} in the closed case as well, and this finishes the proof of Theorem \ref{thm:s_calculation}.

\section{A-model computations of symplectic cohomology}\label{sec:A_model_calculations}
\subsection{Single Dehn twist on a closed surface}
We first discuss the case of a single Dehn twist on the closed surface $\Sigma_g$, then we shall point to the extensions to the cases of multiple Dehn twists and punctured surface $\Sigma_{g,k}$. 
In the following, we use the notation $R^d$ to denote the span
\[
\langle [e_0^d], [e_1^d],\cdots,[e_{d-1}^d], [f^d] \rangle,
\]
in other words, $R^d=\mathrm{HF}^0(\phi^d)$ for $d>0$. Notice that when $d=0$, $\mathrm{HF}^0(\mathrm{id})\cong H^*(\Sigma_g)$ is generated by $[f^0]$ together with the fundamental class $K\in H^2(\Sigma_g)$. We grade the elements $[e_i^d]$, $[h_i^d]$, $[f^d]$ and $[g^d]$ with degree $d$, making $\bigoplus_{d=0}^{\infty}R^d$ a graded algebra. We have the following

\begin{theorem}\label{thmsum}
The algebra $\bigoplus_{d=0}^{\infty} R^d$ is isomorphic to 
\[
\CC[X,Y,Z]/(XYZ-Y^3-Z^2)
\]
as graded $\CC$-algebras, where $|X|=1$, $|Y|=2$ and $|Z|=3$. And the $\CC$-algebra $\bigoplus_{d=0}^{\infty} \mathrm{HF}^0(\phi^d)$ is isomorphic to
\[
(\CC[X,Y,Z]/(XYZ-Y^3-Z^2))\oplus \CC\langle K \rangle,
\]
as a vector space, where $|K|=0$. The algebra structure is determined by the subalgebra $\bigoplus_{d=0}^{\infty} R^d\cong \CC[X,Y,Z]/(XYZ-Y^3-Z^2)$, together with the relations $K^2=KX=KY=KZ=0$.
%, and the multiplication of the second $\CC$ factor with any element of positive degree is trivial. 

\end{theorem}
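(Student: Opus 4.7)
The plan is to construct a graded algebra homomorphism
\[
\Phi: \CC[X,Y,Z]/(XYZ - Y^3 - Z^2) \to \bigoplus_{d\geq 0} R^d
\]
by sending $X \mapsto [f^1]$, $Y \mapsto [e_1^2]$, $Z \mapsto [e_1^3]$, and to show it is an isomorphism of graded algebras. The cubic relation is checked directly from the product formulas recalled in Section \ref{sec:product}: one computes $Y^2 = [e_1^2]\cdot[e_1^2] = [e_2^4]$, $Y^3 = [e_1^2]\cdot[e_2^4] = [e_3^6]$, $Z^2 = [e_1^3]\cdot[e_1^3] = [e_2^6]$, and $XYZ = [f^1]\cdot[e_2^5] = [e_2^6] + [e_3^6]$, whence $\Phi(XYZ - Y^3 - Z^2) = 0$ and $\Phi$ descends to the quotient.

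To show $\Phi$ is an isomorphism, I would match graded dimensions and verify surjectivity. The leading-term reduction $Z^2 \mapsto XYZ - Y^3$ yields the monomial basis $\{X^{d-2a-3b}Y^a Z^b : b\in\{0,1\},\ 2a+3b \leq d\}$ in degree $d$, which has exactly $d$ elements for $d\geq 1$ (and one element $1$ in degree $0$), matching $\dim R^d$. For surjectivity I would argue by induction on $d$: combining the relations $[f^1]\cdot[e_j^n] = [e_j^{n+1}] + [e_{j+1}^{n+1}]$, $[f^1]\cdot[f^n] = [e_1^{n+1}]+[e_n^{n+1}]+[f^{n+1}]$, and $[e_i^m]\cdot[e_j^n] = [e_{i+j}^{m+n}]$, the change-of-basis expressing the monomials $X^{d-2a}Y^a$ and $X^{d-2a-3}Y^a Z$ in the ordered basis $([e_1^d], \ldots, [e_{d-1}^d], [f^d])$ is upper-triangular with nonzero diagonal (a Pascal-type pattern arises from iterated multiplication by $X$), hence invertible. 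I expect this triangular bookkeeping to be the main technical nuisance, rather than a conceptual obstacle.

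For the second statement, the fundamental class $K \in H^2(\Sigma_g) \subset \mathrm{HF}^0(\mathrm{id})$ carries integer degree $2$ in the natural Conley-Zehnder refinement of the $\ZZ/2$-grading, whereas the generators $X, Y, Z$ and all elements of $R^d$ have integer degree $0$. However, $\mathrm{HF}^*(\phi^d)$ for $d\geq 1$ contains no class of integer degree $2$: the open surface $\Sigma_g' = \Sigma_g\setminus N$ has $H^2 = 0$, and the Dehn-twist-region generators $[e_i^d], [h_i^d]$ come from $H^*(S^1)$, occupying only degrees $0$ and $1$. Therefore $K\cdot X$, $K\cdot Y$ and $K\cdot Z$ all vanish for degree reasons, and $K^2 = 0$ in $H^*(\Sigma_g)$ since $H^4(\Sigma_g) = 0$. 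The graded algebra thus splits as $(\CC[X,Y,Z]/(XYZ - Y^3 - Z^2)) \oplus \CC\langle K\rangle$ with $K$ spanning a square-zero ideal, as claimed.
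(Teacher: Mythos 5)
Your overall strategy is sound and reaches the correct conclusion, but it differs from the paper's argument in both the surjectivity step and the injectivity step, so it is worth spelling out the comparison. For injectivity, you match graded dimensions (using a Gr\"obner-basis reduction at $Z^2$ to show $\dim (\CC[X,Y,Z]/(XYZ-Y^3-Z^2))_d = d = \dim R^d$ for $d\geq 1$), and then conclude injectivity from surjectivity by linear algebra in each degree; the paper instead proves directly that the cubic generates the ideal of relations (Lemma~\ref{lemma:relation}) via division in $\CC(y,z)[x]$ and a UFD argument, without any dimension count. Your route is arguably slicker here. For surjectivity, however, you gesture at an ``upper-triangular change of basis with a Pascal-type pattern,'' which is plausible (I can verify it for small $d$ after a nonobvious reordering of both the monomials and the basis $[e_i^d]$), but is not actually triangular in the ordering you state, and the general bookkeeping is not carried out; the paper's Lemma~\ref{lem: generators} instead gives explicit inductive expressions such as $[e_i^d]=[e_1^2][e_{i-1}^{d-2}]$ and $[f^d]=[f^2][f^{d-2}]-[e_1^{d-2}][e_1^2]-[e_{d-3}^{d-2}][e_1^2]$, which settle generation cleanly. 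You would do well to adopt those formulas rather than relying on the triangularity claim. Finally, for the $K$ relations, your degree argument is a reasonable substitute for the paper's appeal to the module structure of $\mathrm{HF}^*$ over $QH^*(\Sigma_g)$, but it quietly assumes the product respects an integer (or at least $\ZZ/(4g-4)$) refinement of the $\ZZ/2$-grading; this is true here because the index formula of Proposition~\ref{index} forces the wrapping number of contributing sections to be zero (so there is no Chern-number monodromy in the grading), and that justification should be stated explicitly rather than invoked as the ``natural Conley-Zehnder refinement.''
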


\begin{remark}
   The fact that the action of the point class $[f^0]\in \mathrm{HF}^0(\phi^0)$ with any other element is identity, and that $K^2=KX=KY=KZ=0$ follow from the module structure of $\mathrm{HF}^*(\Sigma_g,\phi)$, see the discussions in e.g.  \cite{ruan1995bott,piunikhin1996symplectic,seidel1997floer}. In the following, when deriving the algebra structure, we will often ignore the second factor and only focus on the subalgebra $\bigoplus_{d=0}^{\infty} R^d$ to simplify the notations.
\end{remark}

To prove Theorem \ref{thmsum}, we first prove the algebra $\bigoplus_{d=0}^{\infty} R^d$ is generated by $[f^1]$, $[e_1^2]$ and $[e_1^3]$.

\begin{lemma}\label{lem: generators}
    The algebra $\bigoplus_{d=0}^{\infty} R^d$ is generated by $[f^1]$, $[e_1^2]$ and $[e_1^3]$.
\end{lemma}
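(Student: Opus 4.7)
The plan is to proceed by induction on $d$, showing that every element of $R^d$ lies in the subalgebra generated by $X := [f^1]$, $Y := [e_1^2]$, and $Z := [e_1^3]$.

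First I would dispose of the small base cases. For $d=1$, $R^1 = \langle [f^1]\rangle = \langle X\rangle$ trivially. For $d=2$, the relation $[f^1]\cdot[f^1] = 2[e_1^2] + [f^2]$ yields $[f^2] = X^2 - 2Y$, so $R^2 \subset \langle X, Y\rangle$. For $d=3$, using $[f^1]\cdot [e_1^2] = [e_1^3]+[e_2^3]$ I get $[e_2^3] = XY - Z$, and then $[f^3]$ is obtained from $[f^1]\cdot[f^2] = [e_1^3] + [e_2^3] + [f^3]$, giving $[f^3] = X^3 - 3XY$.

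For the inductive step with $d \geq 4$, I assume that $R^{d'}$ lies in the subalgebra generated by $X, Y, Z$ for all $1 \leq d' < d$. The key observation is the ``interior'' product relation $Y\cdot [e_i^{d-2}] = [e_1^2]\cdot[e_i^{d-2}] = [e_{i+1}^d]$, valid precisely when $1 \leq i \leq d-3$. This immediately expresses $[e_2^d], [e_3^d], \ldots, [e_{d-2}^d]$ as products of $Y$ with elements already in the subalgebra by the inductive hypothesis. The two remaining ``boundary'' classes $[e_1^d]$ and $[e_{d-1}^d]$ are recovered from
\[
X\cdot[e_1^{d-1}] = [e_1^d] + [e_2^d],\qquad X\cdot [e_{d-2}^{d-1}] = [e_{d-2}^d] + [e_{d-1}^d],
\]
and finally $[f^d]$ is recovered from $X\cdot [f^{d-1}] = [e_1^d]+[e_{d-1}^d]+[f^d]$. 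Since each right-hand operand lies in $R^{d-1}$ or $R^{d-2}$, the induction closes.

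The main obstacle, such as it is, is keeping careful track of the indexing constraints $0<i<m, 0<j<n$ appearing in the product relations for $[e_i^m]\cdot[e_j^n]$ and $[f^m]\cdot[e_j^n]$; in particular, these relations do not directly produce the boundary classes $[e_1^d]$ or $[e_{d-1}^d]$ from products of two ``interior'' elements, forcing one to bring in $X=[f^1]$ and to check the edge case $d=4$ by hand, where only a single interior product $Y\cdot[e_1^2] = [e_2^4]$ is available.
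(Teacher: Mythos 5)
Your proof is correct and follows essentially the same inductive strategy as the paper: express the interior classes $[e_i^d]$ ($2\le i\le d-2$) as $[e_1^2]\cdot[e_{i-1}^{d-2}]$, then recover the boundary classes $[e_1^d]$, $[e_{d-1}^d]$ by multiplying against $[f^1]$, and finally recover $[f^d]$ from a product of two lower-degree $[f]$-classes. The only cosmetic difference is that you obtain $[f^d]$ from $[f^1]\cdot[f^{d-1}]$ whereas the paper uses $[f^2]\cdot[f^{d-2}]$; both reduce to lower degree and close the induction.
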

\begin{proof}
     Notice that $[f^2]=[f^1][f^1]-[e_1^2]-[e_1^2]$ and $[e_2^3]=[f^1][e_1^2]-[e_1^3]$, so it suffices to show that for each $d\ge 4$, the classes $[e_i^d]$ ($0<i<d-1$) and $[f^d]$ are generated by elements of lower degree. To begin with, we notice that if $1<i<d$ and $d\ge 4$ then $[e_i^d]=[e_1^2][e_{i-1}^{d-2}]$, so we only need to focus on $[f^d]$, $[e_1^d]$ and $[e_{d-1}^d]$.
     
     Next we observe that for each $d\ge 4$, we have
     \[
     [f^d]=[f^2][f^{d-2}]-[e_2^{d}]-[e_{d-2}^d]=[f^2][f^{d-2}]-[e_1^{d-2}][e_1^2]-[e_{d-3}^{d-2}][e_1^2],
     \]
     so $[f^d]$ can be generated by elements of lower degrees. 

     Finally, we observe that for each $d\ge4$,
     \[
     [e_1^d]=[f^1][e_1^{d-1}]-[e_2^d]=[f^1][e_1^{d-1}]-[e_1^2][e_1^{d-2}],
     \]
     and
     \[
     [e_{d-1}^d]=[f^1][e_{d-2}^{d-1}]-[e_{d-2}^d]=[f^1][e_{d-2}^{d-1}]-[e_{1}^2][e_{d-3}^{d-2}].
     \]
     So $[e_1^d]$ and $[e_{d-1}^d]$ (with $d\ge 4$) are also generated by elements of lower degree.
\end{proof}

\begin{lemma}
The generators $[f^1], [e_1^2], [e_1^3]$ satisfy the relation
\begin{equation} \label{eqn:relation}
[f^1] \cdot [e_1^2] \cdot [e_1^3] = [e_1^2]^3 +[e_1^3]^2
\end{equation}
where the exponent is with respect to the product.
\end{lemma}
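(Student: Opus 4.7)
The plan is to verify the relation \eqref{eqn:relation} by a direct computation using the product formulas (1)--(7) recalled in \S\ref{sec:product}, applied in the Dehn-twist region. Everything reduces to tracking subscripts and superscripts on the elliptic generators $[e_i^d]$, since $[f^1]$ is the only non-elliptic generator appearing.

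First I would compute the left-hand side. Applying relation (1) with $m=2,n=3,i=j=1$ gives
\[
[e_1^2]\cdot [e_1^3] = [e_{2}^{5}].
\]
Then applying relation (3) with $m=1,n=5,j=2$ gives
\[
[f^1]\cdot [e_{2}^{5}] = [e_{2}^{6}] + [e_{3}^{6}],
\]
so the left-hand side of \eqref{eqn:relation} equals $[e_{2}^{6}] + [e_{3}^{6}]$.

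Next I would compute the right-hand side. By two applications of relation (1),
\[
[e_1^2]^3 = \bigl([e_1^2]\cdot [e_1^2]\bigr)\cdot [e_1^2] = [e_2^4]\cdot [e_1^2] = [e_{3}^{6}],
\]
and one more application yields
\[
[e_1^3]^2 = [e_{2}^{6}].
\]
Summing gives $[e_1^2]^3 + [e_1^3]^2 = [e_{3}^{6}] + [e_{2}^{6}]$, which matches the left-hand side and establishes \eqref{eqn:relation}. There is no real obstacle here: the relation is a bookkeeping consequence of the product formulas, and in particular of the fact that in the relevant index range the elliptic generators satisfy the clean multiplication $[e_i^m]\cdot[e_j^n] = [e_{i+j}^{m+n}]$, while multiplication by $[f^1]$ contributes the two-term shift $[e_j^{n}]\mapsto [e_j^{n+1}]+[e_{j+1}^{n+1}]$.
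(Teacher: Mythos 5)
Your computation is correct and is exactly the direct verification the paper has in mind (the paper's own proof is simply ``Follows directly from the product relations,'' with the bookkeeping left to the reader). Both sides evaluate to $[e_2^6]+[e_3^6]$ via relations (1) and (3), just as you show.
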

\begin{proof}
Follows directly from the product relations.
\end{proof}

We next show the relation (\ref{eqn:relation}) generates all of the relations of the generators $[f^1], [e_1^2], [e_1^3]$ in the algebra $\bigoplus_{d=0}^{\infty} R^d$. 
The proof of Theorem \ref{thmsum} then follows immediately.
Let 
\[
P(x,y,z) := xyz-y^3-z^2 \in \mathbb{C}[x,y,z].
\]

\begin{lemma} \label{lemma:relation}
Let $L(x,y,z) \in \mathbb{C}[x,y,z]$ be a nonzero polynomial such that $L([f],[e_1^2], [e_1^3]) =0$, then there is a polynomial $Q(x,y,z)\in \mathbb{C} [x,y,z]$ so that 
\[
L(x,y,z) = Q(x,y,z) \cdot P(x,y,z).
\]
\end{lemma}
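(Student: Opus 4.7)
The plan is to deduce this lemma from the stronger isomorphism of Theorem \ref{thmsum}, proved by a graded-dimension count. Lemma \ref{lem: generators} together with the relation $[f^1]\cdot[e_1^2]\cdot[e_1^3] = [e_1^2]^3 + [e_1^3]^2$ already produces a surjective graded algebra homomorphism
\[
\bar\phi : \mathbb{C}[X,Y,Z]/(P) \twoheadrightarrow \bigoplus_{d \geq 0} R^d, \qquad X \mapsto [f^1],\ Y \mapsto [e_1^2],\ Z \mapsto [e_1^3],
\]
where $X,Y,Z$ carry degrees $1,2,3$ respectively. The lemma is precisely the statement that $\ker \bar\phi = 0$, so the task reduces to proving injectivity of $\bar\phi$.

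For this I would compare graded Hilbert series on both sides. On the source: $\mathbb{C}[X,Y,Z]$ is a domain, so the nonzero degree-$6$ element $P$ is a non-zero-divisor, and the short exact sequence
\[
0 \to \mathbb{C}[X,Y,Z][-6] \xrightarrow{\,\cdot P\,} \mathbb{C}[X,Y,Z] \to \mathbb{C}[X,Y,Z]/(P) \to 0
\]
gives the Hilbert series
\[
\frac{1 - t^6}{(1-t)(1-t^2)(1-t^3)} \;=\; \frac{1 - t + t^2}{(1-t)^2} \;=\; 1 + \sum_{d \geq 1} d\, t^d.
\]
On the target side, the decomposition $\mathrm{HF}^*(\phi^d) \cong H^*(\Sigma_g') \oplus \bigoplus_{i=1}^{d-1} H^*(S^1)$ recalled in Section \ref{sec:product} shows that $\mathrm{HF}^0(\phi^d)$ has $\{[e_1^d], \ldots, [e_{d-1}^d], [f^d]\}$ as a basis for $d \geq 1$, hence $\dim R^d = d$; and $R^0 = \mathbb{C}\langle [f^0]\rangle$ has dimension $1$. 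Thus the two graded dimensions agree in every degree.

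Since $\bar\phi$ is a surjection between graded vector spaces of equal dimension in every degree, it is an isomorphism. Consequently, a polynomial $L \in \mathbb{C}[X,Y,Z]$ vanishes under the evaluation $L \mapsto L([f^1], [e_1^2], [e_1^3])$ if and only if $L \in (P)$, which is the desired statement.

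The only content-bearing step is the linear independence of the $d$ classes $[e_1^d],\ldots,[e_{d-1}^d],[f^d]$ in $\mathrm{HF}^0(\phi^d)$, but this is built into the basis established in \cite{ziwenyao} and recalled at the start of Section \ref{sec:product}; the Hilbert series computation itself is routine, so no substantial obstacle is expected.
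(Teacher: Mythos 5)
Your proposal is correct, but it takes a genuinely different route from the paper's proof. The paper proves the lemma by a direct algebraic manipulation inside the ring: it writes $L$ as a polynomial in $x$ with coefficients in $\mathbb{C}[y,z]$, performs the division algorithm in $\mathbb{C}(y,z)[x]$ by $x - (y^3+z^2)/(yz)$, clears denominators by a power of $yz$, evaluates to show the remainder vanishes, and then invokes irreducibility of $P$ in the UFD $\mathbb{C}[x,y,z]$. The essential structural input there is that $\mathbb{C}[[e_1^2],[e_1^3]]$ is a polynomial ring (equivalently, the monomials $[e_1^2]^a[e_1^3]^b = [e_{a+b}^{2a+3b}]$ are distinct basis elements), which follows from the product rules recalled in Section 5. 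Your argument instead proves injectivity of the map $\mathbb{C}[X,Y,Z]/(P) \to \bigoplus_d R^d$ by a graded Hilbert series comparison, with the essential structural input being the precise dimension $\dim R^d = d$ for $d\geq 1$ and $\dim R^0 = 1$, again from \cite{ziwenyao}. Your Hilbert series computation is correct — $\frac{1-t^6}{(1-t)(1-t^2)(1-t^3)} = \frac{1-t+t^2}{(1-t)^2} = 1 + \sum_{d\geq 1} d\,t^d$ — and the dimensions match degree by degree, so the surjection is an isomorphism and the kernel of evaluation is exactly $(P)$. Your approach is cleaner and establishes Theorem \ref{thmsum} in one step rather than deducing it from the lemma, but it uses the exact dimension count, whereas the paper's argument only needs that a certain subfamily of classes is linearly independent. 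Both are legitimate; they trade off slightly different pieces of the input from \cite{ziwenyao}.
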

\begin{proof}
To set notation, wherever we write a polynomial such as $P(x,y,z)$ we think of it as a polynomial in $\mathbb{C}[x,y,z]$. When we write $P([f],[e_1^2], [e_1^3])$ we think of it as an element in $\bigoplus_{d=0}^\infty R^d$.

We write $L(x,y,z) = g_k(y,z)x^k + g_{k-1}(y,z)x^{k-1} +...+ g_0(y,z)$. The assumption that $L$ is nonzero implies $k>0$, since $\mathbb{C}[ [e_1^2], [e_1^3]] \cong \mathbb{C} [y,z]$, which follows from the relations in Section \ref{sec:product}. 

We next apply the division algorithm in $\mathbb{C}(y,z)[x]$ to write
\[
L(x,y,z) = \left (x- \frac{y^3+z^2}{yz}\right) \left (r_{k-1}(y,z) x^{k-1} +...+ r_0(y,z)\right) + h_0(y,z)
\]
where $r_j(y,z), h_0(y,z) \in \mathbb{C}(y,z)$.

It follows from an inductive argument we can write 
\[
r_j(y,z) = \frac{\phi_j(y,z)}{(yz)^{m_j}}, \quad \phi_j(y,z) \in \mathbb{C}[y,z],\quad m_j \in \mathbb{Z}_{\geq0}
\]
and
\[
h_0(y,z) = \frac{\rho_0(y,z)}{(yz)^M}, \quad \rho_0(y,z)\in \mathbb{C}[y,z],\quad M\geq 1
\]
In particular this implies there is a large enough $N$ so that
\[
(yz)^N L(x,y,z) = (xyz-y^3-x^2)\left(\tilde{r}_{k-1}(y,z) x^{k-1} + \cdots + \tilde{r_0}(y,z)\right ) + \tilde{h_0}(y,z)
\]
where $\tilde{r}_j(y,z), \tilde{h}_0(y,z) \in \mathbb{C}[y,z]$. We next plug $x=[f^1]$, $y=[e_1^2]$ and $z=[e_1^3]$ into the above expression to get $\tilde{h}_0([e_1^2], [e_1^3]) =0$. Using the isomorphism $\mathbb{C}[y,z] \cong \mathbb{C}[[e_1^2], [e_1^3]]$ of subalgebras, we have $\tilde{h}_0(y,z) = 0$.

Using the fact $\mathbb{C}[x,y,z]$ is a UFD and that the polynomial $xyz-y^3-z^2$ is irreducible, we conclude there is another polynomial $Q(x,y,z)$ so that $L(x,y,z) = Q(x,y,z)P(x,y,z)$. This concludes the proof of the lemma.
\end{proof}

It follows also from the above computations that
\begin{theorem}
We have the following description of $\bigoplus_{d=0}^\infty \mathrm{HF}^1(\Sigma_g,\phi^d)$ as a module over $\bigoplus_{d=0}^{\infty} \mathrm{HF}^0(\phi^d)$:
\[
   \bigoplus_{d=0}^\infty \mathrm{HF}^1(\Sigma_g,\phi^d) \cong \CC[X,Y,Z]/(XYZ-Y^3-Z^2) \oplus (\mathbb{C}[X])^{2g-2} \oplus \CC.
\] 

The above expression indicates that $A \subset \mathrm{HF}^0(\Sigma_g,\phi^d)$ acts on the first factor by multiplication,  on the second factor by projection to $\CC[X]$ followed by multiplication, and on the last factor by projection to $\CC$ followed by multiplication.
\end{theorem}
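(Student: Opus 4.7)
My plan is to mirror the proof of Theorem \ref{thmsum} at the level of the $R$-module $\bigoplus_d \mathrm{HF}^1(\Sigma_g,\phi^d)$, identifying three families of generators in degree zero corresponding to the three summands on the right-hand side. I would take $\alpha \in H^1(\Sigma_g)$ to be the Poincar\'e dual of the vanishing cycle $C$, pick a $(2g-2)$-dimensional subspace $W \subset H^1(\Sigma_g)$ of classes supported away from $C$ with basis $\beta_1,\ldots,\beta_{2g-2}$, and let $\gamma$ be a remaining class; together these span $H^1(\Sigma_g)=\mathrm{HF}^1(\phi^0)$.

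The heart of the argument is to show that the cyclic $R$-submodule $R\cdot\alpha$ accounts for the entire $R$-summand. Using the product formulas from Section \ref{sec:product}, I expect $[f^m]\cdot\alpha=[g^m]$ and $[e_j^m]\cdot\alpha=[h_j^m]$ for $0<j<m$; an induction analogous to Lemma \ref{lem: generators} then shows that $R\cdot\alpha$ contains every $[g^d]$ and every $[h_i^d]$, and a matching Hilbert series argument (since the graded piece of $R\cdot\alpha$ in degree $d\geq 1$ has dimension $d=\dim R^d$) forces the natural surjection $R \twoheadrightarrow R\cdot\alpha$ to be an isomorphism. For the other summands, I would invoke the no-crossing lemma from Section \ref{sec:product}: the pure Dehn-twist-region classes $Y=[e_1^2]$ and $Z=[e_1^3]$ annihilate the pure Morse-region classes in $W$, so each $\beta_i$ generates a copy of $\CC[X]$ on which $[f^1]$ acts by sending $\beta_i$ to its natural image in $H^1(\Sigma_g')\subset\mathrm{HF}^1(\phi^1)$. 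The leftover class $\gamma$, being orthogonal to $\alpha$ and $W$ in degree zero, must lie in the kernel of multiplication by $X,Y,Z$ on dimension grounds, yielding the trailing $\CC$-summand concentrated in degree zero.

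The main obstacle will be making the products between degree-zero classes in $H^1(\Sigma_g)$ and the ring generators $[f^1],[e_1^2],[e_1^3]$ rigorous, since the formulas in Section \ref{sec:product} are stated only for $m\geq 1$. The needed computations---$[f^m]\cdot\alpha$, $[e_j^m]\cdot\alpha$, $[f^m]\cdot\beta_i$, and the vanishing of $[f^m]\cdot\gamma$---can be approached either through a direct $J$-holomorphic section count over the relevant pair-of-pants bundle (where one end carries trivial monodromy, so the corresponding Floer cohomology is just $H^*(\Sigma_g)$), or by appealing to Theorem \ref{thm:S_equals_S} to identify multiplication by $[f^1]$ with the action of Seidel's natural transformation, which makes $[f^1]\cdot\alpha=[g^1]$ essentially tautological. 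Once these degree-zero products are settled, a final dimension count ($d+2g-2$ for $d\geq 1$ and $2g$ for $d=0$) in each degree completes the verification.
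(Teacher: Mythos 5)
Your decomposition into a twist-region generator, a $(2g-2)$-dimensional piece, and a single killed class is precisely the structure of the paper's sketch, and the dimension count (degree $0$: $1+(2g-2)+1=2g$; degree $d\geq 1$: $d+(2g-2)$) is the same. The paper likewise reduces the hard step to the products of degree-zero classes in $H^1(\Sigma_g)$ with the ring generators, and handles it by the ``extrinsic'' description: extending $H_0$ so that there is an extra index-$1$ critical point $p$ inside the twist region $N$ and counting Floer cylinders through its ascending manifold, giving $[g^0]\cdot[e_i^d]=[h_i^d]$. So your main plan (use these degree-zero products, then a Hilbert-series comparison to force $R\twoheadrightarrow R\cdot\alpha$ to be an isomorphism) is essentially the proof.

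However, your specific identification of $\alpha$ with $\mathrm{PD}(C)$ looks backwards, and you should be careful here. The class $\mathrm{PD}(C)$ generates the image of $H^1(\Sigma_g,\Sigma_g')\to H^1(\Sigma_g)$, i.e.\ it is exactly the kernel of the restriction $H^1(\Sigma_g)\to H^1(\Sigma_g')$, and the paper's sketch says the one class annihilated by the Seidel element is the one ``corresponding to the fixed point contained in the twist region'' --- this is the class that should play the role of your $\gamma$ (the trailing $\CC$-summand), not of $\alpha$. The class $[g^0]$ in the paper's remark --- represented by the index-$1$ critical point added in $N$ --- is the one with $Y\cdot[g^0]=[h_1^2]\neq 0$ and $Z\cdot[g^0]=[h_1^3]\neq 0$, and since the claimed decomposition forces any class that is not annihilated by $Y$ to have nonzero image under $X$, it is $[g^0]$ (not $\mathrm{PD}(C)$) that survives and generates the $R$-summand. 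You should either verify directly that $[f^1]\cdot[g^0]\neq 0$ or observe that it follows from the abstract shape of the module once $Y\cdot[g^0]\neq 0$ is known; and you should identify $\gamma$ with $\mathrm{PD}(C)$, the class that dies. This swap does not change the eventual isomorphism but it does change which concrete Floer product you need to check vanishes versus not, so the ``verification'' step you flag at the end of the proposal is genuinely load-bearing.
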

\begin{proof}
    Here is a sketch of the proof. If we denote by $S$ the submodule generated by the classes $[h^i_j]$ and $[g^1]$, then the argument in the proof of Lemma \ref{lem: generators} shows that $S$ is generated by the elements $[g^1]$, $[h^2_1]$, and $[h^3_1]$. The extra classes come from outside of the twist region, i.e. the classes that correspond to $H^1(\Sigma_g)$ (when $d=0$)  and $H^1(\Sigma_g')$ (when $d>0$). The dimensions of the vector spaces spanned by these extra classes are $2g-1$ (when $d=0$) and $2g-2$ (when $d>0$) respectively. Multiplication by the Seidel element $X=[f^1]$ identifies these extra classes from different degrees, except that the Seidel element annihilates one of the extra classes from $\mathrm{HF}^1(\phi^0)\cong H^1(\Sigma_g)$, which corresponds to the fixed point contained in the twist region.
\end{proof}
    %\item We have \[
    %\bigoplus_{d=0}^\infty \mathrm{HF}^2(\Sigma_g,\phi^d) = H^2(\Sigma_g) \cong \CC.
    %\]
    %as a $\bigoplus_{d=0}^\infty \mathrm{HF}^0(\Sigma_g,\phi^d)$ module. And all multiplication with this factor of $\CC$ is trivial except $[f^0] \in \mathrm{HF}^0(\Sigma_g, \phi^0)$ which acts as the identity.
%\end{itemize}

\begin{remark}
    The main difference from \cite{ziwenyao} is that we need to deal with the product of $\mathrm{HF}^1(\phi^0)$ and $\mathrm{HF}^0(\phi^d)$. The calculations for those products can be derived from the ``extrinsic'' description of the products in Floer homologies, see \cite{ruan1995bott}. More concretely, we can extend the small Hamiltonian $H_0$ to $\Sigma_g$ in such a way that there's an extra pair of critical points with Morse indices $1$ and $2$ inside the twist region $N$. We then count Floer cylinders that intersects the ascending manifolds of the negative gradient flow $-\nabla H_0$. The only nontrivial such count (besides those calculated by the cup product of $H^*(\Sigma_g')$) results in $[g^0]\cdot[e_i^d]=[h_i^d]$ for $d\ge 2$ and $i=1,2,\cdots,d-1$.
\end{remark}

\begin{remark}
Technically speaking there is another multiplication on fixed point Floer cohomology taking pair of elements in $\bigoplus_{d=0}^\infty \mathrm{HF}^1(\Sigma_g,\phi^d)$  to an element in $\bigoplus_{d=0}^\infty \mathrm{HF}^0(\Sigma_g,\phi^d)$. However this product is not very interesting - the only nontrivial piece comes from the product $\mathrm{HF}^1(\Sigma_g, \phi^0) \otimes \mathrm{HF}^1(\Sigma_g,\phi^0) \rightarrow \mathrm{HF}^0(\Sigma_g,\phi^0)$, which is the classical cup product. Henceforth we will not mention this product and focus on the module structure instead.
\end{remark}

It is clear from the proof of Theorem \ref{thmsum} that
\[
R^d\cong (\CC[X,Y,Z]/(XYZ-Y^3-Z^2))_d,
\]
the degree $d$ part of the graded algebra $\CC[X,Y,Z]/(XYZ-Y^3-Z^2)$. If we consider the direct system $\{R^d\}$ with the connecting map $R^d\to R^{d+1}$ given by multiplying Seidel's element $[f^1]\in \mathrm{HF}^0(\phi)$, then the direct limit $\dlim_d R^d$ can be identified with the direct limit
\[
\dlim_d (\CC[X,Y,Z]/(XYZ-Y^3-Z^2))_d
\]
where the connecting map given by multiplication with $X$. With the above observations, we have the following
\begin{theorem}\label{thm:ring_structure}
    $\dlim_d R^d=\dlim_d \mathrm{HF}^0(\phi^d)\cong \CC[Y,Z]/(YZ-Y^3-Z^2)$.
\end{theorem}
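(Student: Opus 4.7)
The plan is to exploit the identification, already noted in the paragraph preceding the theorem, that the direct system $\{R^d\}$ with connecting maps given by multiplication by $[f^1] = X$ is isomorphic to the direct system of graded pieces of $R := \CC[X,Y,Z]/(XYZ - Y^3 - Z^2)$ under multiplication by $X$. Since multiplication by a non-zero-divisor homogeneous element on a graded ring realizes the direct limit of graded pieces as the degree-zero part of the corresponding localization, it suffices to identify $R[X^{-1}]_0$ with $\CC[Y,Z]/(YZ - Y^3 - Z^2)$.

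First I would verify that $X$ is not a zero divisor in $R$. The polynomial $XYZ - Y^3 - Z^2$ is irreducible in $\CC[X,Y,Z]$, as is seen by viewing it as a polynomial linear in $X$ with coefficient $YZ$, which does not divide $Y^3 + Z^2$ in $\CC[Y,Z]$. Hence $R$ is an integral domain, the map $R \to R[X^{-1}]$ is injective, and $\dlim_d R^d \cong R[X^{-1}]_0$.

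The key step is then a straightforward change of variables. Introduce degree-zero elements $y := Y/X^2$ and $z := Z/X^3$ in $R[X^{-1}]$; dividing the defining relation $XYZ = Y^3 + Z^2$ (homogeneous of degree $6$) by $X^6$ yields $yz = y^3 + z^2$. I would then define a $\CC$-algebra homomorphism
\[
\Phi: \CC[y, z]/(yz - y^3 - z^2) \longrightarrow R[X^{-1}]_0, \qquad y \mapsto Y/X^2, \quad z \mapsto Z/X^3,
\]
which is well-defined by the above. Surjectivity is immediate, since any degree-zero monomial in $R[X^{-1}]$ has the form $Y^b Z^c / X^{2b+3c} = y^b z^c$.

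The remaining point --- and the only real obstacle, though a mild one --- is injectivity. Given $P(y,z) \in \ker \Phi$, a homogenization $\tilde{P}(X,Y,Z) := X^N P(Y/X^2, Z/X^3) X^{2\deg_y P + 3\deg_z P}$ (chosen with $N$ large enough to clear denominators) satisfies $\tilde{P} = 0$ in $R$, so $\tilde{P} = Q \cdot (XYZ - Y^3 - Z^2)$ in $\CC[X,Y,Z]$ for some homogeneous $Q$. Substituting $X = 1$ and noting that $(XYZ - Y^3 - Z^2)|_{X=1} = YZ - Y^3 - Z^2$ shows that $P(y,z) \in (yz - y^3 - z^2)$. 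After renaming the variables back to $Y, Z$, this yields the claimed isomorphism.
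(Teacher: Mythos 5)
Your proof is correct and follows essentially the same route as the paper's, identifying $\dlim_d R^d$ with the degree-$0$ part of the localization $R[X^{-1}]$ and computing it via the substitution $y = Y/X^2$, $z = Z/X^3$; the paper asserts the final isomorphism without the surjectivity and injectivity checks that you spell out (and your non-zero-divisor verification, while correct, is not strictly needed, since the direct limit of graded pieces under multiplication by a homogeneous element is always the degree-$0$ part of the localization). The one item you pass over is the first equality $\dlim_d R^d = \dlim_d \mathrm{HF}^0(\phi^d)$ appearing in the statement: for $d > 0$ these coincide by definition, and the paper disposes of the $d=0$ discrepancy by noting that the extra class $K \in \mathrm{HF}^0(\phi^0)$ is annihilated by the Seidel element and so does not survive the direct limit.
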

\begin{proof}
    When taking the direct limit, we can ignore the second factor in $\mathrm{HF}^0(\phi^d)\cong \CC^2$, as its multiplication with the Seidel elements yields zero. Identify $R^d$ with $(\CC[X,Y,Z]/(XYZ-Y^3-Z^2))_d$. It is clear form the definition of the direct system that the direct limit is isomorphic to
    \[
    (\CC[X,Y,Z,X^{-1}]/(XYZ-Y^2-Z^3))_0,
    \]
    the degree $0$ part of the graded algebra $\CC[X,Y,Z,X^{-1}]/(XYZ-Y^2-Z^3)$. This is isomorphic to $\CC[Y,Z]/(YZ-Y^3-Z^2)$.
\end{proof}

This completes the proof of Theorem \ref{thm:homog}.

Similarly, we can consider the direct limit $\dlim_d\mathrm{HF}^1(\phi^d)$, where the connecting map is also defined by multiplying the Seidel's element $[f^1]$. The exact same calculation for $\dlim_d R^d$, together with the cup product structure on $H^*(\Sigma_g^0)$, shows the following
\begin{theorem} \label{thm:HF^1 for nodal curves}
    $\dlim_d \mathrm{HF}^1(\phi^d)\cong \CC[Y,Z]/(YZ-Y^3-Z^2)\oplus \CC^{2g-2}$ as a $\dlim_d \mathrm{HF}^0(\phi^d)$ module, where $\dlim_d \mathrm{HF}^0(\phi^d)\cong \CC[Y,Z]/(YZ-Y^3-Z^2)$ acts on the first factor by multiplication, and on the second factor by projection to $\CC$ followed by diagonal multiplication.
\end{theorem}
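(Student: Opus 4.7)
The plan is to reduce this to a direct application of the description of $\bigoplus_{d=0}^\infty \mathrm{HF}^1(\Sigma_g,\phi^d)$ given immediately before the statement, and then take the direct limit summand-by-summand. Recall that that theorem identifies the graded module (with $\deg = d$) as
\[
\bigoplus_{d=0}^\infty \mathrm{HF}^1(\Sigma_g,\phi^d) \cong \CC[X,Y,Z]/(XYZ-Y^3-Z^2) \;\oplus\; (\CC[X])^{2g-2} \;\oplus\; \CC,
\]
with the $A = \CC[X,Y,Z]/(XYZ-Y^3-Z^2)$-action in which $Y,Z$ annihilate the $(\CC[X])^{2g-2}$ summand and $X,Y,Z$ all annihilate the last $\CC$ summand (which lives in degree $0$). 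The Seidel class is $X = [f^1]$ by Theorem \ref{thm:s_calculation}, so the connecting maps are given by multiplication by $X$, and the direct limit carries an action of $\dlim_d R^d \cong \CC[Y,Z]/(YZ-Y^3-Z^2)$ from Theorem \ref{thm:ring_structure}.

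Next I would compute the direct limit on each summand. For the first summand, exactly the same argument as in the proof of Theorem \ref{thm:ring_structure} applies: the direct limit along $X$-multiplication of the graded algebra $\CC[X,Y,Z]/(XYZ-Y^3-Z^2)$ is the degree-$0$ part of the localization $\CC[X^{\pm 1},Y,Z]/(XYZ-Y^3-Z^2)$, which is generated by $Y' := Y/X^2$ and $Z' := Z/X^3$ and has defining relation $Y'Z' = Y'^3 + Z'^2$, giving $\CC[Y,Z]/(YZ-Y^3-Z^2)$. For the second summand, $\dlim_d \CC[X]$ along $X$-multiplication is simply $\CC$ (each graded piece is one-dimensional and the connecting maps are isomorphisms in positive degrees), so we obtain $\CC^{2g-2}$. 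The third summand $\CC$ sits in degree $0$ and is annihilated by $X$, so it contributes $0$ in the direct limit.

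For the module structure, the action of $\dlim_d R^d$ on the first summand is the ring multiplication inherited directly. On the $\CC^{2g-2}$ summand, $Y'$ and $Z'$ act as zero (since $Y$ and $Z$ already acted as zero before passing to the limit), so the action factors through the quotient $\CC[Y,Z]/(YZ-Y^3-Z^2) \twoheadrightarrow \CC$ that kills $Y$ and $Z$, and the resulting constant acts diagonally on $\CC^{2g-2}$; this is precisely the ``projection to $\CC$ followed by diagonal multiplication'' described in the statement.

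The main obstacle is really just bookkeeping: one needs to verify that the direct limit commutes with the direct sum decomposition above, which is immediate since each summand is preserved by multiplication by $X$. Once the decomposition is established at finite level, the three limit computations are formal, and no further Floer-theoretic input is required beyond what has been set up in Section \ref{sec:product} and Theorem \ref{thm:s_calculation}.
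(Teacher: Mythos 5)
Your proposal is correct and follows essentially the same route as the paper, which simply invokes the same localization calculation used for $\dlim_d R^d$ in Theorem \ref{thm:ring_structure} applied to the module description established just above. Your summand-by-summand computation (the first summand localizing to $\CC[Y,Z]/(YZ-Y^3-Z^2)$, the $(\CC[X])^{2g-2}$ summand stabilizing to $\CC^{2g-2}$, and the degree-zero $\CC$ dying under multiplication by $X$), together with the observation that the $\dlim_d R^d$-action on $\CC^{2g-2}$ factors through evaluation at $Y=Z=0$, makes explicit exactly what the paper leaves implicit.
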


%The relations (1)-(7), together with the cup product structure on $H^*(\Sigma_g^0)$ tells us that, when viewed as a $\dlim_d  \mathrm{HF}^0(\phi^d)$ module, the module structure of $\dlim_d \mathrm{HF}^1(\phi^d)$ can be described as follows: $\CC[Y,Z]/(YZ-Y^3-Z^2)$ acts on the first summand by multiplication, and it acts on the second summand by projection to $\CC$ followed by scalar multiplication.

Theorem \ref{thm:main_theorem} then follows from Theorems \ref{thm:ring_structure}, \ref{thm:HF^1 for nodal curves}, and \ref{thm:b_model}.

\subsection{The case of multiple Dehn twists}

Using the product formula we developed in \cite{ziwenyao}, we can also compute the direct limit in the case we are performing multiple Dehn twists simultaneously. 

 The same computation as in Section 6 shows the Seidel element is horizontal and transverse, so each Dehn twist $C_i$ contributes two elliptic Reeb orbits as in Theorem \ref{thm:s_calculation} to the Seidel class. After a choice of coherent orientations (see remark \ref{rmk:signs}) they all contribute with the same sign. With the same algebraic computation of the direct limit, we can show that
\begin{theorem}
Let $C_1,...,C_k$ be circles on $\Sigma_g$ such satisfying the conditions of Remark \ref{remark: topological assumption on Dehn twists}. Let $\phi$ denote the simultaneous Dehn twists around $\{C_1,..,C_k\}$. 
We have 
    \[
    \dlim_d \mathrm{HF}^0(\phi^d) = A \times _{\mathbb{C}} A \times_{\mathbb{C}} A \times... \times_{\mathbb{C}} A
    \]
    as algebras. Here $\times_{\mathbb{C}}$ denotes the fiber product of rings over their common map $A\rightarrow \mathbb{C}$, and there are $k$ copies of $A$ in the fiber product. We also have
    \[
    \dlim_d \mathrm{HF}^1(\phi^d) = (A \times _{\mathbb{C}} A \times_{\mathbb{C}} A \times... \times_{\mathbb{C}} A) \oplus \mathbb{C}^{2g-2} 
    \]
    as $\dlim_d \mathrm{HF}^0(\phi^d)$ modules, where the action of $\times^k _\mathbb{C}A$ on the second factor is the projection to $\CC$ followed by the diagonal multiplication.
\end{theorem}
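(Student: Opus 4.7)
The plan is to adapt the arguments for Theorems \ref{thm:ring_structure} and \ref{thm:HF^1 for nodal curves} to the setting of $k$ disjoint twist regions. Let $N_1, \ldots, N_k$ denote disjoint Weinstein annular neighbourhoods of $C_1, \ldots, C_k$, and set $\Sigma_g' = \Sigma_g \setminus \bigcup_i N_i$; by Remark \ref{remark: topological assumption on Dehn twists}, $\Sigma_g'$ is connected. Inside each $N_i$ the analysis of Section \ref{sec:product} produces elliptic/hyperbolic orbits $e_{(i),j}^d, h_{(i),j}^d$ for $j = 0, \ldots, d$; the endpoint classes $e_{(i),0}^d, e_{(i),d}^d$ glue into the Morse region to yield a single cohomology class $[f^d] \in H^0(\Sigma_g')$, and likewise a single $[g^d]$.

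The Seidel class computation generalizes directly. I would build a standard Lefschetz fibration $E \to \CC$ with $k$ critical values (one per $C_i$) by iterating Proposition \ref{prop:two_crit_points}. The index formula of Proposition \ref{index} is unchanged, forcing any contributing section to have wrapping number zero and asymptote of Conley--Zehnder index $-1$; the automatic transversality argument of Proposition \ref{horizontal lemma} then shows that each $C_i$ contributes exactly a pair of horizontal sections asymptotic to $e_{(i),0}^1$ and $e_{(i),1}^1$. With a coherent choice of orientations as in Remark \ref{rmk:signs} all pairs contribute with the same sign, so the Seidel class is a nonzero scalar multiple of $[f^1]$, and the direct limit along $S$ agrees with the direct limit along $[f^1]$.

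For the ring structure, the key input is that the ``no-crossing'' lemma of \cite{ziwenyao}, applied in each $N_i$ separately, implies that the pair-of-pants product cannot couple generators supported in distinct twist regions: any product $[e_{(i),*}^m]\cdot [e_{(i'),*}^n]$ with $i\neq i'$ vanishes. Consequently, for each $i$ the subring generated by $\{[e_{(i),j}^d]\}_{d,j}$ together with $\{[f^d]\}_d$ is isomorphic to $R = \CC[X,Y_i,Z_i]/(XY_iZ_i - Y_i^3 - Z_i^2)$ of Theorem \ref{thmsum}, and the class $[f^d]$ (corresponding to $X^d$) is shared across all $i$. Assembling these gives
\begin{equation*}
\bigoplus_{d \geq 0} \mathrm{HF}^0(\phi^d) \;\cong\; R \times_{\CC[X]} R \times_{\CC[X]} \cdots \times_{\CC[X]} R
\end{equation*}
as graded algebras, where the fiber product is over the common $\CC[X]$-subalgebra generated by the point class. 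Inverting $X$ yields $A$ in each factor, and since filtered colimits of vector spaces commute with finite limits, taking the direct limit along multiplication by $[f^1]$ gives
\begin{equation*}
\dlim_d \mathrm{HF}^0(\phi^d) \;\cong\; A \times_\CC A \times_\CC \cdots \times_\CC A,
\end{equation*}
as claimed. The $\mathrm{HF}^1$ statement follows analogously: the hyperbolic classes $[h_{(i),j}^d]$ produce a companion copy of the fiber product as a module, and the remaining $\CC^{2g-2}$ summand is precisely the part of $H^1(\Sigma_g')$ that survives multiplication by the Seidel class, by the same annihilation analysis as in the single-twist case, with the point class acting by the projection to $\CC$ followed by diagonal multiplication.

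The main obstacle is the multi-region no-crossing step: one must verify that the local energy estimates of \cite{ziwenyao} can be applied in each $N_i$ simultaneously, requiring compatible choices of fibration-compatible almost-complex structure, and that the boundary generators $\{e_{(i),0}^d, e_{(i),d}^d\}_i$ are correctly identified across all $i$ with a single common Morse-region class $[f^d]$. Once this localization is in place, the algebraic identification of each block with $R$ and of the assembly with the iterated fiber product is a direct extension of the proof of Theorem \ref{thmsum}.
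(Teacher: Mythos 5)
Your proposal is correct and follows essentially the same strategy the paper employs for this theorem (the paper's own proof is a brief sketch asserting that the Seidel class and algebraic computations of Sections 6--7 carry over verbatim to $k$ disjoint twist regions). Your intermediate description of the pre-limit graded algebra as an iterated fiber product over $\CC[X]$ is a clean and correct way to organize the computation, though strictly speaking $\bigoplus_{d\ge 0}\mathrm{HF}^0(\phi^d)$ has an additional $\CC\langle K\rangle$ summand in degree zero (the fundamental class of $\Sigma_g$), which you omit; this is harmless since multiplication by the Seidel class annihilates it, so the direct limit is unaffected.
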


The above result recovers the mirror statement, Theorem \ref{b side: multiple twists}.

\subsection{Dehn twists on punctured Riemann surfaces}

We next explain what the symplectic cohomology looks like for nodal surfaces with punctures. For simplicity we consider $\Sigma_{g,1}$ with only one puncture, $\phi$ is a Dehn twist around a curve $C$ that is non-separating, and has a puncture at $p$. 

The Seidel element in this setting is now $[e_0^1+e_1^1+u_0^1]$. Then the symplectic cohomology is given by

\begin{theorem}\label{thm: one puncture}
We have
    \[
        \dlim_d \mathrm{HF}^0(\Sigma_{g,1},\phi^d) \cong (\CC[Y,Z]/(YZ-Y^3-Z^2))\times_{\CC}\CC[T],
\]
where $T^i$ is given by the class $[u_i^1]$, and the fiber product is given by evaluations at 0.
We also have 
\[
        \dlim_d \mathrm{HF}^1(\Sigma_{g,1},\phi^d) \cong (\CC[W]\times_{\CC}\CC[T]) \oplus \CC^{2g-2}
\]
as a module over $\dlim_d \mathrm{HF}^0(\Sigma_{g,1},\phi^d)$, where the fiber product is given by evaluations $f(W)\mapsto f(0)-f(1)$ and $g(T)\mapsto g(0)$. The module structure on the first summand is given by $(Y,0)\mapsto (W-W^2,0)$, $(Z,0)\mapsto (W^2-W^3,0)$ followed by multiplications, and the module structure on the second summand is projection to $\CC$ followed by scalar multiplication. 
\end{theorem}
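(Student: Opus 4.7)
The plan is to extend the methods of Theorems \ref{thm:ring_structure} and \ref{thm:HF^1 for nodal curves} to the punctured setting by incorporating the puncture-region classes $[u_i^d]$, $[v_i^d]$, and $[\varphi^d]$ from Section \ref{sec:fixed_point_floer_punctured}.

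First I would identify $\bigoplus_{d\ge 0}\mathrm{HF}^0(\Sigma_{g,1},\phi^d)$ as a graded $\CC$-algebra. In addition to the closed-case generators $X=[f^1]$, $Y=[e_1^2]$, $Z=[e_1^3]$, the puncture contributes a single new generator $T=[u_1^1]$. Using the relations $[u_i^m][u_j^n]=[u_{i+j}^{m+n}]$ and $[f^m][u_j^n]=[u_j^{m+n}]$, every $[u_j^d]$ with $d\ge j\ge 1$ equals $T^j X^{d-j}$, so these four elements (together with Lemma \ref{lem: generators}) generate the whole algebra. The closed-case relation $XYZ=Y^3+Z^2$ persists, and the relations $[e_i^m][u_j^n]=0$ produce the new relations $YT=ZT=0$. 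An argument parallel to Lemma \ref{lemma:relation}, applied to $\CC[X,Y,Z,T]$ with the mutually coprime generators $XYZ-Y^3-Z^2$, $YT$, $ZT$, shows these exhaust all relations. Inverting $X$ and taking the degree-$0$ component (with $y=Y/X^2,\ z=Z/X^3,\ t=T/X$) yields $\CC[y,z,t]/(yz-y^3-z^2,yt,zt)$, which is canonically identified with $A\times_{\CC}\CC[T]$ via evaluation-at-zero in each factor.

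For the module $\bigoplus_d\mathrm{HF}^1(\Sigma_{g,1},\phi^d)$, I would take as generators the classes $[g^1]$, $[\varphi^1]$, and $[v_1^1]$, together with the $2g-2$ remaining independent classes of $H^1(\Sigma_{g,1}')$. The relevant product formulas are
\[
[f^m][\varphi^n]=[\varphi^{m+n}]+[h_m^{m+n}],\quad [e_i^m][\varphi^n]=[h_i^{m+n}],\quad [u_i^m][\varphi^n]=-[v_i^{m+n}],
\]
together with $[g^m][u_j^n]=[v_j^{m+n}]$, $[u_i^m][v_j^n]=[v_{i+j}^{m+n}]$, and the closed-case relations recalled in \S\ref{sec:product}. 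Under the tentative correspondences $[\varphi^d]\leftrightarrow(W,-1)$, $[g^d]\leftrightarrow(1-W,1)$, and $[v_j^d]\leftrightarrow(0,T^j)$, one checks that $T\cdot([g^d]+[\varphi^d])=[v_1^{d+1}]-[v_1^{d+1}]=0$ matches $T\cdot(1,0)=0$, $T\cdot[\varphi^d]=-[v_1^{d+1}]$ matches $T\cdot(W,-1)=(0,-T)$, and the $Y$- and $Z$-multiplications realize precisely the $A$-module structure $Y\mapsto(W-W^2)\cdot$, $Z\mapsto(W^2-W^3)\cdot$ on $\CC[W]$. The remaining $2g-2$ classes are annihilated by $Y,Z,T$ and multiplied by $X$ invertibly, contributing the summand $\CC^{2g-2}$; the extra class from $H^1(\Sigma_g)$ at degree zero corresponding to the twist direction is killed by $X$ and disappears in the limit. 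A module-level analog of Lemma \ref{lemma:relation} then rules out any further relations.

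The main obstacle is the bookkeeping in the module step: the correction term $[h_m^{m+n}]$ in $[f^m][\varphi^n]$ means that the classes $W^k$ for $k\ge 2$ correspond to increasingly intricate combinations of $[\varphi^d]$ and $[h_i^d]$-classes across different degrees, so verifying that the resulting limit module structure agrees exactly with $\CC[W]\times_{\CC}\CC[T]$ (rather than some quotient or extension thereof) requires a careful inductive argument in the spirit of Lemma \ref{lemma:relation}, combined with a degree-by-degree dimension count to rule out extra relations. Once this is done, cross-checking with the $B$-side computation of Theorem \ref{thm:b_model_singular} provides an independent verification.
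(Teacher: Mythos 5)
Your algebra step has a genuine gap: you claim that $X=[f^1]$, $Y=[e_1^2]$, $Z=[e_1^3]$, $T=[u_1^1]$ generate the graded algebra $\bigoplus_{d\ge 0}\mathrm{HF}^0(\Sigma_{g,1},\phi^d)$, but this fails. Because $\phi$ wraps at the puncture, $\mathrm{HF}^0(\Sigma_{g,1},\phi^d)$ contains the infinitely many classes $[u_j^d]$ for all $j\ge 1$, and those with $j>d$ cannot be written as $T^jX^{d-j}$ in degree $d$ (the $X$-exponent would be negative). So the graded ring is \emph{not} $\CC[X,Y,Z,T]/(XYZ-Y^3-Z^2,YT,ZT)$ — its degree-$d$ piece is infinite-dimensional — and a direct analogue of Lemma~\ref{lemma:relation} cannot apply to it as you state. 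The final answer survives because after inverting $X$ one has $[u_j^d]=X^{-(j-d)}T^j$, so nothing new appears; but the argument would need to be reformulated at the level of the localization, or, as the paper does, carried out directly in the direct limit without ever asserting a finite presentation for $\bigoplus_d\mathrm{HF}^0$.

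The module identifications $[\varphi^d]\leftrightarrow(W,-1)$ and $[g^d]\leftrightarrow(1-W,1)$, taken independently of $d$, are not well-defined in the direct limit. From $[f^1][\varphi^d]=[\varphi^{d+1}]+[h_1^{d+1}]$, the images of $[\varphi^d]$ and $[\varphi^{d+1}]$ differ by the image of $[h_1^{d+1}]=[e_1^2][\varphi^{d-1}]$, which is nonzero since $Y$ acts by multiplication by $W-W^2$ on $\CC[W]$. Concretely, under your assignment $[h_1^{d+2}]=[e_1^2][\varphi^d]$ would map to $Y\cdot(W,-1)=(W^2-W^3,0)$, while $[h_1^{d+2}]=[\varphi^{d+1}]-[\varphi^{d+2}]$ would map to $(W,-1)-(W,-1)=0$, a contradiction. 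The consistent, $d$-dependent choice is $[\varphi^d]\mapsto(W^d,-1)$, and only then do your $Y$- and $Z$-checks close up. (Your observation that $T\cdot[g^d]=[v_1^{d+1}]\neq 0$ forces the $\CC[T]$-component of $[g^d]$ to be $1$ is a correct and useful sanity check, sharper than what the paper writes; but the $\CC[W]$-component of $[g^d]$ is again $d$-dependent, as one sees by iterating $[g^m][f^n]=[h_m^{m+n}]+[h_n^{m+n}]+[g^{m+n}]$.)
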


\begin{proof}
    We note that the proof for $\dlim_d \mathrm{HF}^0(\Sigma_{g,1},\phi^d)$ is very similar to the non-punctured case. The key is to realize the generators near the puncture and the generators in the Dehn twist region do not interact with each other save for the fact that both $u_0^1$ and $e_0^1+e^1_1$ contributes to the Seidel element $[f^1]=[e_0^1+e^1_1+u_0^1]$ with respect to which we take the direct limit. This is responsible for the fact we see the fiber product of $A$ with $\mathbb{C}[T]$ instead of the direct product. The main difficulty is working out the module structure of $\dlim_d \mathrm{HF}^1(\Sigma_{g,1},\phi^d)$ over $\dlim_d \mathrm{HF}^0(\Sigma_{g,1},\phi^d)$, which we describe in more detail. 
    
    The extra $2g-2$ generators of $\mathbb{C}^{2g-2}$ correspond to the fixed points away from the twist region and the punctured region; they have trivial products with the elements in $\dlim_d\mathrm{HF}^0(\Sigma_{g,1},\phi^d)$ except for the unit element (that correspond to the classes $[f^d]$), so we can ignore these in the following discussion. The main difference from the non-punctured case is that there is an extra generator $[\varphi^d] = [h_0^d-v_0^d]$. The images of $[\varphi^d]$ in the direct limit then correspond to the element $(W^d,-1)$ in the fiber product $\CC[W]\times_\CC \CC[T]$, the image of $[g^d]=[h_0^d+h_d^d]$ correspond to the elements $(1,0)$, while as the images of the classes $[v_i^d]$ correspond to the elements $(0,T^i)$ in the fiber product. The same proof as in Lemma \ref{lem: generators} shows that the module $\dlim_d \mathrm{HF}^1(\Sigma_{g,1},\phi^d)$ is generated by the images of the classes $[\varphi^d]$, $[g^1]$, $[v_i^d]$, $[h_1^2]$ and $[h_1^3]$. In fact, the last two classes $[h_1^2]$ and $[h_1^3]$ can be generated by $[\varphi^d]$ and $[g^1]$ as well. To see this, notice that 
    \[
    [h_1^2] = [e_0^1+e_1^1+u_0^1]\cdot [h_0^1-v_0^1]-[h_0^2-v_0^2]
    \]
    and 
    \[
    [h_1^3] = [e_0^1+e_1^1+u_0^1]\cdot [h_0^2-v_0^2]-[h_0^3-v_0^3].
    \]
    The images of the classes $[\varphi^d]$ and the image of the class $[g^1]$ are linearly independent over $\CC$. To see this, simply calculate the images of these class in the same degree $D$ by multiplying appropriate powers of $[f^1]$, and we get the following
    \[
    [g^D]=[h_0^D+h_D^D],\quad [\varphi^D] = [h_0^D-v_0^D],\quad [f^1]^{D-k}\cdot [\varphi^k] = [h_0^D+h_{D-k}^D-v_0^D]
    \]
    which are easily seen to be linearly independent over $\CC$ in $\mathrm{HF}^1(\Sigma_{g,1},\phi^D)$.
    
    The actions of $(Y,0)=[e_1^2]$ and $(Z,0)=[e_1^3]$ are $(Y,0)\cdot (W^d,-1) = (W^{d+1},-1)-(W^{d+2},-1)$ and $(Z,0)\cdot (W^d,-1) = (W^{d+2},-1)-(W^{d+3},-1)$, which come from the relations
    \[
    [e_1^2]\cdot [h_0^d-v_0^d] = [e_0^1+e_1^1+u_0^1]\cdot[h_0^{d+1}-v_0^{d+1}]-[h_0^{d+2}-v_0^{d+2}]
    \]
    and
    \[
    [e_1^3]\cdot [h_0^d-v_0^d] = [e_0^1+e_1^1+u_0^1]\cdot[h_0^{d+2}-v_0^{d+2}]-[h_0^{d+3}-v_0^{d+3}].
    \]

\end{proof}

We now present the case for multiple punctures.

\begin{theorem} \label{thm:A side multiple puncture}
    We have 
\[  \dlim_d \mathrm{HF}^0(\Sigma_{g,k},\phi^d) \cong (\CC[Y,Z]/(YZ-Y^3-Z^2))\times_{\CC}\CC[T_1] \times_{\CC} \CC[T_2] \times_{\CC} \cdots \times_{\CC}\CC[T_k],
\]
where the fiber product is taken by evaluations at $0$. For the module structure, we have
\[
\dlim_d \mathrm{HF}^1(\Sigma_{g,1},\phi^d) \cong (\CC[W]\times_{\CC}(\CC[T_1] \times \CC[T_2] \times \cdots \CC[T_k])) \oplus \CC^{2g-2}
\]
where the fiber product is given as follows. Let $f(W) \in \CC[W]$ and let $g_j (T_j) \in \CC[T_j]$, then we require
\[
f(0)-f(1) = g_1(0)+\cdots + g_k(0).
\]
The module action is given by the following. Let $(f(Y,Z),g_1(T_1),\cdots,g_k(T_k)) \in \dlim_d \mathrm{HF}^0(\Sigma_{g,k},\phi^d)$. The factor $Y$ acts by multiplication multiplication via $W-W^2$ on the $\CC[W]$ component. The factor $Z$ acts by multiplication via $W^2-W^3$ on the $\CC[W]$ factor. We extend this multiplicatively to obtain an action of $f(Y,Z)$. The element $g_j(T_j)$ acts by multiplication on $\CC[T_j]$. Finally $(f(Y,Z),g_1(T_1),\cdots,g_k(T_k)) \in \dlim_d \mathrm{HF}^0(\Sigma_{g,k},\phi^d)$ acts by multiplication by $f(0,0)$ in the $\CC^{2g-2}$ factor.
\end{theorem}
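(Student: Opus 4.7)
The plan is to reduce this theorem to Theorem \ref{thm: one puncture} by treating each puncture as an independent copy of the single-puncture setup, using the no-crossing lemma from Section \ref{sec:fixed_point_floer_punctured} to ensure that generators at distinct punctures do not interact multiplicatively (i.e.\ $[u_i^{m,j}]\cdot [u_\ell^{n,j'}] = 0$ when $j \neq j'$, and similarly for the other product combinations). By a direct extension of the index and energy arguments in Propositions \ref{index} and \ref{horizontal lemma}, the Seidel class in this setting is $[f^1] = [e_0^1 + e_1^1 + \sum_{j=1}^k u_0^{1,j}]$, where each puncture $p_j$ contributes one additional horizontal section asymptotic to $u_0^{1,j}$.

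For $\dlim_d \mathrm{HF}^0$, I would first show, by an obvious generalization of Lemma \ref{lem: generators}, that the subalgebra $\bigoplus_{d\geq 0} R^d$ is generated by $[f^1], [e_1^2], [e_1^3]$ (from the twist region) together with the $[u_i^{1,j}]$ (one family per puncture). A straightforward adaptation of Lemma \ref{lemma:relation}, performed separately in the twist region and in each puncture region, confirms that the only relations are those of $A$ among the twist generators and the absence of relations within each $\CC[T_j]$. The fiber product structure over $\CC$ then arises from the fact that the point class $[f^d]$ has contributions both in the twist region ($[e_0^d + e_d^d]$) and at each puncture ($[u_0^{d,j}]$), so compatibility of the constant/unit piece across the different subalgebras is exactly the fiber-product condition. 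Passing to the direct limit along multiplication by $[f^1]$ then proceeds exactly as in Theorem \ref{thm:ring_structure}.

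For $\dlim_d \mathrm{HF}^1$, the $\CC^{2g-2}$ summand collects the Morse 1-cells in the complement of the twist and puncture regions, on which every positive-degree element acts trivially. The remaining part is generated by a $[\varphi^d]$-type class from the twist region (corresponding to $W^d$ in $\CC[W]$) together with the classes $[v_i^{d,j}]$ near each puncture (corresponding to $T_j^i$ in $\CC[T_j]$). The main obstacle will be establishing the precise fiber product condition $F(0) - F(1) = g_1(0) + \cdots + g_k(0)$: because the Seidel element now contains one $u_0^{1,j}$-summand per puncture, multiplying a $[\varphi^d]$-type class by $[f^1]$ produces a single cohomological relation that simultaneously involves the $v_0^{d+1,j}$ classes at every puncture, which yields the summation condition in the fiber product. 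The rest of the module structure---namely $Y \mapsto W - W^2$, $Z \mapsto W^2 - W^3$, and $T_j$ acting as multiplication on $\CC[T_j]$---then follows by applying the single-puncture computations of Theorem \ref{thm: one puncture} in parallel at each puncture, using the extended product relations from Section \ref{sec:fixed_point_floer_punctured}.
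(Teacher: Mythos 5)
Your overall strategy — reduce to the single-puncture Theorem \ref{thm: one puncture} via no-crossing, identify the Seidel element as $[e_0^1 + e_1^1 + \sum_j u_{0,j}^1]$, and treat the twist and puncture regions separately — matches the paper, and your account of $\dlim_d \mathrm{HF}^0$ is correct.

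However, there is a genuine gap in the $\mathrm{HF}^1$ argument. You describe the module as being ``generated by a $[\varphi^d]$-type class from the twist region\ldots together with the classes $[v_i^{d,j}]$,'' and you claim the fiber product condition $F(0)-F(1) = g_1(0)+\cdots+g_k(0)$ arises because ``multiplying a $[\varphi^d]$-type class by $[f^1]$ produces a single cohomological relation that simultaneously involves the $v_0^{d+1,j}$ classes at every puncture.'' This mechanism is wrong. By the same no-crossing lemma you invoke earlier, the product $[u_{0,j'}^1]\cdot[v_{j,0}^d]$ vanishes unless $j'=j$, so when you compute $[f^1]\cdot[h_0^d-v_{j,0}^d]$ only the term $[u_{0,j}^1]\cdot[v_{j,0}^d]$ from the $j$-th puncture survives; the result $[\varphi_j^{d+1}]+[h_1^{d+1}]$ never couples different punctures. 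Moreover, a single $\varphi$-class plus the $[v_{i,j}^d]$ (all of which have $T_j^i(0)=0$ for $i\geq 1$) cannot generate elements with nonzero constant term in the other $\CC[T_{j'}]$ slots, so the fiber product simply cannot be reached with the generators you list.

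What the paper actually uses is that there are $k$ distinct cohomology classes $[\varphi_j^d] = [h_0^d - v_{j,0}^d]$, one for each puncture $j=1,\dots,k$, together with $[g^1]=[h_0^1+h_1^1]$ and the $[v_{i,j}^d]$ with $i\geq 1$. Each $[\varphi_j^d]$ is individually a cohomology class of $H^1(\Sigma_{g,k}')$ and maps to the tuple $(W^d, 0,\dots,0,-1,0,\dots,0)$ (with $-1$ in the $j$-th slot), which satisfies the fiber product constraint $0 - 1 = -1$ on its own. The constraint is not imposed via a single multiplicative relation; it is a consequence of which linear combinations of the Morse-theoretic generators $h_0^d, h_d^d, v_{1,0}^d,\dots,v_{k,0}^d$ are actually cocycles. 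The remaining content is the linear-independence check that the paper carries out at a fixed degree $D$. To fix your proof, replace ``a $[\varphi^d]$-type class'' by the family $\{[\varphi_j^d]\}_{j=1}^k$ and drop the claim that the fiber product condition comes from a Seidel-element product.
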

\begin{proof}
    The case with multiple punctures is almost the same as the case with only one puncture. For each degree $d$ and $j=1,2,\cdots, k$, we have fixed points $u_{i,j}^d$ and $v_{i,j}^d$ that correspond to the $k$ punctures. The Seidel element is $[f^1]=[e_0^1+e_1^1+u_{0,1}^1+\cdots, +u_{0,k}^1]$, and the elements $T_j^i\in \dlim_d \mathrm{HF}^0(\Sigma_{g,k},\phi^d)$ correspond to the classes $[u_{i,j}^d]$. To describe the module structure, the only difference from the once punctured case is that there are $k$ extra special cohomology classes $[\varphi_j^d]$ in each degree, corresponding to $[h_0^d-v_{j,0}^d]$. These classes are identified with $(W^d, 0,\cdots,0, -1, 0,\cdots,0)$ in our description of the module, where $-1$ is in the $j$-th summand $\CC[T_j]$. The fact that $(1, 0, \cdots, 0)$, $(W, -1, 0, \cdots, 0)$, $(W^2, -1, 0, \cdots, 0), \cdots, (W^D, -1, 0, \cdots, 0)$, $(W^D, 0, -1, 0,\cdots, 0),\cdots, (W^D, 0, \cdots, 0, -1)$ are linearly independent over $\CC$ is reflected by the fact that in $\mathrm{HF}^1(\Sigma_{g,k},\phi^D)$, the classes
    \[
    [g^D] = [h_0^D+h_D^D], [h_0^D+h_{D-1}^D-v_{0,1}^D], \cdots ,[h_0^D+h_1^D-v_{0,1}^D]
    ,\]
    \[
    [h_0^D-v_{0,1}^D],[h_0^D-v_{0,2}^D], \cdots ,[h_0^D-v_{0,k}^D]
    \]
    are linearly independent over $\CC$.
\end{proof}

After we take Spec on $\dlim_d \mathrm{HF}^0$ adding punctures to $\Sigma_g$ corresponds to a nodal degeneration of the mirror. Comparing with the B-side computation in Theorem \ref{thm:b_model_singular}, we recover the mirror statement in Theorem \ref{thm:main_theorem}.

\subsection{Homogeneous coordinate ring for $\phi^2$} \label{sec:phi_squared_homogenous}
In this subsection we compute the homogeneous coordinate ring $\bigoplus_{d\geq 0} \mathrm{HF}^0 (\phi^{2d})$ and its module action on $\bigoplus_{d\geq 0} \mathrm{HF}^1 (\phi^{2d})$.

\begin{theorem}
Let $\phi$ denote the Dehn twist along a non-separating circle $C\subset \Sigma_g$. Then we have
\[
\bigoplus_{d\geq 0} \mathrm{HF}^0 (\phi^{2d}) \cong (\CC[X,Y,Z]/(XYZ-Y^4-Z^2))\oplus \CC
\]
as a graded algebra, where $|X|=|Y|=2$ and $|Z|=4$, and the multiplication of the second $\CC$ factor with any element of positive degree is trivial. And 
\[
\bigoplus_{d\geq 0} \mathrm{HF}^1 (\phi^{2d}) \cong \CC[X,Y,Z]/(XYZ-Y^4-Z^2) \oplus (\CC[X])^{2g-2} \oplus \CC
\]
as a $\bigoplus_{d\geq 0} \mathrm{HF}^0 (\phi^{2d})$ module.
\end{theorem}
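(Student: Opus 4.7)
The plan is to extract the even Veronese subalgebra of the computation in Theorem~\ref{thmsum} and rewrite it in the form stated. Theorem~\ref{thmsum} gives
\[
\bigoplus_{d\ge 0}\mathrm{HF}^0(\phi^d)\cong R\oplus\CC\langle K\rangle,\qquad R:=\CC[p,q,s]/(pqs-q^3-s^2),
\]
where $|p|=1$, $|q|=2$, $|s|=3$ and $K\in\mathrm{HF}^0(\phi^0)$ has trivial product with every element of positive degree; and Theorem~\ref{thm:homog} identifies $\bigoplus_d\mathrm{HF}^1(\phi^d)\cong R\oplus(\CC[p])^{2g-2}\oplus\CC$ as an $R$-module, in which $p$ acts on $(\CC[p])^{2g-2}$ by multiplication, $q$ and $s$ act by zero, and the $\CC$ summand is concentrated in degree zero. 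The graded subspace $\bigoplus_d\mathrm{HF}^{*}(\phi^{2d})$ is precisely the even-degree part of these decompositions, so the problem reduces to describing $R^{(2)}:=\bigoplus_d R_{2d}$ together with the even part of each module summand.

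For the algebra, a direct inspection of $R_2=\langle p^2,q\rangle$ and $R_4=\langle p^4,p^2q,q^2,ps\rangle$ shows that $R^{(2)}$ is generated by $a:=p^2$ and $b:=q$ in degree $2$ together with $c:=ps$ in degree $4$, since the other elements of $R_4$ are already products of the degree-$2$ generators. Multiplying $pqs=q^3+s^2$ by $p^2$ yields $abc=p^3qs=p^2q^3+p^2s^2=ab^3+c^2$, so $R^{(2)}$ is a quotient of $\CC[a,b,c]/(abc-ab^3-c^2)$. I would then apply the grading-preserving change of variables
\[
X:=a-2b=p^2-2q,\qquad Y:=b=q,\qquad Z:=c-b^2=ps-q^2,
\]
under which direct substitution with $a=X+2Y$, $b=Y$, $c=Z+Y^2$ converts $abc-ab^3-c^2$ into $XYZ-Y^4-Z^2$. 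To see that this is the full ideal of relations, I would compare Hilbert functions: $f:=XYZ-Y^4-Z^2$ is linear in $X$ with coefficient $YZ$ coprime to $Y^4+Z^2$, hence irreducible in the UFD $\CC[X,Y,Z]$ and a nonzerodivisor, so $\dim(\CC[X,Y,Z]/(f))_D=\dim\CC[X,Y,Z]_D-\dim\CC[X,Y,Z]_{D-8}$ in every degree; a direct count with $|X|=|Y|=2$, $|Z|=4$ matches $\dim R_{2d}=2d$ for $d\ge 1$ (and $1$ at $d=0$) recorded in Theorem~\ref{thmsum}. The extra $\CC$-summand accounts for $K$, which lies in degree zero with trivial multiplication on everything of positive degree.

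For the module $\bigoplus_{d\ge 0}\mathrm{HF}^1(\phi^{2d})$, the even part of $R$ is $R^{(2)}$ acting on itself; the even part of $(\CC[p])^{2g-2}$ is $(\CC[p^2])^{2g-2}$, on which $Y=q$ and $Z=ps-q^2$ act by zero (since $q$ and $s$ already act by zero in the original module), while $X=p^2-2q$ acts as multiplication by $p^2$. Identifying $p^2$ with the polynomial variable of $\CC[X]$ exhibits this summand as $(\CC[X])^{2g-2}$ as an $R^{(2)}$-module, with $X$ acting by multiplication and $Y,Z$ by zero, exactly as stated. The remaining $\CC$ sits in degree zero and is annihilated by every positive-degree element, so the full module decomposition follows.

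The main step requiring care is the change of variables $(a,b,c)\mapsto(X,Y,Z)$: the relation among the natural Veronese generators appears as $abc=ab^3+c^2$, and only after the shifts $X=a-2b$ and $Z=c-b^2$ does it assume the clean form $XYZ=Y^4+Z^2$ stated in the theorem. Once this identification is in hand, the Hilbert function comparison and the verification of the module structure are routine consequences of the single-Dehn-twist calculation of Theorem~\ref{thmsum}.
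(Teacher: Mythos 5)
Your proposal is correct and follows essentially the same line as the paper: you arrive at the same three generators (the paper's $X=[e_0^0+e_0^2]$ should read $[e_0^2+e_2^2]=[f^2]$, which equals your $p^2-2q$; its $Z=[e_1^4]$ equals your $ps-q^2$) and the same relation $XYZ=Y^4+Z^2$. You make two substitutions along the way: first you isolate the relation $abc=ab^3+c^2$ in the ``raw'' Veronese generators $a=p^2$, $b=q$, $c=ps$ and only then pass to the shifted variables, whereas the paper writes the shifted generators directly --- this is cosmetic. More substantively, you establish that the single relation generates the whole ideal by a Hilbert-function comparison, while the paper simply appeals to the division-algorithm argument of Lemma~\ref{lemma:relation}. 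The Hilbert-function route is a perfectly good alternative and arguably more self-contained.

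One gap should be closed. The Hilbert-function comparison forces an isomorphism only once you know the natural map $\CC[a,b,c]/(abc-ab^3-c^2)\to R^{(2)}$ is surjective in \emph{every} degree, but your justification that $a,b,c$ generate the Veronese subalgebra $R^{(2)}$ only inspects degrees $2$ and $4$. The simplest repair: using $s^2=pqs-q^3$ one sees that $R=\CC[p,q,s]/(pqs-q^3-s^2)$ is a free $\CC[p,q]$-module with basis $\{1,s\}$; the even-degree monomials are then exactly $p^{2m}q^j=a^m b^j$ and $p^{2m+1}q^j s=a^m b^j c$, so $a,b,c$ generate $R^{(2)}$ in all degrees. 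With that addition the algebra statement is fully proved, and your reading-off of the module structure (even part of $(\CC[p])^{2g-2}$ is $(\CC[p^2])^{2g-2}\cong(\CC[X])^{2g-2}$ with $Y,Z$ acting by zero, plus the degree-zero $\CC$) is correct.
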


\begin{proof}
    The proof is more or less analogous to the proof of Theorem \ref{thmsum}. The elements $X=[e_0^0+e_0^2]$, $Y=[e_1^2]$ and $Z=[e_1^4]$, together with the fundamental class in $H^2(\Sigma_g')$ generate the ring $\bigoplus_{d\geq 0} \mathrm{HF}^0 (\phi^{2d})$, and satisfy the equation $XYZ=Y^4+Z^2$. The same proof as in lemma \ref{lemma:relation} shows that this equation generates all of the relations among $[e_0^0+e_0^2]$, $[e_1^2]$ and $ [e_1^4]$. The proof of module action of $\bigoplus_{d\geq 0} \mathrm{HF}^0 (\phi^{2d})$ also proceeds as before.
\end{proof}
Combined with the B-side computation in Proposition \ref{prop:B side calculation for phi^2} produces the mirror symmetry statement in Theorem \ref{thm:homog}.

% \nocite{*}
\bibliography{biblio}{}
\bibliographystyle{amsalpha}

\end{document}